\documentclass[a4paper,dvipsnames]{article}
\usepackage[utf8]{inputenc}
\usepackage[english]{babel}
\usepackage{amsmath,amsfonts}
\usepackage{bbm}
\usepackage{color}
\usepackage{amsfonts,enumitem}
\usepackage{amssymb}
\usepackage{hyperref} 
\usepackage{graphicx,tikz,todonotes,comment}
\usepackage{stmaryrd}
\hypersetup{hidelinks}

\usepackage{titlesec}

\titleformat{\subsubsection}[runin]
  {\normalfont\normalsize\bfseries}
  {\thesubsubsection}
  {1em}
  {}
  [.]
\titlespacing*{\subsection}
  {0pt}
  {1.8ex plus .3ex minus .2ex}
  {0.5ex}
  
\titlespacing*{\subsubsection}
  {0pt}
  {1.2ex plus .2ex minus .2ex}
  {0.8em}

\usepackage{algorithmic}
\usepackage[ruled,noend]{algorithm2e}
\usepackage[skins]{tcolorbox}

\newcommand{\N}{\mathbb{N}}
\newcommand{\HH}{\mathcal{H}}
\newcommand{\CC}{\mathbb{C}}

\newcommand{\Dist}{\operatorname{Dist}}

\newcommand{\RR}{\mathbb{R}}
\newcommand{\NN}{\mathbb{N}}

\newtheorem{theorem}{Theorem}
\newtheorem{lemma}[theorem]{Lemma}
\newtheorem{proposition}[theorem]{Proposition}
\newtheorem{corollary}[theorem]{Corollary}

\newtheorem{definition}[theorem]{Definition}
\newtheorem{assumption}[theorem]{Assumption}

\newtheorem{remark}[theorem]{Remark}
\newtheorem{lem-Jn-1}[theorem]{lem-Jn-1}

\newenvironment{proof}[1][]{\noindent {\bf Proof #1:\;}}{\hfill $\Box$}

\usepackage[pagewise]{lineno} 
\usepackage{geometry} 
\usepackage{etoolbox}

\apptocmd{\thebibliography}{%
  \small
  \setlength{\itemsep}{1.8pt}
  \setlength{\parsep}{1.5pt}
  \setlength{\parskip}{1.8pt}
}{}{}

\newcommand{\R}{\mathbb{R}}

\newcommand{\vertiii}[1]{{\vert\kern-0.25ex\vert\kern-0.25ex\vert #1
    \vert\kern-0.25ex\vert\kern-0.25ex\vert}}
\makeatletter
\newcommand{\customlabel}[2]{%
   \protected@write \@auxout {}{\string \newlabel {#1}{{#2}{\thepage}{#2}{#1}{}} }%
   \hypertarget{#1}{#2}
}

\makeatother

\everymath{\displaystyle}

\numberwithin{equation}{section}
\numberwithin{theorem}{section}

\title{Quantitative Fredholm backstepping and rapid stabilization}

\author{Ludovick Gagnon\footnote{Université de Lorraine, CNRS, Inria,  F-54000 Nancy, France. E-mail: \texttt{ludovick.gagnon@inria.fr.}}, Amaury Hayat\footnote{CERMICS, \'{E}cole des Ponts ParisTech, 6 - 8, Avenue Blaise Pascal, Cité Descartes—Champs sur Marne, 77455 Marne la Vall\'{e}e, France. E-mail: \texttt{amaury.hayat@enpc.fr.}}, Swann Marx\footnote{LS2N,
Ecole Centrale de Nantes \& CNRS UMR 6004, F-44000 Nantes, France. E-mail:
\texttt{swann.marx@ls2n.fr}
},\\ Shengquan Xiang\footnote{School of Mathematical Sciences, Peking University, 100871, Beijing, China.
E-mail: \texttt{shengquan.xiang@math.pku.edu.cn}}, and Christophe Zhang\footnote{Sorbonne Université, CNRS, Inria équipe CAGE,  F-75005 Paris, France. E-mail: \texttt{Christophe.zhang@inria.fr.}}}
\date{}

\begin{document}

\maketitle

\begin{abstract}
In this paper, we address the existence of Fredholm backstepping transformations for self-adjoint and skew-adjoint operators $A$. Under suitable assumptions on the operator $A$ and the possibly unbounded control operator $B$, we prove the existence of a Fredholm backstepping transformation for operators of order strictly greater than $1$.

This work overcomes two major limitations of the classical Fredholm backstepping framework. One of the main contributions is the explicit identification of the underlying isomorphism used in the construction of the transformation $T$, thereby bypassing the compactness arguments and Riesz basis mechanisms traditionally used in the literature. This explicit structure enables us to derive quantitative and sharp estimates for $\|T\|_{\mathcal{L}(H;H)}$ and $\|T^{-1}\|_{\mathcal{L}(H;H)}$ with respect to the decay rate $\lambda$.
As a consequence, we obtain quantitative rapid stabilization results for a broad class of operators. 
\end{abstract}

\setcounter{tocdepth}{1}

\tableofcontents

\section{Introduction}

Consider the abstract system, 
\begin{equation}
\label{eq:sys}
\begin{cases}
\dfrac{d}{dt} y(t) = A y(t) + Bu(t), \quad t\geq 0\\
y(0)=y_0,
\end{cases}
\end{equation}
with $y_0\in H$. Here, $H$ denotes a complex Hilbert space endowed with the norm $\|.\|_H$, the unbounded operator $A:D(A)\subset H\rightarrow H$ is densely defined and the control operator $B\in \mathcal{L}(U; D(A^*)^\prime)$ is such that
$\textrm{dim} \, U < +\infty$.
We furthermore assume that $A$ generates a strongly continuous semigroup that we denote by $(e^{tA})_{t\geq 0}$.

A classical problem in control theory is to establish the {\it rapid stabilization} of equations of the form \eqref{eq:sys}, that is to say, for every $\lambda>0$, to seek a feedback operator $K:D(K)\subseteq H \rightarrow \R$ such that the control in feedback form $u(t)=Ky(t)$ and the solution $y(t)$ of the closed-loop system,
\begin{equation}
\label{eq:cl-sys}
\begin{cases}
\frac{d}{dt} y(t) = (A+BK)y(t),\quad t\geq 0,\\
y(0) = y_0,
\end{cases}
\end{equation}
 satisfies,
\begin{equation}
\label{eq:expstablestim}
\|y(t)\|_H \leq Ce^{-\lambda t} \|y_0\|_H,
\end{equation}
for $C>0$. This has been studied since, at least, the pioneering work of Slemrod in 1972 (see \cite{slemrod1972linear}). Many techniques exist in the literature to establish the rapid stabilization of \eqref{eq:cl-sys} from controllability properties stemming from \eqref{eq:sys} (these are merely several examples and are far from exhaustive, the Riccati theory \cite{Barbu-book, Breiten-Kunisch-Rodriques,  Lasiecka-Triggiani-book}, Gramian method \cite{Urquiza, Nguyen-sta-cocv}, equivalence between observability and stabilizability \cite{Trelat-Wang-Xu, LWXY}, frequency Lyapunov method \cite{Xiang-heat-2020, Xiang-NS-2020}, frequency-domain criterion \cite{KWY},  various Lyapunov approaches \cite{Coron-Trelat-2004, Hayat-Shang-2021}, among others). 

The backstepping method for PDEs, initially introduced by Balogh and Krsti\'c for boundary control systems \cite{BK1,BK2, vazquez2026backstepping}, is a powerful stabilization method which consists in finding explicit feedbacks using an appropriate Volterra transformation of the control system \cite{coron2017null, coron2021small, Xiang-KdV2019}. 
This backstepping method can be seen as an infinite-dimensional extension of the F-equivalence method introduced by Brunovsky for finite-dimensional systems in \cite{Brunovsky}.
\vspace{2mm}

There were recently several efforts to extend the backstepping method to more general transformations, namely Fredholm transformations \cite{CGM,CoronLu14,CoronLu15,coron:hal-03161523, deutscher2019fredholm,GLM,Gagnon-Hayat-Xiang-Zhang,WaterWave,hayat2024fredholm}.
Although more intricate, the Fredholm transformation possesses the advantage of having a systematic approach, based on controllability and spectral assumptions on \eqref{eq:sys}. 
This allows us to treat PDEs where the existence of a Volterra transformation is difficult to establish, due in particular to the non-trivial boundary condition on the diagonal (see for instance \cite{KrsticSmyshlyaev_Book}). 
These generalizations show that the Fredholm backstepping method (also referred to as an infinite-dimensional $F$-equivalence method) is also a way to relate controllability properties to the rapid stabilization of \eqref{eq:cl-sys}.

However, the Fredholm backstepping approach that has been developed in the last decade has two essential limitations: due to the use of compactness arguments and Riesz basis mechanisms \cite{CGM,CoronLu14, coron:hal-03161523, Gagnon-Hayat-Xiang-Zhang,WaterWave,hayat2024fredholm}, the construction is still {\it implicit} and it does {\it not provide quantitative estimates} of the exponential stabilization with respect to the decay rate $\lambda$. In other words, how does $C$ given in \eqref{eq:expstablestim} behave with $\lambda$?

\subsection{Main results} 
In this paper, we overcome these two limitations of the Fredholm backstepping approach and show that it allows us to answer this  {\it quantitative} question for generic skew-adjoint or self-adjoint systems by providing an {\it explicit} expression of the Fredholm transformation $T$ involved.
We prove the existence of Fredholm backstepping transformations for operators under classical assumptions of controllability and superlinear growth of the eigenvalues. 

In this paper, we work with the following framework.
\begin{assumption}[Functional framework]
\label{assume:structure0}
\hphantom{}\\
\vspace{-\baselineskip}
\begin{enumerate}
    \item (Self-adjointness or skew-adjointness) The operator $A$ has compact resolvent and is either self-adjoint or skew-adjoint.
        \item (Decomposability of $A$) The Hilbert space $H$ can be decomposed as a direct sum of \textbf{finitely} many subspaces:
\begin{equation}
    H= \oplus_{i=1}^m H_i,
\end{equation}
such that for any $i\in \{1, 2,..., m\}$, the operator $A|_{H_i}$ is diagonalizable, with simple eigenvalues. We denote its eigenpairs by 
\begin{equation}
    \{(\lambda_{i, n}, \varphi_{i, n}): \varphi_{i, n}\in H_i, \, A \varphi_{i, n}=  \lambda_{i, n} \varphi_{i, n} , \, \, \forall n\in \mathbb{N}^*\}.
\end{equation}
    \item (Controllability and admissibility) There exists $c, C>0$,
    \begin{equation}
        \label{b_n-growth}
        0<c\leq \langle B, \varphi_{i,n}  \rangle\leq C, \quad \forall n\in \N^*,\; i\in\{1,...,m\}.
    \end{equation}
\item (Localization of eigenvalues) There exists $\alpha>1$ such that, on each subspace $H_i, \, i\in \{1,2, ..., m\}$, the eigenvalues satisfy
\begin{equation}
\label{eq:cond2}
n^{\alpha-1}\lesssim  |\lambda_{i,n+1}- \lambda_{i,n}|\lesssim n^{\alpha-1}, \; \forall n\in \mathbb{N}^*, \ \forall i\in \{1, \cdots, m\}.
\end{equation}
and the gap estimate
\begin{equation}
\label{eq:cond3}
ck^{\alpha-1}|k-n| \leq |\lambda_{i,k}- \lambda_{i,n}|, \quad  \forall k,n \in \mathbb{N}^*.
\end{equation}
\end{enumerate}
\end{assumption}

We refer to Section \ref{sec-func-framework} for a discussion on this framework. Our main results are the following

\begin{theorem}[Rapid stabilization via explicit feedback construction]\label{thm:intro1}
Assume $A:D(A)\subset H \rightarrow H$ and $B\in \mathcal{L}(U;D(A)')$, such that $\dim U < +\infty$, satisfy Assumption \ref{assume:structure0}. 
Then, we can explicitly construct $T\in \mathcal{L}(H;H)$ invertible from $H$ to itself and a feedback $K\in D(A^s)'$ for $s>\tfrac{1}{2\alpha}$ such that 
\begin{equation}\label{eq:introthm1}
\begin{cases}
TA+BK = (A-\lambda I)T, \\  
TB=B, \text{ in } D(A)',
\end{cases}
\end{equation}
holds for $\lambda>0$ non-resonant\footnote{The non-resonant condition is stated in \eqref{def:mathcalN}}. Moreover, for any $\lambda>0$, we can explicitly construct a feedback $K\in D(A^s)'$ such that the solution of the closed-loop system \eqref{eq:cl-sys} satisfies 
\[
\|e^{(A+BK)t}y_0 \|_H \leq C e^{-\lambda t} \|y_0\|_H.
\]
\end{theorem}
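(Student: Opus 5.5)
We work in the eigenbasis $\{\varphi_{i,n}\}$ of $A$ (orthonormal, since $A$ is self- or skew-adjoint with compact resolvent) and look for $T$ through its matrix coefficients. Write $b_{i,n}=\langle B,\varphi_{i,n}\rangle$ and $K_{j,k}=K\varphi_{j,k}$. Testing $TA+BK=(A-\lambda I)T$ against $\varphi_{j,k}$ gives $(\lambda_{j,k}-A+\lambda)T\varphi_{j,k}=-BK_{j,k}$. Hence, formally,
\begin{equation*}
T\varphi_{j,k}=-K_{j,k}\,(\lambda+\lambda_{j,k}-A)^{-1}B
=-K_{j,k}\sum_{i,n}\frac{b_{i,n}}{\lambda+\lambda_{j,k}-\lambda_{i,n}}\,\varphi_{i,n}.
\end{equation*}
The non-resonance condition is exactly what keeps the denominators from vanishing. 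The condition $TB=B$ then reads
\begin{equation*}
\sum_{j,k}b_{j,k}K_{j,k}\frac{1}{\lambda+\lambda_{j,k}-\lambda_{i,n}}=-1\qquad\text{for all }(i,n).
\end{equation*}
This is an infinite Cauchy-type linear system for the unknowns $c_{j,k}=b_{j,k}K_{j,k}$. I would solve it explicitly by a residue/interpolation argument, in the spirit of Cauchy matrix inversion. Consider the meromorphic function
\begin{equation*}
F(z)=\prod_{j,k}\frac{z-\lambda_{j,k}}{z-\lambda-\lambda_{j,k}}
\end{equation*}
(suitably normalised; the product converges because $\alpha>1$ gives $\sum|\lambda_{j,k}|^{-1}<\infty$ after grouping). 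Take $c_{j,k}=-\operatorname{Res}_{z=\lambda+\lambda_{j,k}}F$. Then the identity $\sum_{j,k}\frac{c_{j,k}}{\lambda+\lambda_{j,k}-z}=F(z)-1$, evaluated at the zeros $z=\lambda_{i,n}$, gives the required $-1$. This is the explicit identification of $T$ that replaces the compactness and Riesz-basis arguments.

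Next I show $T\in\mathcal L(H;H)$ and that it is invertible. With $c_{j,k}$ explicit, $T$ has the form $T=-D_1\,\mathcal C\,D_2$, where $\mathcal C$ is the Cauchy kernel $\bigl((\lambda+\lambda_{j,k}-\lambda_{i,n})^{-1}\bigr)$ and $D_1,D_2$ are diagonal with entries $b_{i,n}$ and $K_{j,k}$. By \eqref{b_n-growth}, $b_{i,n}\simeq1$, so boundedness reduces to two facts:
\begin{enumerate}
\item a uniform bound $|c_{j,k}|\lesssim\lambda$ (up to powers), obtained by estimating the products through the gap conditions \eqref{eq:cond2}--\eqref{eq:cond3};
\item $\ell^2$-boundedness of $\mathcal C$, by a Schur test. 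Off the near-diagonal blocks, $|\lambda_{j,k}-\lambda_{i,n}|\gtrsim k^{\alpha-1}|k-n|$, and since $\alpha>1$ the row and column sums are summable.
\end{enumerate}
Cross terms between different $H_i$ are only finitely many families, so they do not spoil the estimate. For invertibility I would exhibit the inverse explicitly. The same Cauchy structure with $\lambda$ replaced by $-\lambda$ (the interpolation function $1/F$) gives a candidate $S$, and I would check $ST=TS=I$ on basis vectors using the partial-fraction identities. The regularity $K\in D(A^s)'$ for $s>\frac1{2\alpha}$ follows from $|K_{j,k}|\lesssim1$ together with $\sum_k|\lambda_{j,k}|^{-2s}<\infty$, since $\lambda_{j,k}\sim k^{\alpha}$.

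Finally, for stabilization: the two relations \eqref{eq:introthm1} give $T(A+BK)=(A-\lambda I)T$ on a suitable domain, so $e^{(A+BK)t}=T^{-1}e^{(A-\lambda)t}T$. Since $A$ is skew- or self-adjoint with $\Re\lambda_{i,n}$ bounded above, this yields $\|e^{(A+BK)t}y_0\|_H\le\|T^{-1}\|\|T\|e^{(C_0-\lambda)t}\|y_0\|_H$. One then applies the construction with $\lambda+C_0$ in place of $\lambda$. For arbitrary $\lambda$, one picks a slightly larger non-resonant $\lambda'$, which is possible because the resonant set is countable or discrete.

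I expect the main obstacle to be the quantitative bounds on the infinite products $F$ and on the residues $c_{j,k}$ uniformly in $k$, with their dependence on $\lambda$. Near $\lambda_{i,n}\approx\lambda+\lambda_{j,k}$, nearly resonant terms occur for the roughly $\lambda/k^{\alpha-1}$ indices with small $k$. I would first split the product into a near part and a far part. On the far part I would use $\log(1+x)\approx x$ together with \eqref{eq:cond3}; the near part is finitely many factors, bounded by non-resonance.
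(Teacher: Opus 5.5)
Your argument is correct in its main line and departs from the paper at the two decisive steps, to good effect.

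\textbf{Solving $TB=B$.} The paper writes down the inverse $\tilde T^{-1}$ of the infinite Cauchy matrix (justified by truncation) and sums its rows to get $k_nb_n$. It then proves $k_nb_n=-\lambda F_n(\lambda)J_n(\lambda)$ with $J_n\equiv 1$ by a separate interpolation argument (Lemma~\ref{lem-Jn-1}). You read off $k_nb_n=-\operatorname{Res}_{z=\lambda+\lambda_n}F=-\lambda F_n(\lambda)$ directly from the expansion $F(z)=1+\sum_p \lambda F_p(\lambda)/(z-\lambda-\lambda_p)$, which absorbs the $J_n\equiv1$ lemma. To justify this expansion, use the exact partial fractions of the truncated rational function $F_N$, which tends to $1$ at infinity. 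Then pass to the limit by dominated convergence; the truncated residues are bounded uniformly in $N$ and $p$ exactly as in the proof of Lemma~\ref{lem:boundT-1}.

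\textbf{Invertibility.} The paper needs $\inf_n|k_n|>0$ in $T=\tau_B\check T\tau_K$, which costs the long lower bound on $|F_n(\lambda)|$ of Proposition~\ref{cor-main2}. Read as the transformation built with $-\lambda$, your candidate is $S=T_{-\lambda}$, i.e. $S\varphi_n=\frac{\lambda F_n(-\lambda)}{b_n}(A-(\lambda_n-\lambda)I)^{-1}B$. The verification you propose does go through:
\begin{itemize}
\item Off the diagonal, the entries of $ST$ and $TS$ vanish after splitting $\frac{1}{(\lambda_p-a)(b-\lambda_p)}=\frac{1}{b-a}\bigl(\frac{1}{\lambda_p-a}+\frac{1}{b-\lambda_p}\bigr)$. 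One then uses that $1/F$ vanishes at every $\lambda+\lambda_q$ and $F$ at every $\lambda_q$.
\item On the diagonal one gets $1$ from $(1/F)'(\lambda+\lambda_n)=1/(\lambda F_n(\lambda))$ and $F'(\lambda_n)=-1/(\lambda F_n(-\lambda))$.
\end{itemize}
Since $\mathcal N=-\mathcal N$, only the upper bounds of Lemmas~\ref{lem:boundT-1} and~\ref{lem:sumlambdaijdistbound}, applied with $\pm\lambda$, are needed.

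\textbf{What each route buys.} Your route bypasses Section~\ref{sec:estimFn} entirely for this theorem. It gives directly $\|T_\lambda^{-1}\|=\|T_{-\lambda}\|\lesssim\lambda\bigl(1+\Dist_\alpha(\lambda)^{-1}\bigr)e^{C\lambda^{1/\alpha}}$. It also shows at once that $T^{-1}\varphi_n$ is a multiple of $\chi_n=(A-(\lambda_n-\lambda)I)^{-1}B$, on which $K\chi_n=-1$. This eigenvector structure is what the paper uses in Section~\ref{sec-well-posedness} to make your ``suitable domain'' precise, and you should invoke it there. In exchange, the paper's route yields an explicit inverse of the auxiliary operator $\check T$. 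It also gives a pointwise bound $|k_nb_n|\gtrsim\Dist_\alpha(\lambda)e^{-c\lambda^{1/\alpha}}$ with one power of $\Dist_\alpha$, whereas your diagonal identity only gives $|F_n(\lambda)|^{-1}\lesssim\lambda^2\bigl(1+\Dist_\alpha(\lambda)^{-2}\bigr)e^{C\lambda^{1/\alpha}}$.

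\textbf{One claim is wrong as stated.} You put all the $H_i$ into a single product $F$ and a single Cauchy kernel with one scalar control, and assert that cross terms between families are harmless. Assumption~\ref{assume:structure0} gives no separation, not even distinctness, between eigenvalues of different $H_i$.
\begin{itemize}
\item If $\lambda_{1,n}=\lambda_{2,n'}$, two columns of $\mathcal C$ coincide and $T$ is not injective; $F$ also acquires a double zero and a double pole.
\item Without an inter-family gap, $|\lambda+\lambda_{j,k}-\lambda_{i,n}|$ can tend to $0$ along infinitely many pairs even for non-resonant $\lambda$, and the Schur test fails.
\end{itemize}
Do as the paper does (its reduction to $m=1$): use one control component per $H_i$ and build $T$ and $K$ block-diagonally, each block by your scalar argument.
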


A further analysis of this explicit isomorphism allows us to characterize the cost of the backstepping transformation $T=T_\lambda$ with respect to the damping parameter $\lambda>0$. 
\begin{theorem}[Quantitative $e^{c\lambda^{1/\alpha}}$-type  cost]
For any $N\in \N$, there exists $\lambda \in [N,N+1]$ such that the transformation $T_\lambda$ of Theorem \ref{thm:intro1} satisfies 
\[
\|T_\lambda \|_{\mathcal{L}(H;H)}+\|T_\lambda^{-1}\|_{\mathcal{L}(H;H)}\leq Ce^{c\lambda^{1/\alpha}},
\]
where $c,C>0$ do not depend on either $N$ or $\lambda$, and where $\alpha>1$ is the order of the operator $A$.
\end{theorem}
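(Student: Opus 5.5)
The plan is to write $T_\lambda$ explicitly in the eigenbasis. Fredholm backstepping requires $TA+BK=(A-\lambda I)T$ and $TB=B$. Testing against the eigenvectors gives the usual kernel
\[
T\varphi_{i,n}=\sum_{j,k}\frac{b_{i,n}\,(K\varphi_{i,n})}{\lambda_{j,k}-\lambda-\lambda_{i,n}}\,\frac{\varphi_{j,k}}{\overline{b_{j,k}}}\ (\text{up to normalization}),\qquad b_{j,k}=\langle B,\varphi_{j,k}\rangle .
\]
The condition $TB=B$ fixes $K\varphi_{i,n}$ through an interpolation problem. Without loss of generality I restrict to $m=1$; the finitely many blocks only add constants. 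The first step is to identify this interpolation problem with an explicit entire function. Take $\Phi(z)=\prod_k\bigl(1-z/\lambda_k\bigr)$ and $\Phi_\lambda(z)=\Phi(z+\lambda)$. Then $T$ becomes a Cauchy-type matrix
\[
T_{kn}=\frac{c_k d_n}{\lambda_k-\lambda-\lambda_n},
\]
with $c_k,d_n$ expressed through the ratios $\Phi(\lambda_n+\lambda)/\Phi'(\lambda_n)$ and $\Phi(\lambda_k-\lambda)/\Phi'(\lambda_k)$. The inverse is then also explicit, with the roles of $\lambda$ and $-\lambda$ exchanged. I assume this explicit form is what the construction behind Theorem~\ref{thm:intro1} produces.

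The second step is to bound the diagonal weights. By Assumption~\ref{assume:structure0}, specifically \eqref{eq:cond2} and \eqref{eq:cond3}, one has $\lambda_k\sim k^\alpha$ up to sign or a factor $i$, and the spacing around $\lambda_k$ is $\sim k^{\alpha-1}$. I need the ratio
\[
\rho_n(\lambda)=\prod_k\Bigl|\frac{\lambda_k-\lambda_n-\lambda}{\lambda_k-\lambda_n}\Bigr|
\]
(with the $k=n$ factor handled separately) to be at most $e^{c\lambda^{1/\alpha}}$ uniformly in $n$, and I need the same for its reciprocal. To get this, split the product according to whether $|\lambda_k-\lambda_n|\gtrsim\lambda$ or not.
\begin{itemize}
\item \emph{Far indices.} Taking logarithms, $\sum|\log(1-\lambda/(\lambda_k-\lambda_n))|$ is controlled by $\lambda\sum 1/|\lambda_k-\lambda_n|$ over these indices. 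Using $|\lambda_k-\lambda_n|\gtrsim k^{\alpha-1}|k-n|$, this sum is $O(\lambda\cdot\lambda^{1/\alpha-1})=O(\lambda^{1/\alpha})$.
\item \emph{Near indices.} There are at most $O(\lambda/n^{\alpha-1}+1)\lesssim\lambda^{1/\alpha}$ of them; for large $n$ there are only $O(1)$. Each factor has size $\lambda/\operatorname{dist}$ or $\operatorname{dist}/\lambda$.
\end{itemize}
The near indices are exactly where the choice of $\lambda\in[N,N+1]$ enters. I pick $\lambda$ so that $\operatorname{dist}(\lambda,\{\lambda_k-\lambda_n\})\ge e^{-c\lambda^{1/\alpha}}$, or better, $\gtrsim \lambda^{-C}$. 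The set of differences $\lambda_k-\lambda_n$ falling in $[N,N+1]$ has cardinality $\lesssim N^{1/\alpha}\cdot$poly by the gap estimate. A measure (pigeonhole) argument then gives a point whose distance to all of them is at least $N^{-C}$. Each near factor is then at most $e^{C\log\lambda}$, and the total near contribution is $e^{C\lambda^{1/\alpha}\log\lambda}$. To remove the $\log\lambda$, I would instead use a Stirling-type pairing of the near factors: the product $\prod_{|k-n|\le M}$ resembles a ratio of Gamma-function values, which evaluates to $e^{O(M)}$.

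The third step turns the weight bounds into operator bounds. Factor $T=D_1\,\mathcal C\,D_2$, where $\mathcal C_{kn}=1/(\lambda_k-\lambda_n-\lambda)$ is a Cauchy/Hilbert-type matrix and $D_1,D_2$ are diagonal. The diagonal factors have norms $\le e^{c\lambda^{1/\alpha}}$ by Step 2 together with the bounds $c\le|b_n|\le C$. For $\mathcal C$, since the spacing grows superlinearly ($\alpha>1$), a Schur test gives $\|\mathcal C\|\lesssim 1/\operatorname{dist}+\sum_{k}1/|\lambda_k-\lambda_n-\lambda|$, which is polynomial in $\lambda$. The same applies to $T^{-1}$ with $\lambda\to-\lambda$; that case has no resonance since the relevant differences do not vanish in the same way. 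I expect the main obstacle to be Step 2's near-resonance control giving a uniform-in-$n$ bound with exponent exactly $\lambda^{1/\alpha}$, without logarithmic losses.
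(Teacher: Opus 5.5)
\textbf{Verdict: there is a genuine gap, exactly where you anticipated it.}

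\textbf{The missing lower bound.} As written, Step~2 needs the \emph{reciprocal} bound $\rho_n(\lambda)^{-1}\le Ce^{c\lambda^{1/\alpha}}$. That is a lower bound on $|F_n(\lambda)|=\prod_{m\ne n}|1+\lambda/(\lambda_n-\lambda_m)|$, uniform in $n$, and you do not prove it. Bounding each of the $\sim\lambda^{1/\alpha}$ near factors below by $\Dist_\alpha(\lambda)/\lambda$ only gives $e^{-C\lambda^{1/\alpha}\log\lambda}$, which is not the claimed rate. The ``Stirling-type pairing'' is a hope, not an argument. This bound is Proposition~\ref{cor-main2}, the hardest part of the paper.

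The paper needs it because it bounds $T^{-1}=\tau_K^{-1}\check T^{-1}\tau_B^{-1}$ through $\sup_n|k_n|^{-1}$, using $k_nb_n=-\lambda F_n(\lambda)$ (Lemma~\ref{lem:rearangement} with $J_n\equiv 1$, Lemma~\ref{lem-Jn-1}). You assume this identity, in the form $-\Phi(\lambda_n+\lambda)/\Phi'(\lambda_n)$, rather than derive it. The paper's proof of the lower bound goes as follows:
\begin{itemize}
\item it discards the factors $m>n$, which exceed $1$;
\item it reduces to the window $\{n-\tilde n,\dots,n-1\}$ where $\lambda/(\lambda_m-\lambda_n)>c_0$;
\item inside that window, only the one or two factors adjacent to the resonance carry $\Dist_\alpha(\lambda)$, and every other numerator $|\lambda_{n-p}-\lambda_n-\lambda|$ is bounded below by an eigenvalue gap such as $|\lambda_{n-p}-\lambda_{n-p_0}|$;
\item numerator and denominator then become comparable factorial products, handled by Stirling in the cases $\tilde n\le n/2$ and $\tilde n>n/2$, and finally $\tilde n\lesssim\lambda^{1/\alpha}$.
\end{itemize}

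\textbf{Two smaller errors.} First, $T^{-1}$ is not resonance-free. Since $\mathcal N=-\mathcal N$ and its kernel is $1/(\lambda_j-\lambda_i-\lambda)$, it pays the same factor $1+\Dist_\alpha(\lambda)^{-1}$; this is harmless after your choice of $\lambda$. Second, the row weights of $T$ are just $b_k$, since $T\varphi_n=k_n\sum_k b_k(\lambda_k-\lambda_n-\lambda)^{-1}\varphi_k$, not $\Phi$-ratios.

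\textbf{How to close the gap.} Your own unproved remark that $T_\lambda^{-1}$ is $T$ with $\lambda\mapsto-\lambda$ removes the need for any lower bound, if you actually prove it. Combine the paper's inverse Cauchy matrix with $k_ib_i=-\lambda F_i(\lambda)$; the factor $F_i$ cancels:
\[
\langle T^{-1}\varphi_j,\varphi_i\rangle=\frac{\tilde T^{-1}_{ij}}{k_i\,b_j}=\frac{-\lambda\, b_i\,F_j(-\lambda)}{b_j\,(\lambda_j-\lambda_i-\lambda)} .
\]
Let $S$ be the operator with these entries. It is bounded by the Schur test of Proposition~\ref{prop:bound}. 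Then $\langle TS\varphi_j,\varphi_k\rangle=\delta_{jk}$ follows from \eqref{eq:TjiTin-1}, and $\langle ST\varphi_n,\varphi_i\rangle=\delta_{in}$ follows from \eqref{eq:deltaRS}. Neither step needs a lower bound on $|k_n|$, only $k_n\neq0$, which holds for $\lambda\notin\mathcal N$. Hence
\[
\|T^{\pm1}\|\lesssim\lambda\,(1+\Dist_\alpha(\lambda)^{-1})\sup_j|F_j(\pm\lambda)|.
\]
This uses only the easy \emph{upper} bound of Lemma~\ref{lem:boundT-1} and your pigeonhole choice $\Dist_\alpha(\lambda)\gtrsim N^{-1/(\alpha-1)}$ (Proposition~\ref{prop-close}). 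It is a genuinely shorter route than the paper's. As a by-product, $\tau_K^{-1}=T^{-1}\tau_B\check T$ gives the lower bound on $|k_nb_n|$ for free.

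In your $\Phi$-language, $TS=I$ reduces to two facts: the partial-fraction identity coming from $TB=B$, and
\[
\sum_i k_ib_i(\lambda_j-\lambda_i-\lambda)^{-2}=-\frac{1}{\lambda F_j(-\lambda)},
\]
which is $\frac{d}{dz}\bigl[\Phi(z+\lambda)/\Phi(z)\bigr]$ evaluated at $z=\lambda_j-\lambda$. That partial-fraction expansion of $\Phi(z+\lambda)/\Phi(z)$ would itself have to be justified.
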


To the best of our knowledge, this is the first quantitative rapid stabilization result obtained through the Fredholm backstepping approach. In particular, unlike previous constructions based on compactness arguments and Riesz basis mechanisms, the present method provides explicit formulas and sharp quantitative estimates on both the transformation and its inverse. Moreover, the results apply to operators $A$ of order $\alpha>1$,  namely they apply to a broad class of self-adjoint and skew-adjoint operators.

Quantitative rapid stabilization estimates are important for several further developments. Beyond their intrinsic interest, such estimates provide a foundation for the study of observer design, robustness with respect to disturbances, stochastic versions of the closed-loop system, and finite time stabilization. More directly, they also lead to constructive small-time null controllability results.

\begin{theorem}[Small-time null-controllability using feedback control]\label{thm:intro3}
Assume that $A$ satisfies Assumption \ref{assume:structure0}. For any $T>0$ and $s\in [0,1-\tfrac{1}{2\alpha})$, there exists a piecewise constant feedback $K(t), \, t\in (0,T)$, such that the closed-loop system 
\[
\begin{cases}
\dfrac{d}{dt} y(t)=(A+BK(t))y(t), \quad t\in (0,T)\\
y(0)=y_0,
\end{cases}
\]
with $y_0\in D(A^s)$, is well-posed in $D(A^s)$ and satisfies 
$y(T)=0$.
\end{theorem}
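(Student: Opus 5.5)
The plan is the classical Coron--Nguyen / Urquiza piecewise-feedback strategy. The ingredients are the quantitative rapid stabilization of Theorem \ref{thm:intro1} and the cost estimate $\|T_\lambda\|+\|T_\lambda^{-1}\|\le Ce^{c\lambda^{1/\alpha}}$ from the second theorem. The key point is that the decay rate $\lambda$ beats the cost $e^{c\lambda^{1/\alpha}}$ because $\alpha>1$.

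\textbf{Step 1: a quantitative decay estimate for each feedback.} Fix $s\in[0,1-\tfrac1{2\alpha})$. For each $\lambda$ given by the second theorem (one in every interval $[N,N+1]$), the relation \eqref{eq:introthm1} gives
\[
e^{(A+BK_\lambda)t}=T_\lambda^{-1}e^{(A-\lambda I)t}T_\lambda .
\]
Since $A$ generates a contraction group or semigroup (skew- or self-adjoint), this yields
\[
\|e^{(A+BK_\lambda)t}\|_{\mathcal L(H)}\le C e^{2c\lambda^{1/\alpha}}e^{-\lambda t}.
\]
I will need the same estimate in $D(A^s)$. For this I would show that $T_\lambda$ also maps $D(A^s)$ to itself with norm bounded by $C e^{c\lambda^{1/\alpha}}$ times a polynomial in $\lambda$. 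The explicit kernel of $T_\lambda$ in the eigenbasis should give this by the same computations as for the $H$-estimate, and the restriction $s<1-\tfrac1{2\alpha}$ is exactly what is needed for $B$ and $K$ to remain admissible at this regularity. Well-posedness of the closed loop in $D(A^s)$ on each time interval follows from this conjugation.

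\textbf{Step 2: the time splitting.} Choose times $t_0=0<t_1<\dots$ with $t_{k+1}-t_k=\tau_k:=\tau_0 2^{-k}$, where $\tau_0$ is chosen so that $\sum_k\tau_k=T$. Choose rates $\lambda_k\in[N_k,N_k+1]$ with $N_k\sim 2^{\beta k}$, where $\beta>\alpha/(\alpha-1)$, so that $\lambda_k\tau_k\gtrsim 2^{(\beta-1)k}$ dominates $\lambda_k^{1/\alpha}\sim 2^{\beta k/\alpha}$. On $[t_k,t_{k+1})$ set $K(t)=K_{\lambda_k}$. The solution then satisfies
\[
\|y(t_{k+1})\|\le C e^{2c\lambda_k^{1/\alpha}-\lambda_k\tau_k}\|y(t_k)\|.
\]
Hence
\[
\|y(t_{k})\|\le \prod_{j<k}C e^{2c\lambda_j^{1/\alpha}-\lambda_j\tau_j}\,\|y_0\|,
\]
and the exponent tends to $-\infty$ superexponentially. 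The same estimate also gives $\sup_{t\in[t_k,t_{k+1}]}\|y(t)\|\le Ce^{2c\lambda_k^{1/\alpha}}\|y(t_k)\|\to0$. This product bound shows that $y$ extends continuously by $y(T)=0$, and that $\|y(t)\|_{D(A^s)}\le C\|y_0\|_{D(A^s)}$ uniformly on $[0,T)$.

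\textbf{Main obstacle.} The main difficulty is Step 1 in the space $D(A^s)$ rather than in $H$. One has to check that the $D(A^s)$ norms of $T_\lambda^{\pm1}$ have subexponential-in-$\lambda$ growth of the same $e^{c\lambda^{1/\alpha}}$ type, with only polynomial losses that are harmless against the superexponential decay. One also has to check that the closed-loop semigroup is well defined in $D(A^s)$ for $s$ in the stated range. For $s=0$ everything follows directly from the stated theorems. For $s>0$, I would first try to interpolate between $s=0$ and an estimate at some $s'$ close to $1-\tfrac1{2\alpha}$. That estimate would be obtained from the explicit eigenbasis coefficients of $T_\lambda$, using the gap condition \eqref{eq:cond3}.
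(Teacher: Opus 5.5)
Your proposal is correct and follows essentially the paper's route. The paper also conjugates $e^{t(A+BK_\lambda)}=T_\lambda^{-1}e^{t(A-\lambda I)}T_\lambda$ on $D(A^s)$ and concatenates piecewise-constant feedbacks $K_{\lambda}$ on intervals of length $\delta$, with rates growing fast enough that $\lambda\delta$ beats the $e^{c\lambda^{1/\alpha}}$ cost, which forces a growth exponent larger than $\alpha/(\alpha-1)$. Your geometric splitting $\tau_k=\tau_0 2^{-k}$, $\lambda_k\sim 2^{\beta k}$ is only a reparametrization of the paper's $\lambda(N)\sim N^\gamma$, $\delta_N\propto\lambda(N)^{-1/\sigma}$. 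The $D(A^s)$ bound you flag as the main obstacle is exactly what the paper supplies in Lemma \ref{lem-T-check-reg} and Theorem \ref{cor:boundTT-1}, for the whole range at once and without interpolation. It uses the explicit kernels of $\check T^{\pm 1}$, the comparison $|\lambda_j-\lambda_i-\lambda|^{-1}\le (1+\lambda/\Dist_\alpha(\lambda))\,|\lambda_j-\lambda_i|^{-1}$, and a weighted Schur-type bound (Lemma \ref{lem:10}). The restriction $s<1-\tfrac{1}{2\alpha}$ comes from that weighted bound, equivalently from $B\in D(A^{s-1})$, rather than from admissibility of $K$.
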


\subsection{Sketch of the proof and main ingredients}

The strategy  can be summarized in two main steps.

\begin{itemize}
    \item {\it A completely explicit construction of the transformation.} 

    The construction of the transformation operators $(T,K)$ traditionally relies on Riesz basis techniques and compactness arguments; see Section \ref{sec:his:Fredhom} for the historical development of this method. Such approaches make coefficients implicit and quantitative estimates difficult to obtain. 

    In the present work, we observe that these operators can be represented through an {\it infinite-dimensional Cauchy matrix}, without using Riesz basis and compactness. This observation enables us to derive explicit expressions for $(T,K)$ and for $T^{-1}$. For example, from equation \eqref{eq:expr-T-1:full}, the associated operator $\tilde T^{-1}_{i,j}$ is given by
    \begin{equation}
 \tilde T_{i, j}^{-1} =\dfrac{\lambda^2}{(\lambda_j-\lambda_i-\lambda)}  \prod_{\substack{m\in \N^\ast \\ m \neq i}} \left( 1+ \dfrac{\lambda}{\lambda_i-\lambda_m}  \right)  \prod_{\substack{n\in \N^\ast \\ n \neq j}} \left( 1- \dfrac{\lambda}{\lambda_j-\lambda_n}  \right), \quad  \forall i,j\in \N^\ast. \notag
\end{equation}

    See Section \ref{sec:finite-dim} for a finite-dimensional example illustrating the main ideas, and Section \ref{sec:InvertibilityT} for the rigorous construction and well-posedness analysis of the operator equation associated with $(T,K)$ in our general framework.

    \item {\it Sharp quantitative estimates for non-resonant $\lambda$.} 

    Thanks to the newly derived explicit formulas, Section \ref{sec:quantitative} establishes quantitative upper and lower bounds for $\|T\|$ and $\|T^{-1}\|$.  Note that this whole step has not been considered in the previous works. Although the estimates of the other terms are already delicate, the core difficulty lies in the lower bound. Indeed,
    \begin{equation}
 \|T^{-1}\|\leq   \| \check T^{-1}\| \sup_{n} |b_n^{-1}|  \sup_{n} |k_n^{-1}|,
\notag
\end{equation}
where
    \begin{equation}
  k_n b_n = \sum_{j\in \N^*} \tilde{T}^{-1}_{n, j}, \quad \forall n\in \N^*.   \notag
\end{equation}

    Recall that the sequence $\{b_n\}_n$ is prescribed. To estimate $\sup_n |k_n^{-1}|$, we develop a new four-step argument. See Section \ref{sec:42} for an overview of the strategy, and Sections \ref{sec:sec:reaknbn}--\ref{sec:estimDist} for the detailed analysis.
\end{itemize}

In addition, Section \ref{sec-well-posedness} establishes well-posedness of the closed-loop system on a broad class of Banach spaces; this extends the well-posedness results in \cite{Gagnon-Hayat-Xiang-Zhang,WaterWave,hayat2024fredholm}. In Section \ref{sec-null-control}, we prove constructive small-time null controllability using the Fredholm backstepping approach, where we use the iteration introduced in \cite{coron2017null}, see also \cite{coron2021small, Xiang-KdV2019}. This is also the first null controllability result obtained via Fredholm backstepping.

Finally, we summarize several other features of the present approach that distinguish it from previous constructions. 1)  The transformation and stabilization results remain valid on a wider scale of spaces;  2) In the present approach, the existence of a Riesz basis and the compactness issues appear as consequences of the construction rather than as prerequisites and passing the limit arguments for the analysis.

\subsection{History of the Fredholm backstepping method}\label{sec:his:Fredhom}
The Fredholm backstepping method has been successful lately to deduce the rapid stabilization of PDEs from their controllability properties. This method was first introduced by Coron and Lü \cite{CoronLu14, CoronLu15}.  The general idea is to seek a feedback operator $K$ and a transformation $T$ mapping the resulting closed-loop system onto an exponentially stable target system. The target system often has the form\footnote{Actually, more general stable target systems can be considered, but \eqref{eq:target} has the advantage of using the same domain of definition $D(A)$ for both the equation to stabilize and the target system. We refer for instance to \cite{coron:hal-03161523} for a case where the exponential stability comes from a change of boundary conditions, changing the domain of the target system and resulting in a more involved analysis, but leading successfully to the rapid stabilization with the backstepping method.},  
\begin{equation}
\label{eq:target}
\begin{cases}
\dfrac{d}{dt} w(t) = (A-\lambda I) w(t), \quad t\geq 0, \\ 
w(0)=w_0.
\end{cases}
\end{equation}
Thus 
\[
\|w(t)\|_H \leq C_{\lambda} e^{- \lambda t}\| w_0 \|_H,
\]
hence, taking $\lambda>0$ sufficiently large, one can prescribe any exponential decay rate for solution of \eqref{eq:target}. Then, assuming that $T \in \mathcal{L}(H,H)$ is invertible, choosing $w_0=Ty_0$ implies the rapid stabilization of \eqref{eq:cl-sys},
\[
\|y(t)\|\leq \|T^{-1}\| \|w(t)\|_H \leq e^{-\nu t} \|T^{-1}\| \| w_0 \|_H \leq e^{-\nu t} \|T^{-1}\| \|T \| \| y_0 \|_H.
\]
Operators $(T, K)$ thus play a fundamental role.

Let us {\it formally} deduce the equation on $T$ and $K$; see also \cite{WaterWave}. While the rigorous justification of the exponential stability and the functional framework will be specified in Section \ref{sec-func-framework}. First, differentiating $
Ty(t) = w(t)$ in time leads to, 
$$
T(A+BK)y(t) = (A-\lambda I)T y(t).
$$
The previous equation holds for every solution $y(t)$, implying that the operators $T$ and $K$ satisfy:
\begin{equation}
\label{eq:operator-identity}
T(A+BK) = (A-\lambda I)T.
\end{equation}
It is classical to impose the so-called \textit{uniqueness condition}, $TB=B$, 
which reduces \eqref{eq:operator-identity} to, 
\begin{equation}\label{intro:eqop}
\begin{cases}
TA+BK = (A-\lambda I)T,\\  
TB=B.
\end{cases}
\end{equation}

\subsubsection{Abstract approach}

To illustrate how to solve \eqref{intro:eqop} by the generalized backstepping method, we consider a diagonalizable operator $A$ with eigenfunctions $\varphi_n$ and eigenvalues $\lambda_n$. First, apply $(A-\lambda I)T -TA = BK$ on the eigenbasis $\{\varphi_n\}_{n\in \NN^*}$,
\[
(A-(\lambda_n+\lambda) I)T\varphi_n = BK\varphi_n.
\]
Since $k_n:=K\varphi_n$ is a scalar, we can define, assuming $\lambda \notin \sigma(A-\lambda_n I), \forall n\in \N^*$,
\begin{equation}\label{eq:defgn}
g_n :=(A-(\lambda_n+\lambda) I)^{-1}B.
\end{equation}
With this definition, we have $T\varphi_n = k_n g_n$. It then remains to identify the coefficients $k_n$ and to establish the continuity of the transformation $T$ and its invertibility. The coefficients of $k_n$ are obtained through the uniqueness condition. Indeed, 
\[
TB=T\left(\sum_{n\in \N^*} b_n \varphi_n \right) = \sum_{n\in \N^*} b_n k_n g_n, 
\]
and therefore $TB=B$ amounts to,
\begin{equation}\label{intro:TBdecomp}
 \sum_{n\in \N^*} b_n k_n g_n =  \sum_{n\in \N^*} b_n \varphi_n.
\end{equation}
The controllability property of \eqref{eq:sys} allows us to ensure that $b_n$ are bounded from above and below in a norm specific to the space of controllability (see for instance the framework of \cite{CGM}). Together with spectral properties of $A$, the usual approach is to prove that the $\{g_n\}_{n\in \NN^*}$ are a Riesz basis together with compactness arguments, allowing to solve \eqref{intro:TBdecomp} and therefore the uniqueness condition $TB=B$. 

\subsubsection{Riesz basis}\label{sec-riesz}

We briefly recall the definition and some basic properties of Riesz basis, and refer to \cite{christensen2003introduction} for more details.

\begin{definition}\label{def:riesz}
    Let $H$ be a Hilbert space. A family of vectors $\{\xi_n\}_{n\in \mathcal{I}}$, where $\mathcal{I}=\mathbb{Z}$, $\N$, or $\N^*$ is said to be a Riesz basis of $H$, if it is the image of some orthonormal basis by an isomorphism on $H$.
\end{definition}
Riesz bases have the following useful characterization:
\begin{proposition}\label{prop:quad}
    Let $H$ be a Hilbert space. Let $\{\xi_n\}_{n\in \mathcal{I}}$ (where $\mathcal{I}=\mathbb{Z}$, $\N$, or $\N^*$) be a family of vectors, and $\{e_n\}_{n\in \mathcal{I}}$ be an orthonormal basis of $H$. Assume that $\{\xi_n\}$ and $\{e_n\}$ are quadratically close. Then, if $\{\xi_n\}$ is complete, or $\omega$-independent, it is a Riesz basis of $H$.
\end{proposition}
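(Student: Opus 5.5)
The plan is to realize the family $\{\xi_n\}$ as the image of the orthonormal basis $\{e_n\}$ under a compact perturbation of the identity, and then to use the Fredholm alternative. Here \emph{quadratically close} means $\sum_{n\in\mathcal{I}}\|\xi_n-e_n\|_H^2<+\infty$. \emph{$\omega$-independent} means that whenever $\sum_n c_n\xi_n$ converges in $H$ and equals $0$, all the $c_n$ vanish. \emph{Complete} means that the linear span of $\{\xi_n\}$ is dense in $H$.

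First, I define $L:H\to H$ on the orthonormal basis by $Le_n:=\xi_n-e_n$, extended by linearity:
\[
L\Big(\sum_n c_ne_n\Big)=\sum_n c_n(\xi_n-e_n).
\]
This series converges for every $(c_n)\in\ell^2$. Indeed, by Cauchy--Schwarz,
\[
\Big\|\sum_{n\in F} c_n(\xi_n-e_n)\Big\|\le \Big(\sum_{n\in F}|c_n|^2\Big)^{1/2}\Big(\sum_{n\in F}\|\xi_n-e_n\|^2\Big)^{1/2}
\]
for any finite set $F$. So $L$ is bounded, and $\sum_n\|Le_n\|^2<\infty$ says exactly that $L$ is Hilbert--Schmidt, hence compact. (Alternatively, $L$ is the operator-norm limit of its finite-rank truncations $L_N$, since $\|L-L_N\|^2\le\sum_{n>N}\|\xi_n-e_n\|^2\to0$.) Setting $S:=I+L$, we get a bounded operator with $Se_n=\xi_n$ for every $n$. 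By Definition~\ref{def:riesz}, it then suffices to show that $S$ is an isomorphism of $H$.

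Since $L$ is compact, the Riesz--Schauder theory (Fredholm alternative) applies to $S=I+L$. It tells us that $S$ has closed range, finite-dimensional kernel and finite-codimensional range, with index zero. In particular, $S$ is injective if and only if it is surjective, and in that case $S^{-1}$ is bounded by the open mapping theorem. The two hypotheses are then handled separately.

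If $\{\xi_n\}$ is $\omega$-independent, take $x=\sum_n c_ne_n$ with $Sx=0$. Then $\sum_n c_n\xi_n$ converges (to $Sx$) and equals $0$, so $c_n=0$ for all $n$. Hence $x=0$, $S$ is injective, and therefore $S$ is surjective.

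If $\{\xi_n\}$ is complete, note that $\operatorname{Ran} S$ contains the span of the $\xi_n$, so it is dense. Since it is also closed, $\operatorname{Ran} S=H$, and hence $S$ is injective.

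In either case $S$ is a bounded bijection with bounded inverse that maps $\{e_n\}$ onto $\{\xi_n\}$, which is exactly the Riesz basis property.

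The only step carrying real content is the use of the Fredholm alternative for $I+L$ with $L$ compact. In particular, we need that injectivity and surjectivity are equivalent (index zero) and that the range is closed. I would quote this as the standard Riesz--Schauder theorem rather than reprove it. Everything else is the routine verification that quadratic closeness makes $L$ Hilbert--Schmidt.
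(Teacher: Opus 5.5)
Your proof is correct and complete, modulo the Riesz--Schauder theorem. You write $\xi_n=(I+L)e_n$ with $L$ Hilbert--Schmidt, so $I+L$ is Fredholm of index zero. Then $\omega$-independence gives injectivity, completeness together with closed range gives surjectivity, and either one yields a bounded inverse. This is the standard proof of this Bari-type theorem.

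The paper gives no proof of the proposition; it only quotes it from \cite{christensen2003introduction}, so there is no alternative route to compare against. Your argument is essentially the one behind that citation.

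It is worth noticing that your mechanism is the same one the paper attributes to the duality--compactness method: write the family as an invertible compact perturbation of (a multiple of) the identity applied to an orthonormal basis, and settle injectivity or surjectivity through $\omega$-independence or completeness. The difference is the source of compactness. You get it for free from $\sum_n\|\xi_n-e_n\|^2<\infty$. In the regime $1<\alpha\leq 3/2$ that sum diverges, which is why the paper needs a separate regularizing estimate there.

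Two minor technical touches:
\begin{itemize}
\item For $\mathcal{I}=\mathbb{Z}$, the finite-rank truncation should run over $|n|>N$ rather than $n>N$.
\item Since $S$ is bounded, $\sum_n c_n\xi_n=S\bigl(\sum_n c_n e_n\bigr)$ converges unconditionally. The $\omega$-independence hypothesis therefore applies whatever summation convention is used over $\mathbb{Z}$.
\end{itemize}
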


\subsubsection{The quadratically close criterion}
When $|\lambda_n|\simeq n^\alpha$ with $\alpha>3/2$, it can be shown that the family $(g_n)$ is a Riesz basis through the quadratically close criterion of Proposition \ref{prop:quad} under classical assumptions, whether $A$ is self-adjoint or skew-adjoint. Indeed, assuming $|\lambda_n-\lambda_k|>n^{\alpha-1}|n-k|, \, \forall n,k\in \N$ to simplify the computations, the quadratically close criterion amounts roughly to,
\begin{align*}
\sum_{n\in \N} \| \varphi_n - q_n \|^2  \leq C \sum_{n\in \N} \sum_{k\neq n} \left( \dfrac{1}{|n|^{\alpha-1}|k-n|} \right)^2
\end{align*}
which converges if $\alpha>3/2$. We refer to \cite{CGM, GLM} for the application of this method for a skew-adjoint case (Schrödinger equation) and a self-adjoint case (degenerate heat equation) respectively.

\subsubsection{Duality-compactness method}
When $|\lambda_n|\simeq n^\alpha$ with $1<\alpha\leq 3/2$, then the growth of the eigenvalues at infinity is not sufficient to prove the existence of a Riesz basis through the quadratically close criterion, a threshold that has been longstanding since the introduction of the Fredholm backstepping method.

In the recent paper \cite{WaterWave}, we proved for skew-adjoint operators $A$ with the spectral gap and other classical assumptions that the family $\{g_n\}_{n\in \NN^*}$ defined by \eqref{eq:defgn} is still a Riesz basis. The proof relies on a novel duality-compactness method. The duality is used to prove that the family $\{g_n\}_{n\in \NN^*}$ is complete and $\omega$-independent, whereas the compactness method consists in proving a sharp regularizing property for the operator $\varphi_n \mapsto g_n-\frac{1}{\lambda}\varphi_n$, which implies its compactness. In turn, we prove that the family $\{g_n\}_{n\in \NN^*}$ is obtained as an invertible Fredholm transformation of the orthonormal basis $\varphi_n$, which yields the Riesz basis property from Definition \ref{def:riesz}. 

\begin{remark}
We underline that the Fredholm backstepping method in the case $\alpha=1$ was not discussed above, as this case is very specific and often requires techniques of its own (see \cite{coron:hal-03161523}). 
\end{remark}

\subsubsection{Previous results}
We find in the literature two different approaches for the rapid stabilization with the backstepping method with a Fredholm transformation : either the operator $A$ is of first order ($\alpha=1$) or of  order strictly greater than 1 ($\alpha >1$). The case  $\alpha=1$ was essentially not treated in the discussion above as it seems to be a very specific case with techniques of its own. We refer for instance to cases where the rapid stabilization for hyperbolic systems was established in \cite{CoronHuOlive16, MR4160602} through direct methods or by identifying the isomorphism applied to the eigenbasis leading to the Riesz basis \cite{coron:hal-03161523, Zhang-finite, ZhangRapidStab}.

Most of the results found in the literature for operators of order strictly greater than 1 ($\alpha >1$) were established through the quadratically close criterion, thanks to the sufficient growth of the eigenvalues. Following the steps described in the previous section, the rapid stabilization was obtained for the linearized bilinear Schr\"odinger equation \cite{CGM}, the KdV equation \cite{CoronLu14}, the Kuramoto--Sivashinsky equation \cite{CoronLu15}, a degenerate parabolic operator \cite{GLM,zbMATH07897759} and finally the heat equation for which the backstepping is proved in sharp spaces \cite{Gagnon-Hayat-Xiang-Zhang}. More recently, we have established necessary and sufficient conditions for the Fredholm backstepping method for skew-adjoint operators of order strictly greater than 1 with the introduction of the duality-compactness method \cite{WaterWave}, removing the restriction on the growth of the eigenvalues.

We highlight as well the link between the backstepping method and the pole shifting methods (\cite{ho1986spectral}, \cite{rebarber1989spectral}, \cite{shun1981spectrum}, \cite{xu1996spectrum}). Some of these techniques have inspired the computations developed in Section \ref{sec-well-posedness}.

\subsection{Outline of the paper}

First, we present the strategy of the paper in the finite-dimensional case. We highlight the key role of the Cauchy matrix in solving the $TB=B$ condition. This is done in Section \ref{sec:finite-dim}. In Section \ref{sec:InvertibilityT}, we establish the existence of a Fredholm backstepping transformation for \eqref{eq:cl-sys} by establishing the link between an underlying isomorphism and the finite-dimensional Cauchy matrix. Preliminary estimates with respect to $\lambda$ and its distance to the non-resonant set is also begun in this section. These estimates are completed in Section \ref{sec:quantitative}, where most of the technical estimates on $\|T\|$ and  $\|T^{-1}\|$ are performed. 
These sharp estimates allows us to conclude on small-time null controllability results in Section \ref{sec-null-control}. 
Prior to this last section, we prove well-posedness results of the closed-loop system in Section \ref{sec-well-posedness}.

\section{Finite-dimensional example}
\label{sec:finite-dim}
We start this article by illustrating the methodology used in this article using a finite-dimensional example, highlighting the differences with the existing literature.  

We use the eigenvectors $\{\varphi_n\}_{1\leq n \leq d} \in \R^d$ of $A$ to construct a set of vectors $\{g_n\}_{1\leq n \leq d}$ solving the first equation $TA+BK=(A-\lambda I)T$. Usual methods for the Fredholm backstepping here is to prove that the $\{g_n\}_{1\leq n \leq d}$ forms a basis of $\R^d$. This basis is then used to solve the equation $TB=B$, thereby retrieving the coefficients $\{k_n\}_{1\leq n \leq d}$ of the feedback $K$.  

Instead, we inject the expression of $\{g_n\}_{1\leq n \leq d}$ into the $TB=B$ equation, and exhibit an invertible mapping, allowing us to recover the feedback coefficients $\{k_n\}_{1\leq n \leq d}$. This gives another proof of,
\begin{theorem}(\cite{Brunovsky,Coron_ICIAM15})
Let the pair $(A,B)\in M(\R^d)\times \R^d$ be controllable. Then, for any $\lambda>0$, there exists a unique transformation $T\in GL(\R^d)$ and a unique feedback $K^*\in \R^d$ such that 
\[
\begin{cases}
TA+BK = (A-\lambda I)T,\\  
TB=B.
\end{cases}
\]
\end{theorem}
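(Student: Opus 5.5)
In the setting of this section we take $A$ diagonalizable with eigenpairs $(\lambda_n,\varphi_n)_{1\le n\le d}$ and $B=\sum_n b_n\varphi_n$. By the Hautus test, controllability of $(A,B)$ with a single input forces the $\lambda_n$ to be pairwise distinct and every $b_n\neq 0$. The plan is to construct $(T,K)$ explicitly by the Cauchy-matrix route, and then to get existence for resonant $\lambda$, and uniqueness, from a short controllability-matrix argument.

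\textbf{Construction for non-resonant $\lambda$.} Assume $\lambda\neq \lambda_m-\lambda_n$ for all $m,n$. Applying $TA+BK=(A-\lambda I)T$ to $\varphi_n$ gives $T\varphi_n=k_n g_n$ with $g_n=(A-(\lambda_n+\lambda)I)^{-1}B$, where $k_n:=K\varphi_n$. In coordinates,
\[
g_n=\sum_{m} \frac{b_m}{\lambda_m-\lambda_n-\lambda}\,\varphi_m .
\]
Substituting this into $TB=B$ and reading off the $\varphi_m$-component, then dividing by $b_m\neq0$, yields
\[
\sum_{n}\frac{1}{\lambda_m-\lambda_n-\lambda}\,(b_nk_n)=1,\qquad m=1,\dots,d .
\]
This is a linear system whose matrix $C_{mn}=1/(x_m-y_n)$, with $x_m=\lambda_m$ and $y_n=\lambda_n+\lambda$, is a Cauchy matrix. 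The $x_m$ are distinct, the $y_n$ are distinct, and $x_m\neq y_n$ by non-resonance. The Cauchy determinant formula
\[
\det C=\frac{\prod_{m<m'}(x_{m'}-x_m)(y_m-y_{m'})}{\prod_{m,n}(x_m-y_n)}
\]
is then nonzero, so the system has a unique solution $(b_nk_n)_n$. Its explicit inverse (Schechter's formula) gives a closed form for $k_n$.

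\textbf{Invertibility of $T$.} We need every $k_n\neq0$ and the $g_n$ linearly independent. The $g_n$ are the columns of $\mathrm{diag}(b_m)\,C$, which is invertible, so they are independent. Since the right-hand side is the all-ones vector, $b_nk_n=\sum_m (C^{-1})_{nm}$. The main obstacle is showing this row sum is nonzero. I would use the explicit inverse: it should factor as
\[
\sum_m (C^{-1})_{nm}\;\propto\;\frac{\prod_m (y_n-x_m)}{\prod_{n'\neq n}(y_n-y_{n'})},
\]
which equals, up to a nonzero constant, a product of factors $\lambda_n+\lambda-\lambda_m$ (all nonzero by non-resonance) divided by products of nonzero differences $\lambda_n-\lambda_{n'}$. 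An alternative is to evaluate it as the residue of the rational function $\prod_m(z-x_m)/\prod_{n'}(z-y_{n'})$ at $z=y_n$. With this, $T$ maps the basis $(\varphi_n)$ to the basis $(k_ng_n)$ and lies in $GL(\mathbb{R}^d)$.

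\textbf{Resonant $\lambda$ and uniqueness.} Both are handled by a controllability-matrix argument; this also covers non-diagonalizable $A$ via the companion form. If $(T,K)$ is a solution, then $T(A+BK)=(A-\lambda I)T$ and $TB=B$ give
\[
T(A+BK)^kB=(A-\lambda I)^kB\quad\text{for all } k .
\]
Hence $T$ sends the controllability matrix of $(A+BK,B)$ to that of $(A-\lambda I,B)$, and $A+BK$ is similar to $A-\lambda I$. In particular $\chi_{A+BK}=\chi_{A-\lambda I}$.

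For a single-input controllable pair, pole placement shows that $K\mapsto\chi_{A+BK}$ is an affine bijection onto monic polynomials of degree $d$. This fixes $K$ uniquely, and then fixes $T=\mathcal{C}_{A-\lambda I,B}\,\mathcal{C}_{A+BK,B}^{-1}$ uniquely, where $\mathcal{C}_{M,B}$ denotes the controllability matrix of $(M,B)$. Conversely, with this choice of $K$, both $(A+BK,B)$ and $(A-\lambda I,B)$ are controllable and share the same characteristic polynomial, so they have the same companion form. The $T$ above is therefore invertible and satisfies both equations, by Cayley--Hamilton applied to the last column. This gives existence for every $\lambda>0$, and it identifies the Cauchy construction as this unique solution in the non-resonant case.
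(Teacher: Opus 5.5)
Your proof is correct, and it is more complete than the argument in the paper, which only treats diagonalizable $A$ with non-resonant $\lambda$.

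\textbf{Where you follow the paper.} In the diagonalizable, non-resonant case you take the same Cauchy-matrix route.

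\textbf{Where you differ: invertibility of $T$.} This is the one delicate point. The paper offers two options. One is a direct check that $k_n\neq 0$ from the explicit inverse, which it calls ``quite technical'' and does not carry out. The other shows $\ker T^\ast=\{0\}$: this kernel is an $A^\ast$-invariant subspace of $\ker B^\ast$, so observability forces it to vanish.

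Your partial-fraction computation replaces this. Write $R(z)=\prod_m(z-x_m)/\prod_{n'}(z-y_{n'})=1+\sum_n r_n/(z-y_n)$ and evaluate at $z=x_m$, where $R(x_m)=0$. This shows $z_n:=-r_n$ solves the system, hence is the solution since $C$ is invertible. You should state it in this form rather than with ``should factor as''. It gives directly
$b_nk_n=-\lambda\prod_{m\neq n}\bigl(1+\tfrac{\lambda}{\lambda_n-\lambda_m}\bigr)$.
This is exactly the finite-dimensional content of the paper's later decomposition $k_nb_n=-\lambda F_n(\lambda)J_n(\lambda)$ with $J_n\equiv 1$ (Lemmas \ref{lem:rearangement} and \ref{lem-Jn-1}), obtained here in a few lines.

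\textbf{Your second part.} The identity $T(A+BK)^kB=(A-\lambda I)^kB$, pole placement fixing $K$ through $\chi_{A+BK}=\chi_{A-\lambda I}$, and $T=\mathcal{C}_{A-\lambda I,B}\,\mathcal{C}_{A+BK,B}^{-1}$ verified by Cayley--Hamilton on the last column form the classical Brunovsky-type argument. This is what actually proves the statement as written. It gives uniqueness, every $\lambda>0$ including resonant values, non-diagonalizable $A$, and real $T,K$ even when $A$ has complex eigenvalues. The paper's sketch addresses none of these.

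As a side remark, that identity also shows any solution $T$ is automatically invertible, since $\mathcal{C}_{A-\lambda I,B}$ is invertible. This is the controllability-matrix counterpart of the paper's kernel argument.

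\textbf{Trade-off.} The controllability-matrix route is intrinsically finite-dimensional and gives no quantitative information. The explicit Cauchy/residue formula is exactly what the paper extends to infinite dimensions to obtain the $e^{c\lambda^{1/\alpha}}$ bounds. Keeping both, as you do, is the right combination.
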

Thanks to the Cauchy matrix, the expression of $T^{-1}$ is explicit, and its dependency in the damping parameter $\lambda>0$ is tractable. The generalization to infinite dimensional systems of the Cauchy matrix makes it possible to overcome the quadratically close criterion and, at the same time, to have sharp estimates on the cost of the backstepping method.

\subsection{Solving the main equation}
Consider a matrix $A\in M_d(\R)$, $B\in \R^d$, and suppose that $A$ is diagonalizable with simple eigenvalues $\{\lambda_n\}_{1\leq n \leq d}$ and eigenvectors $\{\varphi_n\}_{1\leq n \leq d}$. Assume that $(A,B)$ is controllable and recall that we aim to solve 
\begin{equation}\label{sec2:eqop}
\begin{cases}
TA+BK = (A-\lambda I)T,\\  
TB=B.
\end{cases}
\end{equation}
Applying $(A-\lambda I)T -TA = BK$ to the $\varphi_n$, we get:
\[
(A-(\lambda_n+\lambda) I)T\varphi_n = BK\varphi_n.
\]
Since $k_n:=K\varphi_n$ is a scalar, assuming $\lambda \notin \sigma(A-\lambda_n I), \forall n\in \{1, \dots, d\}$, we deduce, 
\[
T\varphi_n =k_n (A-(\lambda_n+\lambda) I)^{-1}B,
\]
and therefore
\begin{gather}\label{eq:Tnk}
T_{n,k}:= \left\langle T\varphi_n,\varphi_k\right\rangle = \dfrac{k_n b_k}{\lambda_k-\lambda_n-\lambda}, \quad \forall n,k\in \{1, ..., d\}, \\
T\varphi_n= \sum_{k= 1}^d T_{n, k} \varphi_k, \quad \forall n\in \{1, ..., d\}. \label{eq:trans:Tnk}
\end{gather}

\subsection{The uniqueness condition and expression of the feedback.} The uniqueness condition \eqref{intro:TBdecomp} then writes:
\begin{equation*}
TB=\sum_{n=1}^d b_nT\varphi_n = \sum_{n=1}^d b_n k_n(A-(\lambda_n + \lambda)I)^{-1} B,
\end{equation*}
where $B=\sum_{n=1}^d b_n \varphi_n$. At this point, the standard procedure calls to prove that the family $\{g_n\}_{ 1 \leq n \leq d}$ is a Riesz basis, where $g_n:=(A-(\lambda_n + \lambda)I)^{-1} B$. Here, we proceed differently. 

Using again the decomposition of $B$, one obtains:
\begin{align*}
\sum_{n=1}^d b_n k_n(A-(\lambda_n + \lambda)I)^{-1} B &=\sum_{n=1}^d b_n k_n \left(\sum_{k=1}^d b_k (A-(\lambda_n+\lambda)I)^{-1} \varphi_k \right) \\ 
&=\sum_{n=1}^d b_n k_n \left( \sum_{k=1}^d \dfrac{b_k}{\lambda_k-\lambda_n-\lambda} \varphi_k \right) \\
&= \sum_{k=1}^d\sum_{n=1}^d  \dfrac{b_n k_n b_k}{\lambda_k-\lambda_n-\lambda} \varphi_k
\end{align*}
Hence $TB=B$ amounts to,  
\[
\sum_{k=1}^d \sum_{n=1}^d \dfrac{k_n b_n b_k }{\lambda_k-\lambda_n-\lambda} \varphi_k = \sum_{k=1}^d b_k \varphi_k,
\]
and since the controllability assumption requires (using Fattorini-Hautus, for example), $b_k\neq 0$, for any $k=1, \ldots , d$ (otherwise the controllability of this mode fails), this is equivalent to,
\begin{equation}
\label{eq:TBBeq-0}
\sum_{n=1}^d \dfrac{k_n b_n}{\lambda_k-\lambda_n-\lambda}=1,\quad k=1, \ldots , d
\end{equation}
\subsection{The explicit isomorphism  via Cauchy matrix} This last equality can be viewed as solving the matrix system, 
\begin{equation}\label{eq:TBfinie}
\tilde{T}
\begin{pmatrix} 
k_1 b_1  \\
k_2 b_2 \\
\vdots \\
k_d b_d
\end{pmatrix} =
\begin{pmatrix} 
1  \\
1 \\
\vdots \\
1 
\end{pmatrix}
    \end{equation}

where the entries of the matrix $\tilde{T}\in M_d(\mathbb{C})$ are given by,
\[
\tilde{T}_{i, j}=\dfrac{1}{\lambda_i-\lambda_j-\lambda}.
\]

This induces a new transformation $\check T$:
\begin{equation}\label{def:tran:checkT}
   \left\langle \check T\varphi_j,\varphi_i\right\rangle:=  \tilde T_{i, j} \textrm{ thus } \footnote{Note that this expression is slightly different from the one in \eqref{eq:trans:Tnk} concerning the transformation $T$: in  \eqref{eq:trans:Tnk} the sum is over the second index of $T$, while in this expression we sum over the first index of $\tilde T$.}  \;  \check T \varphi_j= \sum_{i= 1}^d   \tilde T_{i, j} \varphi_i.
\end{equation}

This is a particular case of a Cauchy matrix,
\[
C_{ij}=\dfrac{1}{x_i-y_j},
\]
with $x_i\neq y_j$ for $1\leq i,j\leq d$. Here we define, for $\tilde{T}$, the entries $x_i=\lambda_i$, $y_j=\lambda_j+\lambda$. Notice that $x_i \neq y_j$ is ensured by imposing the classical non-resonance condition $\lambda \notin \sigma(A-\lambda_jI), j=1,\ldots, d$. Moreover, since the eigenvalues of $A$ are simple, we also have $x_i\neq x_j$ and $y_i \neq y_j$ for $i\neq j$. 
Under these hypothesis, the invertibility of the Cauchy matrix $C$ is ensured (see for instance \cite{zbMATH03152374}) and
\[
C^{-1}_{ij}=(x_j-y_i)A_j(y_i)B_i(x_j),
\]
with $A_i,B_i$ being the Lagrange polynomials for $x_i$ and $y_i$ respectively:
\[
A_i(x)=\dfrac{A(x)}{A'(x_i)(x-x_i)}=\prod_{j\neq i} \frac{x- x_j}{x_i- x_j}, \quad B_i(x)=\dfrac{B(x)}{B'(y_i)(x-y_i)}=\prod_{j\neq i} \frac{x- y_j}{y_i- y_j},
\]
with
\[
A(x)=\prod_{i=1}^d (x-x_i), \quad B(x)=\prod_{i=1}^d (x-y_i).
\]
Hence, we deduce the explicit expression of the matrix $\tilde{T}_{i, j}^{-1}$,
\begin{align}
(\tilde{T}^{-1})_{ij}
&= (x_j- y_i) \left( \dfrac{\prod_{n\neq j} (y_i- x_n)}{\prod_{n\neq j} (x_j- x_n)} \right) \left(\dfrac{\prod_{m\neq i} (x_j- y_m)}{\prod_{m\neq i} (y_i- y_m) } \right)  \notag \\
&= -\frac{1}{x_j- y_i}  \left(\dfrac{\prod_{n} (y_i- x_n)}{\prod_{n\neq j} (x_j- x_n) } \right) \left(\dfrac{\prod_{m} (x_j- y_m)}{\prod_{m\neq i} (y_i- y_m) } \right)  \notag \\
&= -\frac{1}{x_j- y_i}  \left(\dfrac{\prod_{m} (y_i- x_m)}{\prod_{m\neq i} (y_i- y_m) } \right) \left(\dfrac{\prod_{n} (x_j- y_n)}{\prod_{n\neq j} (x_j- x_n) } \right)  \notag  \\
&=\dfrac{\lambda^2}{(\lambda_j-\lambda_i-\lambda)}  \prod_{\substack{m=1 \\ m \neq i}}^d  \left( 1+ \dfrac{\lambda}{\lambda_i-\lambda_m}  \right)  \prod_{\substack{n=1 \\ n \neq j}}^d \left( 1- \dfrac{\lambda}{\lambda_j-\lambda_n}  \right). 
\label{eq:expr-T-1}
\end{align}
Then, the coefficients $k_j$ are obtained through \eqref{eq:TBfinie}, using again the fact that $b_j \neq 0$. 

We define a new transformation $\check T^{-1}$ by 
\begin{equation}
     \left\langle \check T^{-1}\varphi_j,\varphi_i\right\rangle:= \tilde T_{i, j}^{-1} \textrm{ thus }  \;  \check T^{-1} \varphi_j= \sum_{i= 1}^d   \tilde
     T_{i, j}^{-1} \varphi_i .
\end{equation}

\subsection{Invertibility of the transformation $T$.} The coefficient of the feedback $\{k_n\}_{1\leq n \leq d}$ having been identified, we are able to go back to the transformation $T$ and establish its invertibility, 
\begin{align}
  Tf=\sum_{n=1}^d f_n T \varphi_n &= \sum_{n=1}^d f_n k_n (A-(\lambda_n+\lambda)I)^{-1}B  \notag\\
  &= \sum_{n=1}^d f_n k_n \sum_{k=1}^d \dfrac{b_k}{\lambda_k-\lambda_n-\lambda} \varphi_k \notag\\ 
   &= \sum_{k=1}^d \sum_{n=1}^d f_n k_n  \dfrac{b_k}{\lambda_k-\lambda_n-\lambda} \varphi_k \notag\\
   &= \tau_B \check{T} \tau_K f \label{T-tauBK-checkT}
\end{align}
where, 
\begin{equation}\label{def:eq:tauBKTcheck}
    \tau_K: \varphi_n \mapsto k_n \varphi_n, \quad \tau_B : \varphi_n \mapsto b_n \varphi_n, \quad \check{T} : \varphi_n \mapsto \sum_{k=1}^d  \dfrac{1}{\lambda_k-\lambda_n-\lambda}\varphi_k
\end{equation}

 $\check{T}$ is given by \eqref{def:tran:checkT}, and $\tau_B$ is invertible by hypothesis on the $b_n$. Hence, the invertibility of $T$ hinges on the invertibility of $\tau_{K}$ and therefore on $k_j$ being non-zero. This can be proved directly from the explicit representation of $k_n$ through $\tilde{T}^{-1}$, but it is quite technical. In the infinite-dimensional setting, this will be established through some technical estimates on the infinite sums and products involved. 

Formally we have,
\begin{equation}
    T^{-1}= \tau_K^{-1}\check T^{-1}\tau_B^{-1}
\end{equation}
 
Another way to obtain the invertibility of $T$ in finite-dimension is to prove that $\ker \, T=0$. To that end, note that if $x\in \textrm{Ker}(T^*)$, then by $TB=B$, we have 
\[
B^* x = B^* T^* x = 0.
\]
On the other hand, from the operator equation \eqref{eq:operator-identity}, we have
\[T^\ast (A^\ast -\lambda I) x=(A+BK)^\ast T^\ast x=0,\]
which implies that $\ker (T^\ast)$ is stable by $A^\ast$, hence invariant under the action of the semigroup $e^{tA^\ast}$. Combining the above two assertions yields 
\[B^\ast e^{tA^\ast} x=0,\]
which, by weak observability, implies that $x=0$. Hence, $T$ is invertible.

\section{Construction and invertibility of $T$}
\label{sec:InvertibilityT}

We now move on to the construction of $T$ and $K$ in the infinite-dimensional setting. Apart from the classical functional framework briefly recalled in Section \ref{sec-func-framework}, the whole construction developed in Sections \ref{sec:def:TK}--\ref{sec:invertT} relies entirely on the newly introduced infinite-dimensional Cauchy matrix structure, without using Riesz basis or compactness arguments.

\subsection{Functional framework} 
\label{sec-func-framework}
 We begin by introducing the functional framework and the assumptions on $A :D(A) \subset H \rightarrow H$ and $B \in \mathcal{L}(U;D(A)')$.
We will work with the following assumption

\begin{assumption}
\label{assume:structure}

Assumption \ref{assume:structure0} holds and, in addition
\begin{itemize}
\item if $A$ is self-adjoint, it is negative. Hence the eigenvalues of the operator are real, countable, and
        \[
    0<-\lambda_1 \leq -\lambda_2 \leq \ldots \leq -\lambda_n \rightarrow +\infty
    \]
\item if $A$ is skew-adjoint, its eigenvalues satisfy
    \[
    0<-i \lambda_1 \leq -i \lambda_2 \leq \ldots \leq -i \lambda_n \rightarrow +\infty
    \]
\end{itemize}
\end{assumption}

\begin{remark}
Compared to Assumption \ref{assume:structure0}, in Assumption \ref{assume:structure}, we assume that either $A$ or $iA$ is negative to simplify the computations. In particular, we assume that no eigenvalues are equal to zero. Note \textcolor{black}{that this is not needed and} is without loss of generality : the more general framework with a finite number of unstable eigenvalues (that is, with positive real part of the eigenvalues) can be treated by the change of variables $x(t):=e^{\mu t}y(t)$, leading to the operator $A- \mu I$ being negative for $\mu>0$ large enough. 
\end{remark}

\begin{remark}[Case $m=1$] Remark that if $\textrm{dim} \, U=m,$ with $ 1<m<+\infty $, and if $A$ admits an orthonormal basis of eigenfunctions, we can write $y=\sum_{i=1}^m y_i(t)$ with $y_i$ satisfying $\dot{y}_i = Ay_i + B_i u_i$ (see Assumption \ref{assume:structure}). Hence, we consider the case $m=1$ for the rest of the paper.
\end{remark}

\begin{remark}[Simple eigenvalues]
Item 4 implies that one can handle the case where all the eigenvalues are simple, without loss of generality: the general case reduces to a finite number of spaces on which one considers an operator with simple eigenvalues.
\end{remark}

\begin{remark}
It is easy to see in our context that Assumption \ref{assume:structure0}, item 3, implies that the operator $B$ is admissible in $H$ (see for instance \cite{TucsnakWeissBook} for a definition).
\end{remark}

\begin{remark}
Let us underline that item 4 of Assumption \eqref{assume:structure0} are satisfied in many cases, in particular if the operator is self-adjoint with simple eigenvalues given by $\lambda_{n}\sim -n^{\alpha}$ with $\alpha>1$.
\end{remark}

Finally, we point out that estimate \eqref{eq:cond3} in item 4 of Assumption \ref{assume:structure0} implies a third gap estimate:
\begin{lemma}\label{lem:thirdgap}
Assuming \eqref{eq:cond3}, then there exists $c>0$ such that,
\begin{equation}\label{eigenvalue-growth}
    |\lambda_n-\lambda_k|\geq c|n-k|^\alpha, \quad \alpha>1, \quad \forall k, n \in \N^\ast,
\end{equation}
\end{lemma}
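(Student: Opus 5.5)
Since \eqref{eq:cond3} is not symmetric in $k$ and $n$, I will first symmetrize it and then reduce the claim to an elementary inequality between integers. The case $k=n$ is trivial, so assume $k\neq n$. Applying \eqref{eq:cond3} with the roles of $k$ and $n$ exchanged, and using $|\lambda_k-\lambda_n|=|\lambda_n-\lambda_k|$, gives
\begin{equation*}
|\lambda_n-\lambda_k| \geq c\,\max\bigl(k^{\alpha-1},n^{\alpha-1}\bigr)\,|n-k| = c\,\max(k,n)^{\alpha-1}\,|n-k|,
\end{equation*}
because $\alpha-1>0$ makes $x\mapsto x^{\alpha-1}$ nondecreasing on $(0,+\infty)$.

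The remaining step is to compare $\max(k,n)$ with $|n-k|$. Since $k,n\in\N^\ast$ are both at least $1$, we have
\begin{equation*}
|n-k| = \max(k,n)-\min(k,n) \leq \max(k,n)-1 < \max(k,n).
\end{equation*}
Hence $\max(k,n)^{\alpha-1}\geq |n-k|^{\alpha-1}$, again by monotonicity of $x\mapsto x^{\alpha-1}$.

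Combining the two displays gives $|\lambda_n-\lambda_k|\geq c\,|n-k|^{\alpha-1}|n-k| = c\,|n-k|^{\alpha}$, which is \eqref{eigenvalue-growth} with the same constant $c$ as in \eqref{eq:cond3}. I do not foresee any real obstacle. The only point needing care is the symmetrization: applied in just one order, \eqref{eq:cond3} bounds the gap below only by $\min(k,n)^{\alpha-1}|n-k|$, and this does not dominate $|n-k|^{\alpha}$ when one of the indices is small.
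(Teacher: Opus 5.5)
Your proof is correct and follows the paper's argument: apply \eqref{eq:cond3} with both orderings of the indices to get the factor $\max(k,n)^{\alpha-1}$, then conclude by monotonicity of $x\mapsto x^{\alpha-1}$. You also spell out the comparison $|n-k|<\max(k,n)$, which uses $k,n\geq 1$; the paper leaves this step implicit when it invokes monotonicity.
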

\begin{proof}
Without loss of generality, assume $k>n$, the case $n=k$ being trivial. Then, from \eqref{eq:cond3},
\[
|\lambda_n-\lambda_k|\geq ck^{\alpha-1}|k-n| \textrm { and } |\lambda_n-\lambda_k|\geq cn^{\alpha-1}|k-n|
\]
and thus, the conclusion comes if we are able to prove,
\[
\max \{k^{\alpha-1}, n^{\alpha-1}\}\geq |k-n|^{\alpha-1},
\]
but this comes from the monotonicity of the function $x \in \R^+ \mapsto x^{\alpha-1}$ with $\alpha>1$. 
\end{proof}

For the remainder of the article, since our method is essentially designed for operators with simple eigenvalues, we will present our results in this framework. 
We now consider a real self-adjoint unbounded operator $A$ on $H$, with simple eigenvalues $\{\lambda_n\}_{n\in \N^\ast}$ and an orthogonal family of corresponding eigenvectors $\{\varphi_n\}_{n\in \N^\ast}.$ As usual, we define the domain of $A$ by
\[D(A):=\{f\in H, Af \in H\}.\]

Moreover, we introduce a family of rigged Banach spaces (and refer to \cite{staffans2005well} for the detailed construction and further properties), which can be defined spectrally since $A$ is diagonalizable.
For $s\geq 0$, we define
\[\begin{aligned}
    \HH^s&:=\{f\in H, \quad \sum_{n\in \N^\ast}n^{2s} |\langle f, \varphi_n \rangle|^2 <+\infty \}, \\
    D(A^s)&:=\{f\in H, \quad \sum_{n\in \N^\ast}|\lambda_n|^{2s} |\langle f, \varphi_n \rangle|^2 <+\infty \},
\end{aligned}
\]
Notice that if $cn^\alpha \leq |\lambda_n| \leq Cn^\alpha$, then $D(A^s)=\HH^{\alpha s}$. 
\begin{remark}\label{rem:DAs}
Notice that here and below, we use the abusive notation $D(A^s)$. Indeed, only $(-A)^s$ is uniquely defined for $s\in \R$ as $-A$ is positive. This is to alleviate notations, and throughout this paper, $D(A^s)$ should be understood as $D((-A)^s)$.    
\end{remark}

The spaces $\HH^s$ are naturally endowed with a Hilbert structure given by the scalar product
\[\langle f,g \rangle_s:=\sum_{n\in \N^\ast} (n^{2s}) \langle f, \varphi_n\rangle \langle g, \varphi_n \rangle, \quad \forall f,g \in \HH^s,\]
as well as a natural Hilbert basis given by $\left(\frac1{\sqrt{1+|n|^{2s}}}\varphi_n\right)_{n\in \N^\ast}$.

The spaces $D(A^s), \, s\geq 0$ can in turn be endowed with the closely related Hilbert structure (recall that $\lambda_n \neq 0$):
\[\langle f, g\rangle_{D(A^s)}=\sum_{n \in\N^\ast} |\lambda_n|^{2s}\langle f, \varphi_n \rangle \langle g, \varphi_n \rangle, \quad \forall f, g \in D(A^s).\]

For $s<0$, $\HH^s$ (resp. $D(A^s)$) can be defined by identifying it with the dual space $(\HH^s)^\prime$ (resp. $D(A)^\prime$), with pivot space $H$.

For the remainder of this article, we will denote by:
\begin{itemize}
    \item $\|\cdot\|$ the norm of the Hilbert space $H$;
    \item $\|\cdot\|_s$ the norm associated to the scalar product $\langle \cdot, \cdot \rangle_s$ on $\HH^s$, for $s\neq 0$;
    \item $\|\cdot\|_{D(A^s)}$ the norm associated to the scalar product $\langle \cdot, \cdot \rangle_{D(A^s)}$ on $D(A^s)$, for $s\neq 0$.
    \item For linear mappings on $H$ (resp. $D(A^s)$ or $D_s(A+BK)$ in Sections \ref{sec-well-posedness} the usual operator norm will be denoted $\|\cdot\|$ (resp. $\|\cdot\|_{D(A^s)}$ and $\|\cdot\|_{D_s(A+BK)}$).
\end{itemize}

Importantly, for $s<0$, $D(A^s)$ can also be defined as the completion of $H$ with respect to the norm $\|\cdot\|_{\rho, s}=\|(A-\rho I)^{s} \cdot \|$ for well-chosen $\rho$ in the resolvent set of $A$ (the fractional negative power of $A-\rho I$ can be defined using the Laplace transform, see for example \cite[Chapter IX,Section 11]{yosida2012functional}, or \cite[Proposition 4.20]{li2012optimal}).

Finally, note that if $s_1<s_2$, the injections $\HH^{s_2} \to \HH^{s_1}$ and $D(A^{s_1})\to D(A^{s_2})$ are continuous and compact, due to the fact that $A$ has compact resolvent.

\subsection{Definition of the transformation $T$ and the feedback $K$}\label{sec:def:TK}

 Recall that we work on a given invariant subspace $H_i$ where $A$ has simple eigenvalues. In the following, we denote $H_i$ and $(\lambda_{i, n}, \varphi_{i, n})$ by $H$ and $(\lambda_{n}, \varphi_{n})$ to alleviate notations.

Let us define the operators, at least formally for now, used throughout this article. For $\lambda >0$ such that $\lambda_j+\lambda-\lambda_i \neq 0, \, \forall i,j\in \N^\ast$,  we denote
\begin{align}
K \, &: \, \varphi_n \mapsto k_n \in \R, \quad \qquad \qquad \, \forall n\in \N^\ast, \label{def-K} \\
T \, : \, \varphi_n \mapsto k_n & (A-(\lambda_n+\lambda)I)^{-1}B \in H, \quad \forall n\in \N^\ast. \label{def-T}
\end{align}
Thus 
\begin{align}\label{eq:Tnk:full}
T_{n,k}:=& \left\langle T\varphi_n ,\varphi_k \right \rangle = \dfrac{k_n b_k}{\lambda_k-\lambda_n-\lambda}, \quad \forall n,k\in \mathbb{N}^*, \\
& \quad T\varphi_n= \sum_{k\in \mathbb{N}^*} T_{n, k} \varphi_k, \quad \quad  \, \quad \forall n\in \mathbb{N}^*. \label{eq:trans:Tnk:full}
\end{align}

Motivated by the finite dimensional example, one key element of this article is to introduce and prove several quantitative estimates, with respect to $\lambda$, on the auxiliary operator $\check{T}$ associated to the infinite dimensional extension of the Cauchy matrix $(\tilde T_{i, j})$. Similar to \eqref{def:tran:checkT}, we define
\begin{equation}\label{eq:def:tildT:full}
    \tilde{T}_{i, j}=\dfrac{1}{\lambda_i-\lambda_j-\lambda},
\end{equation}
and a transformation related to the matrix $(\tilde T_{i, j})$:
\begin{equation}\label{eq:deftildeT:full}
     \left\langle \check T\varphi_j,\varphi_i\right\rangle:=  \tilde T_{i, j} \textrm{ thus }  \;  \check T \varphi_j= \sum_{i\in \mathbb{N}^*}   \tilde T_{i, j} \varphi_i.
\end{equation}

Up to this point, the construction remains within the standard framework; see \cite{CGM, coron:hal-03161523, WaterWave}. The main novelty of the present construction is the explicit definition and characterization of $T^{-1}$ and $\check T^{-1}$ below, together with the explicit identification of the feedback operator $K$ later on. 
We also define another auxiliary transformation $\check{T}^{-1}$ associated to the inverse of the Cauchy matrix $(\tilde T_{i, j})$, which we shall prove to be the left inverse of $\check{T}$, by generalizing identity \eqref{eq:expr-T-1}: define
\begin{equation}\label{eq:expr-T-1:full}
 \tilde T_{i, j}^{-1} =\dfrac{\lambda^2}{(\lambda_j-\lambda_i-\lambda)}  \prod_{\substack{m\in \N^\ast \\ m \neq i}} \left( 1+ \dfrac{\lambda}{\lambda_i-\lambda_m}  \right)  \prod_{\substack{n\in \N^\ast \\ n \neq j}} \left( 1- \dfrac{\lambda}{\lambda_j-\lambda_n}  \right), \quad  \forall i,j\in \N^\ast,
\end{equation}
and 
\begin{equation}\label{eq:defi:T-1:full}\left<\check{T}^{-1}\varphi_j,\varphi_i\right>:=  \tilde{T}^{-1}_{i, j} \textrm{ thus }  \check T^{-1} \varphi_j= \sum_{i\in \mathbb{N}^*}   \tilde T_{i, j}^{-1} \varphi_i .
\end{equation}
Again we emphasize that \eqref{eq:trans:Tnk:full} concerning the transformation $T$ is slightly different from \eqref{eq:defi:T-1:full} on the transformation $\check T$.

Much of this article will revolve around the analysis of $\tilde{T}$, and $\tilde{T}^{-1}$. 
Its explicit representation will allow us in particular to deduce quantitative bounds on the operator norms $\|T\|$, $\|T^{-1}\|$, $\|T\|_{D(A^s)}$, $\|T^{-1}\|_{D(A^s)}$ with respect to $\lambda$. This allows us to obtain stability estimates that are quantitative with $\lambda$, a key difference with the previous analysis, for instance \cite{WaterWave}. This is also what allows us to derive a small-time controllability result (see Theorem \ref{thm:intro3}).

In the remainder of this section, we proceed as follows: in Section \ref{sec:technicalestim} we provide some technical estimates that will be useful in the analysis; in Section \ref{sec:analysistildeT}, we show that $\check{T}$ and $\check{T}^{-1}$ defined similarly as in the finite-dimensional case by \eqref{def:tran:checkT}--\eqref{eq:expr-T-1} are still well-defined operators in the infinite-dimensional setting and that $\check{T}^{-1}$ is indeed the inverse of $\check{T}$;  in Section \ref{sec:TBB} we show that we may choose $K$ appropriately such that the condition $TB=B$ holds and we give an explicit expression of $K$. This is a major benefit of this approach, in contrast with previous approaches such as \cite{GLM, Gagnon-Hayat-Xiang-Zhang,WaterWave}.

\subsubsection{Some technical estimates} 
\label{sec:technicalestim}

We begin by introducing a quantity that will play a fundamental role in the quantitative estimates, 
\begin{equation}
\label{eq:distalpha}
\Dist_\alpha(\lambda):=\inf_{i,j\in \N^\ast}|\lambda_j-\lambda_i+\lambda|. 
\end{equation}
Let us define the following countable set of numbers 
\begin{equation}\label{def:mathcalN}
 \mathcal{N}:= \{\lambda_i- \lambda_j: i, j\in \mathbb{N}^*\},  
\end{equation}
Clearly, $\text{Dist}_{\alpha}(\lambda) = 0$ if and only if $\lambda\in\mathcal{N}$.
We list here some properties related to $\Dist_\alpha(\lambda)$ that will be used throughout this article, as well as technical estimates on the $\lambda_j$. The proofs are in Appendix \ref{appendix-estimates-lambda}.

\begin{lemma}\label{lem:di}
Assume that the positive number $\lambda\notin \mathcal{N}$.
For every $i\in \N^*$, define 
\[
D_i(\lambda)=\min_{j\in \N^*} | \lambda_j-\lambda_i+\lambda|. 
\]
Then, 
\begin{itemize}
\item[(i)] There exists at most two $j_1,j_2 \in \N^*$ such that 
\[
| \lambda_{j_1}-\lambda_i+\lambda|=| \lambda_{j_2}-\lambda_i+\lambda|=D_i(\lambda),
\]
and $|j_1-j_2|=1$. 
\item[(ii)]
\[
\Dist_\alpha(\lambda) \leq D_i(\lambda), \quad \forall i\in \N^*.
\]
\end{itemize}
\end{lemma}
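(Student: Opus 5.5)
Part (ii) follows directly from the definitions, so I will dispose of it first. By \eqref{eq:distalpha},
\[
\Dist_\alpha(\lambda)=\inf_{i',j}|\lambda_j-\lambda_{i'}+\lambda| .
\]
For fixed $i$ this infimum is at most $\inf_j|\lambda_j-\lambda_i+\lambda|$. That last infimum is attained, hence a minimum and equal to $D_i(\lambda)$, by the following argument. Since $|\lambda_j|\to\infty$, the quantity $|\lambda_j-\lambda_i+\lambda|\to\infty$ as $j\to\infty$. So only finitely many $j$ can give values below any fixed level, and the minimum over $j$ is reached.

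Because $\lambda\notin\mathcal N$, the value $\lambda_i-\lambda$ is not an eigenvalue. Hence $D_i(\lambda)>0$, although positivity is not needed for (ii).

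For part (i), fix $i$ and set $z:=\lambda_i-\lambda$, so that $D_i(\lambda)=\min_j|\lambda_j-z|$. I work under Assumption \ref{assume:structure}, where the eigenvalues lie on a line and are monotone. In the self-adjoint case they are real and strictly decreasing, $\lambda_1>\lambda_2>\cdots$, by simplicity. In the skew-adjoint case they lie on $i\R$ and are ordered monotonically; multiplying by $-i$ reduces to the real case, with $z$ still on the same line because $\lambda$ is real.

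There is a subtlety in the skew case: $z=\lambda_i-\lambda$ is not on the imaginary axis. Its distance to $\lambda_j$ equals $\sqrt{\lambda^2+|\mathrm{Im}(\lambda_j-\lambda_i)|^2}$. The minimizers are therefore exactly the minimizers of $|\mu_j-\mathrm{Im}\lambda_i|$, where $\mu_j=\mathrm{Im}\lambda_j$ is strictly monotone. So in all cases the problem reduces to the following statement.

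\emph{Reduced statement.} Given a strictly monotone real sequence $(\mu_j)$ and a point $x\in\R$, the set of $j$ minimizing $|\mu_j-x|$ has at most two elements, and when it has two, they are consecutive indices.

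The key step is the proof of this reduced statement.
\begin{itemize}
\item Suppose $|\mu_{j_1}-x|=|\mu_{j_2}-x|=D$ with $j_1\neq j_2$. Since $\mu$ is injective, $\mu_{j_1}$ and $\mu_{j_2}$ are the two points $x\pm D$, one on each side of $x$. A third minimizer would have to coincide with one of these two points, contradicting injectivity. Hence there are at most two minimizers.
\item Suppose $j_1<j_2$ and there is an index $k$ with $j_1<k<j_2$. By strict monotonicity, $\mu_k$ lies strictly between $x-D$ and $x+D$. Then $|\mu_k-x|<D$, contradicting minimality. So $|j_1-j_2|=1$.
\end{itemize}

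In the skew case $x=\mathrm{Im}\lambda_i$ coincides with $\mu_i$, and the unique minimizer is $j=i$. Even so, the claim still holds, and the general argument above covers it.

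The main obstacle is bookkeeping rather than difficulty. One must make the reduction to a monotone real sequence correctly in the skew-adjoint case, and check that the ordering in Assumption \ref{assume:structure} indeed gives strict monotonicity. Strictness comes from the eigenvalues being simple, or alternatively from \eqref{eq:cond3}.
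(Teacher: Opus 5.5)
Your proof is correct and follows essentially the same route as the paper. In the self-adjoint case, strict monotonicity of the eigenvalues places the minimizers of $|\lambda_j-(\lambda_i-\lambda)|$ at the two consecutive indices where $\lambda_j$ crosses $\lambda_i-\lambda$; in the skew-adjoint case the unique minimizer is $j=i$ with $D_i(\lambda)=\lambda$; and (ii) is immediate from the definition of $\Dist_\alpha$. (Your early remark that $z=\lambda_i-\lambda$ stays on the same line after multiplying by $-i$ is inaccurate, but you correct it in the next paragraph, and the reduction you actually use is right.)
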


The following lemma ensures that the infinite product  \eqref{eq:expr-T-1} is well-defined.

\begin{lemma}\label{lem:boundT-1}
There exists some effectively computable constant $C>0$, such that for any given $\lambda>0$ and any $i,j \in \mathbb{N}^*$ there is
 \begin{itemize}
     \item[(i)]  $ \left| \prod_{\substack{m\in \N^\ast\\ m \neq i}} \left( 1+ \dfrac{\lambda}{\lambda_i-\lambda_m}  \right) \right|\leq C e^{C \lambda^{\frac{1}{\alpha}}}, \quad \forall \lambda\in (0, +\infty), \; \forall i\in \N;$
     \item[(ii)]  $ \left| \prod_{\substack{n\in \N^\ast\\ n \neq j}} \left( 1- \dfrac{\lambda}{\lambda_j-\lambda_n}  \right) \right| \leq C  e^{C \lambda^{\frac{1}{\alpha}}}, \quad \forall \lambda\in (0, +\infty), \; \forall j\in \N;$
     \end{itemize}
      Moreover, the power $\frac{1}{\alpha}$ in (i)-(ii) is sharp.
 \end{lemma}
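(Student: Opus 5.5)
The plan is to bound the logarithm of each product by splitting the index set according to whether $|\lambda_i-\lambda_m|$ is small or large compared to $\lambda$. Write
\[
\log\Big|\prod_{m\neq i}\Big(1+\frac{\lambda}{\lambda_i-\lambda_m}\Big)\Big| \le \sum_{m\neq i}\log\Big(1+\frac{\lambda}{|\lambda_i-\lambda_m|}\Big),
\]
using $|1+z|\le 1+|z|$. Here $\lambda_i-\lambda_m$ is real in the self-adjoint case and purely imaginary in the skew-adjoint case, but only the modulus enters. Item (ii) is identical with $m$ replaced by $n$ and $i$ by $j$. So it suffices to show
\[
S_i(\lambda):=\sum_{m\neq i}\log\Big(1+\frac{\lambda}{|\lambda_i-\lambda_m|}\Big)\le C+C\lambda^{1/\alpha},
\]
uniformly in $i$. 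This also gives absolute convergence of the products, hence well-definedness.

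The key input is Lemma \ref{lem:thirdgap}: $|\lambda_i-\lambda_m|\ge c|i-m|^\alpha$. Since the right-hand side depends only on $k=|i-m|\ge1$, and each value of $k$ occurs for at most two indices $m$,
\[
S_i(\lambda)\le 2\sum_{k\ge1}\log\Big(1+\frac{\lambda}{c k^\alpha}\Big).
\]
I then split at $k_0=\lceil (\lambda/c)^{1/\alpha}\rceil$.
\begin{itemize}
\item For $k\le k_0$, use $\log(1+x)\le \log 2+\log x$ when $x\ge1$ (and $\le\log 2$ otherwise). This bounds the sum by $k_0\log 2+\sum_{k\le k_0}\log\big(\lambda/(ck^\alpha)\big)$. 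The last sum equals $k_0\log(\lambda/c)-\alpha\log(k_0!)$, which by Stirling is $\le k_0\log(\lambda/c)-\alpha k_0\log k_0+\alpha k_0$. Since $k_0^\alpha\ge \lambda/c$, this is $\le \alpha k_0$. Hence this part is $O(k_0)=O(1+\lambda^{1/\alpha})$.
\item For $k>k_0$, use $\log(1+x)\le x$. This gives $\frac{\lambda}{c}\sum_{k>k_0}k^{-\alpha}\le \frac{\lambda}{c(\alpha-1)}k_0^{1-\alpha}\lesssim \lambda\cdot\lambda^{(1-\alpha)/\alpha}=\lambda^{1/\alpha}$, using $\alpha>1$.
\end{itemize}
Exponentiating gives (i) and (ii) with a constant that depends only on $c$ and $\alpha$, so it is effectively computable.

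For sharpness, I take the model case $\lambda_n=-n^\alpha$ (or one satisfying the two-sided bound in \eqref{eq:cond2}) and show that the product in (i) is at least $e^{c'\lambda^{1/\alpha}}$ for suitable $i$ and $\lambda$. With $i$ fixed (say $i=1$) and $m>1$, we have $\lambda_i-\lambda_m=m^\alpha-1>0$, so every factor $1+\lambda/(\lambda_i-\lambda_m)$ exceeds $1$ and there is no cancellation. The factors with $m\lesssim\lambda^{1/\alpha}$ are each at least $2$, which gives a product $\ge 2^{c''\lambda^{1/\alpha}}$. This shows no exponent smaller than $1/\alpha$ can work.

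The main obstacle is controlling the near-resonant block $k\le k_0$ uniformly in $i$ without losing more than $O(k_0)$. The Stirling cancellation above is where this happens, and it relies on the uniform gap $|i-m|^\alpha$ from Lemma \ref{lem:thirdgap} rather than on the $i$-dependent gap \eqref{eq:cond3}. I would double-check that the term $k_0\log(\lambda/c)$ is fully absorbed. It is, since $\log(\lambda/c)\le\alpha\log k_0$.
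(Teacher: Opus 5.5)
Your upper-bound argument follows the paper's reduction exactly: $|1+z|\le 1+|z|$, the uniform gap $|\lambda_i-\lambda_m|\ge c|i-m|^\alpha$ from Lemma~\ref{lem:thirdgap}, and the fact that each $k=|i-m|$ occurs at most twice. This leaves $2\sum_{k\ge1}\log(1+\lambda/(ck^\alpha))$. The two proofs differ only in how this series is bounded.

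The paper compares it with $\int_1^\infty \log(1+C\lambda x^{-\alpha})\,dx$ and integrates by parts; the boundary term cancels the $k=1$ term. It then rescales $x=(C\lambda)^{1/\alpha}y$ and gets, in one line, $\alpha (C\lambda)^{1/\alpha}\int_0^\infty (1+y^\alpha)^{-1}\,dy$, with no additive constant. You instead split at $k_0\approx(\lambda/c)^{1/\alpha}$, using Stirling on the head and $\log(1+x)\le x$ on the tail. This is less compact and gives $C+C\lambda^{1/\alpha}$, which is harmless given the prefactor $C$. Its advantage is that it shows the exponent simply counts the $\sim\lambda^{1/\alpha}$ near-resonant indices, each contributing $O(1)$. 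That is exactly the mechanism behind your sharpness example.

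One step needs repair. The inequality $\log(1+x)\le \log 2+\log x$ holds only for $x\ge 1$. With $k_0=\lceil(\lambda/c)^{1/\alpha}\rceil$, the term $k=k_0$ can have $x<1$; for $\lambda<c$ one has $k_0=1$ and $\log(\lambda/c)\to-\infty$. So your displayed head bound is not an upper bound. Use $\log(1+x)\le\log 2+\log^+x$ instead. The Stirling computation then runs over $k\le k_1:=\lfloor(\lambda/c)^{1/\alpha}\rfloor$, where $k_1\bigl(\log(\lambda/c)-\alpha\log k_1\bigr)\le \alpha k_1\log(1+1/k_1)\le\alpha$. The head is therefore still at most $k_0\log 2+\alpha(k_1+1)$.

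On sharpness, the paper's proof only establishes the upper bounds, so this part is your own addition. Your argument for (i) is correct. It in fact works for every admissible self-adjoint spectrum, since $0<\lambda_1-\lambda_m\lesssim m^\alpha$.

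However, (ii) is not covered, and the same self-adjoint trick cannot cover it. There the factors with $n>j$ are real numbers less than $1$, possibly zero or negative, so there is no monotonicity to exploit. For instance, take $\lambda_n=-n^2$ and use $\prod_{n\ge1}(1-z/n^2)=\sin(\pi\sqrt z)/(\pi\sqrt z)$. This gives
\[
\prod_{n\neq j}\Bigl(1-\frac{\lambda}{\lambda_j-\lambda_n}\Bigr)=\frac{2(-1)^j j^2\sin\bigl(\pi\sqrt{j^2+\lambda}\bigr)}{\pi\lambda\sqrt{j^2+\lambda}},
\]
whose modulus is at most $1$ for all $j$ and all $\lambda>0$.

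Sharpness of (ii) therefore has to come from a skew-adjoint example, such as $\lambda_n=\mathrm{i}\,n^\alpha$. There every factor in (i) and (ii) has modulus $\bigl(1+\lambda^2/|\lambda_j-\lambda_n|^2\bigr)^{1/2}\ge1$. For $j=1$, roughly $\lambda^{1/\alpha}$ of these factors are at least $\sqrt2$, which gives the required growth.
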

 
Finally, the following estimate will be useful in Section \ref{sec:quantitative} concerning the upper and lower bound of $\|T^{-1}\|$.
\begin{lemma}\label{lem:sumlambdaijdistbound}
There exists $C>0$ such that, for  any $\lambda>0$ satisfying  $\lambda\not \in \mathcal{N}$, and any $i, j\in \mathbb{N}^*$ there is 
 \item[(i)] $\sum_{j\in \mathbb{N}^*} \left|\dfrac{\lambda^2}{(\lambda_j-\lambda_i-\lambda)}\right|\leq C \left(  \lambda^2  +  \dfrac{\lambda^2}{\Dist_\alpha(\lambda)} \right), \quad \forall i\in \N$;
 \item[(ii)] $\sum_{i\in \mathbb{N}^*} \left|\dfrac{\lambda^2}{(\lambda_j-\lambda_i-\lambda)}\right|\leq C \left(  \lambda^2  +  \dfrac{\lambda^2}{\Dist_\alpha(\lambda)} \right), \quad \forall j\in \N$.
 \end{lemma}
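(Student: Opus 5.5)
By symmetry it suffices to prove (i); statement (ii) follows by the same argument with the roles of $i$ and $j$ exchanged, using $\lambda_j-\lambda_i-\lambda = -(\lambda_i-\lambda_j+\lambda)$. So we fix $i$ and bound
\[
S_i:=\sum_{j\in\N^*}\frac{1}{|\lambda_j-\lambda_i-\lambda|}.
\]
We split the index set into a near part $J_{\mathrm{near}}$ and a far part $J_{\mathrm{far}}$. Let $j_0$ be a minimizer of $|\lambda_j-(\lambda_i+\lambda)|$, in the spirit of Lemma \ref{lem:di}. The near part consists of $j_0$ and its immediate neighbours $j_0\pm1$. Each near term is bounded by $1/D(\lambda)$, where $D(\lambda)\geq \Dist_\alpha(\lambda)$ by the same reasoning as Lemma \ref{lem:di}(ii); the resonance is $\lambda_j-\lambda_i-\lambda$, and the definition of $\Dist_\alpha$ covers both sign conventions since $i,j$ range over all indices. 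The near part therefore contributes at most $3\lambda^2/\Dist_\alpha(\lambda)$ after multiplying by $\lambda^2$.

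For the far part we use that the eigenvalues are monotone in modulus along a ray, real negative in the self-adjoint case and on the imaginary axis in the skew-adjoint case. In the skew-adjoint case $\lambda$ is real while $\lambda_j-\lambda_i$ is imaginary, so $|\lambda_j-\lambda_i-\lambda|\geq|\lambda_j-\lambda_i|$. Lemma \ref{lem:thirdgap} then gives $\sum_{j\neq i}|\lambda_j-\lambda_i|^{-1}\leq C\sum_{k\geq1}k^{-\alpha}<\infty$, the $j=i$ term is $1/\lambda\leq 1/\Dist_\alpha$, and we are done in that case.

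The self-adjoint case is the main obstacle. For $|j-j_0|\geq 2$, the monotonicity of $\lambda_j$ together with the gap estimate \eqref{eq:cond3} places $\lambda_j$ on the far side of a neighbour of $j_0$. This yields
\[
|\lambda_j-\lambda_i-\lambda|\geq \tfrac12|\lambda_j-\lambda_{j_0}|\geq c|j-j_0|^\alpha,
\]
where the second inequality is Lemma \ref{lem:thirdgap}. Summing, the far part is bounded by $C\sum_{k\geq1}k^{-\alpha}=C'$, since $\alpha>1$. The care needed is in justifying the factor $\tfrac12$: the target $\lambda_i+\lambda$ lies between $\lambda_{j_0-1}$ and $\lambda_{j_0+1}$, closer to $\lambda_{j_0}$ than to either of them, so for $j$ beyond a neighbour we have $|\lambda_j-\lambda_i-\lambda|\geq|\lambda_j-\lambda_{j_0\pm1}|$, and this is comparable to $|\lambda_j-\lambda_{j_0}|$ by monotonicity and \eqref{eq:cond2}. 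When $\lambda_i+\lambda$ lies above $\lambda_1$, the nearest eigenvalue is $\lambda_1$ and only the one-sided version of this argument is needed.

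Multiplying by $\lambda^2$ and combining the near and far parts gives
\[
\lambda^2 S_i\leq C\lambda^2+\frac{3\lambda^2}{\Dist_\alpha(\lambda)},
\]
with $C$ independent of $i$ and $\lambda$, which is (i).
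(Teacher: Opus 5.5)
Your proof is correct and follows essentially the same route as the paper: isolate the few indices whose eigenvalue lies nearest the resonance point $\lambda_i+\lambda$ and bound those terms by $1/\Dist_\alpha(\lambda)$. The paper uses the bracketing pair $J_j-1,J_j$, while you use the minimizer $j_0$ and its two neighbours. All remaining terms are then controlled by the gap estimate of Lemma~\ref{lem:thirdgap}, giving a convergent series $\sum_k k^{-\alpha}$, and the skew-adjoint case follows from $|\lambda_j-\lambda_i-\lambda|\geq|\lambda_j-\lambda_i|$.

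Your factor $\tfrac12$ follows immediately from the midpoint property of $j_0$, so the appeal to \eqref{eq:cond2} is unnecessary. For instance, for $j\geq j_0+2$ one has $\lambda_i+\lambda-\lambda_j\geq \tfrac12(\lambda_{j_0}+\lambda_{j_0+1})-\lambda_j\geq\tfrac12(\lambda_{j_0}-\lambda_j)$.
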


\subsubsection{Study of the auxiliary transformations $\check{T}$ and $\check{T}^{-1}$}
\label{sec:analysistildeT}

Recall the definition of the critical set $\mathcal{N}$ in \eqref{def:mathcalN}. From now on we fix the value of $\lambda \notin \mathcal{N}$, and investigate the transformations $\check{T}= \check{T}(\lambda)$ and $\check{T}^{-1}= \check{T}^{-1} (\lambda)$ given by \eqref{eq:deftildeT:full} and \eqref{eq:defi:T-1:full}. For the ease of notations, we omit the index $\lambda$.

First we prove that the operator $\check{T}^{-1}$ defined above is well-defined. Recall that $\check T$ is related to the matrix $(\tilde T_{i, j})$, and $\check T^{-1}$ is related to the matrix $(\tilde T_{i, j}^{-1})$ given by \eqref{eq:def:tildT:full} and \eqref{eq:expr-T-1:full}.

\begin{proposition}\label{prop:regT-1}
Let $\lambda \notin \mathcal{N}$.
The entries $(\tilde{T}^{-1})_{ij}, i,j\in \N^*$ are well-defined.  Moreover, 
\begin{itemize}
    \item[(i)] for every $j\in \N^*$, $\{( \tilde{T}^{-1})_{ij}\}_{i\in \N^*} \in \ell^1(\N^*;\mathbb{C})$ and $\sup_{j \in \N^*} \| \{( \tilde{T}^{-1})_{ij}\}_{i\in \N^*}\|_{\ell^1} \leq C < \infty$. 
     \item[(ii)] for every $i\in \N^*$, $\{( \tilde{T}^{-1})_{ij}\}_{j\in \N^*} \in \ell^1(\N^*;\mathbb{C})$ and $\sup_{i \in \N^*} \| \{( \tilde{T}^{-1})_{ij}\}_{j\in \N^*}\|_{\ell^1} \leq C < \infty$. 
\end{itemize}
\end{proposition}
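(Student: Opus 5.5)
The plan is to bound the entries of $\tilde T^{-1}$ by factoring them and controlling each factor with the lemmas already stated. Fix $\lambda\notin\mathcal{N}$. By \eqref{eq:expr-T-1:full},
\[
\tilde T^{-1}_{ij}=\frac{\lambda^2}{\lambda_j-\lambda_i-\lambda}\,P_i(\lambda)\,Q_j(\lambda),
\]
where
\[
P_i(\lambda):=\prod_{m\neq i}\Bigl(1+\frac{\lambda}{\lambda_i-\lambda_m}\Bigr),\qquad Q_j(\lambda):=\prod_{n\neq j}\Bigl(1-\frac{\lambda}{\lambda_j-\lambda_n}\Bigr).
\]

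The first step is to show that the entries are well defined, i.e. that the infinite products converge. The prefactor is finite because $\lambda\notin\mathcal{N}$ gives $\lambda_j-\lambda_i-\lambda\neq 0$. For the products:
\begin{itemize}
\item No factor vanishes, since $1+\lambda/(\lambda_i-\lambda_m)=0$ would mean $\lambda_m-\lambda_i=\lambda$, i.e. $\lambda\in\mathcal{N}$; the same argument applies to $Q_j$.
\item Convergence follows from Lemma~\ref{lem:thirdgap}: $|\lambda_i-\lambda_m|\geq c|i-m|^\alpha$ with $\alpha>1$, so $\sum_{m\neq i}|\lambda|/|\lambda_i-\lambda_m|\leq C\lambda\sum_{k\geq1}k^{-\alpha}<\infty$.
\end{itemize}
Hence both products converge absolutely, to nonzero finite limits.

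The second step is a uniform bound on the product factors. Lemma~\ref{lem:boundT-1} gives $|P_i(\lambda)|\leq Ce^{C\lambda^{1/\alpha}}$ and $|Q_j(\lambda)|\leq Ce^{C\lambda^{1/\alpha}}$, uniformly in $i$ and $j$. Therefore
\[
|\tilde T^{-1}_{ij}|\leq C^2e^{2C\lambda^{1/\alpha}}\,\frac{\lambda^2}{|\lambda_j-\lambda_i-\lambda|}.
\]

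The third step sums this bound. Summing over $i$ with $j$ fixed and applying Lemma~\ref{lem:sumlambdaijdistbound}(ii) gives
\[
\sup_j\sum_i|\tilde T^{-1}_{ij}|\leq C'e^{2C\lambda^{1/\alpha}}\Bigl(\lambda^2+\frac{\lambda^2}{\Dist_\alpha(\lambda)}\Bigr),
\]
which proves (i). Summing over $j$ with $i$ fixed and applying Lemma~\ref{lem:sumlambdaijdistbound}(i) proves (ii) in the same way. The right-hand side is finite because $\Dist_\alpha(\lambda)>0$ for $\lambda\notin\mathcal{N}$, and the constant depends only on $\lambda$, as the statement allows.

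I expect the only real work to be hidden in the lemmas taken for granted, namely the uniform product bounds of Lemma~\ref{lem:boundT-1} and the row/column sum bounds of Lemma~\ref{lem:sumlambdaijdistbound}. Those sums are where the gap condition \eqref{eq:cond3} and Lemma~\ref{lem:di} are used, to isolate the at most two near-resonant indices, which contribute $\lambda^2/\Dist_\alpha(\lambda)$. Given those lemmas, the proposition follows directly.
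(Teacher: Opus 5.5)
Your proposal is correct and is essentially the paper's proof. You obtain the entrywise bound $|\tilde T^{-1}_{ij}|\leq Ce^{C\lambda^{1/\alpha}}\lambda^2/|\lambda_j-\lambda_i-\lambda|$ from Lemma~\ref{lem:boundT-1} and then sum it using Lemma~\ref{lem:sumlambdaijdistbound}; your explicit check via Lemma~\ref{lem:thirdgap} that the infinite products converge is a small addition that the paper leaves implicit in Lemma~\ref{lem:boundT-1}.
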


\begin{proof}[of Proposition \ref{prop:regT-1}]
From assertion (i) and (ii) of Lemma \ref{lem:boundT-1}, we deduce that,
\begin{equation}\label{eq:firstboundT-1}
|(\tilde{T}^{-1})_{ij}| \leq C e^{C \lambda^{\frac{1}{\alpha}}}\left|\dfrac{\lambda^2}{(\lambda_j-\lambda_i-\lambda)}\right|,
\end{equation}
where the constant $C$ is independent of $\lambda$ and $i, j\in \mathbb{N}^*$.
Moreover, from assertion (ii) of Lemma \ref{lem:sumlambdaijdistbound},
\[
\sum_{i\in \N^*} |(\tilde{T}^{-1})_{ij}| \leq C e^{C \lambda^{\frac{1}{\alpha}}} \sum_{i\in \N^*} \left| \dfrac{\lambda^2}{\lambda_j-\lambda_i-\lambda} \right| \leq  C  e^{C \lambda^{\frac{1}{\alpha}}}\left(  \lambda^2  +  \dfrac{\lambda^2}{\Dist_\alpha(\lambda)} \right),
\]
where the constant $C$ is independent of $\lambda$ and  $j\in \mathbb{N}^*$. The second property can be proved using the same argument. 
\end{proof}

We now show that the entries $(\tilde{T}_{ij}^{-1})_{(i,j)\in\mathbb{N}^{*}}$ indeed define a continuous operator $\check{T}^{-1}$ on $H$.

\begin{proposition}
\label{prop:bound}
 Let $\lambda \notin \mathcal{N}$. The operators $\check T$ defined by \eqref{eq:deftildeT:full}, and  $\check{T}^{-1}$ defined by \eqref{eq:defi:T-1:full}, $i. e.$ 
\begin{gather}
 \check{T}: \varphi_{n}\rightarrow \sum\limits_{i\in\mathbb{N}^{*}}\tilde{T}_{i, n}\varphi_{i}, \\
    \check{T}^{-1}: \varphi_{n}\rightarrow \sum\limits_{i\in\mathbb{N}^{*}}\tilde{T}^{-1}_{i, n}\varphi_{i}
\end{gather}
are bounded linear operators from $H$ to $H$. Moreover,
\begin{gather}
\|\check{T}\| \leq C  \left( 1  +  \dfrac{1}{\Dist_\alpha(\lambda)} \right), \label{eq:boundcheckT} \\
  \|\check{T}^{-1}\| \leq C  \left( 1  +  \dfrac{1}{\Dist_\alpha(\lambda)} \right)   e^{C \lambda^{\frac{1}{\alpha}}}, \label{eq:boundT-1}
\end{gather}
where $C$ is independent of $\lambda$.
\end{proposition}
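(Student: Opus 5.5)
The bound will come from Schur's test applied to the two infinite matrices in the orthonormal basis $\{\varphi_n\}$. Recall the criterion: if a matrix $(M_{i,j})$ satisfies
\[
\sup_{j}\sum_{i}|M_{i,j}|\le C_1 \quad\text{and}\quad \sup_{i}\sum_{j}|M_{i,j}|\le C_2,
\]
then the operator $\varphi_j\mapsto \sum_i M_{i,j}\varphi_i$, first defined on finite linear combinations, extends to a bounded operator on $H$ with norm at most $\sqrt{C_1C_2}$. The proof is one application of Cauchy--Schwarz: write $|M_{i,j}||f_j| = |M_{i,j}|^{1/2}\cdot|M_{i,j}|^{1/2}|f_j|$ and sum first in $j$, then in $i$. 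Density of finite combinations then gives the extension.

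\textbf{Bound on $\check T$.} Here $|\tilde T_{i,j}| = 1/|\lambda_i-\lambda_j-\lambda|$. Rather than quote Lemma \ref{lem:sumlambdaijdistbound} and divide by $\lambda^2$, I would redo its argument so that the constant does not grow in $\lambda$.

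Fix $i$. By Lemma \ref{lem:di}, at most two indices $j$ realize the minimum $D_i(\lambda)\ge \Dist_\alpha(\lambda)$; these contribute at most $2/\Dist_\alpha(\lambda)$. For the remaining indices I use the monotonicity of the $\lambda_j$ (real and ordered in the self-adjoint case; the skew-adjoint case is the same after multiplying by $i$). Together with Lemma \ref{lem:thirdgap}, this shows the distances $|\lambda_j-\lambda_i+\lambda|$ grow like $c|j-j_0|^\alpha$ away from the nearest index $j_0$. Concretely, $|\lambda_j-(\lambda_i-\lambda)| \ge \tfrac12|\lambda_j-\lambda_{j_0}| \ge c|j-j_0|^\alpha$ for $|j-j_0|\ge 2$, because $\lambda_i-\lambda$ lies between two consecutive eigenvalues. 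Since $\alpha>1$, the tail $\sum_{k\ge1} c^{-1}k^{-\alpha}$ is finite and independent of $\lambda$. Summing in $i$ for fixed $j$ is symmetric. This gives both Schur sums bounded by $C(1+1/\Dist_\alpha(\lambda))$, hence \eqref{eq:boundcheckT}.

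\textbf{Bound on $\check T^{-1}$.} By \eqref{eq:firstboundT-1} (which rests on Lemma \ref{lem:boundT-1}), $|\tilde T^{-1}_{i,j}|\le Ce^{C\lambda^{1/\alpha}}\lambda^2/|\lambda_j-\lambda_i-\lambda|$. Running the same row and column sum estimate as above, each Schur sum is bounded by
\[
Ce^{C\lambda^{1/\alpha}}\lambda^2\bigl(1+1/\Dist_\alpha(\lambda)\bigr).
\]
The factor $\lambda^2$ is absorbed into $e^{C\lambda^{1/\alpha}}$ by enlarging $C$, because $\lambda^2\le C'e^{\lambda^{1/\alpha}}$ for all $\lambda>0$. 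Schur's test then yields \eqref{eq:boundT-1}.

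\textbf{Main obstacle.} The one delicate point is showing that the constant in the $\check T$ estimate is uniform in $\lambda$, without the $\lambda^2$ prefactor that Lemma \ref{lem:sumlambdaijdistbound} carries. The issue is to keep the non-minimizing terms summable uniformly in the position of the shifted point $\lambda_i-\lambda$. The gap condition \eqref{eq:cond2} and Lemma \ref{lem:thirdgap} handle this, as sketched above.
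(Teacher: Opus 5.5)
Your proposal is correct and is essentially the paper's argument. The paper's Cauchy--Schwarz computation (with the diagonal terms split off) is exactly Schur's test, using the entrywise bound \eqref{eq:firstboundT-1} and the row/column sums of Lemma \ref{lem:sumlambdaijdistbound}. The ``main obstacle'' you flag is not really one: dividing Lemma \ref{lem:sumlambdaijdistbound} by $\lambda^2$ already gives $\sum_{j}|\lambda_j-\lambda_i-\lambda|^{-1}\le C(1+1/\Dist_\alpha(\lambda))$ with $C$ independent of $\lambda$, which is exactly what the paper uses, so your nearest-index re-derivation is valid but redundant.
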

In fact, this bounded operator is indeed the inverse of $\check{T}$, as expected:

\begin{proposition}
\label{tinverse}
Let $\lambda \notin \mathcal{N}$. 
The operator $\check{T}^{-1}$ defined by \eqref{eq:defi:T-1:full} is the inverse of $\check{T}$ in $H$. 
\end{proposition}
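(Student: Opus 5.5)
We have to show that $\check T^{-1}\check T = I$ and $\check T\check T^{-1} = I$ on $H$. Both operators are bounded by Proposition \ref{prop:bound}, so it is enough to check these identities on the basis vectors $\varphi_n$, that is, at the level of matrix entries:
\[
\sum_{k\in\N^*}\tilde T^{-1}_{i,k}\tilde T_{k,j}=\delta_{ij},\qquad \sum_{k\in\N^*}\tilde T_{i,k}\tilde T^{-1}_{k,j}=\delta_{ij}.
\]
Both series converge absolutely. Indeed, the rows and columns of $\tilde T^{-1}$ are uniformly in $\ell^1$ by Proposition \ref{prop:regT-1}, and the entries $\tilde T_{k,j}$ are bounded by $1/\Dist_\alpha(\lambda)$.

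We will verify these identities with a residue argument, which is the natural infinite-dimensional version of the Cauchy-matrix inversion. Introduce the entire functions
\[
P(z)=\prod_{m}\Bigl(1-\frac{z}{\lambda_m}\Bigr),\qquad Q(z)=P(z-\lambda).
\]
Since $\sum 1/|\lambda_m|<\infty$ (because $\alpha>1$), $P$ is entire of order $1/\alpha<1$. We then rewrite the products in \eqref{eq:expr-T-1:full} as ratios of values of $P$, $Q$ and their derivatives:
\[
\prod_{n\neq j}\Bigl(1-\frac{\lambda}{\lambda_j-\lambda_n}\Bigr)=\frac{Q(\lambda_j)}{(-\lambda)\,\lambda_j P'(\lambda_j)}\cdot c,
\]
with an analogous formula for the product in $m\neq i$ involving $P(\lambda_i+\lambda)/Q'(\lambda_i+\lambda)$. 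The normalising constants $c$ have to be tracked carefully; the ratio of two such infinite products converges because the ratio of general terms is $1+O(\lambda/(\lambda_j\lambda_n))$.

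With this rewriting, for fixed $i,j$ the sum $\sum_k \tilde T^{-1}_{i,k}\tilde T_{k,j}$ becomes
\[
\sum_k \operatorname{Res}_{z=\lambda_k} F(z),\qquad F(z)=\frac{Q(z)}{P(z)}\cdot\frac{\text{const}_{i}}{(z-\lambda_i-\lambda)(z-\lambda_j-\lambda)},
\]
up to the precise constants. The residue theorem on a sequence of large contours $\Gamma_R$ then gives
\[
\sum_k\operatorname{Res}_{\lambda_k}F+\sum_{w\in\{\lambda_i+\lambda,\;\lambda_j+\lambda\}}\operatorname{Res}_w F=\frac{1}{2\pi i}\oint_{\Gamma_R}F.
\]
When $i\neq j$, the point $\lambda_j+\lambda$ is a zero of $Q$, so the pole there cancels. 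When $i=j$, the double pole combines with the zero of $Q$ at $\lambda_i+\lambda$ and produces exactly $1$ after normalisation. This reproduces $\delta_{ij}$, provided the contour integral tends to $0$. The second identity is handled symmetrically by exchanging the roles of $P$ and $Q$ (that is, of $x_i$ and $y_j$), or by taking adjoints.

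The main obstacle is to show that $\oint_{\Gamma_R}F\to0$. For this we need the bound
\[
|Q(z)/P(z)|=\prod_m\Bigl|1+\frac{\lambda}{\lambda_m-z}\cdot(\ldots)\Bigr|
\]
to stay bounded on well-chosen contours. We would choose $\Gamma_R$ as circles, or in the self-adjoint case vertical and horizontal lines, passing midway between consecutive $\lambda_k$ and $\lambda_k+\lambda$. On such contours, the gap condition \eqref{eq:cond3} and Lemma \ref{lem:thirdgap} give $|\lambda_m-z|\gtrsim |k-m|\,k^{\alpha-1}$ away from the nearby pole. Then, exactly as in the proof of Lemma \ref{lem:boundT-1}, $\log|Q/P|$ is controlled by $\sum_m \lambda/|\lambda_m-z|\lesssim \lambda\log k/k^{\alpha-1}+O(\lambda^{1/\alpha})$, which is bounded uniformly in $R$. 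The extra factor $1/|z|^2$ then forces the integral to $0$.

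If the contour estimates turn out to be delicate, we would use a fallback: truncate to the $d\times d$ Cauchy matrix, where the finite-dimensional formula \eqref{eq:expr-T-1} is exact, and pass to the limit $d\to\infty$ entrywise. The truncated products converge to the infinite ones, and the tails of the sums are dominated uniformly in $d$ via the $\ell^1$ bounds of Lemma \ref{lem:sumlambdaijdistbound}, so dominated convergence yields the identities.
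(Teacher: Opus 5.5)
Your proposal is correct. Your fallback is essentially the paper's proof. The paper truncates to $N\times N$ Cauchy matrices, uses the exact finite inverse, and passes to the limit through the explicit estimate \eqref{eq:esimconv}. Your plain dominated-convergence version is even shorter. Every partial product is bounded by $\prod_{m\neq i}(1+\lambda/|\lambda_i-\lambda_m|)\le Ce^{C\lambda^{1/\alpha}}$, since all factors are $\ge 1$, which gives the summable majorant $Ce^{C\lambda^{1/\alpha}}\lambda^2/(|\lambda_k-\lambda_i-\lambda|\,|\lambda_k-\lambda_j-\lambda|)$.

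Your main residue argument is a genuinely different route, and it also works. The step you flagged as the obstacle is in fact easy:
\begin{itemize}
\item One has $Q(z)/P(z)=\prod_m\bigl(1+\lambda/(\lambda_m-z)\bigr)$ exactly.
\item On the circles $|z|=R_k:=(|\lambda_k|+|\lambda_{k+1}|)/2$, one has $|z-\lambda_m|\ge\bigl||\lambda_m|-R_k\bigr|$, because the $\lambda_m$ lie on a ray from $0$ in both the self-adjoint and skew-adjoint cases.
\item Then \eqref{eq:cond2}--\eqref{eq:cond3} give $\sum_m\lambda/|z-\lambda_m|\lesssim\lambda(1+\log k)/k^{\alpha-1}\to0$.
\item Hence $Q/P\to1$ uniformly on these circles and $\oint F=O(1/R_k)$.
\end{itemize}

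Do fix the bookkeeping, though. With your genus-zero normalisation there are no extra constants:
\begin{itemize}
\item $\prod_{n\neq k}\bigl(1-\frac{\lambda}{\lambda_k-\lambda_n}\bigr)=-\frac{Q(\lambda_k)}{\lambda P'(\lambda_k)}$;
\item $\prod_{m\neq i}\bigl(1+\frac{\lambda}{\lambda_i-\lambda_m}\bigr)=\frac{P(\lambda_i+\lambda)}{\lambda P'(\lambda_i)}$, using $Q'(\lambda_i+\lambda)=P'(\lambda_i)$.
\end{itemize}
Therefore $\tilde T^{-1}_{i,k}=-\frac{P(\lambda_i+\lambda)}{P'(\lambda_i)}\cdot\frac{Q(\lambda_k)}{P'(\lambda_k)(\lambda_k-\lambda_i-\lambda)}$. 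This is consistent with your $F$. The factor $\lambda_j$ in your displayed rewriting is spurious; taken literally, it would add a pole of $F$ at $z=0$ and spoil the identity.

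As for what each route buys: the paper's argument is elementary and stays inside the finite Cauchy-matrix picture that motivates the construction. It also gives a quantitative truncation rate. Your route identifies the infinite inverse conceptually, and it treats $\check T^{-1}\check T$ and $\check T\check T^{-1}$ by one computation (swap $P$ and $Q$). It also yields more for free. Run it with $Q(z)/\bigl(P(z)(z-\lambda_i-\lambda)\bigr)$, whose contour integral now tends to $1$ rather than $0$. This gives $k_ib_i=\sum_k\tilde T^{-1}_{i,k}=-P(\lambda_i+\lambda)/P'(\lambda_i)=-\lambda F_i(\lambda)$. That is Lemma \ref{lem-Jn-1}, obtained without the polynomial identity and limiting argument the paper uses there.
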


\begin{proof}[of Proposition \ref{prop:bound}]
In the proof of this proposition, the constants $C$ may differ from line to line, but all of them are independent of the choice of $\lambda\notin \mathcal{N}$.
Using the expressions of $\check T$ and $\check{T}^{-1}$, we are able to prove that it is a bounded operator on $H$ from Lemma \ref{lem:boundT-1} : for every $f\in H$, there is
\begin{align*}
\|\check{T}^{-1} f \|^2 & = \left\| \sum_{n\in \N^*} f_n \check{T}^{-1} \varphi_n \right\|^2   \\
& = \left\| \sum_{n\in \N^*} f_n \left( \sum_{k\in \N^*} (\tilde{T}^{-1})_{k, n} \varphi_k\right) \right\|^2  \\
& = \left\| \sum_{k\in \N^*} f_k (\tilde{T}^{-1})_{kk} \varphi_k + \sum_{k\in \N^*} \left( \sum_{n\in \N^* \setminus\{k\}} f_n (\tilde{T}^{-1})_{k, n} \right) \varphi_k \right\|^2 \\
&\leq C    e^{C \lambda^{\frac{1}{\alpha}}}  \|f\|^2 + 2 \sum_{k\in \N^*} \left| \sum_{n\in \N^* \setminus\{k\}} f_n (\tilde{T}^{-1})_{k, n} \right|^2 \\ 
& \leq C  e^{C \lambda^{\frac{1}{\alpha}}}  \| f \|^2  + 2C e^{C \lambda^{\frac{1}{\alpha}}} \sum_{k \in \N^*} \left| \sum_{n\in \N^* \setminus \{k\}} \dfrac{|f_n|}{|\lambda_n-\lambda_k-\lambda|} \right|^2, 
\end{align*}
where in the last line we have used the inequality \eqref{eq:firstboundT-1}.
Thanks to the Cauchy-Schwarz inequality and Lemma  \ref{lem:sumlambdaijdistbound}, 
\begin{align*}
&  \sum_{k \in \N^*} \left| \sum_{n\in \N^* \setminus \{k\}} \dfrac{|f_n|}{|\lambda_n-\lambda_k-\lambda|} \right|^2  \\
&  \leq \sum_{k \in \N^*}  \left(\sum_{n\in \N^* \setminus \{k\}} \dfrac{|f_n|^2}{|\lambda_n-\lambda_k-\lambda|} \right) \left( \sum_{n\in \N^* \setminus \{k\}}\dfrac{1}{|\lambda_n-\lambda_k-\lambda|}  \right)  \\
&   \leq  C \left(  1  +  \dfrac{1}{\Dist_\alpha(\lambda)} \right) \sum_{k \in \N^*} \sum_{n\in \N^* \setminus \{k\}} \dfrac{|f_n|^2}{|\lambda_n-\lambda_k-\lambda|}  \\
&  \leq  C \left(  1  +  \dfrac{1}{\Dist_\alpha(\lambda)} \right) \sum_{k \in \N^*} \sum_{n\in \N^*} \dfrac{|f_n|^2}{|\lambda_n-\lambda_k-\lambda|}  \\
& \leq  C \left(  1 +  \dfrac{1}{\Dist_\alpha(\lambda)} \right) \sum_{n\in \N^*} \sum_{k \in \N^*} \dfrac{|f_n|^2}{|\lambda_n-\lambda_k-\lambda|}  \\
& \leq  C \left(  1  +  \dfrac{1}{\Dist_\alpha(\lambda)} \right)^2 \sum_{n\in \N^*} |f_n|^2  \leq  C \left(  1  +  \dfrac{1}{\Dist_\alpha(\lambda)} \right)^2 \| f \|^2 .
\end{align*}
Thus 
\begin{equation}
  \|\check{T}^{-1} f \| \leq C  \left( 1  +  \dfrac{1}{\Dist_\alpha(\lambda)} \right)   e^{C \lambda^{\frac{1}{\alpha}}}  \|f \|.
\end{equation}
This ends the proof of boundedness of $\check T^{-1}$ and gives directly \eqref{eq:boundT-1}. The proof of \eqref{eq:boundcheckT} is similar and even simpler, due to the simpler expression of $\tilde T_{i, j}$.
\end{proof}

We now turn to the proof of Proposition \ref{tinverse}

\begin{proof}[of Proposition \ref{tinverse}]
Let us fix the value of $\lambda\not \in \mathcal{N}$.
Recall the definition of $\check{T}$: 
\begin{equation}
    \check{T} : \varphi_{n} \mapsto \sum_p \tilde T_{p, n}\varphi_p=  \sum_p \frac{\varphi_p}{\lambda_p- \lambda_n\ - \lambda}\;\; \textrm{ with }\;\; \tilde{T}_{p, n}:= \frac{1}{\lambda_p- \lambda_n- \lambda}.
\end{equation}

Recall the definition of $\check{T}^{-1}$
\begin{equation}
    \check{T}^{-1} : \varphi_{n}  \mapsto \sum_i \tilde T_{i, n}^{-1}\varphi_i,
\end{equation}
where
\begin{equation}
    \tilde{T}^{-1}_{i, j}:= \frac{\lambda^2}{\lambda_j- \lambda_i- \lambda}\prod_{m\neq i} \left(1+ \frac{\lambda}{\lambda_i- \lambda_m}\right) \prod_{n\neq j} \left(1- \frac{\lambda}{\lambda_j- \lambda_n}\right).
\end{equation}
We will show that $\check{T}^{-1}$ is in fact the inverse of $\check{T}$, i.e.  
\begin{equation*}
    \check{T}\circ \check{T}^{-1}= \check{T}^{-1}\circ \check{T}= Id: H \rightarrow H.
\end{equation*}

We focus on the case of $ \check{T}^{-1}\circ \check{T}$, while the other case can be treated similarly.
Thanks to the boundedness of $\check{T}$ and $\check{T}^{-1}$ (see Proposition \ref{prop:bound}), this is a bounded operator from $H$ to itself. Since $\{ \varphi_n \}_{n\in \NN^*}$ is an orthogonal basis of $H$, it suffices to show that 
\begin{equation*}
   ( \check{T}^{-1}\circ \check{T}) \varphi_n= \varphi_n, \; \forall n\in \N^*.
\end{equation*}
Since 
\begin{align*}
  ( \check{T}^{-1}\circ \check{T}) \varphi_n= \check{T}^{-1}(\check{T}) \varphi_n  
  = \check{T}^{-1} \left(\sum_p \tilde T_{p, n} \varphi_p \right)
  &= \sum_p \tilde T_{p, n} \sum_k \tilde T^{-1}_{k, p} \varphi_k = \sum_k \varphi_k \left(\sum_p \tilde T^{-1}_{k, p} \tilde T_{p, n}  \right),
\end{align*}
this is equivalent to 
\begin{equation}\label{eq:deltaRS}
    \sum_{i\in \mathbb{N}^*} \tilde{T}^{-1}_{ji} \tilde{T}_{in}= \delta_{j,n}, \; \forall j, n\in \N^*,
\end{equation}
where $\delta_{j,n}$ denotes the Kronecker symbol. The proof is inspired from the finite dimensional Cauchy matrix (\cite{zbMATH03152374}). For any $N\in \N^*$ we introduce the finite (truncated) matrices 
\begin{equation}
    \tilde{T}^N:= (\tilde{T}_{ij}^N)_{1\leq i,j\leq N}, \;  (\tilde{T}^{-1})^N:= ((\tilde{T}^{-1}_{ij})^N)_{1\leq i,j\leq N}\in \mathbb{R}^{N\times N} 
\end{equation}
with, 
\begin{gather}
    \tilde{T}^N_{ij}= \tilde{T}_{ij}= \frac{1}{\lambda_i- \lambda_j- \lambda}, \quad  1\leq i,j\leq N, \\
    (\tilde{T}^{-1}_{ij})^N=  \frac{\lambda^2}{\lambda_j- \lambda_i- \lambda}\prod_{\substack{m\neq i\\ 1\leq m\leq N}} \left(1+ \frac{\lambda}{\lambda_i- \lambda_m}\right) \prod_{\substack{n\neq j\\ 1\leq n\leq N}} \left(1- \frac{\lambda}{\lambda_j- \lambda_n}\right), \quad 1\leq i,j\leq N.
\end{gather}
By denoting $x_i:= \lambda_i$ and $y_j:= \lambda_j+ \lambda$, we notice that $(\tilde{T}^N_{i, j})$ is a Cauchy matrix as $\tilde{T}_{ij}^N= 1/(x_i- y_j)$. This implies that the $N\times N$ matrix $(\tilde{T}^N_{i, j})$ is invertible with its inverse being $(\tilde{T}^{-1})^N= ((\tilde{T}^{-1})^N_{ij})$, namely 
\begin{equation}
      \sum_{1\leq i\leq N} (\tilde{T}^{-1})^N_{ji} \tilde{T}^N_{in}= \delta_{j,n}, \quad \forall 1\leq j, n\leq N. 
\end{equation}

For any fixed  $j, n\in \N^*$, we consider $N\geq |j+ n|$, the previous equation yields
\begin{equation}\label{eq:deltafinite}
      \sum_{1\leq i\leq N} (\tilde{T}^{-1})^N_{ji} \tilde{T}_{in}= \sum_{1\leq i\leq N} (\tilde{T}^{-1})^N_{ji} \tilde{T}_{in}^N=  \delta_{j,n}, \quad \forall N \geq |j+ n|.
\end{equation}

We know that for any $N\geq |j+n|$ and any $i$ satisfying  $i\leq N$ there is
\begin{align*}
    \tilde{T}^{-1}_{ji}&- (\tilde{T}^{-1}_{ji})^N= \tilde{T}^{-1}_{ji} \left(1-  \prod_{m\geq N+1} \left(1+ \frac{\lambda}{\lambda_j- \lambda_m}\right)^{-1} \prod_{n\geq N+1} \left(1- \frac{\lambda}{\lambda_i- \lambda_n}\right)^{-1}\right) 
\end{align*}

For any  $N\geq |j+n|+ (10\lambda/c)^{1/(\alpha-1)}$, where the constant $c$ is given in \eqref{eq:cond3}, and any $i$ satisfying  $i\leq N$ there is, 
\begin{equation*}
    \left|\frac{\lambda}{\lambda_j- \lambda_m}\right|\leq \frac{|\lambda|}{c|j- m|m^{\alpha-1}}\leq \frac{|\lambda|}{c N^{\alpha-1}}\leq \frac{1}{10}, \quad \forall m\geq N+1,
\end{equation*}
and thus,
\begin{equation*}
  \left|1- \frac{\lambda}{|\lambda_j- \lambda_m|}\right|\leq \left|1+ \frac{\lambda}{\lambda_j- \lambda_m}\right|\leq \left|1+ \frac{\lambda}{|\lambda_j- \lambda_m|}\right|, \quad \forall m\geq N+1.
\end{equation*}
One has similar estimates for the term 
\begin{equation*}
\left|1- \frac{\lambda}{|\lambda_i- \lambda_n|}\right|\leq  \left|1- \frac{\lambda}{\lambda_i- \lambda_n}\right|\leq \left|1+ \frac{\lambda}{|\lambda_i- \lambda_n|}\right|, \quad \forall n\geq N+1.
\end{equation*}
Since the value of $\lambda\not \in \mathcal{N}$ is fixed, one can show thanks to Assumption \ref{assume:structure}
the existence of $c_0= c_0(\lambda)>0$ such that 
\begin{equation}\label{boud:fix:lambda:ij}
    |\lambda_i- \lambda_j- \lambda|\geq c_0|\lambda_i - \lambda_j|, \forall i, j\in \mathbb{N}^*.
\end{equation}
In conclusion, we fix some $j, n\in \mathbb{N}^*$, then for any $N\geq 2|j+n|+ (10\lambda/c)^{1/(\alpha-1)}$ and any $i \leq N$, there is, 
\begin{align*}
\tilde{T}^{-1}_{ji}- (\tilde{T}^{-1}_{ji})^N&\leq |\tilde T^{-1}_{j, i}| 
\left(1-  \prod_{m\geq N+1} \left(1+ \frac{\lambda}{|\lambda_j- \lambda_m|}\right)^{-1} \prod_{n\geq N+1} \left(1+ \frac{\lambda}{|\lambda_i- \lambda_n|}\right)^{-1}\right)\\
&\leq |\tilde T^{-1}_{j, i}| \left(1- \exp\left(-\sum_{m\geq N+1} \log \left(1+ \frac{\lambda}{|\lambda_j- \lambda_m|}\right) 
- \sum_{n\geq N+1} \log \left(1+ \frac{\lambda}{|\lambda_i- \lambda_n|}\right)\right)\right) \\
&\leq |\tilde T^{-1}_{j, i}| \left(1- \exp\left(-\sum_{m\geq N+1}  \frac{\lambda}{|\lambda_j- \lambda_m|}- \sum_{n\geq N+1}  \frac{\lambda}{|\lambda_i- \lambda_n|}\right)\right)\\
& \leq |\tilde T^{-1}_{j, i}| \left(1- \exp\left(-\sum_{m\geq N+1}  \frac{\lambda}{c|j-m|m^{\alpha-1}}- \sum_{n\geq N+1}  \frac{\lambda}{c|i- n|n^{\alpha-1}}\right)\right) \\
& \leq |\tilde T^{-1}_{j, i}| \left(1- \exp\left(-\sum_{m\geq N+1}  \frac{2\lambda}{cm^{\alpha}}- \sum_{n\geq N+1}  \frac{2\lambda}{c n^{\alpha}}\right)\right)\\
& \leq |\tilde T^{-1}_{j, i}| \left(1- \exp\left(-\frac{4\lambda}{c (\alpha-1)} \cdot \frac{1}{N^{\alpha-1}}- \frac{C \log N}{N^{\alpha- 1}} \right)\right)\\
& \leq C e^{C \lambda^{\frac{1}{\alpha}}}\left|\dfrac{\lambda^2}{(\lambda_i-\lambda_j-\lambda)}\right| \left(1- \exp\left(-\frac{4\lambda}{c (\alpha-1)} \cdot \frac{1}{N^{\alpha-1}}- \frac{C \log N}{N^{\alpha- 1}} \right)\right)\\ 
& \leq C e^{C \lambda^{\frac{1}{\alpha}}}\left|\dfrac{\lambda^2}{(\lambda_i-\lambda_j-\lambda)}\right| \frac{\lambda+ \log N}{N^{\alpha-1}}\\
&\leq \frac{C}{|\lambda_i -\lambda_j|} \frac{\log N}{N^{\alpha-1}},
\end{align*}
where the constant $C= C(\lambda)$ is independent of $j, n$ and $N$ larger than $ 2|j+n|+ (10\lambda/c)^{1/(\alpha-1)}$ and $i\leq N$.
And similarly, together with the simple estimates,
\begin{equation*}
    \log (1- |x|)\geq - 2|x| \textrm{ and } 1- e^x\geq - 2x,  \quad \forall x\in [0, 1/10],
\end{equation*}
we obtain 
\begin{align*}
\tilde{T}^{-1}_{ji}- (\tilde{T}^{-1}_{ji})^N&\geq |\tilde T^{-1}_{j, i}| \left(1- \exp\left(-\sum_{m\geq N+1} \log \left(1- \frac{\lambda}{|\lambda_j- \lambda_m|}\right) \right. \right.  \\
& \qquad \qquad \qquad \qquad \qquad \qquad \left. \left.
- \sum_{n\geq N+1} \log \left(1- \frac{\lambda}{|\lambda_i- \lambda_n|}\right)\right)\right)\\
& \geq |\tilde T^{-1}_{j, i}|  \left(1- \exp\left(-\sum_{m\geq N+1} \left(- \frac{2\lambda}{|\lambda_j- \lambda_m|}\right)- \sum_{n\geq N+1}  \left(- \frac{2\lambda}{|\lambda_i- \lambda_n|}\right)\right)\right) \\
& = |\tilde T^{-1}_{j, i}|  \left(1- \exp\left(\sum_{m\geq N+1} \left( \frac{2\lambda}{|\lambda_j- \lambda_m|}\right)+ \sum_{n\geq N+1}  \left( \frac{2\lambda}{|\lambda_i- \lambda_n|}\right)\right)\right) \\
&\geq |\tilde T^{-1}_{j, i}|  \left(1- \exp\left( \frac{C (\lambda+ \log N)}{N^{\alpha-1}}\right)\right)\\
&\geq - \frac{C}{|\lambda_i -\lambda_j|} \frac{\log N}{N^{\alpha-1}}. 
\end{align*}
Therefore, there exists some $C= C(\lambda)$ such that for any $j, n\in \mathbb{N}^*$ , for any $N\geq 2|j+n|+ (10\lambda/c)^{1/(\alpha-1)}$ and for any $i\leq N$, there is
\begin{equation}\label{eq:esimconv}
\left|\tilde{T}^{-1}_{ji}- (\tilde{T}^{-1}_{ji})^N\right|\leq \frac{C\log N}{N^{\alpha-1}|\lambda_i -\lambda_j|}.
\end{equation}

Using the estimate \eqref{eq:esimconv} we can prove the required equality \eqref{eq:deltaRS}.
Fix $j, n\in \mathbb{N}^*$. By comparing \eqref{eq:deltaRS} and \eqref{eq:deltafinite} we obtain that as $N\in (2|j+n|+ (10\lambda/c)^{1/(\alpha-1)}, +\infty)$ tends to $\infty$ there is 
\begin{align*}
      \;\;\;\; |\sum_{i\in \N^*} & \tilde{T}^{-1}_{ji} \tilde{T}_{in} -  \sum_{1\leq i\leq N} (\tilde{T}^{-1})^N_{ji} \tilde{T}_{in}| \\
      &= |\sum_{i\geq N+1}  \tilde{T}^{-1}_{ji} \tilde{T}_{in}+ \sum_{i=1}^N (\tilde{T}^{-1}_{ji}- (\tilde{T}^{-1}_{ji})^N) \tilde{T}_{in}| \\
      &\leq \sum_{i\geq N+1}  C e^{C \lambda^{\frac{1}{\alpha}}}\left|\dfrac{\lambda^2}{(\lambda_i-\lambda_j-\lambda)}\right| \frac{1}{|\lambda_i- \lambda_n- \lambda|}+ \sum_{i=1}^N  \frac{C}{|\lambda_i -\lambda_j|} \frac{\log N}{N^{\alpha-1}} \frac{1}{|\lambda_i- \lambda_n- \lambda|} \\
      &\leq C \sum_{i\geq N+1}\frac{1}{|\lambda_i- \lambda_j|} \frac{1}{|\lambda_i- \lambda_n|}+  C \sum_{i=1}^N  \frac{1}{|\lambda_i -\lambda_j|} \frac{1}{|\lambda_i- \lambda_n|} \frac{\log N}{N^{\alpha-1}} \\
      & \leq C  \sum_{i\geq N+1} \frac{1}{\left(\frac{i}{2}\right)^{\alpha}}  \frac{1}{\left(\frac{i}{2}\right)^{\alpha}}+ C \sum_{i=1}^N \frac{1}{|i- j|^{\alpha}}  \frac{1}{|i- n|^{\alpha}}   \frac{\log N}{N^{\alpha-1}}\\
      &\leq \frac{C}{N^{2\alpha-1}}+ \frac{C\log N}{N^{\alpha-1}},
\end{align*}
where we have used the estimate \eqref{boud:fix:lambda:ij}. Thus we conclude that 
\begin{equation}
   \sum_{i\in \N^*} \tilde{T}^{-1}_{ji} \tilde{T}_{in}= \lim_{N\rightarrow +\infty}  \sum_{1\leq i\leq N} (\tilde{T}^{-1})^N_{ji} \tilde{T}_{in}= \delta_{j, n}.
\end{equation}

Therefore, the equality \eqref{eq:deltaRS} holds implying that $\check{T}^{-1}\circ \check{T}= Id$. Using the same method we can also deduce that 
\begin{equation}\label{eq:TjiTin-1}
   \sum_{i\in \N^*} \tilde{T}_{ji} \tilde{T}^{-1}_{in}= \delta_{j, n}.
\end{equation}
implying that $\check{T}\circ \check{T}^{-1}=  Id$.
 This ends the proof of Proposition \ref{tinverse}\\

\end{proof}

\subsubsection{Uniqueness condition and existence of the feedback.} 
\label{sec:TBB}
We are now in position to solve the uniqueness condition $TB=B$, as given by \eqref{sec2:eqop}. Recall that this condition may not be formulated as is, since $B\notin H$ which may prevent $TB$ from being well-defined \textit{a priori}. To circumvent this technical point, we may exploit the infinite matrices representation of the operators $\check{T}$ and $\check{T}^{-1}$, introduced in the previous subsection, to work on a weaker formulation, analogous to \eqref{eq:TBBeq-0}:
\begin{equation}
\label{eq:TBBeq}
\sum_{n\in \N^\ast} \dfrac{k_n b_n}{\lambda_j-\lambda_n-\lambda}=1, \quad \forall j\in \N^\ast.
\end{equation}
Solving this uniqueness condition has a double impact: it defines the feedback coefficients $k_n$ uniquely, and thereby defines a unique candidate transformation $T$. As we will show in Section \ref{sec-well-posedness}, it is also crucial to prove that $T$ and $K$ do satisfy the operator equation \eqref{eq:operator-identity}, which ensures the equivalence between the closed-loop system and the target system. 

\begin{corollary}\label{cor:existkn}
Let $\lambda\notin \mathcal{N}$. Then
there exists $\{k_n\}_{n\in \N^*} \in \ell^\infty(\NN;\RR)$ such that 
\[
\sum_{n\in \N} \dfrac{k_n b_n}{\lambda_j-\lambda_n-\lambda}=1,
\]
is satisfied for every $j\in \N^*$.
\end{corollary}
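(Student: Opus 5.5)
The plan is to mirror the finite-dimensional computation \eqref{eq:TBfinie}. Set $x_n := k_n b_n$ and let $\mathbf{1}$ denote the constant sequence $(1,1,\dots)$. The equation \eqref{eq:TBBeq} reads $\sum_n \tilde T_{j,n} x_n = 1$ for every $j$, i.e. formally $\tilde T x = \mathbf{1}$. The difficulty is that $\mathbf{1}\notin \ell^2$, so $H$-invertibility of $\check T$ (Proposition \ref{tinverse}) cannot be applied directly. Instead I would work at the level of infinite matrices acting on $\ell^\infty$, using the $\ell^1$ row and column bounds of Proposition \ref{prop:regT-1} together with the analogous bounds for $\tilde T$ from Lemma \ref{lem:sumlambdaijdistbound}.

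\textbf{Definition of $x$ and $k$.} I would define
\[
x_n := \sum_{j\in\N^*} \tilde T^{-1}_{n,j}, \qquad k_n := x_n/b_n .
\]
By Proposition \ref{prop:regT-1}(ii), the rows of $\tilde T^{-1}$ are uniformly in $\ell^1$, so the series converges absolutely and $\sup_n|x_n|\le C$. Since $b_n\ge c>0$ by \eqref{b_n-growth}, we get $k\in\ell^\infty$. Realness of $k_n$ in the self-adjoint case is immediate from the real formula \eqref{eq:expr-T-1:full}.

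\textbf{Verification of \eqref{eq:TBBeq}.} For fixed $j$,
\[
\sum_n \tilde T_{j,n} x_n = \sum_n\sum_i \tilde T_{j,n}\tilde T^{-1}_{n,i}.
\]
The double series converges absolutely: summing over $n$ with $i$ fixed, I would bound $|\tilde T^{-1}_{n,i}|\le Ce^{C\lambda^{1/\alpha}}\lambda^2/|\lambda_i-\lambda_n-\lambda|$ via \eqref{eq:firstboundT-1}, and $|\tilde T_{j,n}|\le 1/\Dist_\alpha(\lambda)$. The remaining double sum $\sum_{i,n}|\lambda_j-\lambda_n-\lambda|^{-1}|\lambda_i-\lambda_n-\lambda|^{-1}$ has to be handled more carefully. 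I would sum first over $i$ using Lemma \ref{lem:sumlambdaijdistbound}, which gives a bound independent of $n$ but leaves $\sum_n |\lambda_j-\lambda_n-\lambda|^{-1}$. That sum is finite by Lemma \ref{lem:sumlambdaijdistbound}(i)-type bounds, since $\sum_n 1/|n-j|^\alpha<\infty$ by Lemma \ref{lem:thirdgap}. Hence Fubini applies and
\[
\sum_n \tilde T_{j,n} x_n = \sum_i\Big(\sum_n \tilde T_{j,n}\tilde T^{-1}_{n,i}\Big)=\sum_i \delta_{j,i}=1,
\]
where the inner identity is \eqref{eq:TjiTin-1}, established entrywise in the proof of Proposition \ref{tinverse}.

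\textbf{Main obstacle.} The main obstacle is justifying this interchange of summation. Operator invertibility on $H$ does not directly give entrywise identities on non-$\ell^2$ sequences, so absolute summability of $\sum_{n,i}|\tilde T_{j,n}||\tilde T^{-1}_{n,i}|$ must be checked by hand. If the crude bound above turns out insufficient because of the non-summability of $\sum_i 1/|\lambda_i-\lambda_n-\lambda|$ uniformly in $n$, I would fall back on a cleaner argument. The column sums $\sum_i|\lambda_i-\lambda_n-\lambda|^{-1}$ are bounded uniformly in $n$ by Lemma \ref{lem:sumlambdaijdistbound}(ii), so
\[
\sum_{n,i}|\tilde T_{j,n}||\tilde T^{-1}_{n,i}|\le \sup_n\|\tilde T^{-1}_{n,\cdot}\|_{\ell^1}\sum_n|\tilde T_{j,n}|<\infty,
\]
which is exactly the combination of Proposition \ref{prop:regT-1}(ii) and Lemma \ref{lem:sumlambdaijdistbound}(i) applied to the row $j$ of $\tilde T$.
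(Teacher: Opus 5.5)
Your proposal is correct and is essentially the paper's proof. You define $k_nb_n=\sum_j(\tilde T^{-1})_{nj}$, get the $\ell^\infty$ bound from Proposition \ref{prop:regT-1}(ii) together with $b_n\ge c$, and verify \eqref{eq:TBBeq} by exchanging the two sums and applying the entrywise identity \eqref{eq:TjiTin-1}. Your bound $\sum_{n,i}|\tilde T_{j,n}||\tilde T^{-1}_{n,i}|\le \sup_n\|\tilde T^{-1}_{n,\cdot}\|_{\ell^1}\sum_n|\tilde T_{j,n}|<\infty$ is valid and justifies the interchange of summation, which the paper performs without comment. (The row sum $\sum_n|\tilde T_{j,n}|$ is part (ii) of Lemma \ref{lem:sumlambdaijdistbound} and the column sum is part (i), the reverse of your labels, though both parts hold.)
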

\begin{proof}
We have proven that 
\[
\sum_{n\in \N} \dfrac{k_n b_n}{\lambda_j-\lambda_n-\lambda}=1, \quad \forall j\in \N^*,
\]
is equivalent to 
\begin{equation}\label{def:kb:tildeT}
  k_n b_n = \sum_{j\in \N^*} (\tilde{T}^{-1})_{nj}, \quad \forall n\in \N^*  
\end{equation}

Indeed, this is a consequence of Proposition \ref{tinverse}.  First, thanks to Proposition \ref{prop:regT-1}, we know that $\{(\tilde{T}^{-1})_{nj}\}_{j\in \N^*} \in \ell^1(\NN;\RR)$. This in particular implies that the above series on the right-hand side is convergent. Next, by implementing the above defined $\{k_n b_n\}_{n\in \NN^*}$ into the summation and using the identity \eqref{eq:TjiTin-1} we obtain:
\begin{align*}
\sum_{n\in \N} \dfrac{k_n b_n}{\lambda_j-\lambda_n-\lambda}&= \sum_{n\in \N} \tilde T_{j, n} \sum_{k\in \N^*} (\tilde{T}^{-1})_{n, k}\\
& = \sum_{k\in \N^*} \sum_{n\in \N} \tilde T_{j, n}(\tilde{T}^{-1})_{n,k} \\
&= \sum_{k\in \N} \delta_{j, k}= 1.
\end{align*}

 Moreover, by the controllability assumption, there exists $0< c \leq |b_n|  \leq C , \forall n\in \N$. With the fact that $\sup_{n\in \N^*} \| \{(\widetilde{T}^{-1})_{nj}\}_{j\in \N^*} \|_{\ell^1} < C$, we deduce that $\{k_n\}_{n\in \N^*} \in \ell^\infty(\NN^*;\RR)$. 
\end{proof}

From Corollary \ref{cor:existkn} we deduce that there exists $\{k_{n}\}_{n\in\mathbb{N}^*}\in \ell^{\infty}(\NN^*;\RR)$ such that \eqref{eq:TBBeq} holds.  Note that, at this point, we have not yet proved the invertibility of $T$ (in contrast with $\check{T}$ which we know is invertible from Proposition \ref{tinverse}): the properties established on the auxiliary transformation $\check{T}$ suffice to study the uniqueness condition and define the feedback $K$. From \eqref{def-T} we know that with this choice of $\{k_{n}\}_{n\in\mathbb{N}^*}\in \ell^{\infty}(\NN;\RR)$ there exists a unique $T\in\mathcal{L}(H)$, and it satisfies a weak equivalent to the $TB=B$ condition, namely \eqref{eq:TBBeq}. What we have shown is summarized in:
\begin{proposition}
\label{prop:candidate-T}
Let $\{k_{n}\}_{n\in\mathbb{N}^*}$ be given by Corollary \ref{cor:existkn}, then $T\in \mathcal{L}(H)$ given by \eqref{def-T}, that is,
\[T\varphi_n=k_n \sum_{p\in \N} \frac{b_p}{\lambda_p-\lambda-\lambda_n} \varphi_p , \quad \forall n \in \N^*\]
is well defined and satisfies \eqref{eq:TBBeq}.
\end{proposition}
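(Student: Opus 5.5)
We must show that $T$ defined by \eqref{def-T}, with the $k_n$ of Corollary \ref{cor:existkn}, is a bounded operator on $H$, and that it satisfies \eqref{eq:TBBeq}. The plan is to reduce boundedness to the factorization already used in finite dimension.

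First, expand $T\varphi_n$ on the eigenbasis. Using $B=\sum_p b_p\varphi_p$ in $D(A)'$ and $(A-(\lambda_n+\lambda)I)^{-1}\varphi_p=(\lambda_p-\lambda_n-\lambda)^{-1}\varphi_p$, one gets
\[
T\varphi_n=k_n\sum_{p}\frac{b_p}{\lambda_p-\lambda_n-\lambda}\varphi_p .
\]
The coefficients $\{b_p/(\lambda_p-\lambda_n-\lambda)\}_p$ lie in $\ell^2$: the $b_p$ are bounded by \eqref{b_n-growth}, and Lemma \ref{eigenvalue-growth} gives $|\lambda_p-\lambda_n-\lambda|\gtrsim|p-n|^\alpha$ except for the finitely many $p$ near the resonance, where the lower bound $\Dist_\alpha(\lambda)>0$ applies. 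Hence each $T\varphi_n\in H$, and since $(A-(\lambda_n+\lambda)I)^{-1}$ maps $D(A)'$ to $H$, the formula agrees with \eqref{def-T}.

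Second, as in \eqref{T-tauBK-checkT}, identify on finite linear combinations $f=\sum f_n\varphi_n$
\[
Tf=\sum_p b_p\Big(\sum_n \frac{k_nf_n}{\lambda_p-\lambda_n-\lambda}\Big)\varphi_p=\tau_B\check T\tau_K f,
\]
with $\tau_K,\tau_B$ the diagonal multipliers of \eqref{def:eq:tauBKTcheck}. By Corollary \ref{cor:existkn}, $\{k_n\}\in\ell^\infty$, so $\|\tau_K\|\le\sup|k_n|$. By \eqref{b_n-growth}, $\|\tau_B\|\le C$. By Proposition \ref{prop:bound}, $\check T$ is bounded with norm at most $C(1+\Dist_\alpha(\lambda)^{-1})$. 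Therefore $T$ extends by density to a bounded operator on $H$, with
\[
\|T\|\le C\sup_n|k_n|\,(1+\Dist_\alpha(\lambda)^{-1}),
\]
and this extension is unique.

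Third, \eqref{eq:TBBeq} is exactly the identity established in the proof of Corollary \ref{cor:existkn}. The only delicate point there is the exchange of summations in $\sum_n\tilde T_{j,n}\sum_k\tilde T^{-1}_{n,k}$. I would justify it by Fubini: the absolute double sum is bounded by
\[
\sum_n|\tilde T_{j,n}|\sup_n\big\|\{\tilde T^{-1}_{n,k}\}_k\big\|_{\ell^1},
\]
which is finite because $\sum_n|\tilde T_{j,n}|<\infty$ by Lemma \ref{lem:sumlambdaijdistbound} and the $\ell^1$ norms are uniformly bounded by Proposition \ref{prop:regT-1}. The identity then follows from \eqref{eq:TjiTin-1}.

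The main step to watch is this Fubini justification together with the density extension; the rest follows directly from the earlier results.
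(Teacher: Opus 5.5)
Your proof is correct and follows essentially the paper's route: boundedness of $T$ comes from the factorization $T=\tau_B\check T\tau_K$ together with Proposition \ref{prop:bound} and $\{k_n\},\{b_n\}\in\ell^\infty$ (exactly as in Lemma \ref{lem:pre:boundT-1}), and \eqref{eq:TBBeq} is inherited from Corollary \ref{cor:existkn} via \eqref{eq:TjiTin-1}. Your Fubini argument, using $\sum_n|\tilde T_{j,n}|<\infty$ from Lemma \ref{lem:sumlambdaijdistbound} and the uniform $\ell^1$ bound of Proposition \ref{prop:regT-1}, supplies the justification for the exchange of summations that the paper performs without comment.
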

In the following section, we show that this transformation $T$ is invertible, and consequently that the initial system is exponentially stable with decay rate $\lambda$ with the choice of feedback operator $\{k_{n}\}_{n\in\mathbb{N}^*}$. Moreover we will give additional estimates with respect to the dependency in $\lambda$ of $T$.

In this section we turn to the operator $T$. Recall from \eqref{T-tauBK-checkT}--\eqref{def:eq:tauBKTcheck} that $T$ is linked to $\check T$ by
\begin{equation}\label{def:exprT}
 T=    \tau_B \check{T} \tau_K
\end{equation}
and
\begin{equation}
\label{eq:expr-T}
T \varphi_{n} = k_{n} \sum\limits_{j\in\mathbb{N}}b_{j}\widetilde{T}_{j, n}\varphi_{j}.
\end{equation}
This section is organized as follows: first we show some properties on $K$ (Section \ref{sec:propK}). Then, we show that $T$ is invertible thanks to our choice of $K$ using a method similar to \cite{CGM,GLM,Gagnon-Hayat-Xiang-Zhang} (Section \ref{sec:invertT}). Finally, we show precise quantitative estimates on the norm of $T$ and $T^{-1}$, which is the main challenge of this section (Section \ref{sec:quantitative}). These sharp estimates will in turn be used to prove the null controllability of Section \ref{sec-null-control}.

\begin{remark}
    In fact, one can show that $T$ can be written as follows:
    \begin{equation}
        T = \frac{1}{\lambda}\text{Id} + T_{c},
    \end{equation}
    where $T_{c}$ is a compact operator (see \cite{WaterWave,hayat2024fredholm}).
\end{remark}

\subsection{Properties of $K$}
\label{sec:propK}
In this section, we discuss the properties of the feedback law $K$. 
First, we prove that in the present context $K$ cannot be a bounded functional on $H$ (see Lemma \ref{lem:KboundedTcompact}). This might be surprising, given that the solution of the system belongs to $H$. In fact, this is rendered possible by the fact that the construction of $K$ depends on $A$ and consequently $K$ still makes sense when applied to solutions of the system. More precisely, we will prove that $K$ is admissible for the strongly continuous semigroup $(e^{At})_{t\geq 0}$ (see Lemma \ref{lem:Kadmissible}). First, we have the following result, classical for the Fredholm backstepping method.   
\begin{lemma}\label{lem:KboundedTcompact}
Let $\{k_n \}_{n\in \N^*}\in \ell^\infty$. If $K : H \rightarrow \R$ were to be a bounded functional, then $T : H \rightarrow H$ would be a compact operator and therefore not invertible over $H$.
\end{lemma}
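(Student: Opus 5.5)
If $K$ is bounded on $H$, the plan is to show that $T$ is a finite-rank map composed with compact resolvents, plus something small, and hence compact. The natural representation comes from the operator identity behind \eqref{def-T}: on each eigenvector, $(A-\lambda I)T\varphi_n - TA\varphi_n = BK\varphi_n$. We would rather avoid the Sylvester-type formula in $A$ on both sides. Instead we work directly with \eqref{eq:trans:Tnk:full}:
\[
T f = \sum_{n} f_n k_n \sum_{p} \frac{b_p}{\lambda_p-\lambda_n-\lambda}\varphi_p .
\]

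The first step is to use the Riesz representation. If $K$ is bounded, there is $\kappa\in H$ with $k_n=\langle \varphi_n,\kappa\rangle$ (up to conjugation), so $\{k_n\}\in\ell^2$, not merely $\ell^\infty$.

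The second step is to approximate $T$ by finite-rank operators. Let $P_N$ be the orthogonal projection onto $\mathrm{span}\{\varphi_1,\dots,\varphi_N\}$. Each $T\varphi_n = k_n g_n$ with $g_n=(A-(\lambda_n+\lambda)I)^{-1}B$, which lies in $H$ by the bounds on $b_p$ and $\alpha>1$ (its norm is controlled by $\sum_p |\lambda_p-\lambda_n-\lambda|^{-2}$, finite by Lemma \ref{lem:sumlambdaijdistbound}-type estimates). So $TP_N$ has finite rank. It then remains to prove $\|T(I-P_N)\|\to0$.

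For $f\in H$, Cauchy--Schwarz in $n$ gives
\[
\|T(I-P_N)f\|^2=\sum_p |b_p|^2\Big|\sum_{n>N}\frac{f_nk_n}{\lambda_p-\lambda_n-\lambda}\Big|^2
\le C\|f\|^2\sum_{n>N}|k_n|^2\sum_p\frac{1}{|\lambda_p-\lambda_n-\lambda|^2}.
\]
The inner sum is bounded uniformly in $n$ by $C(1+\Dist_\alpha(\lambda)^{-1})^2$, using the gap estimates \eqref{eq:cond3} and \eqref{eigenvalue-growth} exactly as in Lemma \ref{lem:sumlambdaijdistbound}. The tail $\sum_{n>N}|k_n|^2$ tends to $0$ because $k\in\ell^2$. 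Hence $T$ is a norm limit of finite-rank operators, and therefore compact.

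The final step is routine: a compact operator on the infinite-dimensional space $H$ cannot have a bounded inverse, since otherwise $I=T^{-1}T$ would be compact. The main obstacle is the uniform-in-$n$ bound on $\sum_p|\lambda_p-\lambda_n-\lambda|^{-2}$. This needs $\lambda\notin\mathcal N$, or at least the finiteness of each term, together with the superlinear gap \eqref{eigenvalue-growth}. It is a squared version of Lemma \ref{lem:sumlambdaijdistbound}(ii) and follows from that lemma, since each term is at most $\Dist_\alpha(\lambda)^{-1}$ times the corresponding unsquared term.
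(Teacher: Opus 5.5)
Your proof is correct and is essentially the paper's argument. The paper just notes that Riesz representation gives $k\in\ell^2$ and that $T$ is then Hilbert--Schmidt; your tail estimate for $\|T(I-P_N)\|$ is exactly the Hilbert--Schmidt norm of $T(I-P_N)$, so you are reproving by hand, in this instance, that Hilbert--Schmidt operators are compact. (One small point: since the summation index in $\sum_p|\lambda_p-\lambda_n-\lambda|^{-2}$ enters with a plus sign, the bound you need is item (i) of Lemma \ref{lem:sumlambdaijdistbound} rather than (ii), which is immaterial because both are available.)
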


This is a consequence of the Riesz representation theorem and of the fact that Hilbert--Schmidt
operators are compact. As a consequence
$K$ is not a bounded functional from $H$ to $\R$, or else it would imply $T$ is not an invertible transformation from $H$ to itself. Despite this ill-behavior of $K$, the feedback operator $K$ is still admissible.
\begin{lemma}\label{lem:Kadmissible}
Let $k_n\in \ell^\infty(\N^
*;\mathbb{C})$. Then $K$ is admissible for the strongly continuous semigroup $(e^{At})_{t\geq 0}$, that is $Ke^{At}f \in L^2((0,T);\CC)$, for all $f\in H$. 
\end{lemma}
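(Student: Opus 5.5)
We need to show that $\int_0^T |Ke^{At}f|^2\,dt \le C\|f\|^2$. We work first with $f$ a finite combination of eigenvectors and extend by density, which defines $Ke^{\cdot A}f$ as an element of $L^2(0,T)$ for all $f\in H$.

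For $f=\sum_n f_n\varphi_n$ with finitely many nonzero $f_n$, the definition \eqref{def-K} gives
\[
Ke^{At}f=\sum_{n} k_n f_n e^{\lambda_n t}.
\]
We then treat the two cases of Assumption \ref{assume:structure} separately.

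\emph{Self-adjoint case.} Here $\lambda_n<0$, and we expand the square:
\[
\int_0^T |Ke^{At}f|^2dt\le \int_0^\infty \Big|\sum_n k_nf_ne^{\lambda_n t}\Big|^2dt=\sum_{n,m}\frac{k_n f_n\overline{k_m f_m}}{|\lambda_n|+|\lambda_m|}.
\]
Write $a_n=k_nf_n$, so $\sum_n|a_n|^2\le \|k\|_{\ell^\infty}^2\|f\|^2$. It remains to bound the quadratic form with kernel $1/(|\lambda_n|+|\lambda_m|)$ on $\ell^2$. Since $|\lambda_n|\gtrsim n^\alpha$ (from the gap assumption), this kernel is dominated by $C/(n^\alpha+m^\alpha)\le C/(n+m)$, which is a Hilbert-type kernel and hence bounded on $\ell^2$. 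Alternatively, Schur's test with weights $n^{-1/2}$ gives $\sum_m \frac{m^{-1/2}}{n^\alpha+m^\alpha}\lesssim n^{-1/2}$, which yields the same conclusion.

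\emph{Skew-adjoint case.} Here $\lambda_n=i\mu_n$ with $\mu_n$ real and $|\mu_{n+1}-\mu_n|\gtrsim n^{\alpha-1}\ge c>0$, so the frequencies are uniformly separated. Ingham's inequality then gives, for any $T>0$ (using only the direct, upper-bound inequality, which holds for every $T$ once there is a uniform gap, possibly after splitting $[0,T]$ into short intervals),
\[
\int_0^T\Big|\sum_n a_ne^{i\mu_nt}\Big|^2dt\le C(T)\sum_n|a_n|^2\le C(T)\|k\|_{\ell^\infty}^2\|f\|^2.
\]
If we want to avoid invoking Ingham's inequality directly, we instead write $\int_0^T\le e^{T}\int_0^\infty e^{-2t}|\cdot|^2$. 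This produces the kernel $1/(2-i(\mu_n-\mu_m))$, and its off-diagonal part is controlled by $\sum_{m\ne n}1/|\mu_n-\mu_m|$, which is bounded via Lemma \ref{lem:thirdgap} because $\sum_k k^{-\alpha}<\infty$.

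The density extension is then standard. The main obstacle is the bound on the off-diagonal terms. In the self-adjoint case this is the Hilbert-type kernel bound. In the skew-adjoint case the naive Schur bound with kernel $1/|\mu_n-\mu_m|$ only works because $|\mu_n-\mu_m|\ge c|n-m|^\alpha$ with $\alpha>1$ makes the row sums uniformly bounded. So I would first try Schur's test using Lemma \ref{lem:thirdgap}, and fall back on Ingham's inequality if needed.
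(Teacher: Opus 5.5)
Your proof is correct, but it takes a different route from the paper. The paper does not compute the Gram matrix. It applies Cauchy--Schwarz pointwise in $t$, $|\sum_n k_nf_ne^{\lambda_nt}|^2\le \|k\|_{\ell^\infty}^2\|f\|^2\sum_n e^{2\lambda_nt}$, and then integrates term by term to get $\sum_n 1/(2|\lambda_n|)<\infty$; this is where $\alpha>1$ enters. That argument is a single line, but it only works when the eigenvalues have strictly negative real parts of size $n^\alpha$, i.e.\ in the self-adjoint case. For skew-adjoint $A$ one has $|e^{\lambda_nt}|=1$, so the pointwise bound is $+\infty$, and the paper's computation does not actually cover that half of Assumption~\ref{assume:structure}.

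Your approach gains two things over the paper's. In the self-adjoint case, you bound the exact quadratic form $\sum_{n,m}a_n\overline{a_m}/(|\lambda_n|+|\lambda_m|)$ by Hilbert's inequality (or by Schur's test with weights $n^{-1/2}$). This only needs $|\lambda_n|\gtrsim n$, so it would even survive $\alpha=1$, where the paper's series $\sum_n 1/|\lambda_n|$ diverges. In the skew-adjoint case, your damped-integral trick combined with Schur's test and Lemma~\ref{lem:thirdgap}, or alternatively the direct Ingham inequality, supplies the argument the paper is missing.

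Two cosmetic corrections:
\begin{itemize}
\item The weighting step should read $\int_0^T|g|^2\,dt\le e^{2T}\int_0^\infty e^{-2t}|g|^2\,dt$, with $e^{2T}$ rather than $e^{T}$.
\item For the direct Ingham inequality no splitting of $[0,T]$ is needed. The integral over $(0,T)$ is dominated by the integral over any longer interval, so you may simply take $T$ large enough for Ingham's theorem to apply.
\end{itemize}
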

This is quite classical (see \cite[Chapter 5]{TucsnakWeissBook}).
Indeed, for \(f=\sum f_n\phi_n\),
\[
\int_0^T |K e^{At}f|^2\,dt
\leq \|k\|_{\ell^\infty}^2 \|f\|_H^2
\int_0^T \sum_{n\geq 1} e^{2\lambda_n t}\,dt
\leq C\|k\|_{\ell^\infty}^2 \|f\|_H^2
\sum_{n\geq 1}\frac1{|\lambda_n|}<\infty,
\]
since \(|\lambda_n|\simeq n^\alpha\) with \(\alpha>1\).

\subsection{Invertibility of $T$}
\label{sec:invertT}
As it can be seen from \eqref{eq:expr-T} the invertibility of $T$ depends on the behavior of $(k_{n})_{n\in\mathbb{N}}$.
Recall that the $(k_{n})_{n\in\mathbb{N}}$ are defined by Corollary \ref{cor:existkn}. We can show the following.
\begin{lemma}\label{lem:invk}
Assume there exist $c,C>0$ such that $c\leq |k_n| \leq C$, for all $n\in \N^*$. Then $T$ is invertible from $H$ to itself.
\end{lemma}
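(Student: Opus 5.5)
We prove Lemma \ref{lem:invk} through the factorization \eqref{def:exprT}, $T=\tau_B\check T\tau_K$, where $\tau_B:\varphi_n\mapsto b_n\varphi_n$ and $\tau_K:\varphi_n\mapsto k_n\varphi_n$ are the diagonal operators of \eqref{def:eq:tauBKTcheck}. The idea is to show that each of the three factors is a bounded isomorphism of $H$. Then $T$ is invertible, with explicit inverse
\[
T^{-1}=\tau_K^{-1}\check T^{-1}\tau_B^{-1}.
\]

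\textbf{Step 1: the identity \eqref{def:exprT} holds in $\mathcal{L}(H)$.} Fix $f=\sum_n f_n\varphi_n\in H$. By \eqref{eq:expr-T} and \eqref{eq:deftildeT:full},
\[
T\varphi_n=k_n\sum_j b_j\tilde T_{j,n}\varphi_j=\tau_B\check T(k_n\varphi_n)=\tau_B\check T\tau_K\varphi_n .
\]
Thus the two operators agree on every $\varphi_n$, hence on finite linear combinations. Once we know $\tau_B\check T\tau_K$ is bounded (Step 2), both operators extend continuously and agree on $H$. This also shows directly that $T$ is bounded, with $\|T\|\le \sup_n|b_n|\,\|\check T\|\,\sup_n|k_n|$.

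\textbf{Step 2: each factor is an isomorphism.} The diagonal operators are handled by Parseval. Since $\{\varphi_n\}$ is an orthonormal basis, $\|\tau_K f\|^2=\sum_n|k_n|^2|f_n|^2$. The hypothesis $c\le|k_n|\le C$ therefore gives $\|\tau_K\|\le C$. It also shows that $\tau_K^{-1}:\varphi_n\mapsto k_n^{-1}\varphi_n$ is bounded by $c^{-1}$ and is a two-sided inverse of $\tau_K$. The same argument applies to $\tau_B$, using \eqref{b_n-growth} ($0<c\le |b_n|\le C$). For the middle factor, Proposition \ref{prop:bound} gives that $\check T$ and $\check T^{-1}$ are bounded on $H$, since $\lambda\notin\mathcal N$. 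Proposition \ref{tinverse} gives $\check T\check T^{-1}=\check T^{-1}\check T=\mathrm{Id}$.

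\textbf{Step 3: conclusion.} Composing, $\tau_K^{-1}\check T^{-1}\tau_B^{-1}$ is a bounded two-sided inverse of $T$. As a by-product we get the quantitative bound
\[
\|T^{-1}\|\le \|\check T^{-1}\|\,\sup_n|b_n^{-1}|\,\sup_n|k_n^{-1}|.
\]

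The only step needing care is justifying the factorization on all of $H$ rather than only on basis vectors. This is settled by boundedness of the three factors plus density of finite combinations, so no real obstacle arises here. The genuine difficulty, namely proving the hypothesis $|k_n|\ge c>0$ from \eqref{def:kb:tildeT}, is not part of this lemma.
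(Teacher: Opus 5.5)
Your proposal is correct and follows essentially the same route as the paper: the factorization $T=\tau_B\check T\tau_K$, with $\tau_B$ and $\tau_K$ invertible diagonal operators and $\check T$ invertible by Proposition \ref{tinverse}. Your extra step of verifying the identity on basis vectors and extending it by boundedness and density is a welcome precision that the paper's series manipulation leaves implicit.
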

\begin{proof}
Notice
\begin{align*}
  Tf &=\sum_{n\in \NN^*} f_n T \varphi_n  = \sum_{n\in \NN^*} f_n k_n (A-(\lambda_n+\lambda)I)^{-1}B \\
  &= \sum_{n\in \NN^*} f_n k_n \sum_{k\in \NN^*} \dfrac{b_k}{\lambda_k-\lambda_n-\lambda} \varphi_k  
   = \sum_{k\in \NN^*} \sum_{n\in \NN^*} f_n  k_n  \dfrac{b_k}{\lambda_k-\lambda_n-\lambda} \varphi_k \\
   &= \tau_B \check{T} \tau_K f
\end{align*}
where, 
\[
\tau_K : \varphi_n \mapsto k_n \varphi_n, \quad \tau_B : \varphi_n \mapsto b_n \varphi_n,
\]
From the controllability assumption, $\tau_B$ is invertible from $H$ to itself and from the hypothesis of Lemma \ref{lem:invk}, so is $\tau_K$. The invertibility of $T$ then follows from that of $\check{T}$.
\end{proof}

\section{Quantitative estimates for the transformation}
\label{sec:quantitative}
This section is devoted to the sharp quantitative estimates. The coefficients $k_n$ play a fundamental role in the quantitative estimates of $K$ and hence of $T$. In this section, we prove not only that the coefficients $|k_n|$ are bounded from below away from zero, but also their behavior with respect to $\lambda$. This allows, in turn, to deduce precise estimates on the norms of $T$ and $T^{-1}$.   Note that the estimates depend on whether the operator $A$ is self-adjoint or skew-adjoint, with the analysis in the self-adjoint case being more involved.   
\subsection{Exhibiting the underlying isomorphism of $T$}
The expression of $T$ with respect to $K$ is given by \eqref{def:exprT} and the one of $T^{-1}$ is given by, 
\begin{equation}\label{def:exprT-1}
    T^{-1}= \tau_K^{-1} \check T^{-1} \tau_B^{-1}.
\end{equation}

 The norm of $\tau_B$ and $\tau_B^{-1}$ are easily given by the assumption on $b_n$. On the other hand, the norm of $\check T^{-1}$ is given by Lemma \ref{lem:boundT-1} and Proposition \ref{prop:bound}, which provides an estimate with respect to $\lambda$ provided that $\lambda$ is away from $\mathcal{N}$. A similar estimate also holds for $\check{T}$. Therefore, to have a quantitative estimate of $T$ and $T^{-1}$, it suffices to estimate the norm of $\tau_K$ and $\tau_K^{-1}$. This is translated in the following lemma.  

\begin{lemma}\label{lem:pre:boundT-1}
Let $\lambda\notin \mathcal{N}$. Then, 
 the isomorphism $T=T_\lambda $ satisfies
\begin{align*}
    \| T\|&\leq   \| \check T\| \sup_{n} |b_n|  \sup_{n} |k_n|,\\
     \| T^{-1}\|&\leq   \| \check T^{-1}\| \sup_{n} |b_n^{-1}|  \sup_{n} |k_n^{-1}|.
\end{align*}
\end{lemma}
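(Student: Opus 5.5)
The plan is to read both bounds off the factorizations \eqref{def:exprT} and \eqref{def:exprT-1}, $T=\tau_B\check T\tau_K$ and $T^{-1}=\tau_K^{-1}\check T^{-1}\tau_B^{-1}$. Submultiplicativity of the operator norm then does the rest, once we know the norms of the diagonal operators $\tau_B,\tau_K$ and their inverses.

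First I would note that a diagonal operator $\tau_\mu:\varphi_n\mapsto \mu_n\varphi_n$ on the orthonormal basis $\{\varphi_n\}$ satisfies $\|\tau_\mu\|=\sup_n|\mu_n|$. This follows from Parseval: $\|\tau_\mu f\|^2=\sum_n|\mu_n|^2|f_n|^2\le \sup_n|\mu_n|^2\,\|f\|^2$. Applying this to $\tau_B$ gives $\|\tau_B\|\le \sup_n|b_n|$, and to $\tau_K$ gives $\|\tau_K\|\le\sup_n|k_n|$; the latter is finite by Corollary \ref{cor:existkn}. The upper bound on $\|T\|$ then follows from $\|T\|\le\|\tau_B\|\,\|\check T\|\,\|\tau_K\|$, with $\check T$ bounded by Proposition \ref{prop:bound}.

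For the inverse, I would argue that if $\sup_n|k_n^{-1}|=+\infty$ the inequality is trivial, so we may assume it is finite. In that case $\tau_K^{-1}:\varphi_n\mapsto k_n^{-1}\varphi_n$ is a bounded diagonal operator with norm $\sup_n|k_n^{-1}|$. Likewise $\tau_B^{-1}$ has norm $\sup_n|b_n^{-1}|\le c^{-1}$ by assumption \eqref{b_n-growth}. By Proposition \ref{tinverse}, $\check T^{-1}$ is the two-sided inverse of $\check T$, so the product $\tau_K^{-1}\check T^{-1}\tau_B^{-1}$ is a two-sided inverse of $\tau_B\check T\tau_K=T$. This identifies $T^{-1}$ as a bounded operator, consistent with Lemma \ref{lem:invk}. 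Submultiplicativity again gives the stated bound.

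The only point needing care is this identification of $T^{-1}$ with the product formula. I would verify it by composing on each basis vector $\varphi_n$, which works because all factors are bounded. There is no real obstacle here: the substantive work, namely quantitative lower bounds on $|k_n|$, is deferred to the later sections.
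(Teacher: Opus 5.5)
Your proposal is correct and is essentially the paper's argument: both bounds come from the factorizations $T=\tau_B\check T\tau_K$ and $T^{-1}=\tau_K^{-1}\check T^{-1}\tau_B^{-1}$, combined with submultiplicativity and the sup-norm bound for diagonal operators. Your extra care is a harmless refinement of what the paper takes for granted: you dispose of the case $\sup_n|k_n^{-1}|=+\infty$, and you check via Proposition \ref{tinverse} that the product formula really is the two-sided inverse of $T$.
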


\begin{proof}
First, recall that the bound of $\check T$ and $\check T^{-1}$ was proven in \eqref{eq:boundT-1} in Proposition \ref{prop:bound}.

Then, from the definition of $T$, given by \eqref{def:exprT}, and the definition of $T^{-1}$, given by \eqref{def:exprT-1}, we have 
\[
\|T\| \leq \|\tau_B \| \| \check{T} \| \| \|\tau_K \|, \quad  \|T^{-1}\| \leq \|\tau_K^{-1}\| \| \check{T}^{-1}\| \| \|\tau_B^{-1} \|
\]
and since 
\begin{align*}
\|\tau_B \| \leq \|b_n\|_{\ell^\infty}, \quad & \|\tau_K \| \leq \|k_n\|_{\ell^\infty}, \\
\|\tau_B^{-1}\| \leq \|b_n^{-1}\|_{\ell^\infty}, \quad  & \|\tau_K^{-1} \| \leq \|k_n^{-1}\|_{\ell^\infty},
\end{align*}
the result follows easily.
\end{proof}

{\color{black}

We now provide estimates on $\|\check{T}\|$ and $\|\check{T}^{-1}\|$ with respect to $\lambda$ and $\text{Dist}_\alpha(\lambda)$ (recall the definition \eqref{eq:distalpha}). Note that this framework allows us to obtain these estimates in a range of spaces. As we will see, stronger norms come from more regular spaces in which a common domain can be used to define the feedback and the well-posedness.

We state the following

\begin{lemma}\label{lem-T-check-reg} Assume that  Assumption \ref{assume:structure} holds and  that $\lambda\notin \mathcal{N}$. 
\begin{itemize} 
    \item 
(Case $A$ is self-adjoint)
Then $\check{T}$ is an isomorphism on $D(A^s)$, for $s\in \left(\tfrac{1}{2\alpha}-1,1-\tfrac{1}{2\alpha}\right)$
and we have the following estimates:
\[\begin{aligned}
    \|\check{T}\|_{\textcolor{black}{D(A^{s})}} &\leq {\color{black}C_s} \left(1 + \dfrac{{\color{black}\lambda}}{\Dist_\alpha(\lambda)} \right) \\
    \|\check{T}^{-1}\|_{\textcolor{black}{D(A^{s})}} & \leq  {\color{black}C_s} \left(1 + \dfrac{1}{\Dist_\alpha(\lambda)} \right) e^{C\lambda^{\frac{1}{\alpha}}},
\end{aligned}\]
where the constant $C$ does not depend on $s, \lambda$, and $C_s$ does not depend on $\lambda$.

Accordingly, the isomorphism $T$ satisfies, for $s\in \left(\tfrac{1}{2\alpha}-1,1-\tfrac{1}{2\alpha}\right)$
\[\begin{aligned}
    \|T\|_{\textcolor{black}{D(A^{s})}} & \textcolor{black}{\leq   {\color{black}C_s} \left(1 + \dfrac{{\color{black}\lambda}}{\Dist_\alpha(\lambda)} \right)\sup_{n} |b_n|  \sup_{n} |k_n|}\\
    \|T^{-1}\|_{\textcolor{black}{D(A^{s})}} &\leq   \textcolor{black}{  {\color{black}C_s}\left(1 + \dfrac{1}{\Dist_\alpha(\lambda)} \right) {\color{black}e^{C\lambda^{\frac{1}{\alpha}}}} \sup_{n} |b_n^{-1}|  \sup_{n} |k_n^{-1}|.}
\end{aligned}\]
\item
(Case $A$ is skew-adjoint) 
Then,  $\check{T}$ is an isomorphism on $D(A^s)$, for $s\in \left(\tfrac{1}{2\alpha}-1,1-\tfrac{1}{2\alpha}\right)$
and we have the following estimates:
\textcolor{black}{
\[\begin{aligned}
    \|\check{T}\|_{\textcolor{black}{D(A^{s})}} &\leq C_s \left(1 + \lambda \right) \\
    \|\check{T}^{-1}\|_{\textcolor{black}{D(A^{s})}} & \leq  C_s  e^{C\lambda^{\frac{1}{\alpha}}},
\end{aligned}\]
where the constant $C$ does not depend on $s, \lambda$, and $C_s$ does not depend on $\lambda$.}

Accordingly, the isomorphism $T$ satisfies, for $s\in \left(\tfrac{1}{2\alpha}-1,1-\tfrac{1}{2\alpha}\right)$
\[\begin{aligned}
    \|T\|_{\textcolor{black}{D(A^{s})}} & \textcolor{black}{\leq   C_s \left(1 + \lambda \right)\sup_{n} |b_n|  \sup_{n} |k_n|}\\
    \|T^{-1}\|_{\textcolor{black}{D(A^{s})}} &\leq   \textcolor{black}{  C_s e^{C\lambda^{\frac{1}{\alpha}}} \sup_{n} |b_n^{-1}|  \sup_{n} |k_n^{-1}|.}
\end{aligned}\]
\end{itemize}
\end{lemma}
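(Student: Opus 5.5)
Since $D(A^s)$ is diagonal in the basis $\{\varphi_n\}$ with weights $|\lambda_n|^{s}\simeq n^{\alpha s}$, boundedness of $\check T$ (resp. $\check T^{-1}$) on $D(A^s)$ is equivalent to boundedness on $\ell^2$ of the conjugated matrix $M_{ij}=|\lambda_i|^{s}\tilde T_{i,j}|\lambda_j|^{-s}$ (resp. with $\tilde T^{-1}_{i,j}$). The plan is to estimate these weighted matrices directly, by a Schur test, rather than the unweighted operator of Proposition \ref{prop:bound}. Once $\|\check T\|_{D(A^s)}$ and $\|\check T^{-1}\|_{D(A^s)}$ are bounded, the identities $\check T\check T^{-1}=\check T^{-1}\check T=\mathrm{Id}$ (Proposition \ref{tinverse}, valid on the dense subspace of finite combinations of $\varphi_n$) extend by continuity, so $\check T$ is an isomorphism on $D(A^s)$. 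The estimates for $T$ and $T^{-1}$ then follow exactly as in Lemma \ref{lem:pre:boundT-1}. Indeed, $\tau_B,\tau_K$ are diagonal, so they commute with the weights and have the same norms on $D(A^s)$ as on $H$; write $T=\tau_B\check T\tau_K$ and $T^{-1}=\tau_K^{-1}\check T^{-1}\tau_B^{-1}$.

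For $\check T$, I will split $M$ into its diagonal and off-diagonal parts.

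The diagonal part is $|\tilde T_{i,i}|=1/\lambda$ in both cases. The off-diagonal part has entries $\bigl(|\lambda_i|/|\lambda_j|\bigr)^{s}/|\lambda_i-\lambda_j-\lambda|$.

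In the skew-adjoint case, $\lambda_i-\lambda_j$ is purely imaginary while $\lambda$ is real. Hence $|\lambda_i-\lambda_j-\lambda|\ge\max(\lambda,|\lambda_i-\lambda_j|)$, no resonance can occur, and no factor $\Dist_\alpha$ appears.

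In the self-adjoint case, for each $i$ at most two indices $j$ satisfy $|\lambda_i-\lambda_j-\lambda|<\tfrac12|\lambda_i-\lambda_j|$ (Lemma \ref{lem:di}). These near-resonant entries are bounded by $\Dist_\alpha(\lambda)^{-1}$. In such a pair, $|\lambda_i-\lambda_j|\approx\lambda$, so $i^{\alpha-1}|i-j|\lesssim\lambda$ by \eqref{eq:cond3}. Then the weight ratio $(i/j)^{\alpha s}$ is at most $C(1+\lambda)^{|s|}\le C\lambda$ for large $\lambda$, since $|s|<1$. This is where the factor $\lambda/\Dist_\alpha(\lambda)$ comes from.

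For the remaining entries, $|\lambda_i-\lambda_j-\lambda|\gtrsim|\lambda_i-\lambda_j|\ge c\max(i,j)^{\alpha-1}|i-j|$. The problem therefore reduces to a $\lambda$-independent Schur test on the kernel
\[
K_{ij}=\frac{(i/j)^{\alpha s}}{\max(i,j)^{\alpha-1}|i-j|},\qquad i\neq j .
\]
The Schur test will use test weights $w_i=i^{-\beta}$, with $\beta$ tuned to $s$.

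\textbf{This Schur computation is the main obstacle.} The row sums will be split into the ranges $j<i/2$, $i/2\le j\le 2i$, and $j>2i$.
\begin{itemize}
\item In the range $j\sim i$, the weights are comparable and the sum is $\lesssim i^{1-\alpha}\log i$, which is bounded.
\item In the range $j\ll i$, the sum behaves like $i^{\alpha s-\alpha}\sum_{j<i/2}j^{-\alpha s}$. This is bounded provided $\alpha s-\alpha+\max(0,1-\alpha s)\le0$. With the Schur weights, this becomes $s<1-\tfrac1{2\alpha}$.
\item Symmetrically, the range $j\gg i$ gives the condition $s>\tfrac1{2\alpha}-1$.
\end{itemize}
The $\tfrac1{2\alpha}$ in these thresholds arises from splitting the power $j^{-1}$ evenly between rows and columns in the Schur weights. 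I expect this endpoint bookkeeping to be delicate; if needed, I will replace the Schur test by a Hilbert–Schmidt estimate on the far-off-diagonal pieces. The Hilbert–Schmidt bound gives exactly $\sum_j j^{-2\alpha s}\,i^{2\alpha s-2\alpha}\cdots$, and its convergence is precisely the condition $|s|<1-\tfrac1{2\alpha}$.

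For $\check T^{-1}$, inequality \eqref{eq:firstboundT-1} bounds $|\tilde T^{-1}_{i,j}|$ by $Ce^{C\lambda^{1/\alpha}}\lambda^2/|\lambda_j-\lambda_i-\lambda|$. The same splitting then applies with $i,j$ swapped. The diagonal contributes $Ce^{C\lambda^{1/\alpha}}\lambda$, and the near-resonant entries contribute $Ce^{C\lambda^{1/\alpha}}\lambda^{2+1}/\Dist_\alpha(\lambda)$. All polynomial powers of $\lambda$ are absorbed into $e^{C\lambda^{1/\alpha}}$ by enlarging $C$, which yields the stated bounds. In the skew-adjoint case there is again no $\Dist_\alpha$ term.
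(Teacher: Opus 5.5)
\textbf{Gap in the self-adjoint bound for $\check T$.} The near-resonant step does not work as written. Lemma \ref{lem:di} only says that the minimum $D_i(\lambda)$ is attained at at most two adjacent indices. It does not say that at most two $j$ satisfy $|\lambda_i-\lambda_j-\lambda|<\tfrac12|\lambda_i-\lambda_j|$, and that claim is false. The condition means $\lambda_i-\lambda_j\in(\tfrac23\lambda,2\lambda)$. For $i=1$, every $\lambda_j$ in this window has $|\lambda_j|\sim\lambda$, so by \eqref{eq:cond2} the spacing there is $\sim\lambda^{(\alpha-1)/\alpha}$, and the window contains about $\lambda^{1/\alpha}$ indices. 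For instance, with $\lambda_n=-n^2$ and $\lambda=10^6$ there are roughly $600$. Take $s<0$ and $i=1$, where the weight ratio $(|\lambda_j|/|\lambda_1|)^{|s|}$ really is of size $\lambda^{|s|}$. Summing your entrywise bound $(1+\lambda)^{|s|}/\Dist_\alpha(\lambda)$ over such a row then only gives $\lambda^{1/\alpha+|s|}/\Dist_\alpha(\lambda)$. This exceeds $\lambda/\Dist_\alpha(\lambda)$ as soon as $|s|>1-\tfrac1\alpha$, which lies inside the admissible range. So your argument does not yield $\|\check T\|_{D(A^s)}\le C_s(1+\lambda/\Dist_\alpha(\lambda))$, nor the corresponding bound on $\|T\|_{D(A^s)}$. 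For $\check T^{-1}$ the extra polynomial factor is harmless, since it is absorbed into $e^{C\lambda^{1/\alpha}}$.

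\textbf{Fix, and comparison with the paper.} Drop the case split. From $|\lambda_i-\lambda_j|\le|\lambda_i-\lambda_j-\lambda|+\lambda$ and $|\lambda_i-\lambda_j-\lambda|\ge\Dist_\alpha(\lambda)$ you get, for all $i\neq j$,
\[
\frac{1}{|\lambda_i-\lambda_j-\lambda|}\le\Bigl(1+\frac{\lambda}{\Dist_\alpha(\lambda)}\Bigr)\frac{1}{|\lambda_i-\lambda_j|},
\]
which is Lemma \ref{lem:11}. This dominates the entire off-diagonal part of your conjugated matrix by $(1+\lambda/\Dist_\alpha(\lambda))$ times your $\lambda$-independent kernel $K_{ij}$, with no weight bookkeeping in the resonant region. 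The same works for $\check T^{-1}$ via \eqref{eq:firstboundT-1}. With this change your proof follows the paper's: split off the diagonal, apply Lemma \ref{lem:11}, then bound the weighted kernel. The one real difference is that the paper imports that weighted estimate as Lemma \ref{lem:10}, deduced from \cite[Lemma 4.7]{WaterWave} with $r=\alpha s$, whereas you prove it by hand. Your plan for that does work, which makes the lemma self-contained. A Schur test with weights $w_i=i^{-1/2}$ closes in all three ranges exactly when $\alpha s\in(\tfrac12-\alpha,\alpha-\tfrac12)$. On the band $i/2\le j\le 2i$ the row and column sums are $\lesssim i^{1-\alpha}\log i$, and the Hilbert--Schmidt computation on the two far blocks gives the same thresholds. (As in the paper, absorbing the diagonal term $1/\lambda$ into $C_s(1+\lambda)$ or $C_s(1+\lambda/\Dist_\alpha(\lambda))$ implicitly requires $\lambda$ bounded below, e.g.\ $\lambda\ge 1$.)
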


\begin{proof}
Before showing its proof, we first present two auxiliary results. The first one can be directly deduced from the proof of \cite[Lemma 4.7]{WaterWave}, where a stronger regularizing result is shown.   
\begin{lemma}\label{lem:10}
    Let $r\in (1/2- \alpha, \alpha- 1/2)$. Then there exists some effectively computable $C_r$ such that
   \begin{equation*}
       \sum_{p\in \mathbb{N}^*} p^{2r} \left|\sum_{n\in \mathbb{N}^*\setminus \{p\}} \frac{|a_n n^{-r}|}{|\lambda_n- \lambda_p|} \right|^2 \leq C_{r} \sum_{n\in \mathbb{N}^*} |a_n|^2,
   \end{equation*} 
for every sequence $\{a_n\}_{n\in \mathbb{N}^*}\in l^2(\mathbb{N}^*)$.
\end{lemma}
The second one is a straightforward consequence of the following, 
\begin{align*}
    \frac{|\lambda_j- \lambda_i|}{|\lambda_j- \lambda_i-\lambda|}\leq  \frac{|\lambda_j- \lambda_i- \lambda|+ |\lambda|}{|\lambda_j- \lambda_i-\lambda|}\leq 1+ \frac{|\lambda|}{|\lambda_j- \lambda_i-\lambda|}\leq 1+ \frac{\lambda}{\Dist_{\alpha}(\lambda)}.
\end{align*}

\begin{lemma}\label{lem:11}
Assume that $A$ is self-adjoint. For every $\lambda>0$ and every $i\neq j$, there is 
 \begin{equation*}
     \frac{1}{|\lambda_j- \lambda_i- \lambda|} \leq \left(\frac{\lambda}{\Dist_{\alpha}(\lambda)}+ 1 \right) \frac{1}{|\lambda_j- \lambda_i|}.
 \end{equation*}
Assume that $A$ is self-adjoint. For every $\lambda>0$ and every $i\neq j$, there is 
\begin{align*}
    \frac{|\lambda_j- \lambda_i|}{|\lambda_j- \lambda_i-\lambda|}\leq 1.
\end{align*} 
\end{lemma}

In the following, the constant $C$ may change from line to line, but it is independent of the choice of $\lambda>0$. Let $f= \sum_{n\in \mathbb{N}^*} f_n \varphi_n \in D(A^s)$. Recall from the beginning of the proof of Proposition \ref{prop:bound} that 
\begin{equation*}
    \check T^{-1} f=  \sum_{k\in \N^*} f_k (\tilde{T}^{-1})_{k, k} \varphi_k + \sum_{k\in \N^*} \left( \sum_{n\in \N^* \setminus\{k\}} f_n (\tilde{T}^{-1})_{k, n} \right) \varphi_k,
\end{equation*}
we also recall the bounds on $|(\tilde T^{-1})_{i, j}|$  from \eqref{eq:firstboundT-1}.
First notice that 
\begin{equation*}
    \|\sum_{k\in \N^*} f_k (\tilde{T}^{-1})_{k, k} \varphi_k\|_{D(A^s)}^2\leq  C e^{C \lambda^{1/\alpha}} \|f\|_{D(A^s)}^2.
\end{equation*}
Next,  assuming that $A$ is self-adjoint we obtain 
\begin{align*}
 &\left\|\sum_{k\in \N^*} \left( \sum_{n\in \N^* \setminus\{k\}} f_n (\tilde{T}^{-1})_{k, n} \right) \varphi_k\right\|_{D(A^s)}^2 
 \leq   C  \left\|\sum_{k\in \N^*} \left( \sum_{n\in \N^* \setminus\{k\}} f_n (\tilde{T}^{-1})_{k, n} \right) \varphi_k\right\|_{\mathcal{H}^{s \alpha}}^2 \\
 & \leq C \sum_{k\in \N^*} k^{2 s\alpha} \left| \sum_{n\in \N^* \setminus\{k\}} |f_n| |(\tilde{T}^{-1})_{k, n}| \right|^2 \\
 & \leq C e^{C \lambda^{1/\alpha}} \sum_{k\in \N^*} k^{2 s\alpha} \left| \sum_{n\in \N^* \setminus\{k\}} |f_n| \frac{1}{|\lambda_n- \lambda_k- \lambda|}\right|^2, \;\; \; \textrm{(used \eqref{eq:firstboundT-1})} \\
  & \leq C e^{C \lambda^{1/\alpha}}  \left(\frac{\lambda}{\Dist_{\alpha}(\lambda)}+ 1 \right)^2 \sum_{k\in \N^*} k^{2 s\alpha} \left| \sum_{n\in \N^* \setminus\{k\}} |f_n| \frac{1}{|\lambda_n- \lambda_k|}\right|^2, \;\; \; \textrm{(used Lemma \ref{lem:11})}  \\
   & \leq C_s e^{C \lambda^{1/\alpha}}  \left(\frac{1}{\Dist_{\alpha}(\lambda)}+ 1 \right)^2 \sum_{n\in \N^*} |f_n n^{s\alpha}|^2, \;\; \; \textrm{(used Lemma \ref{lem:10})}   \\
   & \leq C_s e^{C \lambda^{1/\alpha}} \left(\frac{1}{\Dist_{\alpha}(\lambda)}+ 1 \right)^2 \|f\|_{D(A^s)}^2.
\end{align*}
By combining the above estimates, we conclude the bound of $\|\check T^{-1}\|_{D(A^s)}$.  Estimating $\|\check T\|_{D(A^s)}$  and considering the case $A$ is skew-adjoint are even simpler, we omit them.
\end{proof}

}

\subsection{Cost of the backstepping transformation with respect to the damping parameter $\lambda$}\label{sec:42}
In this section, we prove one of the main results of this article 
\begin{theorem}
\label{cor:boundTT-1} 
 Under Assumption \ref{assume:structure}, the transformation $T$ constructed
 in Section \ref{sec:def:TK} 
satisfies the following: 
\begin{itemize}
 \item (Case $A$ is self-adjoint)
For any $N\in\mathbb{N}$, there exists $\lambda\in[N,N+1]$ such that the isomorphism $T= T(\lambda)$ satisfies the following estimate 
\begin{equation}\label{T-Tinv-norm-estimate}
  \|T\|_{{\color{black}\mathcal{L}(D(A^s))}}+  \|T^{-1}\|_{{\color{black}\mathcal{L}(D(A^s))}}\leq {\color{black}C_s} e^{C\lambda^{\frac{1}{\alpha}}}, \;  {\color{black}\forall s\in \left(\tfrac{1}{2\alpha}-1,1-\tfrac{1}{2\alpha}\right),}
\end{equation} 
where $C$ is a constant that does not depend on $(s, N, \lambda)$,  {\color{black} and $C_s$ is a constant that does not depend on $(N, \lambda)$. }
\item  (Case $A$ is skew-adjoint)
For any $\lambda\geq 1$ the isomorphism $T= T(\lambda)$ satisfies the following estimate 
\begin{equation}\label{T-Tinv-norm-estimate-bis}
  \|T\|_{{\color{black}\mathcal{L}(D(A^s))}}+  \|T^{-1}\|_{{\color{black}\mathcal{L}(D(A^s))}}\leq {\color{black}C_s} e^{C\lambda^{\frac{1}{\alpha}}}, \;  {\color{black}\forall s\in \left(\tfrac{1}{2\alpha}-1,1-\tfrac{1}{2\alpha}\right),}
\end{equation} 
where $C$ is a constant that does not depend on $(s, N, \lambda)$,  {\color{black} and $C_s$ is a constant that does not depend on $(N, \lambda)$. }
\end{itemize}
\end{theorem}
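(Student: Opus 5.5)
By Lemma \ref{lem-T-check-reg}, the statement reduces to three ingredients: a lower bound on $\Dist_\alpha(\lambda)$ for a well-chosen $\lambda$ (self-adjoint case only), an upper bound $\sup_n|k_n|\le Ce^{C\lambda^{1/\alpha}}$, and a lower bound $\inf_n|k_n|\ge C^{-1}e^{-C\lambda^{1/\alpha}}$. The factors $\sup|b_n|$ and $\sup|b_n^{-1}|$ are controlled by Assumption \ref{assume:structure0}. Polynomial factors in $\lambda$ are absorbed into $e^{C\lambda^{1/\alpha}}$.

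\textbf{Step 1 (choice of $\lambda$).} In the self-adjoint case the $\lambda_n$ are real, and $\lambda_j-\lambda_i+\lambda=0$ forces $\lambda_i-\lambda_j=\lambda\in[N,N+1]$. By Lemma \ref{lem:thirdgap}, this requires $|i-j|^\alpha\lesssim N+1$. By \eqref{eq:cond3}, $\max(i,j)^{\alpha-1}|i-j|\lesssim N+1$, so $\max(i,j)\lesssim N^{1/(\alpha-1)}$. Hence $\mathcal N\cap[N-1,N+2]$ contains at most $M\lesssim N^{2/(\alpha-1)}$ points, and by pigeonhole there is $\lambda\in[N,N+1]$ at distance $\gtrsim N^{-2/(\alpha-1)}$ from all of them. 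Since the points of $\mathcal N$ outside $[N-1,N+2]$ are at distance $\ge1$ from $[N,N+1]$, we get $\Dist_\alpha(\lambda)\ge cN^{-p}$, so $1/\Dist_\alpha$ is polynomial in $\lambda$. In the skew-adjoint case the $\lambda_n$ are purely imaginary while $\lambda>0$ is real, so $|\lambda_j-\lambda_i+\lambda|\ge\lambda\ge1$ and no choice is needed.

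\textbf{Step 2 (upper bound on $k_n$).} From \eqref{def:kb:tildeT}, $|k_nb_n|\le\sum_j|\tilde T^{-1}_{nj}|$. Combining \eqref{eq:firstboundT-1} with Lemma \ref{lem:sumlambdaijdistbound} gives $|k_n|\le Ce^{C\lambda^{1/\alpha}}(\lambda^2+\lambda^2/\Dist_\alpha(\lambda))$, which is of the desired form after Step 1.

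\textbf{Step 3 (lower bound on $|k_n|$, the main obstacle).} Write
\[k_nb_n=\lambda^2P_n\sum_j\frac{Q_j}{\lambda_j-\lambda_n-\lambda},\]
where $P_n=\prod_{m\ne n}(1+\lambda/(\lambda_n-\lambda_m))$ and $Q_j=\prod_{l\ne j}(1-\lambda/(\lambda_j-\lambda_l))$. The plan is to identify the sum in closed form, as in the finite-dimensional Cauchy case. There, $\sum_j C^{-1}_{nj}$ equals a residue computation: $\sum_jQ_j/(x_j-y_n)$ is the partial fraction expansion of $\prod_l(z-y_l)/\prod_l(z-x_l)$ evaluated at $z=y_n$, after subtracting the constant term at infinity. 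Concretely, the function
\[F(z)=\prod_l\frac{z-\lambda_l-\lambda}{z-\lambda_l}\]
has residues $-\lambda Q_j$ at $\lambda_j$. The partial fraction identity then gives
\[\lambda\sum_j\frac{Q_j}{\lambda_j-z}=1-F(z),\]
first for truncations and then in the limit $N\to\infty$, using the convergence estimates from the proof of Proposition \ref{tinverse}. At $z=\lambda_n+\lambda$ we have $F(z)=0$, so
\[k_nb_n=\lambda P_n.\]
The lower bound therefore reduces to bounding $|P_n|$ from below, which is the genuinely hard part.

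\textbf{Bounding $|P_n|$ from below.} The factors with $|\lambda_n-\lambda_m|\gg\lambda$ are handled by the logarithmic expansion, exactly as in Lemma \ref{lem:boundT-1}, and contribute at least $e^{-C\lambda^{1/\alpha}}$. The remaining factors are at most $O(\lambda^{1/\alpha})$ in number, by the gap estimates. In the skew-adjoint case each of them has modulus $\ge1$, since we add a real number to a purely imaginary one. In the self-adjoint case, a factor $|1+\lambda/(\lambda_n-\lambda_m)|=|\lambda_n-\lambda_m+\lambda|/|\lambda_n-\lambda_m|$ can be small only near resonance. Each such factor is $\ge\Dist_\alpha(\lambda)/|\lambda_n-\lambda_m|$, which by Step 1 is polynomially small. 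This gives a bound $e^{-C\lambda^{1/\alpha}\log\lambda}$, which is not good enough. To fix this, I would refine Step 1: choose $\lambda$ so that $D_n(\lambda)$ is small for at most one or two indices $m$ (Lemma \ref{lem:di}), while for the others the product of distances is controlled on average. Concretely, I would bound $\int_N^{N+1}\sum_{m\text{ near}}\log|\lambda_n-\lambda_m+\lambda|^{-1}\,d\lambda$ uniformly in $n$ and then select a good $\lambda$ via Chebyshev. Handling the uniformity over all $n$ simultaneously is the delicate point. Finally, I would plug everything into Lemma \ref{lem-T-check-reg} to conclude.
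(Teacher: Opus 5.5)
\textbf{Where the proposal stands.} Most of your reduction is sound and matches the paper. Step 1 is the pigeonhole argument of Proposition \ref{prop-close}, and Step 2 is Lemma \ref{lem:boundT-1} combined with Lemma \ref{lem:sumlambdaijdistbound}. Your partial-fraction identity is a clean proof of the paper's $J_n(\lambda)=1$ (Lemmas \ref{lem:rearangement} and \ref{lem-Jn-1}). Watch the sign: the residue expansion gives $\lambda\sum_j Q_j/(\lambda_j-z)=F(z)-1$, so $k_nb_n=-\lambda P_n$; this is harmless for moduli. The skew-adjoint case is then complete.

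\textbf{The gap.} In the self-adjoint case you stop exactly at the step that carries the theorem: the bound $|P_n|\gtrsim \Dist_\alpha(\lambda)e^{-c\lambda^{1/\alpha}}$, uniformly in $n$ (Proposition \ref{cor-main2} in the paper). Your naive bound charges $\Dist_\alpha(\lambda)/|\lambda_n-\lambda_m|$ to each of the $O(\lambda^{1/\alpha})$ near factors, which gives only $e^{-C\lambda^{1/\alpha}\log\lambda}$. The proposed repair is to average $\log|\lambda_n-\lambda_m+\lambda|^{-1}$ over $\lambda\in[N,N+1]$ and pick $\lambda$ by Chebyshev, uniformly in $n$. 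It is not carried out, and it is not the mechanism needed. For a fixed $n$, the smallness of all but one or two factors has nothing to do with the choice of $\lambda$; it is controlled deterministically by the spectral gaps.

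\textbf{The missing deterministic argument.}
\begin{enumerate}
\item Since the eigenvalues are decreasing, the factors with $m>n$ satisfy $1+\lambda/(\lambda_n-\lambda_m)>1$ and can be dropped.
\item Among $m<n$, drop also those with $\lambda/(\lambda_m-\lambda_n)\le c_0<1$. Use $(1-x)(1+k_{c_0}x)\ge 1$ together with Lemma \ref{lem:boundT-1}.
\item For the rest, write $m=n-p$ with $1\le p\le\tilde n$ and $f(p)=\lambda_{n-p}-\lambda_n$. These resonance points are increasing with $f(p+1)-f(p)\ge c$. If $f(p_0)<\lambda<f(p_0+1)$, only these two factors can be small, and
\[(\lambda-f(p_0))(f(p_0+1)-\lambda)\gtrsim\Dist_\alpha(\lambda).\]
\item For every other $p$, $|\lambda-f(p)|$ is at least $|f(p)-f(p_0)|$ or $|f(p)-f(p_0+1)|$. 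Each is a difference of eigenvalues, controlled by \eqref{eq:condestim2}.
\item Divide the product of these differences by $\prod_p f(p)$. Up to factors $C^{\tilde n}$ (and, when $\tilde n\sim n$, Stirling factors $e^{-cn}$ with $n\lesssim\lambda^{1/\alpha}$), you get $(p_0-1)!\,(\tilde n-p_0-1)!/\tilde n!\ \ge\ \tilde n^{-2}\binom{\tilde n}{p_0}^{-1}\ \ge\ \tilde n^{-2}2^{-\tilde n}$. This is only exponentially small in $\tilde n\lesssim\lambda/n^{\alpha-1}\lesssim\lambda^{1/\alpha}$, with no $\log$ loss.
\item The regime $\lambda>\lambda_1-\lambda_n$, where all $m<n$ are past resonance, is handled the same way by comparing with $\lambda_1-\lambda_m$ and using $n\lesssim\lambda^{1/\alpha}$.
\end{enumerate}
This yields a single factor $\Dist_\alpha(\lambda)$, which your Step 1 already makes polynomially small, and the argument closes. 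The worry about uniformity over $n$ then disappears, because the bound is deterministic for each $n$.
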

Heuristically, for the case that $A$ is self-adjoint, from Lemma \ref{lem:pre:boundT-1}, Lemma \ref{lem-T-check-reg} 
we have

\[\begin{aligned}
    \|T\|_{\textcolor{black}{D(A^{s})}} & \textcolor{black}{\leq   {\color{black}C_s} \left(1 + \dfrac{{\color{black}\lambda}}{\Dist_\alpha(\lambda)} \right)  \sup_{n} |k_n {\color{black}b_n} |},\\
    \|T^{-1}\|_{\textcolor{black}{D(A^{s})}} &\leq   C_s\left(1 + \dfrac{1}{\Dist_\alpha(\lambda)} \right) {\color{black}e^{C\lambda^{\frac{1}{\alpha}}}}   \sup_{n} |k_n{\color{black}b_n|^{-1}}.
\end{aligned}\]

The quantity $\sup|k_{n}|$ is easy to estimate (see Lemma \ref{lem:rearangement} and Proposition \ref{growthentire} below). Estimating $\sup|1/k_{n}|$, 
however, is much more subtle. \\
Even simpler,  for the case that  $A$ is skew-adjoint one has 
\[\begin{aligned}
    \|T\|_{\textcolor{black}{D(A^{s})}} & \textcolor{black}{\leq   {\color{black}C_s} \left(1 + \lambda \right)\sup_{n} |k_n b_n|},\\
    \|T^{-1}\|_{\textcolor{black}{D(A^{s})}} &\leq   \textcolor{black}{  {\color{black}C_s}{\color{black}e^{C\lambda^{\frac{1}{\alpha}}}} \sup_{n} |k_n b_n|^{-1}.}
\end{aligned}\]

The object of most of this subsection is to prove the following theorem.

\begin{theorem}
\label{th-knbn}
Assume that Assumption \ref{assume:structure} holds, the transformation $T$ constructed in Section \ref{sec:def:TK} 
satisfies the following: 
\begin{itemize}
    \item (Case $A$ is self-adjoint). For any $N\in\mathbb{N}$, there exists $\lambda\in[N,N+C]$, where $C>0$ is a constant independent of $N$, such that the following quantitative lower bound of $\{b_n k_n \}_{n\in\mathbb{N}}$
with respect to $\lambda$ holds
\begin{equation}
    |k_{n}b_{n}|\geq e^{-c\lambda^{\frac{1}{\alpha}}}, \quad \forall\; n\in\mathbb{N}^*,
\end{equation}
 where $c>0$ is a constant independent of $N$, and $\lambda$.
  \item(Case $A$ is skew-adjoint). For any $\lambda\geq 1$, one has $|k_n b_n|\geq \lambda$.  
 \end{itemize}

\end{theorem}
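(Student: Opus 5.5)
The plan is to first derive a closed form for $k_nb_n$, and then bound it from below separately in the skew-adjoint and self-adjoint cases.

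\textbf{Step 1: closed form for $k_nb_n$.} I would start from \eqref{def:kb:tildeT}, $k_nb_n=\sum_{j}(\tilde T^{-1})_{nj}$, and identify this row sum through a residue identity. This avoids summing the products in \eqref{eq:expr-T-1:full} directly. For the truncated Cauchy matrix $\tilde T^N$ with $x_i=\lambda_i$ and $y_j=\lambda_j+\lambda$, set $A_N(z)=\prod_{i\le N}(z-x_i)$ and $B_N(z)=\prod_{j\le N}(z-y_j)$. Then $R(z)=1-A_N(z)/B_N(z)$ vanishes at every $x_k$ and tends to $0$ at infinity. Hence it equals its partial fraction expansion $\sum_n c_n/(z-y_n)$, with $c_n=-A_N(y_n)/B_N'(y_n)$. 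Evaluating at $x_k$ shows that $(c_n)$ solves $\tilde T^N c=\mathbf 1$. Explicitly,
\[
c_n^N=-\lambda\prod_{m\le N,\,m\neq n}\Bigl(1+\frac{\lambda}{\lambda_n-\lambda_m}\Bigr).
\]
Two routes then give the infinite-dimensional formula. One is to pass to the limit $N\to\infty$ exactly as in the proof of Proposition~\ref{tinverse}, using estimate \eqref{eq:esimconv} and the summability of Proposition~\ref{prop:regT-1}. The other is to check directly that the limit sequence solves \eqref{eq:TBBeq} and invoke injectivity of $\check T$ (Proposition~\ref{tinverse}), which makes the solution in $\ell^\infty$ unique. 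Either way,
\[
k_nb_n=-\lambda\,P_n(\lambda),\qquad P_n(\lambda):=\prod_{m\neq n}\Bigl(1+\frac{\lambda}{\lambda_n-\lambda_m}\Bigr).
\]
This product converges by Lemma~\ref{lem:boundT-1}.

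\textbf{Step 2: skew-adjoint case.} Here $\lambda_n-\lambda_m\in i\mathbb R$, so every factor is $1+\lambda/(i\mu)$ with $\mu\in\mathbb R$. Each such factor has modulus $\sqrt{1+\lambda^2/\mu^2}\ge1$. Hence $|P_n|\ge1$ and $|k_nb_n|\ge\lambda$, as claimed.

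\textbf{Step 3: self-adjoint case.} The factors with $m>n$ satisfy $\lambda_n-\lambda_m>0$, so they are $\ge1$ and can be discarded. For $m<n$ write $d_m=\lambda_m-\lambda_n>0$; the factor is then $|1-\lambda/d_m|=|d_m-\lambda|/d_m$. I would split the indices into three groups.
\begin{itemize}
\item[(a)] If $d_m\le\lambda/2$, the factor is $\ge1$.
\item[(b)] If $d_m\ge2\lambda$, use $\log(1-x)\ge-2x$. The gap bounds \eqref{eq:cond3} and Lemma~\ref{lem:thirdgap} give $\sum_{d_m\ge2\lambda}\lambda/d_m\le C\lambda^{1/\alpha}$, so this group contributes a factor $\ge e^{-C\lambda^{1/\alpha}}$.
\item[(c)] If $d_m\in[\lambda/2,2\lambda]$, there are at most $K\lesssim\lambda^{1/\alpha}$ such indices. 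By Lemma~\ref{lem:thirdgap}, the $j$-th nearest of these $d_m$ to $\lambda$ satisfies $|d_m-\lambda|\gtrsim j^\alpha$, except for the one or two nearest, which are bounded below by $\Dist_\alpha(\lambda)$ (Lemma~\ref{lem:di}). The product over this group is then at least
\[
\Dist_\alpha(\lambda)^2\,\prod_{j\le K}\frac{c\,j^\alpha}{2\lambda}\ \gtrsim\ \Dist_\alpha(\lambda)^2\,\frac{(K!)^\alpha}{(C\lambda)^K}\ \ge\ \Dist_\alpha(\lambda)^2\,e^{-C\lambda^{1/\alpha}},
\]
using Stirling and $K^\alpha\sim\lambda$.
\end{itemize}
Altogether, $|k_nb_n|\ge\Dist_\alpha(\lambda)^2e^{-C\lambda^{1/\alpha}}$, uniformly in $n$.

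\textbf{Step 4: choice of $\lambda$ (main obstacle).} The hard part is finding $\lambda\in[N,N+C]$ with $\Dist_\alpha(\lambda)$ only polynomially small in $N$. My plan is a measure-counting argument. The points $\lambda_i-\lambda_j$ lying in $[N-1,N+C+1]$ satisfy $c\,i^{\alpha-1}(j-i)\le N+C+1$ by \eqref{eq:cond3}. Their number is therefore at most $\sum_{k\le CN}(CN/k)^{1/(\alpha-1)}\le CN^{M}$ for some $M=M(\alpha)$. The set of $\lambda$ within $\delta=N^{-M-1}$ of one of these points has measure $\le 2CN^{M}\delta<C$ for $N$ large. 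Hence some $\lambda\in[N,N+C]$ has $\Dist_\alpha(\lambda)\ge N^{-M-1}$. Since $N^{2M+2}\le e^{C\lambda^{1/\alpha}}$, this gives $|k_nb_n|\ge e^{-c\lambda^{1/\alpha}}$. The same choice of $\lambda$ also controls the factors $1/\Dist_\alpha$ appearing in Lemma~\ref{lem-T-check-reg}.

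The most delicate point I expect is the uniformity in $n$ of the counting in group (c) when $n$ is small. There $d_m$ ranges only over $m<n$, and the spacing estimate must be applied with the correct index, via \eqref{eq:cond3} versus Lemma~\ref{lem:thirdgap}.
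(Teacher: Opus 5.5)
Your proof is correct. It reaches the same three ingredients as the paper, but by a different argument at each stage. The ingredients are:
\begin{itemize}
\item the closed form $k_nb_n=-\lambda F_n(\lambda)$;
\item a lower bound $|F_n(\lambda)|\gtrsim\Dist_\alpha(\lambda)^{p}e^{-C\lambda^{1/\alpha}}$;
\item a choice of $\lambda\in[N,N+C]$ with $\Dist_\alpha(\lambda)$ only polynomially small.
\end{itemize}

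\textbf{Closed form.} The paper writes $k_nb_n=-\lambda F_nJ_n$ and proves $J_n\equiv 1$ through an interpolation-polynomial identity on truncations, followed by dominated convergence. Your partial-fraction expansion of $1-A_N/B_N$ computes $(\tilde T^N)^{-1}\mathbf 1$ in one line and explains why $J_n=1$. Two small fixes:
\begin{itemize}
\item $R(x_k)=1$, not $0$; it is $A_N/B_N$ that vanishes at $x_k$.
\item On your uniqueness route, you need injectivity of $\check T$ on $\ell^\infty$, not only on $H$. This follows from \eqref{eq:deltaRS} together with absolute summability (Fubini). Your limit route avoids the issue altogether.
\end{itemize}

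\textbf{Lower bound on $F_n$.} The paper splits according to where $\lambda$ lies relative to $c_0(\lambda_{n-1}-\lambda_n)$ and $\lambda_1-\lambda_n$, and runs factorial estimates involving $\tilde n$ and $p_0$. You instead split the indices $m<n$ by the size of $d_m$ relative to $\lambda$, using only the spacing $|d_m-d_{m'}|\ge c|m-m'|^\alpha$ from Lemma~\ref{lem:thirdgap}. This is uniform in $n$, so the small-$n$ worry you raise does not arise, and it is much shorter.

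The price is $\Dist_\alpha(\lambda)^2$ instead of $\Dist_\alpha(\lambda)$. You can recover the linear power as in \eqref{eq:estim-f-lambda}: the two nearest $d_m$ on either side of $\lambda$ are at least $c$ apart, so one of them is at distance at least $c/2$. In group (c) you also drop a harmless factor $(2\lambda)^{-2}$ and the square coming from the two sides. The Stirling step should read ``the worst case is $K\sim\lambda^{1/\alpha}$'' rather than $K^\alpha\sim\lambda$, since $K$ is only bounded above.

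\textbf{Choice of $\lambda$.} Your measure count gives $\Dist_\alpha(\lambda)\ge N^{-M-1}$. This is weaker than the paper's pigeonhole bound $c/(2M_N)\sim N^{-1/(\alpha-1)}$, but any polynomial bound suffices because $\log N\lesssim\lambda^{1/\alpha}$. The paper's version has the modest advantage of selecting $\mu_N$ from an explicit finite grid, in keeping with its emphasis on explicit constructions.
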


We can conclude the proof of Theorem \ref{cor:boundTT-1} by combining the estimates above, this previous theorem and from Proposition \ref{prop:estimate-dist-mu-n} below, deriving estimates of the form 
\[
\text{Dist}_\alpha(\lambda) \geq e^{-c\lambda^{\frac{1}{\alpha}}}, \;\;\;\;  \textrm{ if $A$ is self-adjoint}
\] 
for values of $\lambda $ in $[N,N+C]$ and for any $N\in \N$.

The proof of Theorem \ref{th-knbn} is based on four steps: 
\begin{enumerate}
    \item Decompose $k_{n}b_{n}$ in two terms, $F_{n}$ and $J_{n}$ that have different behavior (see Lemma \ref{lem:rearangement}).
    \item Show that $J_{n}(\lambda) = 1$ for any $n\in\mathbb{N}$ (see Lemma \ref{lem-Jn-1}). This is the object of Section \ref{sec:estimJn}. 
    \item Use careful estimations to estimate $F_{n}(\lambda)$ as a function of $\Dist_{\alpha}(\lambda)$ and $\lambda$ (see Proposition \ref{cor-main2}). This is the object of Section \ref{sec:estimFn}.
    \item Show that for good choice of $\lambda$ one has a good lower bound of $ \Dist_{\alpha}(\lambda)$ (see Proposition \ref{prop:estimate-dist-mu-n}). This is done in Section \ref{sec:estimDist}.
\end{enumerate}

\subsection{Rearrangement of $k_nb_n$}\label{sec:sec:reaknbn}

We begin by a rearrangement of the expression $k_nb_n$ to decompose it in two parts. 
\begin{lemma}
\label{lem:rearangement}
The following holds for any $n\in\mathbb{N}$
\begin{equation}\label{kn-bn-decomposed}
    k_{n}b_{n} = -\lambda F_{n}(\lambda)J_{n}(\lambda),
\end{equation}
where
\begin{gather}
 F_n(\lambda):=\prod_{\substack{m\in \N^\ast\\ m \neq n}}\left(1+\dfrac{\lambda}{\lambda_n-\lambda_m}\right),
  \label{eq:DefFn} \\
 \label{def_Jn}
     J_n(\lambda):=\sum_{j\in \N}  \dfrac{\displaystyle   \prod_{\substack{m\in \N^\ast\\ m \neq n}} (\lambda_j-\lambda_m-\lambda)}{ \displaystyle  \prod_{\substack{m\in \N^\ast\\ m \neq j}} (\lambda_j-\lambda_m) }. 
\end{gather}
\end{lemma}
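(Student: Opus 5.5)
The identity is a termwise algebraic rearrangement of the series defining $k_nb_n$. I will start from \eqref{def:kb:tildeT} in the proof of Corollary \ref{cor:existkn}, namely $k_nb_n=\sum_{j}(\tilde T^{-1})_{nj}$. By Proposition \ref{prop:regT-1}(ii) this series is absolutely convergent. Inserting \eqref{eq:expr-T-1:full} with $i=n$, the factor $\prod_{m\neq n}(1+\lambda/(\lambda_n-\lambda_m))=F_n(\lambda)$ does not depend on $j$ and can be pulled out of the sum. This gives
\[
k_nb_n=\lambda^2F_n(\lambda)\sum_{j\in\N^*}\frac{1}{\lambda_j-\lambda_n-\lambda}\prod_{m\neq j}\Big(1-\frac{\lambda}{\lambda_j-\lambda_m}\Big).
\]
Each inner product converges absolutely by Lemma \ref{lem:boundT-1}(ii), so this step only uses results already established.

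Next I compare each summand with the $j$-th summand of $J_n(\lambda)$ in \eqref{def_Jn}. The quotient in \eqref{def_Jn} is a formal ratio of two divergent products, so I interpret it as the convergent product
\[
\prod_{m\neq n}(\lambda_j-\lambda_m-\lambda)\Big/\prod_{m\neq j}(\lambda_j-\lambda_m)
\;:=\;(\lambda_j-\lambda_j-\lambda)\,\frac{1}{\lambda_j-\lambda_n-\lambda}\prod_{m\neq j}\frac{\lambda_j-\lambda_m-\lambda}{\lambda_j-\lambda_m}
\]
for $j\neq n$. In words: remove the factor $m=j$ from the numerator (this factor equals $-\lambda$), and add the missing factor $m=n$. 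For $j=n$ the index sets coincide and no bookkeeping is needed. Equivalently, I would define it as the limit of the truncated finite quotients with $m\le M$ as $M\to\infty$; the truncations match term by term and converge by the estimates used in the proof of Proposition \ref{tinverse}.

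With this convention, the $j$-th summand above equals $-\tfrac{1}{\lambda}$ times the $j$-th summand of $J_n$, in both cases $j\neq n$ and $j=n$:
\begin{itemize}
\item For $j\neq n$, the factor $1/(\lambda_j-\lambda_n-\lambda)$ cancels the $m=n$ factor $(\lambda_j-\lambda_n-\lambda)/(\lambda_j-\lambda_m)$ up to rearrangement, and the numerator of $J_n$ carries the extra factor $-\lambda$ coming from $m=j$.
\item For $j=n$, the prefactor is $1/(-\lambda)$ and the products coincide.
\end{itemize}
Summing over $j$, which is legitimate by absolute convergence, gives $k_nb_n=\lambda^2F_n(\lambda)\cdot(-\lambda^{-1})J_n(\lambda)=-\lambda F_n(\lambda)J_n(\lambda)$. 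This also shows that the series $J_n$ converges.

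The only delicate point is giving a rigorous meaning to the formal infinite quotients in \eqref{def_Jn}, since numerator and denominator diverge separately. The safest route is to prove the finite-dimensional identity for the truncations $1\le m,j\le M$, where everything is a genuine finite product, and then pass to the limit $M\to\infty$. The convergence of the truncated terms and sums would be controlled exactly as in the estimate \eqref{eq:esimconv}.
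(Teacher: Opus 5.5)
Your proposal is correct and follows the paper's proof. Like the paper, you pull $F_n(\lambda)$ out of the absolutely convergent row sum $\sum_j(\tilde T^{-1})_{nj}$, then absorb a factor $-\lambda$ as the $m=j$ term of the numerator while dividing out the $m=n$ term, which is exactly the chain of equalities ending in \eqref{eq:formJn}. Your explicit convention for the formally divergent quotient in \eqref{def_Jn} is the one the paper uses implicitly (and later in Step B of Lemma \ref{lem-Jn-1} through $J_n=-\sum_j I_j$), so it is a useful clarification rather than a different route.
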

\begin{proof}
By definition, we know from \eqref{def:kb:tildeT}  that 
\begin{align*} 
k_nb_n & = \sum_{j\in \N^*} \tilde{T}^{-1}_{nj} \\
&= \sum_{j\in  \N^*} \dfrac{\lambda^2}{\lambda_j-\lambda_n-\lambda} \prod_{\substack{m\in \N^\ast\\ m \neq n}}\left(1+\dfrac{\lambda}{\lambda_n-\lambda_m}\right) \prod_{\substack{m\in \N^\ast\\ m \neq j}} \left( 1 - \dfrac{\lambda}{\lambda_j-\lambda_m}\right)\\
&= \left(\prod_{\substack{m\in \N^\ast\\ m \neq n}}\left(1+\dfrac{\lambda}{\lambda_n-\lambda_m}\right) \right)  \left(\sum_{j\in  \N^*} \dfrac{\lambda^2}{\lambda_j-\lambda_n-\lambda}  \prod_{\substack{m\in \N^\ast\\ m \neq j}} \left( 1 - \dfrac{\lambda}{\lambda_j-\lambda_m}\right) \right)\\
&= -\lambda \left( \prod_{\substack{m\in \N^\ast\\ m \neq n}}\left(1+\dfrac{\lambda}{\lambda_n-\lambda_m}\right)\right) \left(\sum_{j\in  \N^*} \dfrac{-\lambda}{\lambda_j-\lambda_n-\lambda} \prod_{\substack{m\in \N^\ast\\ m \neq j}} \left( 1 - \dfrac{\lambda}{\lambda_j-\lambda_m}\right)\right)\\
&= -\lambda \left( \prod_{\substack{m\in \N^\ast\\ m \neq n}}\left(1+\dfrac{\lambda}{\lambda_n-\lambda_m}\right)\right) 
\sum_{j\in \N}  \dfrac{\displaystyle   \prod_{\substack{m\in \N^\ast\\ m \neq n}} (\lambda_j-\lambda_m-\lambda)}{ \displaystyle  \prod_{\substack{m\in \N^\ast\\ m \neq j}} (\lambda_j-\lambda_m) } \\
&=- \lambda F_n(\lambda)J_n(\lambda).
\end{align*}
where we used that
\begin{align}
   \sum_{j\in  \N^*} \dfrac{-\lambda}{\lambda_j-\lambda_n-\lambda} \prod_{\substack{m\in \N^\ast\\ m \neq j}} \left( 1 - \dfrac{\lambda}{\lambda_j-\lambda_m}\right) 
    &=  \sum_{j\in  \N^*} \dfrac{-\lambda}{\lambda_j-\lambda_n-\lambda}  \left(\dfrac{ \prod_{\substack{m\in \N^\ast\\ m \neq j}} (\lambda_j- \lambda_m- \lambda) }{ \prod_{\substack{m\in \N^\ast\\ m \neq j}} (\lambda_j- \lambda_m) } \right)  \notag  \\
    &=   \sum_{j\in  \N^*} \dfrac{1}{\lambda_j-\lambda_n-\lambda}  \left(\dfrac{ \prod_{\substack{m\in \N^\ast}} (\lambda_j- \lambda_m- \lambda) }{ \prod_{\substack{m\in \N^\ast\\ m \neq j}} (\lambda_j- \lambda_m) } \right)  \notag  \\
     &=   \sum_{j\in  \N^*} \left(\dfrac{1}{\prod_{\substack{m\in \N^\ast\\ m \neq j}} (\lambda_j- \lambda_m) } \right) \left(  \dfrac{ \prod_{\substack{m\in \N^\ast}} (\lambda_j- \lambda_m- \lambda) }{  \lambda_j-\lambda_n-\lambda } \right)  \notag  \\
    &= \sum_{j\in \N}  \dfrac{\displaystyle   \prod_{\substack{m\in \N^\ast\\ m \neq n}} (\lambda_j-\lambda_m-\lambda)}{ \displaystyle  \prod_{\substack{m\in \N^\ast\\ m \neq j}} (\lambda_j-\lambda_m) } \label{eq:formJn}
\end{align}
\end{proof}

One can note that $F_n$ is a holomorphic function of $\lambda \in \mathbb{C}$, with a countable number of zeros $\lambda=\lambda_m-\lambda_n, \, m\in \N\setminus\{n\}$ and such that $F_n(0)=1$. In the following we study the behavior of $F_{n}$ and $J_{n}$. In particular, we will show a remarkable property: $J_n(\lambda)=1$, for any $\lambda\in \mathbb{C}$ and any $n\in \NN^*$(see Lemma \ref{lem-Jn-1}). We will also show that there exists $C,c>0$ such that 
\begin{equation}
|F_{n}(\lambda)|\geq C\Dist_{\alpha}(\lambda) e^{-c\lambda^{\frac{1}{\alpha}}},\quad \forall\;n\in\mathbb{N}^{*},\forall \;\lambda>1,
\end{equation}
where $\Dist_{\alpha}(\lambda)$ is given by \eqref{eq:distalpha}. Note that from Lemma \ref{lem:sumlambdaijdistbound} we also have
\begin{equation}
    |F_{n}(\lambda)|\leq C e^{C \lambda^{\frac{1}{\alpha}}}.
\end{equation}

\subsection{Behavior of $J_{n}(\lambda)$.}
\label{sec:estimJn}
We begin by drastically simplifying one of the terms introduced above.
\begin{lemma}
\label{lem-Jn-1}
For $\lambda\in \mathbb{C}$ and $n\in \N^*$, we have
\[
J_n(\lambda)=1.
\]
\end{lemma}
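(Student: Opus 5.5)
The identity is, in finite dimension, a classical fact about divided differences (Lagrange interpolation). My plan is to prove it first for truncated sums and then pass to the limit, reusing the truncation estimates from the proof of Proposition~\ref{tinverse}.

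\textbf{Finite-dimensional identity.} Fix $n$ and $N\geq n$, and set
\[
J_n^N(\lambda):=\sum_{j=1}^N \frac{\prod_{m\leq N,\, m\neq n}(\lambda_j-\lambda_m-\lambda)}{\prod_{m\leq N,\, m\neq j}(\lambda_j-\lambda_m)}.
\]
Let $P(x)=\prod_{m\leq N,\, m\neq n}(x-\lambda_m-\lambda)$. This is a monic polynomial of degree $N-1$ in $x$. By Lagrange interpolation at the $N$ distinct nodes $\lambda_1,\dots,\lambda_N$,
\[
P(x)=\sum_{j=1}^N P(\lambda_j)\prod_{m\leq N,\, m\neq j}\frac{x-\lambda_m}{\lambda_j-\lambda_m}.
\]
Comparing the coefficients of $x^{N-1}$ gives $J_n^N(\lambda)=1$. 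This holds for every $\lambda\in\mathbb{C}$, since $J_n^N$ is a polynomial in $\lambda$ and no resonance condition is needed.

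\textbf{Convergent form and passage to the limit.} The infinite products in \eqref{def_Jn} are only meaningful after normalisation. Following \eqref{eq:formJn}, each term of $J_n$ equals
\[
\frac{-\lambda}{\lambda_j-\lambda_n-\lambda}\prod_{m\neq j}\Big(1-\frac{\lambda}{\lambda_j-\lambda_m}\Big),
\]
and the same identity holds for the truncated terms with $m\leq N$.

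First take $\lambda\notin\mathcal{N}$, with $\lambda$ real or in a compact set of $\mathbb{C}$ at positive distance from $\mathcal{N}$. By Lemma~\ref{lem:boundT-1}(ii), every term is bounded by $Ce^{C|\lambda|^{1/\alpha}}|\lambda|/|\lambda_j-\lambda_n-\lambda|$. By Lemma~\ref{lem:sumlambdaijdistbound}, or directly from the gap estimate of Lemma~\ref{lem:thirdgap}, this bound is summable in $j$, uniformly in $N$.

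Next I need each truncated product to converge to the full product. The tail factor $\prod_{m>N}(1-\lambda/(\lambda_j-\lambda_m))$ should tend to $1$ uniformly for $j\leq N/2$. To see this, bound its logarithm by $\sum_{m>N}|\lambda|/(c\,m^{\alpha-1}|m-j|)\lesssim |\lambda| N^{1-\alpha}$, which is the same computation as in the proof of \eqref{eq:esimconv}. The indices $j>N/2$ are handled by the uniform summable bound, whose tail is small. Dominated convergence then gives $J_n(\lambda)=\lim_N J_n^N(\lambda)=1$.

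\textbf{Extension to all $\lambda\in\mathbb{C}$.} The convergence above is locally uniform on $\mathbb{C}\setminus\mathcal{N}$, so $J_n$ is holomorphic there and identically equal to $1$. The points of $\mathcal{N}$ are removable singularities: in the unnormalised form \eqref{def_Jn} the factor $\lambda_j-\lambda_n-\lambda$ is absent from the numerator, so the apparent pole is an artefact of the normalisation. The uniform limit argument can also be run directly on small circles around such a point, using the maximum principle for $J_n^N-J_n$. Hence $J_n\equiv1$ on all of $\mathbb{C}$.

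\textbf{Main obstacle.} The delicate step is the uniform convergence of the truncated products to the infinite ones, together with the uniform-in-$N$ summable domination. This is where the hypothesis $\alpha>1$ enters, through the tail bounds $N^{1-\alpha}\log N$ already obtained in the proof of Proposition~\ref{tinverse}.
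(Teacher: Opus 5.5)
Your proof is correct, and its skeleton is the same as the paper's: show that the truncation $J_n^N(\lambda)$ is identically $1$, then let $N\to\infty$ by dominated convergence on the normalised terms $\frac{-\lambda}{\lambda_j-\lambda_n-\lambda}\prod_{m\neq j}\bigl(1-\frac{\lambda}{\lambda_j-\lambda_m}\bigr)$.

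The genuine difference is in the finite identity. The paper reduces to $n=N$ by a symmetry and clears denominators. It then shows that an auxiliary polynomial $Q(\mu)$ of degree $N-2$ vanishes at $\zeta_1,\dots,\zeta_{N-1}$, and evaluates it at $\zeta_N$. You instead read $J_n^N(\lambda)$ directly as the leading coefficient of the Lagrange interpolant of the monic degree-$(N-1)$ polynomial $\prod_{m\leq N,\,m\neq n}(x-\lambda_m-\lambda)$, that is, as its divided difference on $\lambda_1,\dots,\lambda_N$. Both arguments rest on the same fact: a polynomial of degree $<N$ is determined by $N$ values. Your packaging handles all $n\leq N$ at once and makes it transparent why the value is exactly $1$.

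In the limit step, you combine a uniform tail bound $O(|\lambda|N^{1-\alpha})$ for $j\leq N/2$ with an $N$-independent summable majorant. This is a cleaner form of the paper's domination \eqref{eq:hypdominated}. Strictly speaking, for truncated products and complex $\lambda$ you are using the proof of Lemma \ref{lem:boundT-1} (each factor bounded by $1+|\lambda|/|\lambda_j-\lambda_m|$) rather than its statement.

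You also cover complex $\lambda$ and $\lambda\in\mathcal{N}$, which the paper's Step B, written for real $\lambda$, leaves implicit. Your own remark about the cancelled factor shortens that part further. For $j\neq n$, each term equals $\frac{-\lambda}{\lambda_j-\lambda_n}\prod_{m\neq j,n}\bigl(1-\frac{\lambda}{\lambda_j-\lambda_m}\bigr)$. This is an entire function bounded by $Ce^{C|\lambda|^{1/\alpha}}|\lambda|/|\lambda_j-\lambda_n|$ uniformly in the truncation, and that bound is summable in $j$ by Lemma \ref{lem:thirdgap}. So dominated convergence applies directly at every $\lambda\in\mathbb{C}$, with no detour through $\mathbb{C}\setminus\mathcal{N}$ and removable singularities.
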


\begin{proof}[of Lemma \ref{lem-Jn-1}] 

The proof is composed by two steps:
\begin{itemize}
    \item[Step A:] Show that a truncated version $J^{N}_{n}(\lambda)$ of $J_n(\lambda)$ is equal to $1$ for any $\lambda\in \CC$, any $N\in \mathbb{N}^*$ and any $1\leq n \leq N$.
    \item[Step B:] Pass the limit in $N$ thanks to the distribution of $\{\lambda_n\}_{n\in \NN^*}$, and leveraging that $\lambda_n\sim n^{\alpha}$ with $\alpha>1$.
\end{itemize}

 In the following we only consider the case that $A$ is self-adjoint, since the skew-adjoint case can be treated similarly.
Before starting the proof of the first step, let us define the following polynomial
\begin{equation}
    P(\lambda):= \sum_{j=1}^{N-1} 1\cdot \left(\prod_{\substack{1\leq m\leq N-1 \\ m\neq j}}\frac{\lambda- \lambda_m}{\lambda_j- \lambda_m} \right),
\end{equation}
which is of order at most $N-2$. This is actually the Lagrange interpolation formula such that $P(\lambda_p)=1, 1\leq p\leq N-1$. 
We get, with this definition,
\begin{equation}
    \sum_{j=1}^N \dfrac{\prod_{m\neq N}(\lambda_{N}- \lambda_m)}{\displaystyle  \prod_{\substack{m=1 \\ m \neq j}}^N (\lambda_j-\lambda_m)}=  -\sum_{j=1}^{N-1} \dfrac{\displaystyle  \prod_{\substack{m=1 \\ m \neq j}}^{N-1} (\lambda_N-\lambda_m)}{\displaystyle  \prod_{\substack{m=1 \\ m \neq j}}^{N-1} (\lambda_j-\lambda_m)}+ 1=1- P(\lambda_N)=0. 
\end{equation}

{\bf Step A: }Let $\{\zeta_1, \zeta_2,..., \zeta_N\}$ be a given sequence of constants satisfying $\zeta_i\neq \zeta_j$.
Inspired by the definition of $J_n$ in equation \eqref{def_Jn} we further define, for any $N\in \mathbb{N}^*$ and any $1\leq n\leq N$:
\begin{equation}
\label{eq:defJnN}
J^N_n(\lambda):=J^N_n(\lambda; \zeta_1, \zeta_2,..., \zeta_N):= \sum_{j=1}^N  \dfrac{\displaystyle   \prod_{\substack{m\leq N \\ m \neq n}} (\zeta_j-\zeta_m-\lambda)}{ \displaystyle  \prod_{\substack{m\leq N \\ m \neq j}} (\zeta_j-\zeta_m) }.
\end{equation}
In the following we will prove that 
\begin{equation}
\label{eq:JnN1}
    J^N_n(\lambda; \zeta_1, \zeta_2,..., \zeta_N)=1, \quad \forall \lambda\in \mathbb{C}, \forall n\in \{1, 2,..., N\}.
\end{equation}
To do so, we immediately notice the following symmetry
\begin{equation}
    J^N_{N-1}(\lambda; \zeta_1, \zeta_2,...,\zeta_{N-1}, \zeta_N)=  J^N_{N}(\lambda; \zeta_1, \zeta_2,...,\zeta_{N-2},\zeta_{N}, \zeta_{N-1}).
\end{equation}

We further observe that 
\begin{equation}
J^N_N(\lambda)= \sum_{j=1}^{N-1}  \left(\dfrac{\displaystyle   \prod_{\substack{m\leq N-1}} (\zeta_j-\zeta_m-\lambda)}{ \displaystyle  \prod_{\substack{m\leq N \\ m \neq j}} (\zeta_j-\zeta_m) }\right)+ \dfrac{\displaystyle   \prod_{\substack{m\leq N-1}} (\zeta_N-\zeta_m-\lambda)}{ \displaystyle  \prod_{\substack{m\leq N-1}} (\zeta_N-\zeta_m) }.
\end{equation}

By multiplying $J^N_N(\lambda)$ with $\prod_{\substack{m\leq N-1}} (\zeta_N-\zeta_m)$ we shall prove that  
\begin{align*}
    P^N_N(\lambda):=& \sum_{j=1}^{N-1}  \left(\dfrac{\displaystyle   \prod_{\substack{m\leq N-1}} (\zeta_j-\zeta_m-\lambda) \prod_{\substack{m\leq N-1}} (\zeta_N-\zeta_m)}{ \displaystyle  \prod_{\substack{m\leq N \\ m \neq j}} (\zeta_j-\zeta_m) }\right) \\
    &+ \prod_{\substack{m\leq N-1}} (\zeta_N-\zeta_m-\lambda)-\prod_{\substack{m\leq N-1}} (\zeta_N-\zeta_m),
\end{align*}
equals to zero, which is equivalent to prove that 
\begin{align*}
   P^N_N(\lambda)=&  \sum_{j=1}^{N-1}  \left(\dfrac{\displaystyle \lambda  \prod_{\substack{m\leq N-1\\ m \neq j}} (\zeta_j-\zeta_m-\lambda) \prod_{\substack{m\leq N-1\\ m \neq j}} (\zeta_N-\zeta_m)}{ \displaystyle  \prod_{\substack{m\leq N-1 \\ m \neq j}} (\zeta_j-\zeta_m) }\right) \\
   &+ \prod_{\substack{m\leq N-1}} (\zeta_N-\zeta_m-\lambda)-\prod_{\substack{m\leq N-1}} (\zeta_N-\zeta_m)=0.
\end{align*}
Inspired by the preceding formula, for any given $\{\zeta_1,..., \zeta_{N-1}, \lambda\}$ satisfying $\zeta_i\neq \zeta_j$ we define 
\begin{align}
    Q(\mu; \zeta_1,...,\zeta_{N-1},\lambda):= &\sum_{j=1}^{N-1}  \left(\dfrac{\displaystyle \lambda  \prod_{\substack{m\leq N-1\\ m \neq j}} (\zeta_j-\zeta_m-\lambda) \prod_{\substack{m\leq N-1\\ m \neq j}} (\mu-\zeta_m)}{ \displaystyle  \prod_{\substack{m\leq N-1 \\ m \neq j}} (\zeta_j-\zeta_m) }\right) \notag\\
   & + \prod_{\substack{m\leq N-1}} (\mu-\zeta_m-\lambda)-\prod_{\substack{m\leq N-1}} (\mu-\zeta_m),
\end{align}
which is clearly a polynomial of $\mu$ of order $N-2$. Now we show that $Q(\mu)=0$. Indeed, by choosing $\mu= \zeta_1$ we get that 
\begin{align*}
    Q(\zeta_1)&=\sum_{j=1}  \left(\dfrac{\displaystyle \lambda  \prod_{\substack{m\leq N-1\\ m \neq j}} (\zeta_j-\zeta_m-\lambda) \prod_{\substack{m\leq N-1\\ m \neq j}} (\zeta_1-\zeta_m)}{ \displaystyle  \prod_{\substack{m\leq N-1 \\ m \neq j}} (\zeta_j-\zeta_m) }\right) \notag\\
   & + \prod_{\substack{m\leq N-1}} (\zeta_1-\zeta_m-\lambda)-\prod_{\substack{m\leq N-1}} (\zeta_1-\zeta_m),\\
   &= \lambda  \prod_{\substack{m\leq N-1\\ m \neq 1}}(\zeta_1-\zeta_m-\lambda)+\prod_{\substack{m\leq N-1}} (\zeta_1-\zeta_m-\lambda)\\
   &=0.
\end{align*}
Thanks to the symmetry we obtain that \begin{equation}
    Q(\zeta_i)=0, \forall i\in \{1, 2,..., N-1\},
\end{equation}
which, to be combined with the fact that $Q(\mu)$ is a polynomial of order $N-2$, yields $Q(\mu)=0$. In particular, by choosing $\mu=\zeta_N$ we get 
\begin{equation}
   0= Q(\zeta_N)= P^N_N(\lambda).
\end{equation}
Hence, for any sequence $\{\zeta_1,..., \zeta_N\}$ satisyfing $\zeta_i\neq \zeta_j$ and for  any $\lambda$ we know that $P^N_N(\lambda)=0$. This concludes the proof of Step 1 and \eqref{eq:JnN1}.

{\bf Step B: } This step is standard and consists in passing to the limit in \eqref{eq:JnN1}. 
We can apply, for instance, a dominated convergence by denoting
\begin{equation}
\label{eq:defIiN}
    I_{j}^{N} := \frac{\lambda}{\lambda_{j}-\lambda_{n}-\lambda}\prod_{\substack{m\leq N\\ m \neq j}}\left(1-\frac{\lambda}{\lambda_{j}-\lambda_{m}}\right),\;\;    I_{j} := \frac{\lambda}{\lambda_{j}-\lambda_{n}-\lambda}\prod_{\substack{m\in\mathbb{N}^{*}\\ m \neq j}}\left(1-\frac{\lambda}{\lambda_{j}-\lambda_{m}}\right).
\end{equation}
We observe that for $j\leq N$
\begin{equation}
\label{eq:ratioIj}
    \frac{I_{j}^{N}}{I_{j}} = \prod_{\substack{m> N\\ m \neq j}}\left(1-\frac{\lambda}{\lambda_{j}-\lambda_{m}}\right)^{-1}.
\end{equation}
Note that we have, 
for $j$ given, and when $m\rightarrow +\infty$
\begin{equation}
     \log\left(1-\frac{\lambda}{\lambda_{j}-\lambda_{m}}\right)\sim -\frac{\lambda}{C|m-j|^{\alpha}}
\end{equation}
and therefore
\begin{equation}
    \sum\limits_{m> N}\log\left(1-\frac{\lambda}{\lambda_{j}-\lambda_{m}}\right) \rightarrow 0, \quad {\color{black} N\rightarrow \infty.}
\end{equation}
This and \eqref{eq:ratioIj} imply $I_{j}^{N}\rightarrow I_{j}$ when $N\rightarrow+\infty$ for any $j\in\mathbb{N}^{*}$. Looking again at \eqref{eq:ratioIj} and using \eqref{eq:cond3}, for  $N>(\lambda/c)^{1/(\alpha-1)}-1$, where $c$ is given by \eqref{eq:cond3},
\begin{align}
    |I_{j}^{N}\mathbbm{1}_{j\leq N}| & = 
    |I_{j}\mathbbm{1}_{j\leq N}|\prod_{\substack{m> N\\ m \neq j}}\left(1-\frac{\lambda}{\lambda_m-\lambda_j}\right)^{-1} \nonumber \\
    &\leq |I_{j}\mathbbm{1}_{j\leq N}|\prod_{\substack{m> N\\ m \neq j}}\left(1-\frac{\lambda}{c|m-j|m^{\alpha-1}}\right)^{-1}\nonumber \\
    &\leq |I_{j}|\prod_{m> N}\left(1-\frac{\lambda}{c|m-N|m^{\alpha-1}}\right)^{-1},
\end{align}
where we used the fact that if $j>N$ then $\mathbbm{1}_{j\leq N}$ vanishes. This gives, 
\begin{equation}
\label{eq:hypdominated}
    |I_{j}^{N}\mathbbm{1}_{j\leq N}| \leq |I_{j}|\prod_{k> 0}\left(1-\frac{\lambda}{c|k|(k+N)^{\alpha-1}}\right)^{-1}\leq {\color{black}|I_{j}|} \prod_{k>0}\left(1-\frac{1}{{\color{black}c}k^{\alpha}}\right)^{-1}.
\end{equation}
Finally, we can show that the right-hand side is integrable: from the proof of Lemma \ref{lem:boundT-1} and Lemma \ref{lem:sumlambdaijdistbound}, we have
\begin{equation}
 \sum\limits_{j\in\mathbb{N}^{*}}|I_{j}|= \sum\limits_{j\in\mathbb{N}^{*}}\left|\frac{\lambda}{\lambda_{j}-\lambda_{n}-\lambda}\right|\left|\prod_{\substack{m\in\mathbb{N}^{*}\\ m \neq j}}\left(1-\frac{\lambda}{\lambda_{j}-\lambda_{m}}\right)\right|<+\infty,
\end{equation}
and 
\begin{equation}
    \prod_{k>0}\left(1-\frac{1}{{\color{black}c}k^{\alpha}}\right)^{-1}=\exp\left(\sum_{k>0} \ln \left( 1+\dfrac{1}{ck^\alpha-1} \right)\right)<+\infty,
\end{equation}
thus from \eqref{def_Jn} and \eqref{eq:formJn}, \eqref{eq:defJnN}, \eqref{eq:defIiN} and \eqref{eq:hypdominated} we conclude by dominated convergence that
\begin{equation}
J_{n}^{N}(\lambda) = -\sum\limits_{j\in\mathbb{N}^{*}} I_{j}^{N}\mathbbm{1}_{j\leq N} \rightarrow     -\sum\limits_{j\in\mathbb{N}^{*}} I_{j} = J_{n}(\lambda). \notag
\end{equation}
\end{proof}

\subsection{Behavior of $F_{n}$}
\label{sec:estimFn}

Now, we turn to the other term of decomposition \eqref{kn-bn-decomposed}. Note that we have the following proposition as a direct consequence of Lemma \ref{lem:boundT-1}.
\begin{proposition}[Upper bound]\label{growthentire}
Assume that $A$ is either self-adjoint or skew-adjoint.  Let $\lambda >0$. Then, there exists $C>0$ independent of $\lambda$ such that,
\[
|F_n(\lambda)| \leq Ce^{C\lambda^{\frac{1}{\alpha}}}.
\]
\end{proposition}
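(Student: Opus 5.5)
The bound follows directly from Lemma \ref{lem:boundT-1}(i), once we note that $F_n(\lambda)$ in \eqref{eq:DefFn} is exactly the product estimated there, with $i=n$:
\[
F_n(\lambda)=\prod_{\substack{m\in \N^\ast\\ m \neq n}}\left(1+\dfrac{\lambda}{\lambda_n-\lambda_m}\right).
\]
So the plan is to invoke that lemma, and to record why its constant is uniform in $n$ and $\lambda$ in both the self-adjoint and skew-adjoint cases.

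If one wants the argument behind Lemma \ref{lem:boundT-1}(i) rather than a citation, I would split the product at the threshold $|\lambda_n-\lambda_m|\sim\lambda$. By Lemma \ref{lem:thirdgap}, $|\lambda_n-\lambda_m|\geq c|n-m|^\alpha$, so the set of $m$ with $|\lambda_n-\lambda_m|\leq 2\lambda$ has at most $C\lambda^{1/\alpha}$ elements.

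For these near indices, each factor satisfies
\[
\left|1+\frac{\lambda}{\lambda_n-\lambda_m}\right|\leq 1+\frac{\lambda}{c|n-m|^\alpha}.
\]
Taking logarithms, the near contribution is bounded by
\[
\sum_{1\le k\le C\lambda^{1/\alpha}}\log\bigl(1+\lambda/(ck^\alpha)\bigr).
\]
Comparing this sum with $\int_0^{\lambda^{1/\alpha}}\log(1+\lambda x^{-\alpha})\,dx$ and substituting $x=\lambda^{1/\alpha}y$ gives a bound of $C\lambda^{1/\alpha}$, since $\log(1+y^{-\alpha})$ is integrable near $0$.

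For the far indices, use $\log|1+z|\leq|z|$. The remaining sum is then at most
\[
\sum_{|n-m|\ge c\lambda^{1/\alpha}}\frac{\lambda}{c|n-m|^\alpha}\leq C\lambda\cdot\lambda^{(1-\alpha)/\alpha}=C\lambda^{1/\alpha},
\]
which again uses $\alpha>1$.

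Adding the two contributions and exponentiating gives $|F_n(\lambda)|\leq Ce^{C\lambda^{1/\alpha}}$, with constants depending only on $c,\alpha$ from Assumption \ref{assume:structure0}. Hence they are independent of $n$ and $\lambda$. In the skew-adjoint case the eigenvalues are purely imaginary, and only the moduli $|\lambda_n-\lambda_m|$ enter the argument, so it is unchanged.

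The only step requiring care is the near-index counting, namely that the bad factors number $O(\lambda^{1/\alpha})$ uniformly in $n$. Lemma \ref{lem:thirdgap} supplies exactly this, so no real obstacle is expected.
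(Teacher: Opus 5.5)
Your argument is the paper's: the proposition is stated there as a direct consequence of Lemma~\ref{lem:boundT-1}(i) with $i=n$, whose constants are uniform in $n$ and $\lambda$ and which covers both the self-adjoint and skew-adjoint cases since only $|\lambda_n-\lambda_m|$ enters. One slip in your optional re-derivation: the far set $\{m:|\lambda_n-\lambda_m|>2\lambda\}$ is not contained in $\{|n-m|\ge c\lambda^{1/\alpha}\}$ (for large $n$ even $m=n\pm1$ is far, because $|\lambda_{n+1}-\lambda_n|\gtrsim n^{\alpha-1}$), so you should either split by index distance or bound each far term by $\min\bigl(\tfrac12,\lambda/(c|n-m|^{\alpha})\bigr)$, which still sums to $O(\lambda^{1/\alpha})$; the paper needs no split at all, since it bounds every factor by $1+C\lambda|n-m|^{-\alpha}$ and compares $\sum_{k\ge1}\log(1+C\lambda k^{-\alpha})$ with an integral that scales like $\lambda^{1/\alpha}$.
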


Let us recall the definition of $\Dist_{\alpha}(\lambda)$, 
\begin{equation*}
      \Dist_{\alpha}(\lambda)= \min \{|\lambda_n- \lambda_m-\lambda|: m, n\in \N^*\}.
\end{equation*}
Note that this is indeed a minimum rather than an infinimum from \eqref{eq:cond3}.
We show the following lower bound on $|F_{n}(\lambda)|$, as a function of $\Dist_{\alpha}(\lambda)$.
\begin{proposition}[Lower bound]
\label{cor-main2}
The quantity $F_n(\lambda)$ satisfies the following estimate.
\begin{itemize}
    \item Assume that $A$ is self-adjoint.  There exist positive constants $c$, $C$ such that, for any $\alpha>1$, $n\in\mathbb{N}^{*}$ and any $\lambda>1$,
\begin{equation}
    |F_n(\lambda)|\geq \Dist_{\alpha}(\lambda) \cdot C e^{-c \lambda^{\frac{1}{\alpha}}}, \quad \forall n\in \N^*, \forall \lambda>1.
\end{equation}
   \item  Assume that $A$ is skew-adjoint. Then  $|F_n(\lambda)|\geq 1$. 
\end{itemize}

\end{proposition}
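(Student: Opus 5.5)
The plan is to handle the skew-adjoint case directly and put almost all the work into the self-adjoint case, by splitting the product $F_n(\lambda)$ into far and near factors.

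\emph{Skew-adjoint case.} Under Assumption \ref{assume:structure} the eigenvalues are purely imaginary, so for $m\neq n$ the quantity $\lambda/(\lambda_n-\lambda_m)$ is purely imaginary, say $it_m$ with $t_m\in\R$. Each factor then has modulus $|1+it_m|=\sqrt{1+t_m^2}\ge 1$. The product converges by Lemma \ref{lem:boundT-1}, so $|F_n(\lambda)|\ge 1$.

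\emph{Self-adjoint case: far factors.} Write $x:=\lambda_n+\lambda$, so that each factor equals $(\lambda_n-\lambda_m+\lambda)/(\lambda_n-\lambda_m)=(x-\lambda_m)/(\lambda_n-\lambda_m)$. Since the $\lambda_m$ are real and decreasing, every factor with $m>n$ exceeds $1$ and can be discarded from a lower bound. For $m<n$ we have $\lambda_m>\lambda_n$, and the factor is $1-\lambda/|\lambda_n-\lambda_m|$. Split these indices into far ones, $|\lambda_n-\lambda_m|>2\lambda$, and near ones, $|\lambda_n-\lambda_m|\le 2\lambda$.
\begin{itemize}
\item For far $m$ we use $\log(1-u)\ge -2u$ on $[0,1/2]$, so the far part is bounded below by $\exp\bigl(-2\lambda\sum_{|\lambda_n-\lambda_m|>2\lambda}|\lambda_n-\lambda_m|^{-1}\bigr)$.
\item Lemma \ref{lem:thirdgap} gives $\#\{m:|\lambda_n-\lambda_m|\le X\}\lesssim X^{1/\alpha}$.
\item A dyadic decomposition in $X=2^k\lambda$ then bounds the sum by $C\lambda^{1/\alpha-1}$, since $\sum_k (2^k\lambda)^{1/\alpha}(2^k\lambda)^{-1}$ converges because $\alpha>1$.
\end{itemize}
Hence the far factors contribute at least $e^{-C\lambda^{1/\alpha}}$.

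\emph{Self-adjoint case: near factors, the main obstacle.} By the same counting, the near set $M$ has cardinality $K\lesssim\lambda^{1/\alpha}$. The near contribution is
\[
\prod_{m\in M}\frac{|x-\lambda_m|}{|\lambda_n-\lambda_m|}.
\]
Let $m_0$ be the index with $|x-\lambda_{m_0}|=D_n(\lambda)$. By Lemma \ref{lem:di}, this factor is at least $\Dist_\alpha(\lambda)/(2\lambda)$, and the $\lambda^{-1}$ can be absorbed into the exponential. For $m\neq m_0$, monotonicity together with \eqref{eq:cond2}--\eqref{eq:cond3} gives $|x-\lambda_m|\gtrsim|\lambda_m-\lambda_{m_0}|$, up to one neighbouring index which is at least half a gap away.

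A crude bound, using $|\lambda_n-\lambda_m|\le 2\lambda$ and $|x-\lambda_m|\gtrsim |m-m_0|$, loses a factor $\log\lambda$ in the exponent when $\alpha\ge 2$, so it is not enough. What I would try instead is the following.
\begin{enumerate}
\item Note that all points $\lambda_m$, $m\in M$, lie in a window of length $O(\lambda)$ around $\lambda_n$ and $x$. By \eqref{eq:cond2} their local spacing there is comparable to a single scale $h\simeq m_*^{\alpha-1}$, where $m_*$ is any index in $M$.
\item Consequently $|x-\lambda_m|\simeq h\,|m-m_0|$ and $|\lambda_n-\lambda_m|\simeq h\,|m-n|$, with uniform constants. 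Both index sets run over intervals of length $K$, so the comparability constants cost at most $e^{O(K)}$, which is what permits the bound $e^{-C\lambda^{1/\alpha}}$.
\item The powers of $h$ cancel between numerator and denominator, leaving a ratio of factorial-type products $\prod|m-m_0|/\prod|m-n|$ over the same index set.
\item Stirling's formula bounds this ratio below by $e^{-CK}$, since both products contain $K$ terms at integer distances from a point, and such products differ by at most an exponential factor.
\end{enumerate}

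Collecting the far bound, the near bound and the $m_0$ factor gives $|F_n(\lambda)|\ge C\,\Dist_\alpha(\lambda)\,e^{-c\lambda^{1/\alpha}}$, uniformly in $n$. The constants depend only on those in \eqref{eq:cond2}--\eqref{eq:cond3}.
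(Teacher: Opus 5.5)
Most of your plan is sound: the skew-adjoint case, discarding the factors with $m>n$, the far-factor bound via $\log(1-u)\ge -2u$ and dyadic counting, and the count $K\lesssim\lambda^{1/\alpha}$ from Lemma~\ref{lem:thirdgap}. Your far/near split is essentially the paper's $M_1$/$M_2$ split of Step~4 with $c_0=1/2$, and your bound on $K$ is more direct than the paper's \eqref{eq:estimtildenlamb}. (A small correction: $|x-\lambda_{m_0}|\ge\Dist_\alpha(\lambda)$ follows directly from \eqref{eq:distalpha} with the pair $(j,i)=(n,m_0)$. Lemma~\ref{lem:di} measures distances to $\lambda_n-\lambda$, not to $x=\lambda_n+\lambda$.)

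The gap is in step~1 of the near-factor argument. A window of length $O(\lambda)$ forces a single spacing scale only when $\lambda\ll n^\alpha$. When $n^\alpha\lesssim\lambda$, the near set can contain indices of order $1$ as well as indices close to $n$. For instance, when $\lambda\ge\lambda_1-\lambda_n$ it is all of $\{1,\dots,n-1\}$. By \eqref{eq:cond2} its gaps are then of order $1$ near $m=1$ and of order $n^{\alpha-1}$ near $m=n$. In this regime $|\lambda_n-\lambda_m|\simeq n^{\alpha-1}(n-m)$ for all $m<n$, whereas $|x-\lambda_m|$ is only of order $\max(m,m_0)^{\alpha-1}|m-m_0|$ (take $m_0=1$ when $x$ lies just above $\lambda_1$). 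Consequently:
\begin{itemize}
\item no single $h$ works, and the per-factor comparability constants are of size $(n/m)^{\alpha-1}$;
\item the powers of $h$ do not cancel;
\item bounding these constants uniformly gives $n^{-(\alpha-1)K}$ with $K\sim n\sim\lambda^{1/\alpha}$, which is exactly the $e^{-C\lambda^{1/\alpha}\log\lambda}$ loss you set out to avoid.
\end{itemize}

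The missing ingredient is a cross-scale comparison. Distinguish $n\ge C_0\lambda^{1/\alpha}$ from $n<C_0\lambda^{1/\alpha}$.

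In the first case, \eqref{eq:cond3} gives $cn^{\alpha-1}(n-m)\le\lambda_m-\lambda_n\le 2\lambda$ for $m\in M$. Hence $n-m\le n/2$ once $C_0^\alpha\ge 4/c$, all relevant indices (including $m_0$ and its neighbour) are comparable to $n$, and your argument works with $h=n^{\alpha-1}$.

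In the second case, use two bounds:
\begin{itemize}
\item $\lambda_m-\lambda_n\le Cn^{\alpha-1}(n-m)$, obtained by summing gaps with \eqref{eq:cond2};
\item $|x-\lambda_m|\gtrsim m^{\alpha-1}|m-m_0|$, obtained from \eqref{eq:cond3} with your half-gap caveat for the neighbour of $m_0$.
\end{itemize}
The surplus factor is then
\[
\prod_{m=1}^{n-1}\Bigl(\frac{m}{n}\Bigr)^{\alpha-1}=\Bigl(\frac{n!}{n^{n}}\Bigr)^{\alpha-1}\ge e^{-(\alpha-1)n}\ge e^{-C\lambda^{1/\alpha}},
\]
and your factorial comparison of $\prod|m-m_0|$ with $\prod(n-m)$ finishes the estimate. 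This Stirling comparison is exactly what the paper uses in Step~3 and in the case $\tilde n\in(n/2,n-1]$ of Step~4, where it compares $((\tilde n-1)!)^{\alpha-1}$ with $n^{\tilde n(\alpha-1)}$.

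With this addition, your single far/near split, valid uniformly in $\lambda$, replaces the paper's three regimes ($\lambda<c_0(\lambda_{n-1}-\lambda_n)$, $\lambda\ge\lambda_1-\lambda_n$, and the range in between). It is a somewhat cleaner organisation of the same estimates.
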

\begin{proof}[of Proposition \ref{cor-main2}]
First notice that the case $A$ is skew-adjoint is trivial due to the explicit formulation of $F_n(\lambda)$. In the following, we concentrate on the case that $A$ is self-adjoint.  Recall that, thanks to \eqref{eq:cond2},

 for any $n>m\geq 1$ there exist positive constants $C,c$ such that,
\begin{align}
\label{eq:condestim2}
    C(n-m)n^{\alpha-1}>|\lambda_n- \lambda_m|&
    >c(n- m) n^{\alpha- 1}.
\end{align}
Similarly, still from \eqref{eq:cond2}, 

\begin{equation}
\label{eq:cond20}
n^{\alpha}\lesssim |\lambda_{n}|+1\lesssim n^{\alpha}
, \; \forall n\in \mathbb{N}^*.
\end{equation}

We further have,
\begin{lemma}\label{lem-sum-alpha} 
There exists some effectively computable constant $C>0$ such that,
\begin{equation}
    \prod_{m\in\mathbb{N}^{*}} \left(1+ \frac{\lambda}{|\lambda_m|}\right)\lesssim e^{C \lambda^{\frac{1}{\alpha}}}, \; \forall \lambda>1.
\end{equation}

\end{lemma}
\begin{proof}
Indeed, thanks to the estimate \eqref{eq:cond20}, it is equivalent to show the following,
\begin{equation}
    \prod_{m\in\mathbb{N}^{*}} \left(1+ \frac{\lambda}{m^{\alpha}}\right)\lesssim e^{C \lambda^{\frac{1}{\alpha}}}, \; \forall \lambda>1.
\end{equation}
The proof is similar to the proof of Lemma \ref{lem:boundT-1}. Indeed, let 
\begin{equation}
\begin{split}
    \ln\left(\prod_{m\in\mathbb{N}^{*}\setminus\{n\}} \left(1+ \frac{\lambda}{m^{\alpha}}\right)\right)&=\sum\limits_{m\in\mathbb{N}^{*}\setminus\{n\}}\ln\left(1+\frac{\lambda}{m^{\alpha}}\right)\\
    &\leq \log(1+ \lambda)+ \int_1^{+\infty} \log \left(1+\dfrac{\lambda}{x^{\alpha}}\right) dx \\
    &\leq \log(1+ \lambda)+ x\log\left(1+\dfrac{\lambda}{x^{\alpha}}\right)\Big|_{1}^{+\infty}- \int_1^{+\infty} x d \log  \left(1+\dfrac{\lambda}{x^{\alpha}}\right) \\
    &= 2 \int_1^{+\infty} \frac{\lambda}{\lambda+ x^{\alpha}} dx 
     \leq C \lambda^{\frac{1}{\alpha}}.
    \end{split}
\end{equation}

\end{proof}

To continue the proof of Proposition \ref{cor-main2}, we are going to proceed in four steps.\\

\textbf{Step 1: show that, for $F_n(\lambda)$, only the terms $m\in [1, n-1]$ matter.}

Notice that
\begin{equation}
    |F_n(\lambda)|= \prod_{m< n} \left|1+ \frac{\lambda}{\lambda_{n}- \lambda_{m}}\right|\cdot \prod_{m> n} \left|1+ \frac{\lambda}{\lambda_{n}- \lambda_{m}}\right|>\prod_{m< n} \left|1+ \frac{\lambda}{\lambda_{n}- \lambda_{m}}\right|,
\end{equation}
since $\lambda_{m}-\lambda_{n}\leq 0$ for $m>n$ (recall that the eigenvalues are negative). In fact $\text{Re}(\lambda_{m}-\lambda_{n})\leq 0$ would suffice. Thus, to show Proposition \ref{cor-main2} it suffices to show that,
\begin{equation}
\label{ine-key-2}
     \tilde F_n(\lambda):= \left|\prod_{m< n} \left(1+ \frac{\lambda}{\lambda_{n}- \lambda_{m}}\right)\right|\gtrsim \Dist_{\alpha}(\lambda)e^{-C \lambda^{\frac{1}{\alpha}}}.
\end{equation}
In fact, this is even an equivalence since, from Lemma \ref{lem:boundT-1},
\begin{equation*}
     \prod_{m> n} \left|1+ \frac{\lambda}{\lambda_{n}- \lambda_{m}}\right|\lesssim e^{C \lambda^{\frac{1}{\alpha}}}.
\end{equation*}

\textbf{Step 2: case 
$\lambda\in(1,  c_{0}|\lambda_{n}-\lambda_{n-1}|)$ with $c_{0}<1$ and independent from $n$.
}\\

Let's assume that $n>m$. Since $\lambda\leq c_{0}|\lambda_{n}-\lambda_{n-1}|$ and $(\lambda_{n})_{n\in\mathbb{N}^{*}}$ are real,
\begin{equation}
\label{eq:estimstep20}
    \frac{\lambda}{\lambda_{m}- \lambda_{n}}\leq 
    \frac{\lambda}{\lambda_{n-1}- \lambda_{n}}
    \leq c_{0}.
\end{equation}
Now, observe that since $c_{0}<1$, there exists $k_{c_{0}}>0$ depending only on this $c_{0}$ such that 
\begin{equation}
\label{eq:defkc}
(1-x)(1+k_{c_{0}}x) \geq 1,\;\forall x\in \left[0,c_{0}\right].
\end{equation}
One can see this since this function is concave for $k_{c_{0}}>0$ and by looking at its values at $0$ and $c_{0}$.
This, together with \eqref{eq:estimstep20}, implies
\begin{equation}
\label{defkalpha}
    \left(1-\frac{\lambda}{\lambda_{m}- \lambda_{n}}\right)\left(1+\frac{k_{c_{0}}\lambda}{\lambda_{m}- \lambda_{n}}\right)\geq 1,
\end{equation}
and therefore,
\begin{equation}
\label{eq:estimFnstep2}
    \tilde{F}_{n}(\lambda)\prod\limits_{m<n}\left|1+\frac{k_{c_{0}}\lambda}{\lambda_{m}-\lambda_{n}}\right|\geq 1.
\end{equation}
Since, using Lemma \ref{lem:boundT-1},
\begin{equation}
    \prod\limits_{m<n}\left|1+\frac{k_{c_{0}}\lambda}{\lambda_{m}-\lambda_{n}}\right|<Ce^{Ck_{c_{0}}^{\frac{1}{\alpha}}\lambda^{\frac{1}{\alpha}}},
\end{equation}
and with \eqref{eq:estimFnstep2} we deduce 
\eqref{ine-key-2}.

\textbf{Step 3: case $\lambda\in [|\lambda_{n}-\lambda_{1}|, +\infty)$.}

First, notice that if $\Dist_\alpha(\lambda)=0$, then the proof of \eqref{ine-key-2} is trivial. We therefore focus on the case $\Dist_\alpha(\lambda)\neq 0$. We underline that if $|\lambda_n- \lambda_1| = \lambda$, then $\lambda = \lambda_1- \lambda_n$ and $\Dist_\alpha(\lambda)=0$. Therefore, $\lambda>\lambda_1-\lambda_n>0$ and from the strict ordering of the eigenvalues of Assumption \ref{assume:structure}, 
\begin{equation*}
    \frac{\lambda}{\lambda_{m}- \lambda_{n}}\geq \frac{\lambda}{\lambda_{1}-\lambda_{n}}> 1.
\end{equation*}
Using again $\lambda>\lambda_1-\lambda_n>0$, this implies, using \eqref{eq:condestim2}
\begin{equation}
\label{eq:estimFnstep31}
\begin{split}
         \tilde F_n(\lambda)&= \prod_{m< n} \left( \frac{\lambda}{\lambda_{m}- \lambda_{n}}-1\right)> \prod_{m< n} \left( \frac{\lambda_{1}-\lambda_{n}}{\lambda_{m}- \lambda_{n}}-1\right)=\prod_{m< n} \left( \frac{\lambda_{1}-\lambda_{m}}{\lambda_{m}- \lambda_{n}}\right)\\
         &\geq 
         \prod_{m< n}\frac{1}{C} \left(\frac{\lambda_{1}-\lambda_{m}}{(n- m)n^{\alpha-1}}\right),
\end{split}
\end{equation}
where 
$C$ is given by \eqref{eq:condestim2}.
We are now going to give an estimate on $(\lambda_{1}-\lambda_{m})$.
{Denote $c:=\lambda_1/\lambda_2$ satisfying $0<c<1$. As in \eqref{eq:defkc}, there exists $k_c>0$ such that,
\[
(1-x)(1+k_cx)\geq 1, \text{ for all } x\in [0,c].
\]
As a consequence, defining $\tilde{k}:=-\lambda_1 k_c>0$, we have 
\[
 \left(1- \frac{\lambda_{1}}{\lambda_{m}}\right) \left(1- \frac{\tilde{k}}{\lambda_{m}}\right) \geq 1.
\]
}

Therefore, 
\begin{equation}
\label{eq:estimstep31}
    \prod_{m\in \mathbb{N}^{*}, m>1}\left(1- \frac{\lambda_{1}}{\lambda_{m}}\right) \left(1+ \frac{\tilde{k}}{|\lambda_{m}|}\right)\geq 1.
\end{equation}

Observe that, from Lemma \ref{lem-sum-alpha} and \eqref{eq:cond20}
\begin{equation}
    \prod_{m\in \mathbb{N}^{*}, m>1} \left(1+ \frac{\tilde{k}}{|\lambda_{m}|}\right)\leq C', 
\end{equation}
where $C'>0$ depends on $\lambda_1$ and $\lambda_2$ but not on $m>2$, $n>2$ or $\lambda$. Together with \eqref{eq:estimstep31}, this gives,
\begin{equation}
      \prod_{m\in \mathbb{N}^{*}, m>1} \frac{\lambda_{1}-\lambda_{m}}{|\lambda_{m}|}\geq \frac{1}{C'}.
\end{equation}
Therefore, using also \eqref{eq:condestim2}, \eqref{eq:estimFnstep31} and Stirling formula,
\begin{align*}
     \tilde F_n(\lambda)
    & \geq \prod_{m< n}\frac{1}{C} \left(\frac{|\lambda_m|(\lambda_{1}-\lambda_{m})}{|\lambda_m|(n- m)n^{\alpha-1}}\right) \\
     & \gtrsim \prod_{m< n} \frac{c}{C}\frac{m^{\alpha}}{(n- m) n^{\alpha-1}}\\
     &= \frac{\left((n-1)!\right)^{\alpha}}{(n-1)!} \left(\frac{c}{C}\right)^{n-1} \frac{1}{n^{(\alpha-1)(n-1)}}\\
     &\gtrsim e^{- cn} \gtrsim e^{-c \lambda^{\frac{1}{\alpha}}},
\end{align*}
which is \eqref{ine-key-2}.

\textbf{Step 4: case 
$\lambda\in(c_{0}(\lambda_{n-1}-\lambda_{n}),\lambda_{1}-\lambda_{n}) $.
}

Recall that $c_{0}$ is given in Step 2 and can be arbitrarily chosen as long as it is strictly smaller than 1 and does not depend on $n$ nor $m$. We assume again that $\Dist_{\alpha}(\lambda)>0$, otherwise there is nothing to show.

The first thing to observe is that we only need to focus on the pairs $(n,m)$ such that
\begin{equation*}
    m\in M_{1}:=\{m\in\mathbb{N}^{*} :m< n,\;\frac{\lambda}{\lambda_{m}-\lambda_{n}}> c_{0} \}.
\end{equation*}
Indeed, setting $M_{2}=\{1,...,n-1\}\setminus M_{1}$ one has, using \eqref{defkalpha} and Lemma \ref{lem:boundT-1},
\begin{equation}
\prod_{m\in M_{2}}\left(1+\frac{\lambda}{\lambda_{n}-\lambda_{m}}\right)\geq \left(\prod_{m\in M_{2}}\left(1+\frac{k_{c_{0}}\lambda}{\lambda_{m}-\lambda_{n}}\right)\right)^{-1}\gtrsim e^{-C\lambda^{\frac{1}{\alpha}}},
\end{equation}
where $C$ is a constant that only depends on $\alpha$ and $c_{0}$. Note that since $\lambda >c_{0}(\lambda_{n-1}-\lambda_{n})$, $(n-1)\in M_{1}$. In fact, there exists $\tilde{n}\in\{1,...,n-1\}$ such that  
$$M_{1} = \{n-\tilde n,...,n-1\}$$ 
and $\tilde{n}$ is given by (note that $n\rightarrow -\lambda_{n}$ is a strictly increasing function)

\begin{equation}
\label{eq:tilden}
    \tilde{n}:=\max\{\check{n}\in \{1,...,n-1\}\;|\;\lambda_{n-\check{n}}< \frac{\lambda}{c_{0}}+\lambda_{n}\}.
\end{equation}

Thus we deduce that it suffices to show that
\begin{equation}
    \prod_{m=n-\tilde n}^{n-1}\left|1+ \frac{\lambda}{\lambda_{n}-\lambda_{m}}\right|=\prod_{p=1}^{\tilde n}\left|1-\frac{\lambda}{\lambda_{n-p}-\lambda_{n}}\right|\geq e^{-C\lambda^{\frac{1}{\alpha}}} \Dist_{\alpha}(\lambda).
\end{equation}

We start with the case 
$\lambda\in(c_{0}(\lambda_{n-1}-\lambda_{n}),\lambda_{n-1}-\lambda_{n})$. Note that since $\Dist_{\alpha}(\lambda)>0$, we have in fact $\lambda\neq \lambda_{n-1}-\lambda_{n}$.

In this case, note that we always have, with $c$ and $C$ given by \eqref{eq:condestim2}
\begin{align*}
    \prod_{p=1}^{\tilde n}\left|1-\frac{\lambda}{\lambda_{n-p}-\lambda_{n}}\right| =& \left|1-\frac{\lambda}{\lambda_{n-1}-\lambda_{n}}\right|\prod_{p=2}^{\tilde{n}}\left|1-\frac{\lambda}{\lambda_{n-p}-\lambda_{n}}\right|\\
    \geq& \left|1-\frac{\lambda}{\lambda_{n-1}-\lambda_{n}}\right|\prod_{p=2}^{\tilde{n}}\left|1-\frac{\lambda_{n-1}-\lambda_{n}}{\lambda_{n-p}-\lambda_{n}}\right|\\
    =& \left|1-\frac{\lambda}{\lambda_{n-1}-\lambda_{n}}\right|\prod_{p=2}^{\tilde{n}}\left|\frac{\lambda_{n-p}-\lambda_{n-1}}{\lambda_{n-p}-\lambda_{n}}\right|\\
    \geq& \Dist_{\alpha}(\lambda)\frac{\prod_{p=2}^{\tilde{n}}|\lambda_{n-p}-\lambda_{n-1}|}{\prod_{p=1}^{\tilde{n}}|\lambda_{n-p}-\lambda_{n}|}\\
    \geq& \frac{\Dist_{\alpha}(\lambda)}{C}\left(\frac{c}{C}\right)^{\tilde{n}-2}\left(\frac{n-1}{n}\right)^{(\alpha-1)\tilde{n}}\frac{(\tilde{n}-1)!}{\tilde{n}!}\\
    =& \frac{\Dist_{\alpha}(\lambda)}{C}\left(\frac{c}{C}\right)^{\tilde{n}-2}\left(1-\frac{1}{n}\right)^{(\alpha-1)\tilde{n}}\frac{1}{\tilde{n}}\\
    \gtrsim & \Dist_{\alpha}(\lambda) e^{-c'\tilde{n}}
\end{align*}
where $c'$ does not depend on $\lambda$, $\tilde{n}$ or $n$. To conclude, it suffices to show that $\tilde{n}\leq \lambda^{\frac{1}{\alpha}}$. This is done later in \eqref{eq:estimtildenlamb}, after the case $\lambda\in(\lambda_{n-1}-\lambda_{n},\lambda_{1}-\lambda_{n})$.

Let us now look at the case $\lambda\in(\lambda_{n-1}-\lambda_{n},\lambda_{1}-\lambda_{n})$. 
We have,
\begin{equation*}
    \prod_{p=1}^{\tilde n}\left|1-\frac{\lambda}{\lambda_{n-p}-\lambda_{n}}\right|=\prod_{p=1}^{\tilde n}\frac{1}{\lambda_{n-p}-\lambda_{n}}\prod_{p=1}^{\tilde n}\left|\lambda_{n-p}-\lambda_{n}-\lambda\right|=:I_{1}\cdot I_{2},
\end{equation*}
Using \eqref{eq:condestim2}, we have
\begin{equation*}
    \frac{1}{\lambda_{n-p}-\lambda_{n}}\geq \frac{1}{C p n^{\alpha-1}},
\end{equation*}
where $C$ is given by \eqref{eq:condestim2}. Thus,
\begin{equation}
\label{eq:estimI1}
    I_{1}\geq \frac{1}{\tilde n !}\frac{1}{C^{\tilde n}n^{\tilde n(\alpha-1)}}.
\end{equation}
Let us now look at $I_{2}$. We introduce the strictly increasing function $f :x\rightarrow \lambda_{n-x}-\lambda_{n}$, where for $x\in\mathbb{R}\setminus\mathbb{N}$, $\lambda_{x}$ is defined as, 
\begin{equation}
    \lambda_{x} := \lambda_{\lfloor x\rfloor}+(x-\lfloor x\rfloor)(\lambda_{\lfloor x\rfloor+1}-\lambda_{\lfloor x\rfloor})
\end{equation}

Observe that, from the definition of $\tilde{n}$ given in \eqref{eq:tilden} and since $\lambda<\lambda_{1}-\lambda_{n}$, either $\tilde{n}< n-1$ and therefore $\lambda\leq c_{0}(\lambda_{n-(\tilde{n}+1)}-\lambda_{n})<\lambda_{n-(\tilde{n}+1)}-\lambda_{n}$ or $\tilde{n} = n-1$ and 
therefore $\lambda<\lambda_{n-\tilde{n}}-\lambda_{n}=\lambda_{1}-\lambda_{n}$. 
Similarly since, $\lambda >\lambda_{n-1}-\lambda_{n}$, we deduce that 
\begin{equation*}
    f(1)< \lambda< f(\min(\tilde n+1,n-1)).
\end{equation*}

Therefore there exists an integer $p_{0}$ such that $p_{0}\leq \min(\tilde n+1,n-1)-1$ and $f(p_{0})<\lambda$ and $f(p_{0}+1)\geq\lambda$. But since $\Dist_{\alpha}(\lambda)>0$, we have $f(p_{0}+1)\neq \lambda$. 
Therefore, as $f$ is strictly increasing,
\begin{equation*}
  \prod_{p=1}^{\tilde n}\left|\lambda_{n-p}-\lambda_{n}-\lambda\right|=  \prod_{p\leq p_{0}}(\lambda-(\lambda_{n-p}-\lambda_{n}))\prod_{\tilde n\geq p\geq p_{0}+1}((\lambda_{n-p}-\lambda_{n})-\lambda)=:I_{2}^{-}I_{2}^{+},
\end{equation*}
with the convention that if $p_{0}=\tilde{n}$ then the second product is empty and $I_{2}^{+}=1$.
Let us start with $I_{2}^{-}$. Using the definition of $f$ and the fact that $f(p)<\lambda, \, 1\leq p \leq p_0-1$,
\begin{equation}
\label{eq:I2-alpha}
\begin{split}
    I_{2}^{-} &= \left(\lambda - (\lambda_{n-p_{0}}-\lambda_{n})\right)\prod_{p\leq p_{0}-1}\lambda-(\lambda_{n-p}-\lambda_{n})\\
    &\geq (\lambda-f(p_{0}))\prod_{p\leq p_{0}-1} \lambda_{n-p_{0}}-\lambda_{n-p}\\
    &\geq c(\lambda-f(p_{0}))\prod_{p\leq p_{0}-1} (p_{0}-p)(n-p)^{\alpha-1}\\
    &= c(\lambda-f(p_{0}))(p_{0}-1)!\prod_{p\leq p_{0}-1} (n-p)^{\alpha-1},
\end{split}
\end{equation}
where $c$ is given by \eqref{eq:condestim2}. For $I_{2}^{+}$ we have, using a similar argument,
\begin{equation}
\label{eq:I2+alpha}
\begin{split}
    I_{2}^{+}&=((\lambda_{n-(p_{0}+1)}-\lambda_{n})-\lambda)\prod_{\tilde n\geq p\geq p_{0}+2}(\lambda_{n-p}-\lambda_{n})-\lambda\\
    &\geq (f(p_{0}+1)-\lambda)\prod_{\tilde n\geq p\geq p_{0}+2}(\lambda_{n-p}-\lambda_{n-(p_{0}+1)})\\
    &\geq c(\tilde n-p_{0}-1)!(f(p_{0}+1)-\lambda)\prod_{\tilde n\geq p\geq p_{0}+2}(n-p_{0}-1)^{\alpha-1} \\
    &= c(f(p_{0}+1)-\lambda) (\tilde n-p_{0}-1)! (n- p_0-1)^{(\alpha-1)(\tilde n- p_0-1)}
\end{split}
\end{equation}
where we used \eqref{eq:condestim2} in the last line.  
Combining \eqref{eq:I2-alpha} and \eqref{eq:I2+alpha} together, we have
\begin{equation}
\label{eq:estimI2}
    I_{2}\geq c^{2}(f(p_{0}+1)-\lambda)(\lambda-f(p_{0}))(\tilde n-p_{0}-1)!(p_{0}-1)!\prod_{\tilde n\geq p\geq p_{0}+2}(n-p_{0}-1)^{\alpha-1}\prod_{p\leq p_{0}-1} (n-p)^{\alpha-1}.
\end{equation}
Now observe that
\begin{equation}
\label{eq:estim-f-lambda}
    (f(p_{0}+1)-\lambda)(\lambda-f(p_{0}))\geq c\frac{\Dist_{\alpha}(\lambda)}{2}.
\end{equation}
Indeed, $f(p_{0}+1)-f(p_{0})=\lambda_{n-(p_{0}+1)}-\lambda_{n-p_{0}}\geq c(n-p_{0})^{\alpha-1}>c$, thus since $\lambda \in (f(p_{0}),f(p_{0}+1))$ we deduce that
either $(f(p_{0}+1)-\lambda)\geq c/2$ or $(\lambda-f(p_{0}))\geq c/2$, which leads to \eqref{eq:estim-f-lambda}. Hence, combining with \eqref{eq:estimI2}
\begin{equation}
\label{eq:estimI22}
\begin{split}
    I_{2}&\geq c^{3}\frac{\Dist_{\alpha}(\lambda)}{2}(\tilde n-p_{0}-1)!(p_{0}-1)!\prod_{\tilde n\geq p\geq p_{0}+2}(n-p_{0}-1)^{\alpha-1}\prod_{p\leq p_{0}-1} (n-p)^{\alpha-1}\\
    &=c^{3}\frac{\Dist_{\alpha}(\lambda)}{2}(\tilde n-p_{0}-1)!(p_{0}-1)!(n-p_{0}-1)^{(\tilde n-p_{0}-1)(\alpha-1)}\frac{((n-1)!)^{\alpha-1}}{((n-p_{0})!)^{\alpha-1}}.
    \end{split}
\end{equation}

We separate the analysis in two cases:
\begin{itemize}
\item Case $\tilde n\leq n/2$, in this case $p_{0}-1< n/2$ thus
\begin{equation*}
    \frac{(n-p_{0}-1)}{n} > \frac{1}{2},
\end{equation*}
hence
\begin{equation}
\label{eq:estimstep410}
(n-p_{0}-1)^{(\tilde n-p_{0}-1)(\alpha-1)}\frac{((n-1)!)^{\alpha-1}}{((n-p_{0})!)^{\alpha-1}}\geq \left(\frac{n}{2}\right)^{(\tilde n-2)(\alpha-1)},
\end{equation}
where we used the fact that the fraction of factorial is in fact a product of $p_{0}-1$ terms all of which are larger than $n/2$. We now recall the following useful lemma:
\begin{lemma}\label{lem-n!!}
If $n,m\in \N^*$ satisfy $0< n< m$, then 
\begin{equation}
    n! m!\geq (n+1)! (m-1)!.
\end{equation}
\end{lemma}
Setting $k = \lfloor \tilde n/2\rfloor-1$ and using this lemma ($\tilde{n}-p_{0}-1-k$ (resp. $k-(p_{0}-1)$) times) combined with \eqref{eq:estimstep410} gives 
\begin{equation}
    I_{2}\geq c^{3}\frac{\Dist_{\alpha}(\lambda)}{2}k!(\tilde n-1-k)!\left(\frac{n}{2}\right)^{(\tilde n-2) (\alpha-1)}.
\end{equation}
Then,  
assuming 
that $\tilde{n}$ is even (the odd case can be done exactly similarly) $$I_{2}\geq c^{3}\frac{\Dist_{\alpha}(\lambda)}{2}\left(\frac{2}{\tilde n}\right)^2 \cdot \left(\frac{\tilde n}{2}\right)!\left(\frac{\tilde n}{2}\right)!\left(\frac{n}{2}\right)^{(\tilde n-2)(\alpha-1)}, $$
and \eqref{eq:estimI1} together with Stirling formula,
\begin{equation}
\begin{split}
    I&\geq c^{3}\frac{\Dist_{\alpha}(\lambda)}{2}\left(\frac{2}{\tilde n}\right)^2 \cdot \left(\frac{\tilde n}{2}\right)!\left(\frac{\tilde n}{2}\right)!\left(\frac{n}{2}\right)^{(\tilde n-2) (\alpha-1)}\frac{1}{\tilde n !}\frac{1}{C^{\tilde n}n^{\tilde n(\alpha-1)}}\\
    &\gtrsim \Dist_{\alpha}(\lambda) \frac{1}{2^{(\tilde n-2)(\alpha-1)}(2C)^{\tilde n}n^{2(\alpha-1)}\tilde n^{2}}\\
    &\gtrsim \Dist_{\alpha}(\lambda) \frac{1}{2^{\alpha \tilde n}} \frac{1}{C^{\tilde n}} \frac{1}{n^{2\alpha}}\\
    &\gtrsim \Dist_{\alpha}(\lambda) e^{-c \tilde n},
\end{split}
\end{equation}
where $c$ is a constant depending on $\alpha$ (and C) but independent of $\lambda$. 
\item Case $\tilde n\in (n/2,n-1]$. In this case we have
\begin{equation}
\begin{split}
    (n-p_{0}-1)^{(\tilde n-p_{0}-1)(\alpha-1)}&\frac{((n-1)!)^{\alpha-1}}{((n-p_{0})!)^{\alpha-1}}\\
    &\geq (\tilde n-p_{0}-1)^{(\tilde n-p_{0}-1)(\alpha-1)}\frac{((\tilde n-1)!)^{\alpha-1}}{((\tilde n-p_{0})!)^{\alpha-1}}\\
    &\gtrsim ((\tilde n-1)!)^{\alpha-1},
\end{split}
\end{equation}
where we used that $n\rightarrow \frac{( n-1)!}{( n-p_{0})!}$ is increasing and $\tilde n\leq n-1$. Assuming again that $\tilde n$ is even (the odd case can be done exactly similarly) we have similarly as previously from \eqref{eq:estimI22} and \eqref{eq:estimI1}
\begin{equation}
\begin{split}
    I&\gtrsim \Dist_{\alpha}(\lambda) \left(\frac{2}{\tilde n}\right)^2 \cdot \left(\frac{\tilde n}{2}\right)!\left(\frac{\tilde n}{2}\right)!((\tilde n-1)!)^{\alpha-1}\frac{1}{\tilde n !}\frac{1}{C^{\tilde n}n^{\tilde n(\alpha-1)}}\\
    &\gtrsim \Dist_{\alpha}(\lambda) e^{-c\tilde n} 
\end{split}
\end{equation}
\end{itemize}
We are now going to show that whenever $\lambda\in(c_{0}(\lambda_{n-1}-\lambda_{n}),\lambda_{1}-\lambda_{n})$, $e^{-c \tilde n}\gtrsim e^{-c' \lambda^{\frac{1}{\alpha}}}$, where $c'$ is a constant independent from $\lambda$.
Since $\lambda<\lambda_{1}-\lambda_{n}<C(n-1)n^{\alpha-1}\leq C n^{\alpha}$ and $c_{0}(\lambda_{n-\tilde{n}}-\lambda_{n})<\lambda$, thus $\lambda>c c_{0}n^{\alpha-1}\tilde{n}$. As a consequence
\begin{equation}
\label{eq:estimtildenlamb}
\tilde{n}\leq \frac{\lambda}{c c_{0}n^{\alpha-1}}<\frac{C^{\frac{1}{\alpha}}\lambda}{c c_{0}\lambda^{(\alpha-1)/\alpha}}\lesssim \lambda^{\frac{1}{\alpha}}.
\end{equation}
 
Therefore
\begin{equation}
    I\gtrsim \Dist_{\alpha}(\lambda) e^{-c\lambda^{\frac{1}{\alpha}}}.
\end{equation}
This concludes the proof of Proposition \ref{cor-main2}.
\end{proof}

\subsection{Estimation of $\text{Dist}_{\alpha}(\lambda)$}
\label{sec:estimDist}

To conclude on an explicit lower bound on $F_{n}$ as a function of $\lambda$, we need an explicit estimate of $\Dist_{\alpha}(\lambda)$.  Indeed, we only need to consider the case that $A$ is self-adjoint, because for skew-adjoint operator $A$ one always has 
\begin{equation}
\Dist_{\alpha}(\lambda)\geq |\lambda|, \;\; \;\;   \textrm{ if $A$ is skew-adjoint}.    
\end{equation}
In the following we concentrate on the case that $A$ is self-adjoint.    What we would like to show is that there are sufficiently many $\lambda$ such that 
\begin{equation}
\label{eq:estimate-dist-lambda}
    \Dist_{\alpha}(\lambda)\gtrsim e^{-c \lambda^{\frac{1}{\alpha}}}, \;\;\;\;  \textrm{ if $A$ is self-adjoint}.
\end{equation}
This is more rigorously stated in the following proposition. 
\begin{proposition}
\label{prop:estimate-dist-mu-n}
The function $\Dist_{\alpha}(\lambda)$  satisfies the following estimates. 
\begin{itemize}
    \item  (Case $A$ is self-adjoint). There exist positive constants $c$, $C$ and a sequence $\{\mu_{N}\}_{N\in\mathbb{N}^{*}}$ such that for any $N\in \mathbb{N}^{*}$,
\begin{equation}
\begin{split}
&\mu_{N}\in [N,N+C],\\
        & \Dist_{\alpha}(\mu_{N})\gtrsim e^{-c \mu_{N}^{1/\alpha}}.
\end{split}
\end{equation}
\item   (Case $A$ is skew-adjoint). $\Dist_{\alpha}(\lambda)\geq |\lambda|$. 
\end{itemize}
\end{proposition}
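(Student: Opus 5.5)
The skew-adjoint case is immediate. There the $\lambda_j$ are purely imaginary, so $\lambda_j-\lambda_i\in i\R$. For real $\lambda>0$ this gives $|\lambda_j-\lambda_i+\lambda|^2=\lambda^2+|\lambda_j-\lambda_i|^2\geq\lambda^2$, and taking the infimum yields $\Dist_\alpha(\lambda)\geq|\lambda|$.

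For the self-adjoint case we use a counting (measure) argument on the resonant set. On a window $[N,N+C]$ we discard small neighbourhoods of the points of $\mathcal{N}$ that fall in a slightly enlarged window, and show that what remains has positive measure.

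\textbf{Step 1: count the resonant points in a window.} Fix $N$ and a constant $C\geq 1$ to be chosen. The relevant points are the differences $\lambda_i-\lambda_j$ with $\lambda_i-\lambda_j\in[N-1,N+C+1]$. Since the eigenvalues are negative and decreasing, this forces $i<j$. Lemma \ref{lem:thirdgap} gives $c(j-i)^\alpha\leq|\lambda_i-\lambda_j|\leq N+C+1$, so $j-i\lesssim N^{1/\alpha}$. Condition \eqref{eq:cond3} gives $c\,j^{\alpha-1}(j-i)\leq N+C+1$, so $j\lesssim N^{1/(\alpha-1)}$. This crude bound makes the number of relevant pairs polynomial in $N$, at most $M_N\lesssim N^{1/\alpha+1/(\alpha-1)}$. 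A polynomial count is all we need, since we will remove intervals of exponentially small length. Controlling this count is the main place where care is needed: one must check that the pair count is genuinely polynomial and not exponential, and this uses \eqref{eq:cond3} together with $\alpha>1$.

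\textbf{Step 2: remove small neighbourhoods.} Set $\varepsilon_N:=e^{-cN^{1/\alpha}}$. Remove from $[N,N+C]$ every interval $(\lambda_i-\lambda_j-\varepsilon_N,\lambda_i-\lambda_j+\varepsilon_N)$ over the relevant pairs. The removed set has total length at most $2M_N\varepsilon_N\lesssim N^{p}e^{-cN^{1/\alpha}}$, which is smaller than $C$ for every $N$ once the implicit constants are fixed. So there is a point $\mu_N\in[N,N+C]$ left over. Every difference $\lambda_i-\lambda_j$ outside $[N-1,N+C+1]$ lies at distance at least $1\geq\varepsilon_N$ from $\mu_N$. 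Hence $\Dist_\alpha(\mu_N)\geq\varepsilon_N$, using the symmetric definition $\inf|\lambda_j-\lambda_i+\lambda|$, which ranges over all differences in both orders.

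\textbf{Step 3: conclude.} Since $\mu_N\geq N$, we have $\varepsilon_N=e^{-cN^{1/\alpha}}\geq e^{-c\mu_N^{1/\alpha}}$, which is the claimed bound. For small $N$ the polynomial factor may be large, so we either enlarge $C$ or absorb it: we can take $\varepsilon_N:=\kappa e^{-cN^{1/\alpha}}$ with $\kappa$ small, making $2M_N\varepsilon_N<C$ uniformly in $N$.
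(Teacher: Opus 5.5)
Your proof is correct, but it takes a different route from the paper's.

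\textbf{The paper's route.} Proposition \ref{prop-close} works in the window $[N,N+c]$, where $c$ is the gap constant of \eqref{eq:condestim2}. It tests the explicit grid $l_i=N+c\frac{1+2i}{2M_N}$, $i=0,\dots,M_N-1$, with $M_N\approx ((N+c)/c)^{1/(\alpha-1)}$, and argues by pigeonhole:
\begin{itemize}
\item a resonance $\lambda_m-\lambda_n\in(N,N+c)$ forces $n\le M_N-2$, because $\lambda_m-\lambda_n\geq \lambda_{n-1}-\lambda_n>cn^{\alpha-1}$;
\item for fixed $n$, at most one $m$ gives a difference in a window of length $c$;
\item a single difference cannot lie within $c/(2M_N)$ of two grid points that are $c/M_N$ apart.
\end{itemize}
Hence some grid point satisfies $\Dist_\alpha(\mu_N)\geq c/(2M_N)\gtrsim N^{-1/(\alpha-1)}$. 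This is a polynomial bound, far stronger than what is needed, and it comes with an explicit finite list of candidates.

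\textbf{Your route.} You instead make a union bound on the Lebesgue measure of the exceptional set. Your pair count, $N^{1/\alpha+1/(\alpha-1)}$, is cruder than the paper's ``one resonance per $n$''. The argument is less sharp and non-constructive, but more robust: any subexponential count suffices. It also shows that all of $[N,N+C]$ except a set of measure $O(N^p\varepsilon_N)$ is admissible. If you take $\varepsilon_N\sim 1/M_N$, you also obtain a polynomial bound, slightly weaker than the paper's.

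\textbf{Two small points to tidy.}
\begin{itemize}
\item For $N=1$, your window $[N-1,N+C+1]$ contains $0=\lambda_i-\lambda_i$ for infinitely many pairs. You should count distinct values or exclude the diagonal; those points are at distance $\geq 1$ from $\mu_N$ anyway.
\item In Step 3, enlarging $C$ is not a reliable fix, since $M_N$ also grows with $C$. The small prefactor $\kappa$ is the fix that works.
\end{itemize}
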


\begin{remark}
This inequality can be improved to stronger estimates using the explicit representation of the spectrum. For example, for the heat equation, we are even able to replace the right-hand side of the preceding inequality by 1.
\end{remark}

Proposition \ref{prop:estimate-dist-mu-n} is a direct consequence of the following proposition.
\begin{proposition}\label{prop-close}
For any $N\in \N^*$, there exist some numbers 
\begin{equation}
    \mu_{N}\in \left\{N+ c\left(\frac{1+2i}{2M_N}\right): i=0, 1,..., M_N-1\right\} \textrm{ where } M_N:= \left\lfloor\left(\frac{N+c}{c}\right)^{\frac{1}{\alpha-1}}\right\rfloor+2,
\end{equation}
with $c$ given by \eqref{eq:condestim2},
such that 
\begin{equation}
    |\lambda_{m}- \lambda_{n}- \mu_{N}|\geq \frac{c}{2M_N}, \quad \forall n, m\in \N^*.
\end{equation}

\end{proposition}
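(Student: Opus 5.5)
This is a pigeonhole argument. Fix $N$ and consider the finite set of candidate points
\[
\mathcal{C}_N:=\left\{N+c\,\tfrac{1+2i}{2M_N}: i=0,\dots,M_N-1\right\}\subset (N,N+c).
\]
These are the midpoints of $M_N$ disjoint intervals $I_i:=[N+ci/M_N,\,N+c(i+1)/M_N)$ of length $c/M_N$. If an interval $I_i$ contains no element of $\mathcal{N}=\{\lambda_m-\lambda_n\}$, then its midpoint $\mu$ is at distance at least $c/(2M_N)$ from every $\lambda_m-\lambda_n$. This is exactly the claimed inequality $|\lambda_m-\lambda_n-\mu|\ge c/(2M_N)$. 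So it suffices to show that the number of distinct values in $\mathcal{N}\cap[N,N+c)$ is strictly smaller than $M_N$.

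To count these values, take $m\neq n$ with $\lambda_m-\lambda_n\in[N,N+c)$. Since the eigenvalues are negative and strictly decreasing (Assumption \ref{assume:structure}), this forces $m<n$. By \eqref{eq:condestim2},
\[
N+c>\lambda_m-\lambda_n>c(n-m)n^{\alpha-1}\ge c\,n^{\alpha-1}.
\]
Hence $n<\left(\frac{N+c}{c}\right)^{\frac1{\alpha-1}}$, so $n\le M_N-2$.

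For each such $n$, I claim at most one $m$ gives a difference in the window. The map $m\mapsto\lambda_m-\lambda_n$ is strictly decreasing. Consecutive values differ by $\lambda_{m}-\lambda_{m+1}\ge c\,m^{\alpha-1}\ge c$ (using \eqref{eq:cond2}, or \eqref{eq:condestim2} with difference one). A half-open window of length $c$ can therefore contain at most one of them. I will check the constant convention here: the window is half-open and the gap is at least $c$, possibly strictly.

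Summing over $n$, the number of points of $\mathcal{N}$ in $[N,N+c)$ is at most $M_N-2<M_N$. So some interval $I_i$ is free of such points, and $\mu_N$ is its midpoint. The only step requiring care is matching the constant $c$ in the definition of the grid with the lower gap constant, so that the "at most one $m$ per $n$" count is valid. Uniformity of $c$ over $m,n$ is guaranteed by \eqref{eq:condestim2}.

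Finally, Proposition \ref{prop:estimate-dist-mu-n} follows. We have $M_N\lesssim N^{1/(\alpha-1)}$, so $\Dist_\alpha(\mu_N)\ge c/(2M_N)\gtrsim N^{-1/(\alpha-1)}\gtrsim e^{-c\mu_N^{1/\alpha}}$. Moreover $\mu_N\in[N,N+c]$, which matches the statement with $C=c$.
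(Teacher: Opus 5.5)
Your argument is correct and is essentially the paper's proof. The paper runs the same pigeonhole by contradiction: it attaches to each of the $M_N$ grid points a nearby difference $\lambda_{m_i}-\lambda_{n_i}$, uses $n_i\le M_N-2$ to force two grid points to share the same $n$, and then uses the gap $>c$ to force the same $m$. You phrase it directly, counting at most one difference per $n$ in $[N,N+c)$ and extracting an empty cell; your remark that this even yields the polynomial bound $\Dist_\alpha(\mu_N)\gtrsim N^{-1/(\alpha-1)}$, stronger than Proposition \ref{prop:estimate-dist-mu-n}, is also correct.
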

\begin{proof}[of Proposition \ref{prop-close}]
 As illustrated above, we only consider the case that $A$ is self-adjoint.
We use a contradiction argument. Suppose that this proposition does not hold. Then for any 
$l_i=: N+ c\left(\frac{1+2i}{2M_N}\right)$, $i\in \{0, 1,..., M_N-1\}$,   there exists at least one pair of integers $(n_i, m_i)$ such that,
\begin{equation}
\label{eq:distnml}
     |\lambda_{m_{i}}- \lambda_{n_i}- l_i|< \frac{c}{2M_N}.
\end{equation}
Namely, 
\begin{equation}\label{choiceofni}
   \lambda_{m_i}- \lambda_{n_i}\in \left(N+ c\frac{2i}{2M_N}, N+ c\frac{2i+2}{2M_N}\right). 
\end{equation}

Now we present the following lemma,
\begin{lemma}\label{lem-n-small}
If for some $(n, m)\in \N^*$ there is,
\begin{equation*}
    \lambda_{m}-\lambda_{n}\in (N, N+c),
\end{equation*}
then, 
\begin{equation}
    n\leq M_N-2.
\end{equation}
\end{lemma}
\begin{proof}[of Lemma \ref{lem-n-small}]
By the definition of $(n, m)$ we know that,
\begin{equation*}
   N<  \lambda_{m}- \lambda_{n}< N+c,
\end{equation*}
thus from \eqref{eq:condestim2}
\begin{equation}
\label{eq:Lemma12proof}
   N+c> \lambda_{m}- \lambda_{n}\geq \lambda_{n-1}- \lambda_{n}> c n^{\alpha-1}.
\end{equation}
Therefore, there exists $0<r<1$ such that,
\begin{equation}
    n< \left(\frac{N+c}{c}\right)^{\frac{1}{\alpha-1}}=M_N -2 +r\leq M_N-1.
\end{equation}
\end{proof}

Now we come back to the proof of Proposition \ref{prop-close}. Thanks to equation \eqref{choiceofni}  and Lemma \ref{lem-n-small}, we know that 
\begin{equation}
    n_i\leq M_N-2, \quad \forall i\in \{0, 1,..., M_N-1\},
\end{equation}
which implies, since $n_{i}\in\mathbb{N}^{*}$, that there exist $i_1, i_2\in \{0, 1,..., M_N-1\}$ such that $i_{1}\neq i_{2}$ and $n_{i_1}= n_{i_2}$. Since, from \eqref{choiceofni},
\begin{gather*}
    N<  \lambda_{m_{i_{1}}}-\lambda_{n_{i_{1}}}< N+c, \\
     N<  \lambda_{m_{i_{2}}}-\lambda_{n_{i_{2}}}< N+c,
\end{gather*}
and $n_{i_{1}}=n_{i_{2}}$ we have, 
\begin{equation}
    \label{eq:m1m20}
    |\lambda_{m_{i_1}}- \lambda_{m_{i_2}}|<c.
\end{equation}
However, we know from \eqref{eq:condestim2} that if $m_{i_{1}}\neq m_{i_{2}}$ then, 
\begin{equation}
    |\lambda_{m_{i_{1}}}-\lambda_{m_{i_{2}}}|>c(m_{i_{1}}-m_{i_{2}})>c,
\end{equation}
which, combined with \eqref{eq:m1m20} means that $m_{i_{1}}=m_{i_{2}}$. Thus, using \eqref{eq:distnml},
\begin{equation}
|l_{i_{1}}-l_{i_{2}}|<\frac{c}{M_{N}},
\end{equation}
but from the definition of $l_{i}$, since $i_{1}\neq i_{2}$, $|l_{i_{1}}-l_{i_{2}}|=2c|i_{1}-i_{2}|/2M_{N}\geq c/M_{N}$ which leads to a contradiction, and concludes the proof of Proposition \ref{prop-close}.
\end{proof}

\begin{remark}
    Note that this proof relies strongly on the fact that $\alpha>1$; see in particular \eqref{eq:Lemma12proof}.
\end{remark}

The lower bound of Theorem \ref{th-knbn} then follows from the straightforward combination of Lemma \ref{lem-Jn-1}, and Propositions \ref{cor-main2} and \ref{prop:estimate-dist-mu-n}.

\section{Well-posedness and exponential stability of the closed-loop system}
\label{sec-well-posedness}

In this section, recall that, following Assumption \ref{assume:structure} and Section \ref{sec-func-framework}, we assume without loss of generality that $A$ is diagonalizable with simple eigenvalues. {We also assume that $\lambda\notin \mathcal{N}$.}

\subsection{Operator equality and domain of the closed-loop operator}
In keeping with the spirit of the spectral method (\cite{ho1986spectral, rebarber1989spectral, shun1981spectrum, xu1996spectrum}), a key ingredient in the study of the closed-loop operator $A+BK : D(A+BK)\subset H \rightarrow H$ is to determine its eigenfunctions.
This will allow us to describe more precisely its domain $D(A+BK)$, and rigorously establish the operator equality \eqref{eq:operator-identity}
\[T(A+BK)=(A-\lambda I) T\]
on sharp spaces. We  introduce the following family of non-zero functions,
\[\chi_n:=(A-(\lambda_n-\lambda)I)^{-1} B, \quad \forall n \in \N^\ast.\]
First note that these functions are well defined since $\lambda$ is chosen so that $\lambda_n-\lambda\neq \lambda_p$ for all $n,p\in \N^\ast$. Moreover, since $B\in \HH^{-\alpha}$ and $B\neq 0$, by property of the resolvent, we have $\chi_n \neq 0$ and
\begin{equation}\label{chi-n-in-H}\chi_n\in H, \quad \forall n\in \N^\ast.\end{equation}
Moreover, they decompose simply along the Riesz basis $\{\varphi_n\}_{n\in \NN^*}$
\[\chi_n=\sum_{p\in\N^\ast} \frac{b_p}{\lambda_p-\lambda_n+\lambda} \varphi_p.\]
In this section, we prove that the family $\{\chi_n\}_{n\in \NN^*}$ consists in eigenfunctions of $A+BK$. 

 \begin{lemma}\label{lmm-intrinsic-eigenvectors}
There holds the following, 
\begin{equation}
    \begin{aligned}
        &\chi_n \in D(A+BK), \\
        &(A+BK)\chi_n=(\lambda_n -\lambda )\chi_n,
    \end{aligned}
    \quad \forall n \in \N^\ast.
\end{equation}
 \end{lemma}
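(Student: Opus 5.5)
The proof is a direct computation. It rests on the uniqueness condition \eqref{eq:TBBeq}, which was solved in Corollary \ref{cor:existkn}, and on the fact that $\chi_n$ is a resolvent applied to $B$. I take $D(A+BK)$ in its natural sense: the set of $f\in H$ for which $Kf:=\sum_p k_p f_p$ converges and $Af+BKf$, computed in $D(A)'$, lies in $H$.

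\textbf{Step 1: $K\chi_n$ is well defined.} The coefficients of $\chi_n$ are $\langle \chi_n,\varphi_p\rangle = b_p/(\lambda_p-\lambda_n+\lambda)$. The series defining $K\chi_n$ is therefore
\[
K\chi_n=\sum_{p\in\N^\ast} \frac{k_p b_p}{\lambda_p-\lambda_n+\lambda}.
\]
By Corollary \ref{cor:existkn}, $\{k_p\}_p\in\ell^\infty$, and the $b_p$ are bounded by Assumption \ref{assume:structure0}. Lemma \ref{lem:sumlambdaijdistbound} (ii), applied with $j=n$, shows that $\sum_p |\lambda_n-\lambda_p-\lambda|^{-1}<\infty$ because $\lambda\notin\mathcal N$. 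Hence the series converges absolutely. In particular $K$, which is only admissible and not bounded on $H$ (Lemmas \ref{lem:KboundedTcompact} and \ref{lem:Kadmissible}), still makes sense on each $\chi_n$. This interpretation of $K$ is the only delicate point, and I expect it to be the main obstacle. I would handle it by fixing the definition of $D(K)$ as the set of $f\in H$ with $\sum_p|k_pf_p|<\infty$, or equivalently convergent partial sums, before doing any computation, so that $\chi_n\in D(K)$ is explicit.

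\textbf{Step 2: computation of $K\chi_n$.} Since $\lambda_p-\lambda_n+\lambda=-(\lambda_n-\lambda_p-\lambda)$, the uniqueness condition \eqref{eq:TBBeq} with $j=n$ gives
\[
K\chi_n=-\sum_{p}\frac{k_pb_p}{\lambda_n-\lambda_p-\lambda}=-1 .
\]

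\textbf{Step 3: the eigenvalue equation.} By definition, $(A-(\lambda_n-\lambda)I)\chi_n=B$ in $D(A)'$. This can be checked coefficientwise: $(\lambda_p-\lambda_n+\lambda)\cdot b_p/(\lambda_p-\lambda_n+\lambda)=b_p$. Hence $A\chi_n=B+(\lambda_n-\lambda)\chi_n$ in $D(A)'$, and therefore
\[
(A+BK)\chi_n=B+(\lambda_n-\lambda)\chi_n-B=(\lambda_n-\lambda)\chi_n .
\]
The right-hand side belongs to $H$ by \eqref{chi-n-in-H}. So $\chi_n\in D(A+BK)$, and the identity holds. The singular parts $B$ cancel exactly, which is precisely the role played by the uniqueness condition $TB=B$.
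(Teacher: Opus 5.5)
Your proof is correct and follows the paper's argument. You use \eqref{eq:TBBeq} to get $K\chi_n=-1$, then $A\chi_n=B+(\lambda_n-\lambda)\chi_n$ in $D(A)'$ so the $B$ terms cancel, and you conclude from $\chi_n\in H$; your explicit check via Lemma \ref{lem:sumlambdaijdistbound}(ii) that the series for $K\chi_n$ converges absolutely is a small addition the paper leaves implicit. One minor slip: requiring absolute convergence of $\sum_p k_pf_p$ is not \emph{equivalent} to requiring convergent partial sums when defining $D(K)$, but this is harmless here because $\chi_n$ satisfies the stronger condition.
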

\begin{proof}
First, let us prove that the feedback $K$ defined above is well defined on the family $\{\chi_n\}_{\N^\ast}$. Recall from the identity \eqref{eq:TBBeq} that, for all $n\in \N^\ast$,
\begin{equation}
\label{K-on-chi-n}\begin{aligned}
    1&=\sum_{p\in \N^\ast} \frac{k_p b_p}{\lambda_n-\lambda_p-\lambda}
    =-\sum_{p\in \N^\ast}\frac{k_p b_p}{\lambda_p-\lambda_n+\lambda} \\
    &=-\sum_{p\in \N^\ast} k_p \langle \chi_n, \varphi_p\rangle
    =-\langle K, \chi_n \rangle.
\end{aligned}\end{equation}
Thus, $\langle K, \chi_n\rangle=-1, \ \forall n\in \N^\ast$. Since $A\chi_n ,B \in \HH^{-\alpha}=D(A)'$, we can compute in $D(A)^\prime$ (or $\HH^{-\alpha}$) for all $n\in \N^\ast$,
\[\begin{aligned}(A+BK)\chi_n&= A\chi_n -B \\
&=(A-(\lambda_n-\lambda)I)\chi_n+ (\lambda_n-\lambda)\chi_n-B \\
&=(A-(\lambda_n-\lambda)I)(A-(\lambda_n-\lambda)I)^{-1}B +(\lambda_n-\lambda)\chi_n-B \\
&=(\lambda_n-\lambda)\chi_n.\end{aligned}\]
Hence, 
\begin{equation}\label{eq:eigvectorABK}
(A+BK)\chi_n=(\lambda_n-\lambda)\chi_n, \textrm{ in } D(A)'.
\end{equation}
Now, from \eqref{chi-n-in-H},  \eqref{eq:eigvectorABK} actually holds in $H$ for all $n\in \N^*$. Hence,  $\chi_n \in D(A+BK), \ \forall n\in \N^*$.
\end{proof}

Now, given the operator equation \eqref{eq:operator-identity}, one would expect that the backstepping transformation $T$ should map the $\chi_n$ to the $\varphi_n$, given that it \textit{formally} conjugates $A-\lambda I$ to $A+BK$. We now confirm this with the following proposition.

\begin{proposition}\label{prop:expchin}
    The following hold,
    \[T\chi_n=\operatorname{sgn}(\langle T\chi_n, \varphi_n \rangle)\|T\chi_n\|\varphi_n, \quad \forall n \in \N^\ast, \]
\textit{i.e.,} $T\chi_n$ is an eigenfunction of $A$ for the eigenvalue $\lambda_n$. Moreover, 
the $\{T^{-1}\varphi_n\}_{n\in \NN^*}$ form a Riesz basis of $H$ of eigenfunctions of $A+BK$.
\end{proposition}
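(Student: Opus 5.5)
The plan is to compute $T\chi_n$ in the eigenbasis and show that all its components except the $n$-th vanish, using the uniqueness condition \eqref{eq:TBBeq}. Write $\chi_n=\sum_p c_p\varphi_p$ with $c_p=b_p/(\lambda_p-\lambda_n+\lambda)$. By \eqref{eq:expr-T},
\[
T\chi_n=\sum_{k\in\N^*}\Big(\sum_{p\in\N^*} c_p k_p \frac{b_k}{\lambda_k-\lambda_p-\lambda}\Big)\varphi_k .
\]
Swapping the sums is justified because $T\in\mathcal L(H)$ and $\chi_n\in H$. So the $k$-th coefficient is $b_k S_{k}$, where
\[
S_k:=\sum_p \frac{k_pb_p}{(\lambda_p-\lambda_n+\lambda)(\lambda_k-\lambda_p-\lambda)}.
\]

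\textbf{Off-diagonal components.} For $k\neq n$ I use the partial fraction identity
\[
\frac{1}{(\lambda_p-\lambda_n+\lambda)(\lambda_k-\lambda_p-\lambda)}=\frac{1}{\lambda_k-\lambda_n}\Big(\frac{1}{\lambda_p-\lambda_n+\lambda}+\frac{1}{\lambda_k-\lambda_p-\lambda}\Big).
\]
It is valid because the two denominators sum to $\lambda_k-\lambda_n\neq0$. Splitting $S_k$ accordingly gives
\[
S_k=\frac{1}{\lambda_k-\lambda_n}\Big(-\sum_p\frac{k_pb_p}{\lambda_n-\lambda_p-\lambda}+\sum_p\frac{k_pb_p}{\lambda_k-\lambda_p-\lambda}\Big)=\frac{1}{\lambda_k-\lambda_n}(-1+1)=0,
\]
by \eqref{eq:TBBeq} applied at $j=n$ and $j=k$.

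Splitting requires each partial series to converge. This follows from Corollary \ref{cor:existkn} ($k_pb_p$ bounded) together with the summability $\sum_p|\lambda_j-\lambda_p-\lambda|^{-1}<\infty$ from Lemma \ref{lem:sumlambdaijdistbound}. The same summability makes the earlier interchange legitimate, and this is the only delicate point.

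\textbf{Conclusion on $T\chi_n$.} Since $S_k=0$ for $k\ne n$, we get $T\chi_n=b_nS_n\varphi_n$, a scalar multiple of $\varphi_n$. Writing the scalar as $\langle T\chi_n,\varphi_n\rangle$ (real in the self-adjoint real setting) gives exactly the stated form $\operatorname{sgn}(\langle T\chi_n,\varphi_n\rangle)\|T\chi_n\|\varphi_n$.

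\textbf{Riesz basis statement.} By Lemma \ref{lem:invk}, combined with Theorem \ref{th-knbn} (or simply $k_n\neq0$ together with the boundedness of $\tau_K^{-1}$ for the $\lambda$ considered), $T$ is invertible. Since $\chi_n\neq0$, injectivity gives $T\chi_n\neq0$, so the scalar is nonzero. Hence $T^{-1}\varphi_n=\chi_n/\langle T\chi_n,\varphi_n\rangle$, which is a nonzero multiple of $\chi_n$ and therefore an eigenfunction of $A+BK$ by Lemma \ref{lmm-intrinsic-eigenvectors}. Finally, $\{T^{-1}\varphi_n\}$ is the image of the orthonormal basis $\{\varphi_n\}$ under the isomorphism $T^{-1}$, so it is a Riesz basis by Definition \ref{def:riesz}.

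I expect the main obstacle to be justifying the interchanges and splittings. The individual series converge only like $\sum 1/|p-k|^{\alpha}$-type terms with bounded $k_pb_p$, which is fine because $\alpha>1$ via Lemma \ref{lem:thirdgap}. The uniform dependence on $\lambda$ is irrelevant here, since $\lambda\notin\mathcal N$ is fixed.
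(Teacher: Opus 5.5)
Your proof is correct and is essentially the paper's argument. The paper also shows that the off-diagonal coefficients of $T\chi_n$ vanish by splitting $\lambda_k-\lambda_n=(\lambda_k-\lambda_p-\lambda)+(\lambda_p-\lambda_n+\lambda)$, which is your partial-fraction identity with the denominator cleared, and applying \eqref{eq:TBBeq} at the two indices; it then gets the Riesz basis property from $T$ being an isomorphism. Your justification of the interchange and splitting via boundedness of $k_pb_p$ and Lemma \ref{lem:sumlambdaijdistbound} is a welcome addition that the paper leaves implicit. One small correction: Theorem \ref{th-knbn} only covers selected values of $\lambda$, so for a general $\lambda>1$, $\lambda\notin\mathcal N$, the lower bound on $|k_n|$ needed in Lemma \ref{lem:invk} should instead come from $k_nb_n=-\lambda F_n(\lambda)$ together with Lemma \ref{lem-Jn-1} and Proposition \ref{cor-main2}.
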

\begin{proof}Let us compute, for all $n, p \in \N^\ast$ with $n\neq p$, using \eqref{eq:Tnk:full}--\eqref{eq:trans:Tnk:full},
\[\begin{aligned}\langle (A-\lambda_n I)T\chi_n, \varphi_p\rangle_{D(A)',D(A)}&=(\lambda_p-\lambda_n)\langle T\chi_n, \varphi_p\rangle_H\\
&=(\lambda_p-\lambda_n) \left\langle \sum_{\ell\in \N^\ast} \frac{b_\ell}{\lambda_\ell-\lambda_n+\lambda} T\varphi_\ell, \varphi_p \right\rangle_H \\
&=(\lambda_p-\lambda_n)\sum_{\ell\in \N^\ast} \frac{b_\ell}{\lambda_\ell-\lambda_n+\lambda} \frac{k_\ell b_p}{\lambda_p-\lambda -\lambda_\ell}\\
&=(\lambda_p-\lambda_n)\sum_{\ell\in\N^\ast} \frac{b_p}{\lambda_\ell-\lambda_n+\lambda}\frac{k_\ell b_\ell}{\lambda_p-\lambda-\lambda_\ell}\\
&=\sum_{\ell\in \N^\ast}(\lambda_p-\lambda_\ell-\lambda+\lambda_\ell-\lambda_n+\lambda)\frac{b_p}{\lambda_\ell-\lambda_n+\lambda}\frac{k_\ell b_\ell}{\lambda_p-\lambda-\lambda_\ell}\\
&=b_p\sum_{\ell\in \N^\ast}\left(1+\frac{\lambda_p-\lambda_\ell-\lambda}{\lambda_\ell-\lambda_n+\lambda}\right)\frac{k_\ell b_\ell}{\lambda_p-\lambda-\lambda_\ell} \\
&=b_p\left[\sum_{\ell\in \N^\ast}\frac{k_\ell b_\ell}{\lambda_p-\lambda-\lambda_\ell}-\sum_{\ell\in \N^\ast}\frac{k_\ell b_\ell}{\lambda_n-\lambda-\lambda_\ell}\right]\\
&=0,
\end{aligned}\]
where the last line comes from identity \eqref{eq:TBBeq}. 

Hence, as $\chi_n\neq 0$, it is an eigenfunction of $A$ for the eigenvalue $\lambda_n$. Since this eigenvalue is simple, $T\chi_n$ is colinear to $\varphi_n$, and the first part of the proposition follows easily. Finally, since $T^{-1}\varphi_n=\textcolor{black}{\text{sgn}(\langle T\chi_{n},\varphi_{n} \rangle)}\chi_n/\|T\chi_n\|$, these vectors are eigenfunctions of $A+BK$ thanks to Lemma \ref{lmm-intrinsic-eigenvectors}. Since $T$ is an isomorphism from $H$ into itself, they form a Riesz basis of $H$, which concludes the proof of the proposition.
\end{proof}

We can now describe the domain of the closed-loop operator $A+BK$, through its Riesz basis of eigenvectors $(T^{-1}\varphi_n)$, and real eigenvalues $\lambda_n-\lambda$.  {\color{black} Note that this Riesz basis has a natural bi-orthonormal family, given by $\{T^\ast \varphi_n\}_{n\in \NN^*}$, so that we have the decomposition:
\[f=\sum_{n\in \N^\ast} \langle f, T^\ast \varphi_n \rangle T^{-1}\varphi_n, \quad \forall f\in H.\]
This leads to the following characterization:}
    \[D(A+BK)=\left\{f \in H, \quad \sum_{n\in \N^\ast}|\lambda_n-\lambda|^2\left|\langle f, T^\ast \varphi_n \rangle \right|^2 <+\infty\right\}.\]
     
It is clear that this is equivalent to 
      \[D(A+BK)=\left\{f \in H, \quad \sum_{n\in \N^\ast}|\lambda_n|^2\left|\langle f, T^\ast \varphi_n \rangle \right|^2 <+\infty\right\}.\]

    Now, given the assumptions on the $(\lambda_n)$, $A+BK$ clearly has compact resolvent. Since it admits a Riesz basis of eigenvectors, with simple eigenvalues, it is a Riesz spectral operator in the sense of Guo and Zwart \cite{guo2001riesz}. Fractional powers 
 are then defined via the spectral functional calculus for scalar-type spectral operators (see \cite[Chapter XIII]{DunfordSchwartzIII}).
  
We can then define another family of nested Banach spaces, associated to the closed-loop operator $A+BK$. 
      \[D_s(A+BK):=\left\{f \in H, \quad \sum_{n\in \N^\ast}|\lambda_n|^{2s}\left|\langle f, T^\ast \varphi_n \rangle \right|^2 <+\infty\right\}, \quad s\geq 0,\]
      and, recalling that $\lambda_n\neq0$, we define the norm
      \begin{equation}\label{norm-D_s(A+BK)}\|f\|_{D_s(A+BK)}:=\sum_{n\in \N^\ast} |\lambda_n|^{2s} \left|\langle f, T^\ast \varphi_n \rangle \right|^2 , \quad \forall f\in D_s(A+BK).\end{equation}
      Classically (see references above), the family of functions $(T^{-1}\varphi_n / \sqrt{|\lambda_n|^{2s}})_n$ is a Riesz basis for $D_s(A+BK)$ (with $s\geq 0$). 
      \begin{remark}
          We stress that $\|\cdot\|_{D_0(A+BK)}$ is equivalent to $\|\cdot\|$ (since the $\{T^{-1}\varphi_n\}$ form a Riesz basis of $H$), but not equal.
      \end{remark}
      
       On the other hand, for $s<0$, note that expression \eqref{norm-D_s(A+BK)} is well defined on all $H$. We then define $D_{s}(A+BK)$ as the completion of $H$ with respect to this norm, which we also denote by $\|\cdot\|_{D_s(A+BK)}$.
       
      Once again, for $s_1 > s_2$ the injection $D_{s_1}(A+BK) \to D_{s_2}(A+BK)$ is continuous and compact.  
      
     We can now establish the operator equality \eqref{eq:operator-identity}. Recall here that the notation $D(A^s)$ stands for $D((-A)^s)$ (see Remark \ref{rem:DAs}).
\begin{proposition}\label{prop-operator-equality}
There holds,
\begin{equation}\label{space-correspondences}D(A^s)=TD_s(A+BK), \quad \forall s\in \R,\end{equation}
and $T:(D_s(A+BK), \|\cdot\|_{D_s(A+BK)}) \to (D(A^s), \|\cdot\|_{D(A^s)})$ is an isometry.
Moreover, 
\begin{equation}\label{operator-equality-full-range}
T(A+BK)=(A-\lambda I) T \ \textrm{on}  \ D_s(A+BK), \quad \forall s\in \R.\end{equation}
\end{proposition}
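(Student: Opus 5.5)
The proof will be spectral: we work entirely with the two bases $\{\varphi_n\}$ and $\{T^{-1}\varphi_n\}$, and use the biorthogonal family $\{T^\ast\varphi_n\}$ to transfer norms from one to the other.

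\textbf{Step 1: $T$ is an isometry on $H$ for the $D_s$ norms.} By Proposition \ref{prop:expchin}, $T^{-1}\varphi_n$ is a (normalized) multiple of $\chi_n$. The family $\{T^\ast\varphi_n\}$ is biorthogonal to $\{T^{-1}\varphi_n\}$, since
\[
\langle T^{-1}\varphi_m,T^\ast\varphi_n\rangle=\langle \varphi_m,\varphi_n\rangle=\delta_{mn}.
\]
For any $f\in H$ we also have the key identity
\[
\langle Tf,\varphi_n\rangle=\langle f,T^\ast\varphi_n\rangle .
\]
Consequently, for every $s\in\R$ and every $f\in H$,
\[
\|f\|_{D_s(A+BK)}=\sum_n|\lambda_n|^{2s}|\langle f,T^\ast\varphi_n\rangle|^2=\sum_n|\lambda_n|^{2s}|\langle Tf,\varphi_n\rangle|^2=\|Tf\|_{D(A^s)}^2 .
\]
(The paper's definition \eqref{norm-D_s(A+BK)} omits the square root; we read the norms consistently.) So $T$ restricted to $H$ is an isometry from $(H,\|\cdot\|_{D_s(A+BK)})$ into $(H,\|\cdot\|_{D(A^s)})$. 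Since $T$ is a bijection of $H$, this isometry is onto $H$.

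\textbf{Step 2: the space correspondence \eqref{space-correspondences}.} For $s\ge0$, the isometry of Step 1 gives directly: $f\in D_s(A+BK)$ if and only if $Tf\in D(A^s)$, which yields $TD_s(A+BK)=D(A^s)$. For $s<0$, both $D_s(A+BK)$ and $D(A^s)$ are completions of $H$ for the respective norms; the spectral description of $D(A^s)$ for $s<0$ agrees with the completion description recalled in Section \ref{sec-func-framework}. The isometry $T:H\to H$ therefore extends uniquely to an isometric isomorphism between the completions. This settles \eqref{space-correspondences} and the isometry claim for all $s\in\R$.

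\textbf{Step 3: the operator identity \eqref{operator-equality-full-range}.} On each basis vector, Proposition \ref{prop:expchin} and Lemma \ref{lmm-intrinsic-eigenvectors} give
\[
T(A+BK)T^{-1}\varphi_n=(\lambda_n-\lambda)\varphi_n=(A-\lambda I)TT^{-1}\varphi_n .
\]
So the identity holds on finite linear combinations of $\{T^{-1}\varphi_n\}$. These combinations are dense in every $D_s(A+BK)$, because the rescaled family is a Riesz basis there, or, for $s<0$, by density of $H$.

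To pass to the limit, we interpret $A+BK$ on $D_s(A+BK)$ as the spectral operator that maps $D_s(A+BK)$ continuously into $D_{s-1}(A+BK)$, multiplying the $n$-th coordinate by $\lambda_n-\lambda$. Likewise, $A-\lambda I$ maps $D(A^s)$ continuously into $D(A^{s-1})$. By Step 2, $T$ is an isometry on both levels $s$ and $s-1$, so both sides of \eqref{operator-equality-full-range} are bounded operators from $D_s(A+BK)$ to $D(A^{s-1})$. Since they coincide on a dense set, they are equal.

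\textbf{Main obstacle.} The delicate point is consistency between the abstractly defined closed-loop operator $A+BK$ and this spectral extension at negative and low regularity. When $K\notin H'$, one must check that the coordinate formula really is $A+BK$ on its natural domain $D(A+BK)=D_1(A+BK)$. The plan is to verify this on $s=1$ using $\langle K,\chi_n\rangle=-1$ from \eqref{K-on-chi-n}, and then define the other levels by extension or restriction, which is consistent with how the paper introduces the scale $D_s(A+BK)$.
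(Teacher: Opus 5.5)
Your proposal is correct and follows essentially the same route as the paper. The identity $\langle Tf,\varphi_n\rangle=\langle f,T^\ast\varphi_n\rangle$ gives equality of the $D_s$-norms on $H$; the isometry is extended to the completions for $s<0$; and the operator identity is checked on the basis $\{T^{-1}\varphi_n\}$ and extended by density. Your explicit choice of $D(A^{s-1})$ as the target space for the density argument, and your remark that the paper implicitly takes the spectral description of $A+BK$ as its definition, make precise two points the paper's proof leaves unstated.
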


\begin{proof}
We begin by proving that $T$ can be extended to the spaces $D_s(A+BK)$, $s<0$. 
    First note that for $s\geq 0$, one can write
      \[Tf=T\sum_n {\color{black}\langle f, T^\ast \varphi_n \rangle } T^{-1}\varphi_n= \sum_n {\color{black}\langle f, T^\ast \varphi_n \rangle } \varphi_n, \quad \forall f\in D_s(A+BK).\]
      Hence, by the definitions of these norms, for $s\geq 0$, 
      \begin{equation}\label{equal-norms-smooth}\|Tf\|_{D(A^s)}=\|f\|_{D_s(A+BK)}, \quad \forall f\in  D_s(A+BK). \end{equation}

      For $s<0$, we also have, from the same argument, 
     \begin{equation}\label{equal-norms-sing}\|Tf\|_{D(A^s)}=\|f\|_{D_s(A+BK)}, \quad \forall  f\in  H.\end{equation}
     Thus, since $D(A^s)$ (resp. $D_s(A+BK)$) is the completion of $H$ for the norm $\|\cdot\|_{D(A^s)}$ (resp. $\|\cdot\|_{D_s(A+BK)}$), and $T:(H, \|\cdot\|_{D_s(A+BK)}) \to (H, \|\cdot\|_{D(A^s)})$ (resp. $T^{-1}:(H, \|\cdot\|_{D(A^{s})})\to (H, \|\cdot\|_{D_s(A+BK)})$) is bounded, $T$ and $T^{-1}$ can be extended continuously to elements of $\mathcal{L}(D_s(A+BK), D(A^s))$ and $\mathcal{L}(D(A^s), D_s(A+BK))$ respectively. 
     
     It is straightforward that these extensions are still mutual inverses, and thus, from the definitions of $D_s(A+BK)$ and $D(A^s)$, we get: 
      \[f\in D_s(A+BK) \iff Tf \in D(A^s), \quad \forall s\in \textcolor{black}{\mathbb{R}},\]
      which proves \eqref{space-correspondences}.
      Moreover, from \eqref{equal-norms-smooth} and \eqref{equal-norms-sing} extended to $f\in D_s(A+BK)$, $T$ is an isometry  from $(D_{s}(A+BK),\|.\|_{D_s(A+BK)})$ to $(D(A^{s}),\|\cdot\|_{D(A^{s})})$

      Now, for $n\in \N^\ast$, 
      \[\begin{aligned}
          T(A+BK)T^{-1}\varphi_n&=T(\lambda_n-\lambda ){T^{-1}\varphi_n}\\
          &=(\lambda_n-\lambda){\varphi_n}\\
          &=(A-\lambda I){\varphi_n} \\
          &=(A-\lambda I)T(T^{-1}\varphi_n).
      \end{aligned}\]
      Hence the operator equality \eqref{eq:operator-identity} holds on Riesz basis of the spaces $D_s(A+BK)$ for $s\in \R$. Since $T$ is bounded on these spaces, \eqref{operator-equality-full-range} follows.
\end{proof}

\begin{remark}
It is important to stress the fact that we have established the operator equality in a wide array of spaces, but one should keep in mind that those spaces are the natural scale of Banach spaces associated with the closed-loop operator $A+BK$, and as such are specifically tailored for this purpose. {\color{black}In this sense}, we have {\color{black}described the comprehensive functional range for the operator equality.}
\end{remark}

Another important question is the relationship of the spaces $D_s(A+BK)$ to the more intrinsic regularity spaces $D(A^s)$. We give some insights on this matter, showing that for a far more restricted range of spaces, $D_s(A+BK)=D(A^s)$.

\begin{proposition}\label{prop-spaces-equality}
    For $s\in [0, 1-\tfrac{1}{2\alpha})$,
    \[D_s(A+BK)=D(A^s).\]
\end{proposition}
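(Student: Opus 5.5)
Since $T$ is an isometry from $D_s(A+BK)$ onto $D(A^s)$ (Proposition \ref{prop-operator-equality}), the equality $D_s(A+BK)=D(A^s)$ with equivalent norms is equivalent to
\[
T\big(D(A^s)\big)=D(A^s),
\]
that is, $T$ and $T^{-1}$ both restrict to bounded operators on $D(A^s)$. To see this, let $f\in D(A^s)$. If $Tf\in D(A^s)$, then $f=T^{-1}(Tf)\in T^{-1}D(A^s)=D_s(A+BK)$. Conversely, if $f\in D_s(A+BK)$, then $Tf\in D(A^s)$, so $f=T^{-1}(Tf)\in D(A^s)$ as soon as $T^{-1}$ preserves $D(A^s)$. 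So the plan is to prove that $T$ and $T^{-1}$ are bounded on $D(A^s)$ for $s\in[0,1-\tfrac{1}{2\alpha})$.

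\textbf{Step 1: reduce to $\check T$.} Write
\[
T=\tau_B\check T\tau_K,\qquad T^{-1}=\tau_K^{-1}\check T^{-1}\tau_B^{-1},
\]
as in \eqref{def:exprT} and \eqref{def:exprT-1}. The operators $\tau_B^{\pm1}$ and $\tau_K^{\pm1}$ are diagonal in the basis $\{\varphi_n\}$, so they commute with $(-A)^s$. Their norms on $D(A^s)$ are therefore $\sup|b_n^{\pm1}|$ and $\sup|k_n^{\pm1}|$, which are finite by Assumption \ref{assume:structure0} and Theorem \ref{th-knbn}. For a fixed $\lambda\notin\mathcal N$ we have $\Dist_\alpha(\lambda)>0$, so Propositions \ref{growthentire} and \ref{cor-main2}, combined with Lemma \ref{lem-Jn-1}, give that $|k_nb_n|$ is bounded above and below. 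It then remains to show that $\check T$ and $\check T^{-1}$ are bounded on $D(A^s)$.

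\textbf{Step 2: boundedness of $\check T^{\pm1}$ on $D(A^s)$.} This is exactly Lemma \ref{lem-T-check-reg}, which holds on the wider range $s\in(\tfrac{1}{2\alpha}-1,1-\tfrac{1}{2\alpha})$. The underlying mechanism is the weighted Schur-type estimate of Lemma \ref{lem:10} with $r=s\alpha$. The condition $s\alpha<\alpha-\tfrac12$ is precisely $s<1-\tfrac{1}{2\alpha}$. Lemma \ref{lem:11} is used to replace $|\lambda_n-\lambda_k-\lambda|^{-1}$ by $|\lambda_n-\lambda_k|^{-1}$ off the diagonal, at a cost depending only on $\lambda$. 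Steps 1 and 2 together give that $T$ and $T^{-1}$ are bounded on $D(A^s)$.

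\textbf{Step 3: conclude.} The reduction in the first paragraph yields $D_s(A+BK)=D(A^s)$ as sets, and also the equivalence of norms:
\[
\|f\|_{D_s(A+BK)}=\|Tf\|_{D(A^s)}\simeq\|f\|_{D(A^s)}.
\]

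\textbf{Main obstacle.} The delicate point is making rigorous that $T$, originally defined on $H$, coincides on $D(A^s)\subset H$ with the operator $\tau_B\check T\tau_K$ computed spectrally. This requires the matrix expansions to converge in $D(A^s)$, which I would check on finite linear combinations of the $\varphi_n$ and then extend by density. I also need the endpoint range $s\ge0$ to sit inside the admissible interval of Lemma \ref{lem:10}. If the uniform-in-$n$ lower bound on $|k_n|$ from Section \ref{sec:quantitative} were only available for special $\lambda$, I would instead use that for fixed non-resonant $\lambda$, $\Dist_\alpha(\lambda)>0$ suffices in Proposition \ref{cor-main2}. That proposition holds for every $\lambda>1$ with a constant depending on $\lambda$, and this dependence is harmless here since $\lambda$ is fixed.
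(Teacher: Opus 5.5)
Your proposal is correct and matches the paper's argument. The paper likewise combines the fact that $T=\tau_B\check T\tau_K$ is an isomorphism of $D(A^s)$ (Lemma~\ref{lem-T-check-reg}) with the correspondence $D(A^s)=TD_s(A+BK)$ of Proposition~\ref{prop-operator-equality}, giving $D_s(A+BK)=T^{-1}(D(A^s))=D(A^s)$.

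Your extra step justifying $\inf_n|k_n|>0$ for a fixed non-resonant $\lambda$, via Lemma~\ref{lem-Jn-1} and Proposition~\ref{cor-main2} rather than Theorem~\ref{th-knbn}, makes explicit something the paper leaves implicit. For $\lambda\le 1$, which lies outside the stated range of Proposition~\ref{cor-main2}, it suffices to note two facts: $F_n(\lambda)\neq 0$ for $\lambda\notin\mathcal N$, and $F_n(\lambda)\to 1$ as $n\to\infty$.
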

\begin{proof}
   On the one hand, we know from Lemma \ref{lem-T-check-reg} that $T$ is an isomorphism on $D(A^s)$ for $s\in [0, 1-\tfrac{1}{2\alpha})$. Combining this with Proposition \ref{prop-operator-equality}, we get
   \[D_s(A+BK)=T^{-1}(D(A^s))=D(A^s).\]
\end{proof}
\subsection{Well-posedness and stability of the closed-loop system}

 Now that we have properly established the operator equality \eqref{eq:operator-identity} in the adequate functional setting, we can use it to obtain the well-posedness and stability for the closed-loop system, in the same way as in finite dimension.

 To prepare for the quantitative work of the next section, which will rely on a diverging sequence of values for the damping parameter $\lambda$, we now denote by $(T_\lambda, K_\lambda)$ the backstepping transformation and the feedback obtained in the previous sections, for a given damping parameter $\lambda$. 

 We prove well-posedness properties for the closed-loop system, first in the natural scale $D_s(A+BK_\lambda)$, then in the less natural (but more useful for our purposes) scale $D(A^s).$

 We begin with the most natural well-posedness result:
\begin{theorem}
    \label{thm-well-posedness}
    {\color{black}Let $\lambda\notin\mathcal{N}$.} For every {\color{black}$s\in \R$},
     the operator $(A+BK_\lambda, D_{s+1}(A+BK_\lambda))$ is the generator of an exponentially stable semigroup on $D_s(A+BK_\lambda)$, and the following stability estimates hold,
    \begin{equation}
    \label{stability-estimates}
    \|e^{t(A+BK_\lambda)}y_0\|_{D_s(A+BK_\lambda)}\leq C_s e^{-\lambda t} \|y_0\|_{D_s(A+BK_\lambda)}, \quad \forall y_0\in D_s(A+BK_\lambda), \ \forall t\geq 0,
    \end{equation}
where $C_{s}$ is a positive constant that does not depend on $\lambda$.
\end{theorem}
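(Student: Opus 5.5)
The plan is to diagonalize the closed-loop operator in the Riesz basis $\{T_\lambda^{-1}\varphi_n\}_{n\in\N^\ast}$ given by Proposition \ref{prop:expchin}, and to use the isometry of Proposition \ref{prop-operator-equality}. That isometry turns the question on $D_s(A+BK_\lambda)$ into the corresponding question for the diagonal operator $A-\lambda I$ on $D(A^s)$.

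\textbf{Defining the semigroup.} For $f\in D_s(A+BK_\lambda)$, write its coordinates $f_n:=\langle f, T_\lambda^\ast\varphi_n\rangle$. These are exactly the coefficients of $T_\lambda f=\sum_n f_n\varphi_n$; for $s<0$ this is understood through the continuous extension of $T_\lambda$ built in Proposition \ref{prop-operator-equality}. I then set
\[
S(t)f:=\sum_{n\in\N^\ast} e^{(\lambda_n-\lambda)t} f_n\, T_\lambda^{-1}\varphi_n = T_\lambda^{-1} e^{t(A-\lambda I)} T_\lambda f .
\]
Since $T_\lambda$ is an isometry from $D_s(A+BK_\lambda)$ onto $D(A^s)$, the semigroup property, strong continuity and the generator all transfer from $e^{t(A-\lambda I)}$ on $D(A^s)$.

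\textbf{Identifying the generator.} The generator of $e^{t(A-\lambda I)}$ on $D(A^s)$ is $A-\lambda I$ with domain $D(A^{s+1})$; this is standard because $A$ is diagonal in the basis $\{\varphi_n\}$. Conjugating by $T_\lambda$ and using \eqref{space-correspondences} at level $s+1$, together with the operator identity \eqref{operator-equality-full-range}, the generator of $S(t)$ is $T_\lambda^{-1}(A-\lambda I)T_\lambda=A+BK_\lambda$ with domain $T_\lambda^{-1}D(A^{s+1})=D_{s+1}(A+BK_\lambda)$. One must check that this conjugated operator coincides with $A+BK_\lambda$ as originally defined. This follows because both operators act by multiplication by $\lambda_n-\lambda$ on the eigenfunctions $T_\lambda^{-1}\varphi_n$, by Lemma \ref{lmm-intrinsic-eigenvectors} and Proposition \ref{prop:expchin}, and because these eigenfunctions span a core.

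\textbf{Stability estimate.} By the definition \eqref{norm-D_s(A+BK)}, I compute
\[
\|S(t)f\|^2_{D_s(A+BK_\lambda)}=\sum_n |\lambda_n|^{2s} e^{2\mathrm{Re}(\lambda_n-\lambda)t}|f_n|^2 \le e^{-2\lambda t}\|f\|^2_{D_s(A+BK_\lambda)} .
\]
This holds since $\mathrm{Re}\,\lambda_n\le 0$ in both cases of Assumption \ref{assume:structure}: the $\lambda_n$ are negative in the self-adjoint case and purely imaginary in the skew-adjoint case. Hence $C_s=1$, which is independent of $\lambda$. If the norm is read as written without a square root, the estimate still holds with a squared exponential, and the constant is unaffected.

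\textbf{Main obstacle.} The delicate point is not the estimate but the identification of the generator for $s<0$. There, $D_s(A+BK_\lambda)$ is a completion, and one must check that the extension of $A+BK_\lambda$ is exactly the conjugate $T_\lambda^{-1}(A-\lambda I)T_\lambda$. I would handle this by working only on finite linear combinations of the $T_\lambda^{-1}\varphi_n$, which are dense in every $D_s(A+BK_\lambda)$, and then closing the operator, invoking the extension argument already used in Proposition \ref{prop-operator-equality}.
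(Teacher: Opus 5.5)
Your proposal is correct and follows essentially the paper's proof: you define the semigroup as $T_\lambda^{-1}e^{t(A-\lambda I)}T_\lambda$, identify its generator through the correspondence $T_\lambda D_{s+1}(A+BK_\lambda)=D(A^{s+1})$ and the operator identity of Proposition \ref{prop-operator-equality}, and obtain the decay from the isometry property. The only difference is cosmetic: you compute the decay directly in the eigen-coordinates, getting $C_s=1$ from $\mathrm{Re}\,\lambda_n\le 0$, whereas the paper quotes the stability estimate of the diagonal semigroup on $D(A^s)$ and transports it through the isometry.
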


\begin{proof}
    Let $\textcolor{black}{s\in \mathbb{R}}$. Thanks to Proposition \ref{prop-operator-equality}, the operator equality \eqref{eq:operator-identity} rewrites as
    \begin{equation}\label{order-s-norm-of-T}
    (A+BK_\lambda)= T_\lambda^{-1}(A-\lambda I)T_\lambda \ \textrm{on} \ T_\lambda^{-1}D(A^s)=D_s(A+BK_\lambda).
    \end{equation}
From the assumptions made on $A$, we know that $(A-\lambda I, D(A^{s+1}))$ generates an exponentially stable $C_0$-semigroup $S_s(t), t\geq 0$ on $D(A^s)$, with the following stability estimate,
\[\|S_s(t)y_0\|_{D(A^{s})} \leq C_s e^{-\lambda t} \|y_0\|_{D(A^{s})}, \quad \forall y_0\in D(A^s),
\]
\textcolor{black}{where $C_{s}$ is a positive constant that does not depend on $\lambda$.}
    Now, let us define the following semigroup on $D_s(A+BK_\lambda)$,
    \[e^{t(A+BK_\lambda)}y:=T_\lambda^{-1}S_s(t)T_\lambda y, \quad \forall y \in D_s(A+BK_\lambda).\]
    This semigroup is well defined since $T_\lambda D_s(A+BK_\lambda)=D(A^s)$. Let us check that its infinitesimal generator is $(A+BK_\lambda, D_{s+1}(A+BK_\lambda))$. Let $y\in D_s(A+BK_\lambda)$, and compute
    \[\begin{aligned}
        \frac{e^{t(A+BK_\lambda)}y-y}t&=\frac{T_\lambda ^{-1}S_s(t)T_\lambda y-y}t \\
        &=T_\lambda ^{-1}\frac{S_s(t)T_\lambda y-T_\lambda y}t .
    \end{aligned}\]
   By continuity of $T_\lambda ^{-1}$, the above quantity converges in $D_s(A+BK_\lambda)$ when $t\to 0$ if and only if $\frac{S_s(t)T_\lambda y-T_\lambda y}{t}$ converges in $D(A^s)$ when $t\to 0$. By definition of $S_s(t)$ and its infinitesimal generator $(A-\lambda I, D(A^{s+1}))$, 
    \[\frac{S_s(t)T_\lambda y-T_\lambda y}t \xrightarrow[t\to 0]{D(A^s)} (A-\lambda I)T_\lambda y \ \iff \ T_\lambda y\in D(A^{s+1}).\]
    Now, the above is equivalent to $y\in D_ {s+1}(A+BK_\lambda)$, thanks to \eqref{space-correspondences}. Thus, 
    \[\frac{e^{t(A+BK_\lambda)}y-y}{t} \xrightarrow[t\to 0]{D_s(A+BK_\lambda)} T_\lambda ^{-1}(A-\lambda I)T_\lambda y \ \iff \ y\in D_{s+1}(A+BK_\lambda).\]
    Finally, we conclude using the operator equality \textcolor{black}{\eqref{order-s-norm-of-T}:} 
    \[\frac{e^{t(A+BK_\lambda)}y-y}{t} \xrightarrow[t\to 0]{D_s(A+BK_\lambda)} (A+BK_\lambda)y \ \iff \ y\in D_{s+1}(A+BK_\lambda),\]
hence, $(A+BK_\lambda, D_{s+1}(A+BK_\lambda))$ is the infinitesimal generator of $e^{t(A+BK_\lambda)}$ on $D_s(A+BK_\lambda)$.

Now consider $y\in D_s(A+BK_\lambda)$, we then have, using \eqref{equal-norms-smooth}, \eqref{equal-norms-sing} and the definition of the semigroup $e^{t(A+BK_\lambda)}$,
\[\begin{aligned}\|e^{t(A+BK_\lambda)} y\|_{D_s(A+BK_\lambda)}&=\|T_\lambda ^{-1}S_s(t)T_\lambda y\|_{D_s(A+BK_\lambda)}\\
&=\|S_s(t)T_\lambda y\|_{D(A^s)} \\
&\leq C_s e^{-\lambda t} \|T_\lambda y\|_{D(A^s)} \\
&=C_s e^{-\lambda t} \|y\|_{D_s(A+BK_\lambda)},\end{aligned}\]
which ends the proof.
\end{proof}

We complete this stability result with the admissibility of $K_\lambda$ (see also Lemma \ref{lem:Kadmissible}) in the functional framework associated to $A+BK_\lambda$.

\begin{proposition}\label{prop-K-adm-A+BK}
    Let $s>\tfrac{1}{2\alpha}$. For $\lambda\notin \mathcal{N}$, $K_\lambda\in\left(D_{s}(A+BK_\lambda)\right)^\prime$. Thus, it is well-defined on trajectories of the semigroup $e^{t(A+BK_\lambda)}$ on the space $D_{s}(A+BK_\lambda)$. Moreover, for any $N\in \N$ there exists $\lambda\notin \mathcal{N}$ such that $\lambda\in [N, N+1]$, and
     \begin{equation}\label{feedback-a-priori-estimate}
     \begin{aligned}
     |K_\lambda f| &\leq C_s e^{c\lambda^{\frac{1}{\alpha}}} \|f\|_{D_{s}(A+BK_\lambda)}, \\
     |K_\lambda e^{t(A+BK_\lambda)}f|&\leq C^{\prime}_s e^{c\lambda^{\frac{1}{\alpha}}}  e^{-\lambda t } \|f\|_{D_{s}(A+BK_\lambda)}, \quad \forall f\in D_{s}(A+BK_\lambda), \ \forall t\geq 0.\end{aligned}\end{equation}
     where $C_s,C^\prime_s>0$ depend only on $A$, $B$, and $s$.
\end{proposition}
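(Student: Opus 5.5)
The plan is to reduce everything to the Riesz-basis description of $D_s(A+BK_\lambda)$ given by Proposition \ref{prop:expchin}. Write $f\in D_s(A+BK_\lambda)$ as $f=\sum_n \langle f, T_\lambda^\ast\varphi_n\rangle T_\lambda^{-1}\varphi_n$, so that formally
\[
K_\lambda f=\sum_{n\in\N^\ast}\langle f, T_\lambda^\ast\varphi_n\rangle\, K_\lambda T_\lambda^{-1}\varphi_n .
\]
The key observation is that $K_\lambda T_\lambda^{-1}\varphi_n$ is explicit. From the proof of Proposition \ref{prop:expchin}, $T_\lambda^{-1}\varphi_n=\pm\chi_n/\|T_\lambda\chi_n\|$, and \eqref{K-on-chi-n} gives $\langle K_\lambda,\chi_n\rangle=-1$. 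Hence $|K_\lambda T_\lambda^{-1}\varphi_n|=1/\|T_\lambda\chi_n\|$. By Cauchy--Schwarz against the weights $|\lambda_n|^{2s}$, using $|\lambda_n|\simeq n^\alpha$,
\[
|K_\lambda f|\le \Big(\sum_n |\lambda_n|^{-2s}\|T_\lambda\chi_n\|^{-2}\Big)^{1/2}\|f\|_{D_s(A+BK_\lambda)}.
\]
The series $\sum_n n^{-2\alpha s}$ converges exactly when $s>\tfrac1{2\alpha}$. So it suffices to prove a uniform lower bound $\|T_\lambda\chi_n\|\ge C^{-1}e^{-c\lambda^{1/\alpha}}$. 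This also shows that $K_\lambda$ extends to an element of $(D_s(A+BK_\lambda))'$, since the series converges absolutely.

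To bound $\|T_\lambda\chi_n\|$ from below, I would write $T_\lambda\chi_n=\pm\|T_\lambda\chi_n\|\varphi_n$, which gives $\|T_\lambda\chi_n\|\ge\|\chi_n\|/\|T_\lambda^{-1}\|$. Then $\|\chi_n\|\ge |\langle\chi_n,\varphi_n\rangle|=|b_n|/\lambda\ge c/\lambda$, using $\lambda_n-\lambda_n+\lambda=\lambda$. This yields $\|T_\lambda\chi_n\|^{-1}\le \lambda c^{-1}\|T_\lambda^{-1}\|_{\mathcal L(H)}$. For any $N$, choose $\lambda\in[N,N+1]$ (or $[N,N+C]$, adjusting constants) as in Theorem \ref{cor:boundTT-1} with $s=0$, so that $\|T_\lambda^{-1}\|\le Ce^{C\lambda^{1/\alpha}}$. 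This $\lambda$ comes from Proposition \ref{prop:estimate-dist-mu-n} and has $\Dist_\alpha(\lambda)>0$, so $\lambda\notin\mathcal N$. Absorbing the factor $\lambda$ into the exponential gives the first estimate in \eqref{feedback-a-priori-estimate}. In the skew-adjoint case any $\lambda\ge1$ works.

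For the second estimate, apply the first with $f$ replaced by $e^{t(A+BK_\lambda)}f$. Theorem \ref{thm-well-posedness} gives $\|e^{t(A+BK_\lambda)}f\|_{D_s(A+BK_\lambda)}\le C_se^{-\lambda t}\|f\|_{D_s(A+BK_\lambda)}$ with $C_s$ independent of $\lambda$. The claim that the feedback is well defined along trajectories follows directly, since the semigroup leaves $D_s(A+BK_\lambda)$ invariant.

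The main obstacle is justifying the termwise expansion of $K_\lambda f$. The functional $K_\lambda$ is only defined spectrally via the $k_n$ and is unbounded on $H$, so its action on the basis $\chi_n$ and its extension by linearity need care. I would first define $K_\lambda$ on finite combinations of the $\chi_n$, using \eqref{K-on-chi-n}, whose series converges since $k_n\in\ell^\infty$ and $\chi_n$ has coefficients $b_p/(\lambda_p-\lambda_n+\lambda)$, summable by Lemma \ref{lem:sumlambdaijdistbound}. I would then extend it by density using the bound above, and check consistency with the original spectral definition on the common dense set $D(A)$.
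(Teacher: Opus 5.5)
Your proposal is correct and follows the paper's proof essentially step by step. The common steps are: $|K_\lambda T_\lambda^{-1}\varphi_n|=1/\|T_\lambda\chi_n\|$; the bound $\|T_\lambda\chi_n\|^{-1}\le \lambda\|T_\lambda^{-1}\|/|b_n|$ from $|\langle\chi_n,\varphi_n\rangle|=|b_n|/\lambda$ (the paper writes it as $\|T_\lambda^{-1}\varphi_n\|\ge|\langle T_\lambda^{-1}\varphi_n,\varphi_n\rangle|$, which is the same inequality); Cauchy--Schwarz with weights $|\lambda_n|^{2s}$; the choice of $\lambda$ from Theorem \ref{cor:boundTT-1}; and the semigroup bound of Theorem \ref{thm-well-posedness}.

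Your density argument justifying the termwise expansion of $K_\lambda f$ is a step the paper passes over silently, and it is a good addition. One correction: $D(A)\not\subset D_s(A+BK_\lambda)$ once $s\ge 1-\tfrac{1}{2\alpha}$, because $T_\lambda\varphi_n\notin D(A^s)$ there, so $D(A)$ is not a common dense set for such $s$. Do the consistency check at some $s'\in(\tfrac{1}{2\alpha},1-\tfrac{1}{2\alpha})$, where $D_{s'}(A+BK_\lambda)=D(A^{s'})$, and then restrict to larger $s$.
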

\begin{proof}
    First, from Proposition \ref{prop:expchin} and Proposition \ref{prop-operator-equality}, we have \begin{equation}
    T_\lambda ^{-1}\varphi_n=\operatorname{sgn}(\langle T_\lambda \chi_n, \varphi_n \rangle)\frac{\chi_n}{\|T_\lambda \chi_n \|}, \text{ in } D(A+BK).
    \end{equation}
    Then, from \eqref{K-on-chi-n}, we have \[K_\lambda T_\lambda^{-1}\varphi_n =-\frac{\operatorname{sgn}(\langle T_\lambda \chi_n, \varphi_n \rangle)}{\|T_\lambda \chi_n \;\|}\textcolor{black}{\in \mathbb{R}}.\]
    Moreover, since $T_\lambda^{-1}\varphi_n$ is an eigenvector of $A+BK_\lambda $, we can compute, in $D(A)'$,
    \[AT_\lambda^{-1}\varphi_n + (K_\lambda T_\lambda^{-1}\varphi_n)B= (\lambda_n-\lambda) T_\lambda^{-1} \varphi_n, \]
    which leads to \textcolor{black}{(recall that $K_{\lambda}T^{-1}_{\lambda}\varphi_{n}$ is a scalar)}
    \[T_\lambda^{-1}\varphi_n= -(K_\lambda T_\lambda^{-1}\varphi_n) (A-(\lambda_n-\lambda)I)^{-1}B \in H.\]
    Now, since $\{T_\lambda^{-1}\varphi_n\}$ is a Riesz basis of $H$ \textcolor{black}{and $(\varphi_{n})_{n\in\mathbb{N}^{*}}$ is an orthonormal family of eigenvectors of $A$}, there exists $C>0$ such that
    \begin{equation}\label{naive-estimate-forced-modes}\|T_\lambda^{-1}\|\geq \|T_\lambda^{-1}\varphi_n\|_H=\sqrt{\sum_{k\in \N}|\langle T^{-1}_\lambda \varphi_n, \varphi_k \rangle |^2 }\geq |\langle T_\lambda^{-1}\varphi_n, \varphi_n \rangle_H | = |K_\lambda T_\lambda^{-1}\varphi_n| \frac{|b_n|}{|\lambda|},\end{equation}
    \textit{i.e.},
    \begin{equation}\label{naive-estimate-forced-modes-2}|K_\lambda T_\lambda^{-1}\varphi_n| \leq \frac{|\lambda|}{|b_n|}\|T_\lambda^{-1}\| \leq C^\prime \lambda\|T_\lambda^{-1}\| ,\end{equation}
    where $C^\prime>0$ is independent of $n$, \textcolor{black}{from the controllability-type condition \eqref{b_n-growth}}

    Now, let $s>\tfrac{1}{2\alpha}$, and let $f=\sum_n f_n T_\lambda^{-1}\varphi_n \in D_{1/2}(A+BK_\lambda )$ where here $f_n:=\langle f,T^*\varphi_n\rangle$. From the above we derive,
    \[\begin{aligned}
         |K_\lambda f|&=\left|\sum_n f_n (K_\lambda T_\lambda^{-1}\varphi_n)\right| \\
         &= \left|\sum_n f_n |\lambda_n|^{s} \frac{(K_\lambda T_\lambda^{-1}\varphi_n)}{|\lambda_n|^{s}}\right| \\
         &\leq \sqrt{\sum_n |\lambda_n|^{2s}|f_n|^2} \sqrt{ \sum_n \frac{|K_\lambda T_\lambda^{-1}\varphi_n|^2}{|\lambda_n|^{2s}}}.
    \end{aligned}
   \]
From \eqref{eq:cond20} \textcolor{black}{(which is a consequence of Assumption \ref{assume:structure})}, and the assumption on $s$, we know that
   \[\sum_n \frac{1}{|\lambda_n|^{2s}} < +\infty.\]
   Thus, by definition of $D_{s}(A+BK_\lambda )$, and using \eqref{equal-norms-smooth} and \eqref{naive-estimate-forced-modes-2}, there exists $C_s>0$ such that
   \begin{equation}\label{qualitative-continuity-K}|K_\lambda f| \leq C_s \lambda \|T_\lambda^{-1}\| \,\|f\|_{D_{s}(A+BK_\lambda )},\end{equation}
   which implies that $K_\lambda \in \left(D_{s}(A+BK_\lambda )\right)^\prime$.
   
   To get the quantitative estimates, let $N\in \N$, $\lambda\notin \mathcal{N}$ and $\lambda\in [N, N+1]$ such that Theorem \ref{cor:boundTT-1} is satisfied. Then, \eqref{naive-estimate-forced-modes} \textcolor{black}{implies}
   \[Ce^{c\lambda^{\frac{1}{\alpha}}} \geq \|T_\lambda^{-1}\varphi_n\|\geq |K_\lambda T_\lambda^{-1}\varphi_n|\frac{|b_n|}{\lambda},\]
   which yields
   \[|K_\lambda T_\lambda^{-1}\varphi_n|\leq C^{\prime\prime}e^{c\lambda^{\frac{1}{\alpha}}}\lambda.\]
   Then, \eqref{qualitative-continuity-K} becomes
   \[|K_\lambda f| \leq C^{\prime\prime\prime}_se^{c\lambda^{\frac{1}{\alpha}}} \lambda \|f\|_{D_{s}(A+BK_\lambda )}.\]
   Since $\alpha>0$, there exists a constant $C_\alpha>0$ such that
   \[\lambda\leq C_\alpha e^{c\lambda^{\frac1\alpha}},\]
   which yields the first inequality of \eqref{feedback-a-priori-estimate}. Combining this with the fact that $e^{t(A+BK_\lambda )}$ is a $C^0$ semigroup on $D_{\textcolor{black}{s}}(A+BK_\lambda )$, and using Theorem \ref{thm-well-posedness}, we easily obtain \eqref{feedback-a-priori-estimate}.
  
\end{proof}

As remarked in the previous section, the above theorem is given in the natural scale of spaces $D_s(A+BK_\lambda)$. However, they depend on $\lambda$, and the Riesz basis we have provided for them are implicit. In many cases, in particular to achieve finite time stabilization or small-time null controllability, one would like to obtain quantitative estimates in $\lambda$. This is more easily achieved in the explicit (\textcolor{black}{and more practical} but less natural \textcolor{black}{from an operator point of view}) scale of spaces $D(A^s)$, since they do not depend on $\lambda$. We start with a stability estimate for the linear closed-loop system:
\begin{theorem}
\label{th:wellposed-closed-As}
        Let $s\in [0, 1-\tfrac{1}{2\alpha})$. For $N\in \N$ there exists $\lambda\notin \mathcal{N}$ such that $\lambda\in [N, N+1]$ and the closed-loop operator $A+BK_\lambda$ generates an exponentially stable semigroup on $D(A^{s})$, such that
    \begin{equation}
        \label{stability-estimate-D(A^s)}
        \|e^{t(A+BK)} y\|_{D(A^{s})} \leq C_s e^{{\color{black}c}\lambda^{\frac1\alpha}} e^{-\lambda t} \|y\|_{D(A^{s})},
    \end{equation}
where the constant $c$ does not depend on $(s, N, \lambda)$, and $C_s$ does not depend on $(N, \lambda)$.
\end{theorem}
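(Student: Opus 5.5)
The plan is to transport the target-system semigroup through $T_\lambda$ and measure norms in the fixed scale $D(A^s)$ instead of the $\lambda$-dependent scale $D_s(A+BK_\lambda)$. Fix $s\in[0,1-\tfrac{1}{2\alpha})$ and $N\in\N$. First I choose $\lambda\in[N,N+1]\setminus\mathcal{N}$ as provided by Theorem \ref{cor:boundTT-1}; if the intervals $[N,N+C]$ of Theorem \ref{th-knbn} and Proposition \ref{prop:estimate-dist-mu-n} are longer than $1$, I cover $[N,N+1]$ by reindexing $N$, or absorb the shift into the constants. For this $\lambda$, \eqref{T-Tinv-norm-estimate} gives
\[
\|T_\lambda\|_{\mathcal{L}(D(A^s))}+\|T_\lambda^{-1}\|_{\mathcal{L}(D(A^s))}\leq C_s e^{C\lambda^{1/\alpha}}
\]
for every $s$ in the admissible range. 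In the skew-adjoint case the same estimate holds for every $\lambda\geq 1$, so any non-resonant $\lambda$ in the interval works.

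Next I identify the semigroup on $D(A^s)$. By Proposition \ref{prop-spaces-equality}, $D_s(A+BK_\lambda)=D(A^s)$ as sets for $s$ in this range, with equivalent norms. Theorem \ref{thm-well-posedness} therefore already shows that $e^{t(A+BK_\lambda)}=T_\lambda^{-1}S_s(t)T_\lambda$ is a $C_0$-semigroup on $D(A^s)$, generated by $A+BK_\lambda$ with domain $D_{s+1}(A+BK_\lambda)=T_\lambda^{-1}D(A^{s+1})$ by \eqref{space-correspondences}. Strong continuity in the $D(A^s)$ norm follows from the boundedness of $T_\lambda^{\pm1}$ on $D(A^s)$.

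The estimate then follows directly from the factorization. Since $A$ is self-adjoint or skew-adjoint and diagonal in $\{\varphi_n\}$, the semigroup $S_s(t)$ acts diagonally with multipliers $e^{(\lambda_n-\lambda)t}$, and $\mathrm{Re}\,\lambda_n\leq 0$. Hence $\|S_s(t)\|_{\mathcal{L}(D(A^s))}\leq e^{-\lambda t}$ with constant exactly $1$. Therefore
\[
\|e^{t(A+BK_\lambda)}y\|_{D(A^s)}\leq \|T_\lambda^{-1}\|_{\mathcal{L}(D(A^s))}\,e^{-\lambda t}\,\|T_\lambda\|_{\mathcal{L}(D(A^s))}\|y\|_{D(A^s)}\leq C_s^2 e^{2C\lambda^{1/\alpha}}e^{-\lambda t}\|y\|_{D(A^s)},
\]
which is \eqref{stability-estimate-D(A^s)} with $c=2C$. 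Here $C$ does not depend on $s$, $N$ or $\lambda$, as stated in Theorem \ref{cor:boundTT-1}.

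The main obstacle is the bookkeeping in the first step. I need a single $\lambda\in[N,N+1]$ that satisfies the lower bound on $\Dist_\alpha(\lambda)$ and the lower bound on $|k_nb_n|$ simultaneously, uniformly for all $s$ in the range. This choice is supplied by Theorem \ref{cor:boundTT-1}, since its constant $C$ does not depend on $s$. One further point must be checked: $s=0$ lies inside $(\tfrac{1}{2\alpha}-1,1-\tfrac{1}{2\alpha})$, so the endpoint $s=0$ is included. Everything else is the routine conjugation argument.
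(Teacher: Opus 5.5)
Your proposal is correct and follows the paper's own argument. You define $e^{t(A+BK_\lambda)}=T_\lambda^{-1}S_s(t)T_\lambda$ and bound it by $\|T_\lambda^{-1}\|_{\mathcal{L}(D(A^s))}\,e^{-\lambda t}\,\|T_\lambda\|_{\mathcal{L}(D(A^s))}$ via Theorem \ref{cor:boundTT-1}, and your appeal to Proposition \ref{prop-spaces-equality} and Theorem \ref{thm-well-posedness} spells out the generator identification the paper leaves implicit. Your fallback of ``reindexing $N$'' could not place $\lambda$ in $[N,N+1]$ if only $[N,N+C]$ were available, but it is unnecessary: the gap constant $c$ in Proposition \ref{prop-close} can be decreased to $c\leq 1$, which already gives $\mu_N\in(N,N+1)$.
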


\begin{proof}
    Let $s\in [0, 1-\tfrac{1}{2\alpha})$. As in the proof of Theorem \ref{thm-well-posedness}, we define the semigroup
    \[e^{t(A+BK)}:=T^{-1} S_{s}(t)T.\]
    Since $T$ is an isomorphism on $D(A^{s})$, and since $S_{s}(t)$ is an exponentially stable semigroup on $D(A^{s})$, $e^{t(A+BK)}$ is well-defined and we have the following estimate \textcolor{black}{thanks to Lemma~\ref{lem-T-check-reg}}:
    \[\begin{aligned}
        \|e^{t(A+BK)}y \|_{D(A^s)}&=\|T^{-1} S_{s}(t) T y\|_{D(A^s)} \\
        &\leq \|T^{-1}\|_{D(A^s)} \, e^{-\lambda t} \, \|Ty\|_{D(A^s)}\\
        &\leq \|T^{-1}\|_{D(A^s)} \|T\|_{D(A^s)} \, e^{-\lambda t} \|y\|_{D(A^s)}\\
        &\leq C_se^{{\color{black}s}\lambda^{\frac1\alpha}} \, e^{-\lambda t} \|y\|_{D(A^s)}.
    \end{aligned}\]
\end{proof}

Again, we complete the above result with an admissibility result for $K_\lambda$.
\begin{proposition}\label{prop-K-A-bounded}
   Let $s\in (\tfrac{1}{2\alpha}, 1-\tfrac{1}{2\alpha}).$ For $\lambda \notin \mathcal{N}$, $K_\lambda \in D(A^{s})^\prime$. Thus, it is well-defined on trajectories of the semigroup $e^{t(A+BK_\lambda)}$ in the space $D(A^s)$. Moreover, for \textcolor{black}{any} $N\in \N$, there exists $\lambda\notin \mathcal{N}$, such that $\lambda\in [N, N+1]$, and
    \begin{equation}
        \label{feedback-norm-in-A}
        \begin{aligned}
            &|K_\lambda f|\leq C_s e^{c\lambda ^{\frac{1}{\alpha}}} \|f\|_{D(A^{s})}, \\
            &|K_\lambda e^{t(A+BK_\lambda)}f|\leq C^\prime_s e^{{\color{black}s}\lambda^{\frac1\alpha}} e^{-\lambda t}\|f\|_{D(A^s)}, \quad \forall y\in D(A^{s}),
        \end{aligned}
    \end{equation}
       where the constant $c$ does not depend on $(s, N, \lambda)$, and $C_s, C^\prime_s$ do not depend on $(N, \lambda)$.
\end{proposition}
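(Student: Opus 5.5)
The plan is to transfer the admissibility result already proved in the natural scale, Proposition \ref{prop-K-A-bounded} being the $D(A^s)$ analogue of Proposition \ref{prop-K-adm-A+BK}. The bridge between the two scales is Proposition \ref{prop-spaces-equality}: for $s\in[0,1-\tfrac{1}{2\alpha})$ we have $D_s(A+BK_\lambda)=D(A^s)$. I would quantify the equivalence of norms through the isometry property of $T_\lambda$ in Proposition \ref{prop-operator-equality}. For $f\in D(A^s)$,
\[
\|f\|_{D_s(A+BK_\lambda)}=\|T_\lambda f\|_{D(A^s)}\leq \|T_\lambda\|_{\mathcal{L}(D(A^s))}\|f\|_{D(A^s)}.
\]
Here the isomorphism of $T_\lambda$ on $D(A^s)$ in this range of $s$ comes from Lemma \ref{lem-T-check-reg}. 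Conversely, $\|f\|_{D(A^s)}\leq \|T_\lambda^{-1}\|_{\mathcal{L}(D(A^s))}\|f\|_{D_s(A+BK_\lambda)}$.

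First I take $s\in(\tfrac{1}{2\alpha},1-\tfrac{1}{2\alpha})$. Then the qualitative part of Proposition \ref{prop-K-adm-A+BK} applies for any $\lambda\notin\mathcal N$ and gives $K_\lambda\in (D_s(A+BK_\lambda))'$. Combined with the norm comparison above, this yields $K_\lambda\in D(A^s)'$ with
\[
|K_\lambda f|\leq C_s\,\lambda\,\|T_\lambda^{-1}\|\,\|T_\lambda\|_{\mathcal{L}(D(A^s))}\|f\|_{D(A^s)}.
\]
This estimate is obtained from \eqref{qualitative-continuity-K}. Since Theorem \ref{thm-well-posedness} shows that trajectories starting in $D_s(A+BK_\lambda)=D(A^s)$ stay in that space, $K_\lambda$ is well defined along them.

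For the quantitative part, I fix $N$ and choose $\lambda\in[N,N+1]\setminus\mathcal N$ as in Theorem \ref{cor:boundTT-1}. This choice gives $\|T_\lambda\|+\|T_\lambda^{-1}\|\leq Ce^{c\lambda^{1/\alpha}}$ both in $\mathcal{L}(H)$ and in $\mathcal{L}(D(A^s))$, with constants uniform in $N$. The same $\lambda$ must also serve in Proposition \ref{prop-K-adm-A+BK} and Theorem \ref{th:wellposed-closed-As}; this is consistent because both statements use exactly the $\lambda$ supplied by Theorem \ref{cor:boundTT-1}. I then absorb the polynomial factor using $\lambda\leq C_\alpha e^{\lambda^{1/\alpha}}$, enlarging $c$, which gives the first inequality in \eqref{feedback-norm-in-A}.

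For the second inequality, I apply the first one to $e^{t(A+BK_\lambda)}f$ and then use \eqref{stability-estimate-D(A^s)} from Theorem \ref{th:wellposed-closed-As}:
\[
|K_\lambda e^{t(A+BK_\lambda)}f|\leq C_se^{c\lambda^{1/\alpha}}\cdot C_se^{c\lambda^{1/\alpha}}e^{-\lambda t}\|f\|_{D(A^s)}.
\]
The two exponentials then merge into a single $e^{c'\lambda^{1/\alpha}}$, with $c'$ independent of $(s,N,\lambda)$.

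The main obstacle is keeping every constant uniform in $\lambda$. In particular, the operator norm of $T_\lambda$ on $D(A^s)$, and not only on $H$, must be controlled by $e^{c\lambda^{1/\alpha}}$. Lemma \ref{lem-T-check-reg} supplies this, but only for a good $\lambda$: it carries the factor $(1+\lambda/\Dist_\alpha(\lambda))$, which is controlled only through Proposition \ref{prop:estimate-dist-mu-n}, together with $\sup|k_nb_n|^{\pm1}$ from Theorem \ref{th-knbn}. The point to check is that the dependence on $s$ enters only through $C_s$ and never through the exponent.
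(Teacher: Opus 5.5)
Your proposal is correct and follows the paper's proof. You identify $D_s(A+BK_\lambda)=D(A^s)$, bound $\|f\|_{D_s(A+BK_\lambda)}\le\|T_\lambda\|_{\mathcal{L}(D(A^s))}\|f\|_{D(A^s)}$ via the isometry and Lemma~\ref{lem-T-check-reg}, and feed this into Proposition~\ref{prop-K-adm-A+BK} at the good $\lambda$ supplied by Theorem~\ref{cor:boundTT-1}.

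The only cosmetic difference is in the second estimate. You pass through the $D(A^s)$ stability bound of Theorem~\ref{th:wellposed-closed-As}, which costs an extra factor $\|T_\lambda\|\,\|T_\lambda^{-1}\|$ that is harmlessly absorbed into $e^{c'\lambda^{1/\alpha}}$. The paper instead applies the norm comparison directly to the second inequality of Proposition~\ref{prop-K-adm-A+BK}, where the $D_s(A+BK_\lambda)$ stability constant is already independent of $\lambda$.
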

\begin{proof}
Let $s\in (\tfrac{1}{2\alpha}, 1-\tfrac{1}{2\alpha})$. First, recall that from Proposition \ref{prop-spaces-equality} $D(A^s)=D_s(A+BK)$. Moreover, for $f\in D(A^s)$, we have 
\[\|f\|_{D_{s}(A+BK_{\lambda})} = \|T f\|_{D(A^{s})} \leq \|T\|_{D(A^s)}\|f\|_{D(A^{s})}.\] 
Combining this with Proposition \ref{prop-K-adm-A+BK} and Lemma \ref{lem-T-check-reg}, we immediately get \eqref{feedback-norm-in-A}.
    
\end{proof}

\section{Small-time null-controllability}
\label{sec-null-control}

In this last section, we make use of the sharp estimations on $\|T\|_{D(A^s)}$ and $\|T^{-1}\|_{D(A^s)}$ with respect to $\lambda$ \textcolor{black}{obtained in Theorem \ref{cor:boundTT-1} and \ref{lem-T-check-reg}} to deduce small-time null-controllability.

Let $T>0$. We begin by building an explicit closed-loop control which achieves null-controllability in time $T$,  using estimates from Theorem \ref{th-knbn}.  {\color{black}Fix some $\gamma> \frac{\alpha}{\alpha-1}$.}  From Proposition \ref{prop:estimate-dist-mu-n}, for \textcolor{black}{any} $N\in \N^\ast$ there exists $\lambda(N) \notin \mathcal{N}$ such that the bounds {\color{black} \eqref{T-Tinv-norm-estimate} concerning $T_{\lambda}$ and $T_{\lambda}^{-1}$ holds} and,
\begin{equation}\label{lambda(N)-growth}\lambda(N)\in [N^{\gamma}, N^\gamma+C],\end{equation}
for some $C>0$ independent of $N$.
Now, let $\sigma \in (\alpha/(\alpha-1), \gamma) $. Then, 
\[ \lambda(N)^{-\frac1\sigma}{\color{black}\sim} N^{-\frac{\gamma}{\sigma}}, \quad \forall N \in \N^\ast,\]
and since $\frac\gamma\sigma>1$,  the following series convergence:
\[\sum_{N=1}^\infty \lambda(N)^{-\frac1\sigma} =: L_\sigma <\infty.\]
We then define 
\begin{equation}
\label{eq:defTN}
\begin{array}{c}\delta_N:= \frac{T}{L_\sigma} \lambda(N)^{-\frac1\sigma}, \quad \forall N\in \N^\ast \\
t_0:=0, \quad t_N:=\sum_{p=1}^N \delta_p, \quad \forall N\geq 1.\end{array}
\end{equation}
 In particular, by construction $t_N\xrightarrow[N\to\infty]{} T$.
 
We now define the following piecewise constant feedback control,
\begin{equation}
    \label{simple-piecewise-feedback}
    K(t):= K_{\lambda(N)} \ \textrm{if} \  t \in [t_{N-1}, t_{N}), \quad \forall t\in [0,T), \; \forall N\in \N^\ast,
\end{equation}
which allows us to state the following closed-loop null-controllability result.

\begin{proposition}\label{prop-null-control}
\textcolor{black}{For any $s\in [0,1-\tfrac{1}{2\alpha})$, t}he closed-loop control system
    \begin{equation}
    \label{null-control-system}
    \begin{cases} \dfrac{d}{dt} y(t)=(A+BK(t)) y(t), \quad t>0  \\\ y(0)=y_0, 
    \end{cases}
\end{equation}
is well-posed \textcolor{black}{in $D(A^{s})$} and
satisfies $y(T)=0$ for any initial condition $y_0\in \textcolor{black}{D(A^{s})}$.

Moreover,  if
$\textcolor{black}{s > \tfrac{1}{2\alpha}}$, the control $K(t)y(t)$ satisfies 
\begin{equation}
\label{eq:Kyto0}
|K(t)y(t)|\xrightarrow[t\to T^-]{}0.
\end{equation}
\end{proposition}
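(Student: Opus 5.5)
We build the solution interval by interval and telescope the stability estimates. On $[t_{N-1},t_N)$ the system is autonomous with operator $A+BK_{\lambda(N)}$. By Theorem \ref{th:wellposed-closed-As}, this operator generates a semigroup on $D(A^s)$ for every $s\in[0,1-\tfrac{1}{2\alpha})$, with bound $C_s e^{c\lambda^{1/\alpha}}e^{-\lambda t}$. We therefore set $y(t)=e^{(t-t_{N-1})(A+BK_{\lambda(N)})}y(t_{N-1})$ on that interval. This defines a unique continuous mild solution $y\in C([0,T);D(A^s))$ by concatenation, since each piece is a $C_0$-semigroup trajectory on the fixed space $D(A^s)$, which does not depend on $\lambda$. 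Well-posedness on $[0,T)$ is then immediate. What remains is to show that $y(t)\to0$ in $D(A^s)$ as $t\to T^-$, and to set $y(T)=0$.

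\textbf{Telescoping estimate.} Iterating the bound of Theorem \ref{th:wellposed-closed-As} gives
\[
\|y(t_N)\|_{D(A^s)}\le \prod_{p=1}^N C_s\exp\!\big(c\lambda(p)^{1/\alpha}-\lambda(p)\delta_p\big)\,\|y_0\|_{D(A^s)} .
\]
By \eqref{eq:defTN}, $\lambda(p)\delta_p=\tfrac{T}{L_\sigma}\lambda(p)^{1-1/\sigma}$. Since $\sigma>\alpha/(\alpha-1)$, we have $1-\tfrac1\sigma>\tfrac1\alpha$. Hence the exponent is at most $-\tfrac{T}{2L_\sigma}\lambda(p)^{1-1/\sigma}$ for $p\ge p_0$, where $p_0$ depends on $T$.

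The main obstacle is that each step carries a constant $C_s$, which may exceed $1$, and the product of these constants must still go to zero. This is absorbed by the exponential gain: $\log C_s-\tfrac{T}{2L_\sigma}\lambda(p)^{1-1/\sigma}\le-\tfrac{T}{4L_\sigma}\lambda(p)^{1-1/\sigma}$ for $p$ large. Since $\lambda(p)\ge p^\gamma\to\infty$, the sum $\sum_p \lambda(p)^{1-1/\sigma}$ diverges, so the product tends to $0$. The finitely many early factors only contribute a $T$-dependent constant.

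\textbf{Within each interval and the final time.} For $t\in[t_{N-1},t_N)$ we have $\|y(t)\|\le C_se^{c\lambda(N)^{1/\alpha}}\|y(t_{N-1})\|$. This extra factor is dominated by the exponential gain of the $(N-1)$th step, because $\lambda(N)\le C\lambda(N-1)$ and the power $1/\alpha<1-1/\sigma$. Hence $\|y(t)\|_{D(A^s)}\to0$, and setting $y(T)=0$ gives a solution continuous on $[0,T]$.

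\textbf{Feedback convergence \eqref{eq:Kyto0}.} For $s\in(\tfrac1{2\alpha},1-\tfrac1{2\alpha})$, Proposition \ref{prop-K-A-bounded} gives, on $[t_{N-1},t_N)$,
\[
|K(t)y(t)|\le C'_se^{c\lambda(N)^{1/\alpha}}\|y(t_{N-1})\|_{D(A^s)} .
\]
By the telescoped bound above, this is at most $C\exp\big(c\lambda(N)^{1/\alpha}-\tfrac{T}{4L_\sigma}\sum_{p<N}\lambda(p)^{1-1/\sigma}\big)$. The sum is at least $\lambda(N-1)^{1-1/\sigma}$, so the exponent tends to $-\infty$, and \eqref{eq:Kyto0} follows.
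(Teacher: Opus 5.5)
Your proposal is correct and follows essentially the same route as the paper. You concatenate the semigroups of Theorem \ref{th:wellposed-closed-As} on each $[t_{N-1},t_N)$, telescope the stability estimates, absorb both $C_s$ and $e^{c\lambda(N)^{1/\alpha}}$ into the gain $\frac{T}{L_\sigma}\lambda(N)^{1-1/\sigma}$ (using $1-\frac{1}{\sigma}>\frac{1}{\alpha}$), and then bound the feedback through Proposition \ref{prop-K-A-bounded}. The only difference is bookkeeping: the paper shifts indices to pair $c\lambda(k+1)^{1/\alpha}$ with $-\lambda(k)\delta_k$ (using $\lambda(k)/\lambda(k+1)\to 1$), whereas you compare the intra-interval factor with the gain of step $N-1$ via $\lambda(N)\le C\lambda(N-1)$.
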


\begin{proof}
The well-posedness of the above system is straightforward, since it can be established successively on each interval $[t_N, t_{N+1}]$ thanks to the definition \eqref{simple-piecewise-feedback} of the control term, and the results of Section \ref{sec-well-posedness} \textcolor{black}{(see Theorem \ref{th:wellposed-closed-As})}.

To prove the null-controllability result, \textcolor{black}{that is $y(T)=0$,} let $s\in [0, 1-\tfrac{1}{2\alpha})$ and note that estimate \eqref{stability-estimate-D(A^s)} implies that exist constants {\color{black}$c$}, $C_s >0${\color{black}, independent of $\lambda(N)>0$,} such that, for \textcolor{black}{any} $N\in \N^\ast$, \textcolor{black}{and any $t\in[t_{N},t_{N+1})$,}
\[\|e^{t(A+BK_{\lambda(N)})}f\|_{D(A^{s})}\leq C_s e^{{\color{black}c}\lambda(N)^{\frac1\alpha} - \lambda(N)t}\|f\|_{D(A^{s})}.\]
Now, for \textcolor{black}{any} $N \in \N$,  given a solution $y(t)$ of \eqref{null-control-system},
\[\|y(t_{N})\|_{D(A^{s})}\leq C_s e^{{\color{black}c}\lambda(N)^{ \frac1\alpha} - \lambda(N)\delta_N} \|y(t_{N-1})\|_{D(A^{s})}, \quad \forall N\in \N^\ast. \]

Iterating this leads to
\begin{equation}\label{iterated-estimate-finite-time}\begin{aligned}
\|y(t)\|_{D(A^{s})}&\leq C^N_se^{\sum_{k=1}^{N}{\color{black}c}\lambda(k)^{ \frac1\alpha} - \lambda(k) \delta_{k}  } \, e^{{\color{black}c}\lambda(N+1)^{\frac1\alpha}-\lambda(N+1)(t-t_N)}\|y(0)\|_{D(A^{s})} \\
&\leq C_s^N e^{\sum_{k=1}^{N} {\color{black}c}\lambda(k)^{ \frac1\alpha} - \lambda(k) \delta_{k}  } \, e^{{\color{black}c}\lambda(N+1)^{\frac1\alpha}} \|y(0)\|_{D(A^{s})}\\
&= C_s^N e^{ \sum_{k=1}^{N} {\color{black}c}\lambda(k+1)^{ \frac1\alpha} - \lambda(k) \delta_{k}  } \, e^{{\color{black}c} \lambda(1)^{\frac1\alpha}} \|y(0)\|_{D(A^{s})} \\
&=e^{ {\color{black}c_s'} N +\sum_{k=1}^{N} {\color{black}c}\lambda(k+1)^{ \frac1\alpha} - \lambda(k) \delta_{k}  } \, e^{{\color{black}c}\lambda(1)^{\frac1\alpha}} \|y(0)\|_{D(A^{s})} \\
&\leq e^{{\color{black}c} \lambda(1)^{\frac1\alpha}}   e^{\sum_{k=1}^{N} \left(c_s^{\prime}+ {\color{black}c}\lambda(k+1)^{ \frac1\alpha} - \lambda(k) \delta_{k}\right)  } \,  \|y(0)\|_{D(A^{s})}, \quad \forall t \in [t_N, t_{N+1}) ,
\end{aligned}\end{equation}
where $c^{\prime}_s>0$ is independent of $N$.
Now, from \eqref{lambda(N)-growth} and the definition of $\delta_k$, we have
\[\begin{aligned} {\color{black}c} \lambda(k+1)^{\frac1\alpha}-\lambda(k)\delta_k &= {\color{black}c} \lambda(k+1)^{\frac1\alpha}-\frac{T}{L_\sigma}\lambda(k)^{1-\frac1\sigma} \\
&= \lambda(k+1)^{1-\frac1\sigma}\left({\color{black}c} \lambda(k+1)^{\frac{1-\alpha}\alpha +\frac1\sigma }-\left(\frac{\lambda(k)}{\lambda(k+1)} \right)^{1-\frac1\sigma}\frac{T}{L_\sigma}\right)\end{aligned}\]
and since $\sigma>\frac\alpha{\alpha-1}$, we have $\frac{1-\alpha}\alpha+\frac1\sigma<0$, so that 
\[\lambda(k+1)^{\frac{1-\alpha}\alpha +\frac1\sigma } \xrightarrow[k\to \infty]{}0.\]
Moreover, given the growth of $\lambda(k)$, it is straightforward that,
\[\frac{\lambda(k)}{\lambda(k+1)} \xrightarrow[k\to\infty]{}1.\]

Thus, for $k\geq 1$ large enough,
\begin{equation}\label{asympt-estimate-null-control-state}
    \begin{aligned}
        c^{\prime}_s+ {\color{black}c}\lambda(k+1)^{\frac1\alpha}-\lambda(k)\delta_k &= c_s^{\prime} -\frac{T}{L_\sigma}\lambda(k+1)^{1-\frac1\sigma}+o\left(\lambda(k+1)^{1-\frac1\sigma}\right) \\
        &= -\frac{T}{L_\sigma}\lambda(k)^{1-\frac1\sigma} +o\left(\lambda(k)^{1-\frac1\sigma}\right) .
    \end{aligned}
\end{equation}
Hence, since $\lambda(k)^{1-\frac1\sigma} \xrightarrow[k\to\infty]{}+\infty$,
\begin{equation}\label{series-divergence}\sum_{k=1}^{N} c_s^{\prime}+ \lambda(k+1)^{ \frac1\alpha} - \lambda(k) \delta_{k} \xrightarrow[N\to\infty]{} -\infty.\end{equation}
Combining \eqref{iterated-estimate-finite-time}, \eqref{series-divergence} \textcolor{black}{and the fact that $t_{N} \xrightarrow[N\to\infty]{}T$} then yields
\[y(t)\xrightarrow[t\to T^-]{} 0.\]
Now, let $s\geq \alpha/2$ and $y_0\in D(A^s)$. The control $K(t)y(t)$ is defined for all $t\geq 0$ thanks to Proposition \ref{prop-K-adm-A+BK} and Proposition \ref{prop-spaces-equality}. To estimate the control term, note that, using first \eqref{feedback-norm-in-A} , then \eqref{iterated-estimate-finite-time}, we have
\[\begin{aligned}
    |K(t) y(t) | &= |K_{\lambda(N+1)} y(t)| \\
    & =|K_{\lambda(N+1)} e^{(t-t_N)(A+BK_{\lambda(N+1)})}y(t_N)| \\
& \leq C_{s} \lambda(N+1)e^{-\lambda(N+1) (t-t_N)} e^{C\lambda(N+1)^{\frac{1}{\alpha}}}\|y(t_N)\|_{D(A^{s})} \\
&\leq C_{s} \lambda(N+1)e^{-\lambda(N+1) (t-t_N)} e^{C\lambda(N+1)^{\frac{1}{\alpha}}} e^{c \lambda(1)^{\frac1\alpha}}   e^{\sum_{k=1}^{N} \left(c^{\prime}+c\lambda(k+1)^{ \frac1\alpha} - \lambda(k) \delta_{k}\right)  } \,  \|y(0)\|_{D(A^{s})} \\
&\leq C_{s} \lambda(N+1)e^{c \lambda(1)^{\frac1\alpha}}   e^{\sum_{k=1}^{N} \left(c^{\prime}+c\lambda(k+1)^{ \frac1\alpha} - \lambda(k) \delta_{k}\right)  } \,  \|y(0)\|_{D(A^{s})} \\
&\leq C_{s}e^{c \lambda(1)^{\frac1\alpha}}   e^{\log(\lambda(N+1))+\sum_{k=1}^{N} \left(c^{\prime}+c\lambda(k+1)^{ \frac1\alpha} - \lambda(k) \delta_{k}\right)  } \,  \|y(0)\|_{D(A^{s})} \\
&\leq C_{s}e^{c \lambda(1)^{\frac1\alpha}}   e^{\sum_{k=1}^{N} \left(\log(\lambda(k+1))+c^{\prime}+c\lambda(k+1)^{ \frac1\alpha} - \lambda(k) \delta_{k}\right)  } \,  \|y(0)\| _{D(A^{s})}, \quad \forall t\in [t_N, t_{N+1}).
\end{aligned}
\]
Clearly, given the growth of $\lambda(N)$ (see \eqref{lambda(N)-growth}), the $\log$ term is negligible, and we have, as in \eqref{asympt-estimate-null-control-state},
\begin{equation} \label{asympt-estimate-null-control-cont-term}
\log(\lambda(k+1))+c^{\prime}+c\lambda(k+1)^{ \frac1\alpha} - \lambda(k) \delta_{k}=-\frac{T}{L_\sigma}\lambda(k)^{1-\frac1\sigma} +o\left(\lambda(k)^{1-\frac1\sigma}\right) .\end{equation}
Combining this with the inequality above then yields
\[|K(t) y(t) | \xrightarrow[t\to T^-]{} 0.\]
\end{proof}

\begin{remark}
    In the above proof, it would have been sufficient to establish the estimate \eqref{iterated-estimate-finite-time} for the $\|\cdot\|$ norm, since it would have implied the null-controllability result in all $D(A^s)$ spaces. However, one can in fact establish this estimate in the range of spaces $D(A^s)$, $s\in [0, 1-\tfrac{1}{2\alpha})$. Such estimates would typically play an essential role to obtain a semi-global stabilization result, but this is out of the scope of the current paper.
\end{remark}

\appendix
\section{Technical lemmas on the spectrum of $A$}
\label{appendix-estimates-lambda}

In this appendix we provide the proofs of some technical lemmas, including Lemma \ref{lem:di}, Lemma \ref{lem:boundT-1}, and Lemma \ref{lem:sumlambdaijdistbound}.

\begin{proof}[of Lemma \ref{lem:di}]
We easily see that 
\[
D_i(\lambda)=\min_{j\in \N^*} | \lambda_j-\lambda_i+\lambda|, 
\] 
is well-defined since $\inf_{j\in \N^*} | \lambda_j-\lambda_i+\lambda|$ is reached for a finite $j\in \N^*$.  This is clear when $A$ is skew-adjoint as $D_{i}(\lambda) = \lambda$. Let us now assume that $A$ is self-adjoint. Since $\lambda_j\rightarrow -\infty$ as $j\rightarrow \infty$ and $j\in \N^* \mapsto | \lambda_j-\lambda_i+\lambda|$ is increasing with respect to $j$ for $\lambda_j>\lambda_i-\lambda$. 

Moreover, notice that $j\in \N^* \mapsto | \lambda_j-\lambda_i+\lambda|$ may be decreasing only if there exists $j_1 \in \N^*$ such that $\lambda_j<\lambda_i-\lambda$, $\forall 1\leq j \leq j_1$. In this case, using the assumption $\lambda_j+\lambda\neq \lambda_i, \forall i,j\in \N^*$
\[
-\lambda_1+\lambda_i-\lambda < \ldots < -\lambda_{j_1}+\lambda_i-\lambda < 0 < -  \lambda_{j_2}+ \lambda_i-\lambda  < \ldots,
\]
with $j_2=j_1+1$. Hence, 
\[
D_i(\lambda)=\min\{ | \lambda_{j_1}-\lambda_i+\lambda|, | \lambda_{j_2}-\lambda_i+\lambda| \}.
\]
Finally, we have $\Dist_\alpha(\lambda)\leq D_i(\lambda)$ by definition of $\Dist_\alpha(\lambda)$
\end{proof}

\begin{proof}[of Lemma \ref{lem:boundT-1}]
First, notice that due to Assumption \ref{assume:structure} concerning the asymptotic behavior of the eigenvalues, for $i\in \N$,
\[
1\pm \dfrac{\lambda}{\lambda_m-\lambda_i} \rightarrow 1, \quad m \rightarrow \infty, \, m\neq i.
\]
 We first prove assertion (i). Assertion (ii) is dealt with similarly. Using the gap condition Lemma \ref{lem:thirdgap},
\begin{align*}
\left| \prod_{\substack{m\in \N^\ast\\ m \neq i}} \left( 1+ \dfrac{\lambda}{\lambda_i-\lambda_m}  \right) \right|  =  \prod_{\substack{m\in \N^\ast\\ m \neq i}} \left| 1+ \dfrac{\lambda}{\lambda_i-\lambda_m}  \right|   & \leq   \prod_{\substack{m\in \N^\ast\\ m \neq i}} \left( 1+\left| \dfrac{\lambda}{\lambda_i-\lambda_m}  \right| \right)  \\
& \leq   \prod_{\substack{m\in \N^\ast\\ m \neq i}} \left( 1+C\dfrac{\lambda}{|i- m|^\alpha} \right) \\
& = \exp\left( \sum_{\substack{m\in \N^\ast\\ m \neq i}}\ln \left( 1+C\dfrac{\lambda}{|i-m|^\alpha} \right)  \right) \\
& \leq \exp\left( 2 \sum_{k\in \N^*}\ln \left( 1+C\dfrac{\lambda}{k^\alpha} \right)  \right).
\end{align*}
Since $\ln(1+1/y)$ is decreasing over $y\in (1,\infty)$, 
\begin{align*}
\sum_{k\in \N^*}\ln \left( 1+C\dfrac{\lambda}{k^\alpha} \right) & \leq  \ln(1+C\lambda)  + \int_1^\infty \ln \left( 1+C\dfrac{\lambda}{x^\alpha} \right) dx  \\
& =\ln(1+C\lambda) + x\ln\left(1+C\dfrac{\lambda}{x^\alpha}\right) \Big|_1^\infty + \alpha \int_{1}^\infty \dfrac{C\lambda}{x^\alpha + C\lambda} dx \\
& =\alpha (C\lambda)^{\frac{1}{\alpha}} \int_{(C\lambda)^{-\frac{1}{\alpha}}}^\infty \dfrac{1}{y^\alpha + 1 } dy \\
& \leq \alpha (C\lambda)^{\frac{1}{\alpha}} \int_0^\infty \dfrac{1}{y^\alpha + 1 } dy.
\end{align*}
Hence, there exists $C>0$ such that, for every $\lambda\in (0,\infty)$ and $i\in \N^*$,
\[
\left| \prod_{\substack{m\in \N^\ast\\ m \neq i}} \left( 1+ \dfrac{\lambda}{\lambda_i-\lambda_m}  \right) \right|\leq C e^{C \lambda^{\frac{1}{\alpha}}}.
\]
\end{proof}

\begin{proof}[of Lemma \ref{lem:sumlambdaijdistbound}] 
In the sequel we only present the proof for the inequality $(ii)$, since the inequality $(i)$ can be treated similarly.
 
Let us consider the case where $A$ is self-adjoint. Let $j\in \mathbb{N}^*$.
From Lemma \ref{lem:di}, we may assume that there exists $J_j \in \N^*$ which satisfies $|\lambda_{J_j}-\lambda_j+\lambda|=D_{j}(\lambda)$. Without loss of generality, we may assume $\lambda_j-\lambda_{J_j}-\lambda>0$, as a similar argument works for the case $\lambda_{J_j}-\lambda_j+\lambda<0$. Under this assumption, we also know that $\lambda_j-\lambda_{J_j- 1}-\lambda\leq - D_j(\lambda)<0$.

Then, we write, 
\[
\sum_{i\in \N^*} \left|\dfrac{\lambda^2}{\lambda_j-\lambda_i-\lambda}\right| \leq \dfrac{2\lambda^2}{D_j(\lambda)} + \sum_{1\leq i \leq J_j-2 } \dfrac{\lambda^2}{\lambda_i+\lambda -\lambda_j} + \sum_{i \geq J_j+1 } \dfrac{\lambda^2}{ \lambda_j-\lambda_i-\lambda},
\]
where we bounded the term $i=J_j-1$ using $D_j(\lambda)\leq |\lambda_{J_j-1}-\lambda_j+\lambda|$. Then, since $\lambda_{J_j}-\lambda_i>0$ for $i>J_j$, using the gap condition of Lemma \ref{lem:thirdgap},
\begin{align*}
\sum_{J_j< i } \dfrac{\lambda^2}{ \lambda_j-\lambda_i-\lambda} & = \sum_{J_j< i } \dfrac{\lambda^2}{ \lambda_{J_j}-\lambda_i+D_j(\lambda)} \leq  \sum_{J_j< i } \dfrac{\lambda^2}{ \lambda_{J_j}-\lambda_i} \\
& \leq  \sum_{J_j< i } \dfrac{\lambda^2}{ |i- J_j|^{\alpha}} 
 \leq C \sum_{k\in \N^*}  \dfrac{\lambda^2}{ k^\alpha}  \leq C \lambda^2
\end{align*}

Concerning the sum over $i\in \{1, 2,..., J_j- 2\}$, we know that 
\begin{align*}
\sum_{1\leq i\leq J_j- 2 } \dfrac{\lambda^2}{ \lambda_i+\lambda- \lambda_j} 
&=\sum_{1\leq i\leq J_j- 2 } \dfrac{\lambda^2}{ (\lambda_{J_j-1}+\lambda- \lambda_j)+ (\lambda_i- \lambda_{J_j-1})}\\
& \leq  
\sum_{1\leq i\leq J_j- 2 } \dfrac{\lambda^2}{ (\lambda_i- \lambda_{J_j-1})+D_j(\lambda)} \\
& \leq  
\sum_{1\leq i\leq J_j- 2 } \dfrac{\lambda^2}{ (\lambda_i- \lambda_{J_j-1})} 
 \leq C \sum_{1\leq i\leq J_j- 2 } \dfrac{\lambda^2}{ |J_{j-1}-i|^{\alpha}}\\ 
& \leq C \sum_{k\in \N^*}  \dfrac{\lambda^2}{ k^\alpha} 
 \leq C \lambda^2
\end{align*}
Collecting the estimates above yield the proof. For the case where $A$ is skew-adjoint the situation is simpler as $J_{i}=i$ and is unique and $D_{i}(\lambda) = \lambda$, thus
\begin{equation}
    \sum\limits_{i\in\mathbb{N}^{*}} \left|\frac{\lambda^{2}}{\lambda_{j}-\lambda_{i}-\lambda}\right| =     \sum\limits_{i\in\mathbb{N}^{*}} \frac{\lambda^{2}}{(|\lambda_{j}-\lambda_{i}|^{2}+\lambda^{2})^{1/2}} \leq \lambda+ \sum\limits_{i\neq j} \frac{\lambda^{2}}{|\lambda_{j}-\lambda_{i}|},
\end{equation}
and the rest of the proof is identical.

\end{proof}

\section*{Acknowledgements}
AH was supported by the ANR-Tremplin StarPDE ANR-24-ERCS-0010. SX was supported by NSFC 12301562 and 12571474. LG and CZ are partially supported by ANR QuBiCCS ANR-24-CE40-3008.

\bibliographystyle{plain}
\bibliography{biblio}

\end{document}